\newcommand{\Z}{\mathbb{Z}}
\newcommand{\Q}{\mathbb{Q}}
\newcommand{\F}{\mathbb{F}}
\newcommand{\G}{\mathbb{G}}
\newcommand{\C}{\mathbb{C}}
\newcommand{\bT}{\mathbb{T}}
\newcommand{\bK}{\mathbb{K}}
\newcommand{\bft}{\mathbf{t}}
\newcommand{\btr}{\mathbf{1}}
\newcommand{\Nrm}{\mathrm{N}}
\newcommand{\fO}{\mathfrak{O}}
\newcommand{\fH}{\mathfrak{H}}
\newcommand{\fa}{\mathfrak{a}}
\newcommand{\fb}{\mathfrak{b}}
\newcommand{\fc}{\mathfrak{c}}
\newcommand{\fd}{\mathfrak{d}}
\newcommand{\ff}{\mathfrak{f}}
\newcommand{\fl}{\mathfrak{l}}
\newcommand{\fm}{\mathfrak{m}}
\newcommand{\fn}{\mathfrak{n}}
\newcommand{\fp}{\mathfrak{p}}
\newcommand{\fq}{\mathfrak{q}}
\newcommand{\fr}{\mathfrak{r}}
\newcommand{\ft}{\mathfrak{t}}
\newcommand{\fP}{\mathfrak{P}}
\newcommand{\fQ}{\mathfrak{Q}}
\newcommand{\fR}{\mathfrak{R}}
\newcommand{\fL}{\mathfrak{L}}
\newcommand{\bpsi}{\boldsymbol{\psi}}
\newcommand{\Rc}{\Z_p[G]^{\chi}}
\newcommand{\OO}{\mathcal{O}}
\newcommand{\cZ}{\mathcal{Z}}
\newcommand{\cA}{\mathcal{A}}
\newcommand{\cG}{\mathcal{G}}
\newcommand{\cH}{\mathcal{H}}
\newcommand{\cS}{\mathcal{S}}
\newcommand{\wtil}[1]{\widetilde{#1}}
\newcommand{\ol}[1]{\overline{#1}}
\newcommand{\parenth}[1]{\left( #1 \right)}
\newcommand{\AC}[1]{Q_{#1}}
\newcommand{\coef}[2]{{c \parenth{#1, #2}}}
\newcommand{\coefz}[2]{{c_{#1} \parenth{0, #2}}}
\newcommand{\condu}[1]{\ff_{#1}}
\newcommand{\ordinary}{\mathchar`- \ord}
\newcommand{\detZ}{{\rm det}_{\mathbb{Z}[G]^-}}
\newcommand{\detC}{{\rm det}_{\mathbb{C}[G]^-}}
\newcommand{\GK}{\Gal(K/F)}
\DeclareMathOperator{\Gal}{Gal}
\DeclareMathOperator{\Ind}{Ind}
\DeclareMathOperator{\Coker}{Cok}
\DeclareMathOperator{\Ker}{Ker}
\DeclareMathOperator{\Imag}{Im}
\DeclareMathOperator{\Fitt}{Fitt}
\DeclareMathOperator{\pd}{pd}
\DeclareMathOperator{\ram}{ram}
\DeclareMathOperator{\Cl}{Cl}
\DeclareMathOperator{\ord}{ord}
\DeclareMathOperator{\Hom}{Hom}
\DeclareMathOperator{\Ext}{Ext}
\DeclareMathOperator{\ur}{ur}
\DeclareMathOperator{\ab}{ab}
\DeclareMathOperator{\id}{id}
\DeclareMathOperator{\End}{End}
\DeclareMathOperator{\cyc}{cyc}
\DeclareMathOperator{\cha}{char}
\DeclareMathOperator{\GL}{GL}
\DeclareMathOperator{\prim}{prime}
\DeclareMathOperator{\sgn}{sgn}
\DeclareMathOperator{\bad}{bad}
\DeclareMathOperator{\wild}{wild}
\DeclareMathOperator{\Frac}{Frac}
\DeclareMathOperator{\res}{res}
\DeclareMathOperator{\cl}{cl}
\DeclareMathOperator{\Frob}{Frob}
\DeclareMathOperator{\cusps}{cusps}
\DeclareMathOperator{\ba}{bad}
\let\oldenumerate\enumerate
\renewcommand{\enumerate}{
   \oldenumerate
   \setlength{\itemsep}{1pt}
   \setlength{\parskip}{0pt}
   \setlength{\parsep}{0pt}
}
\let\olditemize\itemize
\renewcommand{\itemize}{
   \olditemize
   \setlength{\itemsep}{1pt}
   \setlength{\parskip}{0pt}
   \setlength{\parsep}{0pt}
}
\theoremstyle{plain}
\newtheorem{thm}{Theorem}[section]
\newtheorem{lem}[thm]{Lemma}
\newtheorem{conj}[thm]{Conjecture}
\newtheorem{prop}[thm]{Proposition}
\newtheorem{claim}[thm]{Claim}
\theoremstyle{definition}
\newtheorem{defn}[thm]{Definition}
\newtheorem{setting}[thm]{Setting}
\newtheorem{rem}[thm]{Remark}
\title[eTNC]
{On the minus component of the equivariant Tamagawa number conjecture for $\mathbb{G}_{\lowercase{m}}$}
\author[M.~Atsuta \& T. Kataoka]{Mahiro Atsuta and Takenori Kataoka}
\address{Faculty of Science and Technology, Keio University.
3-14-1 Hiyoshi, Kohoku-ku, Yokohama, Kanagawa 223-8522, Japan}
\email{tkataoka@math.keio.ac.jp}
\email{atsuta0128@keio.jp}
\date{\today}
\begin{document}

\begin{abstract}
The equivariant Tamagawa number conjecture (hereinafter called the eTNC) predicts close relationships between algebraic and analytic aspects of motives.
In this paper, 
we prove a lot of new cases of the minus component of the eTNC for $\G_m$ and for CM abelian extensions.
One of the main results states that the $p$-component of the eTNC is true when there exists at least one $p$-adic prime that is tamely ramified.
The fundamental strategy is inspired by the work of Dasgupta and Kakde on the Brumer--Stark conjecture.
\end{abstract}

\maketitle

\tableofcontents

\section{Introduction}\label{s:intro}

The relationship between the special values of $L$-functions and algebraic objects is a central theme in number theory.  
The equivariant Tamagawa number conjecture,
which was formulated by Burns and Flach \cite{BuFl01}, is a very general and strong conjecture on such a relationship.
In this paper, 
we focus on the equivariant Tamagawa number conjecture for $\G_m$ (referred to below as the eTNC for short). 

So far, a promising way to prove the eTNC has been to apply a suitable (equivariant) Iwasawa main conjecture, together with a suitable descent theory.
For example, using this strategy, Burns and Greither \cite{BuGr03} and Flach \cite{Flach11} proved the eTNC for an abelian extension of $\Q$.
Also, Bley \cite{Ble06} proved the eTNC for a finite abelian extension of an imaginary quadratic field, under certain hypotheses.
In both cases, the relevant Iwasawa main conjecture could be proved by the theory of Euler systems: the system of cyclotomic units over $\Q$ and the system of elliptic units over an imaginary quadratic field, respectively.
More recently, in \cite{BKS17}, Burns, Kurihara and Sano established a general descent theory for an arbitrary number field to deduce the eTNC from the Iwasawa main conjecture and additional conjectures. 
The result \cite[Theorem 1.1]{BKS17} is strong enough to recover the above-mentioned results of \cite{BuGr03}, \cite{Flach11}, and \cite{Ble06}.
Note also that Burns \cite{Bur11} proved a global function field analogue of the eTNC within a similar framework.

In this paper, we focus on the minus component of the eTNC for a finite abelian CM-extension, which we refer to as the eTNC$^-$.
We will review a formulation of the eTNC$^-$ in \S \ref{ss:form_eTNC}.
Note that we ignore the $2$-components whenever we consider the minus components in this paper.
Then the eTNC$^-$ can be decomposed into the $p$-component eTNC$_p^-$ for each odd prime number $p$.

This eTNC$_p^-$ has been investigated closely, still via the Iwasawa main conjecture.
For instance, Nickel \cite[Theorem 4]{Nic11I} proved the eTNC$_p^-$ under the hypotheses that the relevant $\mu$-invariant vanishes, that the field extension is {\it almost tamely ramified} above $p$ (the definition will be recalled after Theorem \ref{thm:current_main} below), and that another mild condition holds.
Note that the vanishing of the $\mu$-invariant is imposed for the Iwasawa main conjecture to hold.
On the other hand, as a consequence of the above-mentioned result, Burns, Kurihara, and Sano \cite[Corollary 1.2]{BKS17} proved the eTNC$_p^-$ under the hypotheses that
the associated $p$-adic $L$-function has at most one trivial zero, still assuming the vanishing of the $\mu$-invariant.
The condition on the number of the trivial zeros is imposed in order to use the Gross--Stark conjecture proved by Darmon, Dasgupta, and Pollack \cite{DDP11} and Ventullo \cite{Ven15}.
Subsequently, Dasgupta, Kakde, and Ventullo \cite{DKV18} proved a significant part of the Gross--Stark conjecture without the condition on the trivial zeros, but this does not allow us to remove the condition from the result of Burns, Kurihara, and Sano (see \cite[\S 1.C]{BKS17}).

On the other hand, in a recent preprint \cite{DK20}, 
Dasgupta and Kakde {\it unconditionally} proved the Brumer--Stark conjecture (we do not review the statement), except for the $2$-components.
A promising strategy to prove the Brumer--Stark conjecture had been again to descend from the (equivariant) Iwasawa main conjecture (e.g., Greither and Popescu \cite{GP15}).
However, Dasgupta and Kakde do not follow this strategy, and instead they investigate finite abelian CM-extensions directly.
This is why they could avoid any hypotheses for the Iwasawa main conjecture and for the descent procedure.

In more recent work, using the work \cite{DK20},
Nickel \cite{Nickel21} proved the eTNC$_p^-$ as long as all the $p$-adic primes are almost tamely ramified, without assuming the vanishing of the $\mu$-invariant. 
Also, using the work \cite{DK20}, Johnston and Nickel \cite{JN20} proved the relevant Iwasawa main conjecture without assuming the vanishing of the $\mu$-invariant.

In this paper, by adapting the method of Dasgupta and Kakde \cite{DK20} on the Brumer--Stark conjecture, we prove the eTNC$_p^-$ in much more cases.
The main theorem is the following.

\begin{thm}\label{thm:current_main}
Let $H/F$ be a finite abelian CM-extension.
Let $p$ be an odd prime number.
Then the eTNC$_p^-$ for $H/F$ holds if at least one of the following conditions holds.
\begin{itemize}
\item[(i)]
There exists at least one $p$-adic prime of $F$ that is (at most) tamely ramified in $H/F$.
\item[(ii)]
The maximal extension of $F$ in $H$ that is totally split at all the $p$-adic primes is totally real.
\item[(iii)]
We have $H^{\cl, +} (\mu_p) \not\supset H^{\cl}$. 
Here, $H^{\cl}$ denotes the Galois closure of $H$ over $\Q$, $H^{\cl, +}$ its maximal real subfield, and $\mu_p$ the group of $p$-th roots of unity.
\end{itemize}
\end{thm}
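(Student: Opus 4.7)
The plan is to adapt Dasgupta and Kakde's direct approach to finite abelian CM-extensions in order to prove the stronger statement eTNC$_p^-$, rather than first proving an equivariant Iwasawa main conjecture and then descending. Following the framework of Burns--Kurihara--Sano \cite{BKS17} together with the reformulations of Nickel \cite{Nic11I,Nickel21}, the eTNC$_p^-$ for $H/F$ should be equivalent to an equality of Fitting ideals over $\Z_p[\Gal(H/F)]^-$ involving, on the algebraic side, a Ritter--Weiss style module attached to $H/F$ and, on the analytic side, a suitably modified Stickelberger element. The work \cite{DK20} establishes the analogous equality for a canonical quotient of this module, corresponding to the Brumer--Stark conjecture; the residual content needed for eTNC$_p^-$ lives in the local components at the $p$-adic primes and is exactly controlled by the ramification behavior and the trivial zeros of the $p$-adic $L$-function.

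I would then treat condition (i) as the base case. If $\fp$ is a $p$-adic prime of $F$ tamely ramified in $H$, the inertia at $\fp$ acts through a cyclic group of order prime to $p$, so the local Ritter--Weiss summand at $\fp$ is a cyclic $\Z_p[\Gal(H/F)]^-$-module whose Fitting ideal matches the corresponding Euler factor exactly. Running the Dasgupta--Kakde argument while keeping track of this privileged local term should produce the full eTNC$_p^-$ equality, refining the almost-tame result of \cite{Nickel21}. For (ii), let $L \subseteq H$ be the maximal subextension totally split at every $p$-adic prime; the hypothesis that $L$ is totally real forces the minus part to be effectively supported on $H/L$, where no trivial zero can occur, so the local correction factors are under control and one can bootstrap from the (i)-type mechanism via an inductive reduction to the quotient $\Gal(H/L)$. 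For (iii), the condition $H^{\cl,+}(\mu_p) \not\supset H^{\cl}$ should allow one to construct an auxiliary CM-extension $\widetilde{H} \supseteq H$ in which some $p$-adic prime becomes tamely ramified, without altering the minus component of interest (via a cyclotomic or character-twist argument that exploits precisely the failure of containment in $H^{\cl,+}(\mu_p)$); applying (i) to $\widetilde{H}/F$ and descending along a group-ring trace then yields eTNC$_p^-$ for $H/F$.

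The main obstacle is the passage from the Brumer--Stark equality of \cite{DK20} to the finer eTNC$_p^-$ equality: the latter encodes the leading term of the $p$-adic $L$-function including correction factors at trivial zeros, and this forces a much more precise analysis of the local Ritter--Weiss terms at wildly ramified $p$-adic primes than \cite{DK20} directly provides. When no $p$-adic prime is tame, those local terms are not principal as $\Z_p[\Gal(H/F)]^-$-modules and contribute intricate correction factors that must be matched against the analytic side exactly; each of the three hypotheses provides a distinct mechanism for avoiding or resolving this non-principality, which is why they appear to be essential rather than merely convenient. A secondary technical difficulty will be to carry out the field-theoretic reductions in (ii) and (iii) while preserving all $\chi$-components of the minus part, so that no information is lost in the passage to the auxiliary field.
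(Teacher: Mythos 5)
Your proposal captures the right high-level philosophy (adapt Dasgupta--Kakde's direct construction rather than descend from an Iwasawa main conjecture), but at the level where the three hypotheses actually enter, the mechanisms you describe diverge from what the paper does, and the one for (iii) would not work as stated.

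The paper's proof factors through a finer, character-by-character statement (Theorem~\ref{thm:current_main_p}) that has \emph{two} parts: under a mild hypothesis on the $\chi$-component one gets $\Fitt_{\Z_p[G]^\chi}(\Omega_\Sigma^{\Sigma',\chi})\subset(\theta_\Sigma^{\Sigma',\chi})$ outright, while in the worst case (all $p$-adic primes ramified in $H^\chi/F$ with $p$-group decomposition groups) the best one can extract from the cuspform construction is the weaker inclusion $\Fitt\subset(\theta^\chi)+(\nu_{I_p}^\chi)$, where $\nu_{I_p}$ is the norm element of the sum of inertia at $p$. This extra $\nu_{I_p}$ term comes specifically from the weight-$k$ piece $\nu_{\fP}W_k^{H^{\fP}/F}(\ldots)$ that survives in Theorem~\ref{thm:cuspform} precisely when $\fP_{\mathrm{ba}}=\fP$; it is not a ``local Fitting ideal versus Euler factor'' matching issue. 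Conditions (i) and (ii) are then shown (Lemma~\ref{lem:deduction1}) to be jointly equivalent to the hypothesis of part (1) of Theorem~\ref{thm:current_main_p} holding for \emph{every} odd $\chi$, so no reduction to the quotient $\Gal(H/L)$ is performed for (ii); you simply land in the good case for all characters. You also omit the step (Proposition~\ref{prop:divisibility3}) where the analytic class number formula upgrades the one-sided inclusion to equality.

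For (iii), your proposed auxiliary extension ``in which some $p$-adic prime becomes tamely ramified'' cannot be constructed, and this is the main gap. The paper instead uses Wiles' avoiding-trivial-zero trick (Lemma~\ref{lem:trivial_zero}): take $H_\ell=HE_\ell$ with $E_\ell\subset\Q(\mu_\ell)$ of degree $p^n$, where $\ell\equiv 1\pmod{p^n}$ and $p$ is \emph{inert} in $E_\ell/\Q$. Since $E_\ell/\Q$ is ramified only at $\ell$, the $p$-adic primes remain exactly as wildly ramified in $H_\ell/F$ as they were in $H/F$; nothing becomes tame. What the construction buys is that characters of $\Gal(H_\ell/F)$ whose restriction to $\Gal(H_\ell/H)$ has large order satisfy $\psi(v)\ne1$ for $v\mid p$ (Lemma~\ref{lem:ATZ1}), which makes the correction factors $h_{v,H_\ell}/\pi(h_{v,H_{\ell,m}})$ $p$-integral. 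One then applies Theorem~\ref{thm:current_main_p}(2) --- still the version with the $\nu_{I_p}$ term --- further up the cyclotomic $\Z_p$-tower, descends, and observes that after projection to $\Z_p[G]$ the residual $\nu$-contribution lands in $(p^{n-r})$, which by the choice of $n$ is contained in $(\theta_{\Sigma,H}^{\Sigma'})$. Without this precise interplay (inertness at $p$, integrality of $h$-ratios away from trivial zeros, absorbing $\nu$ into a power of $p$), the argument for (iii) does not close.
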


We emphasize that the assumption of this theorem is very mild.
See Remark \ref{rem:Wiles_cond} below for an equivalent condition of (iii).
The above-mentioned result of Nickel \cite{Nickel21} is also a part of this theorem.
Recall that a $p$-adic prime $v$ of $F$ is said to be almost tamely ramified in $H/F$ if either $v$ is (at most) tamely ramified in $H / F$ or 
the complex conjugation is in the decomposition subgroup of $v$ in $\Gal(H/F)$.
Therefore, the existence of a {\it single} $p$-adic prime that is almost tamely ramified ensures the condition (i) or (ii).
Note that \cite{Nickel21} also deals with non-commutative extensions, which we do not study in this paper.

Even without assuming (i), (ii), or (iii) of Theorem \ref{thm:current_main},
in this paper we also show that the eTNC$_p^-$ holds after suitably enlarging the coefficient ring.
See Theorem \ref{thm:main_p_2} for this result.

\begin{rem}
After the authors had completed this project, very recently Bullach, Burns, Daoud, and Seo \cite{BBDS} announced {\it an unconditional proof of the eTNC$^-$}.
This is achieved by combining the result of \cite{DK20} with a newly developed general theory of Euler system.
On the other hand, this present paper does not rely on the theory of Euler systems at all and instead directly adapt the strategy of \cite{DK20} (see \S \ref{ss:outline}).
\end{rem}

In \S \ref{ss:form_eTNC}, we formulate the eTNC$_p^-$.
In \S \ref{ss:eTNCp}, we state a more refined result (Theorem \ref{thm:current_main_p}) on character-components of the eTNC$_p^-$, from which we will deduce Theorems \ref{thm:current_main} and \ref{thm:main_p_2}.
In \S \ref{ss:outline}, we give an outline the proof of Theorem \ref{thm:current_main_p}.

\subsection{The formulation of the eTNC}\label{ss:form_eTNC}

In this section, we give a formulation of the eTNC$^-$ that we adopt in this paper.
The equivalence with a more standard formulation will be shown in \S \ref{Ap:eTNC}.

\subsubsection{Notation}\label{sss:not}

Let $H/F$ be a finite abelian CM-extension, which means that $H$ is a CM-field and $F$ is a totally real number field such that the Galois group $G = \Gal(H/F)$ is finite and abelian.

Let $c \in G$ denote the complex conjugation.
We define $\Z[G]^- = \Z[1/2][G]/(1 + c)$. 
For a $\Z[G]$-module $M$, we also define $M^- = \Z[G]^- \otimes_{\Z[G]} M$, which is regarded as a $\Z[G]^-$-module.
When $x$ is an element of $M$, we write $x^-$ for the image of $x$ by the natural homomorphism $M \to M^-$.
Note that we implicitly invert the prime $2$ whenever we study minus components.

We write $S_{\ram}(H/F)$ for the set of places of $F$ that are ramified in $H/F$, including the infinite places.
We also write $S_{\wild}(H/F) \subset S_{\ram}(H/F)$ for the set of finite primes of $F$ that are wildly ramified in $H/F$.
We write $S_{\infty}(F)$ (resp.~$S_p(F)$ for an odd prime number $p$) for the set of infinite places of $F$ (resp.~$p$-adic primes of $F$).
For a finite set $\Sigma$ of places of $F$ such that $\Sigma \supset S_{\infty}(F)$, 
we put $\Sigma_f = \Sigma \setminus S_{\infty}(F)$.

For a finite set $\Sigma'$ of finite primes of $F$, we write $\Cl_H^{\Sigma'}$ for the ray class group of $H$ of modulus $\prod_{w \in \Sigma'_H} w$, where $\Sigma'_H$ denotes the set of primes of $H$ that lie above primes in $\Sigma'$.
We write $\mu(H)$ for the group of roots of unity in $H$ and define
\[
\mu(H)^{\Sigma'} = \{ \zeta \in \mu(H) \mid \zeta \equiv 1 (\bmod{w}), \forall w \in \Sigma'_H\}.
\]
Note that the complex conjugation acts as the inversion on $\mu(H)$, so we have $\mu(H)^{\Sigma', -} = \Z[1/2] \otimes_{\Z} \mu(H)^{\Sigma'}$.

\subsubsection{A Ritter--Weiss type module}\label{sss:Omega_statement}

We introduce an arithmetic module, denoted by $\Omega_{\Sigma}^{\Sigma'}$, that plays a central role in this paper.
The construction and the proof of its basic properties will occupy Section \ref{s:Omega}.
The module is basically the same as what the authors used in preceding work \cite{AK21}, though we have to slightly generalize the situation.
In the work \cite{AK21}, the module was used for computing the Fitting ideals of ray class groups of $H$.
The construction is based on Ritter and Weiss \cite{RW96}, Greither \cite{Gre07}, and Kurihara \cite{Kur20}.


For each finite prime $v$ of $F$, we write $G_v$ (resp.~$I_v$) for the decomposition group (resp.~the inertia group) of $v$ in $G$, and $\varphi_v \in G_v / I_v$ for the arithmetic Frobenius.
We define a finite $\Z[G]$-module $A_v$ by
\begin{equation}\label{eq:A_v}
A_v = \Z[G/I_v]/(1 - \varphi_v^{-1} + \# I_v).
\end{equation}

Now we introduce the module $\Omega_{\Sigma}^{\Sigma'}$, which will be constructed in \S \ref{ss:Omega_constr}.

\begin{prop}\label{prop:Omega}
Let $\Sigma$ and $\Sigma'$ be finite sets of places of $F$ satisfying the following.
\begin{itemize}
\item[(H1)]
$\Sigma \cap \Sigma' = \emptyset$.
\item[(H2)]
$\Sigma \supset S_{\infty}(F)$.
\end{itemize}
Then there exists a finite $\Z[G]^-$-module $\Omega_{\Sigma}^{\Sigma'}$ such that we have an exact sequence
\begin{equation}\label{eq:fund}
0 \to \Cl_H^{\Sigma', -} \to \Omega_{\Sigma}^{\Sigma'} \to \bigoplus_{v \in \Sigma_f} A_v^- \to 0.
\end{equation}
\end{prop}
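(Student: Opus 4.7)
The plan is to construct $\Omega_\Sigma^{\Sigma'}$ by a Ritter--Weiss style procedure, directly generalizing the construction in \cite{AK21}, which itself builds on \cite{RW96}, \cite{Gre07}, and \cite{Kur20}. First, I would choose an auxiliary finite set $T$ of finite primes of $F$, disjoint from $\Sigma \cup \Sigma'$, such that every finite prime of $F$ ramified in $H/F$ lies in $\Sigma \cup T$, and set $S = \Sigma \cup T$. Via the idelic formulation of class field theory incorporating the $\Sigma'$-congruence conditions, there is a four-term exact sequence of $\Z[G]$-modules
\[
0 \to \OO_{H,S}^{\times, \Sigma'} \to P \to Q \to \Cl_H^{\Sigma'} \to 0,
\]
where $P$ is built from local $\Sigma'$-units at primes of $S$ and $Q$ from local completions at the same primes.

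Next, I would pass to the minus part, exploiting that for any archimedean place $v$ of $F$ the complex conjugation $c$ lies in $G_v$, so the archimedean contributions collapse after inverting $2$. Thus, on the minus part, only contributions from primes in $\Sigma_f \cup T$ remain. I would split the resulting four-term sequence into two short exact sequences, take the one ending in $\Cl_H^{\Sigma',-}$, and eliminate the contributions from the auxiliary primes $T$ via a pushout. This yields an exact sequence whose right-hand term is indexed by $\Sigma_f$. Then, for each $v \in \Sigma_f$, a local calculation identifies the corresponding summand with $A_v^-$: the naive local term $\Z[G/G_v]^-$ is refined to $A_v = \Z[G/I_v]/(1 - \varphi_v^{-1} + \#I_v)$, essentially via the Tate cohomology of the inertia subgroup together with the action of a Frobenius lift. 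Taking $\Omega_\Sigma^{\Sigma'}$ to be the resulting middle module gives \eqref{eq:fund}, and finiteness is automatic since $\Cl_H^{\Sigma',-}$ and each $A_v^-$ are finite.

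I expect the main obstacle to be the local identification at each $v \in \Sigma_f$ with $A_v^-$, which requires careful bookkeeping of ramification data and of the Frobenius action on the relevant Tate cohomology group. The expression $1 - \varphi_v^{-1} + \#I_v$ arises precisely from this local computation and is the reason for the specific form of $A_v$ in \eqref{eq:A_v}. A secondary technical point is to verify that the pushout operations and the minus-part truncations are compatible with the $\Z[G]^-$-module structure after inverting $2$, but this should follow from standard facts about $\Z[G]^-$ and from the hypotheses (H1) and (H2) on $\Sigma$ and $\Sigma'$.
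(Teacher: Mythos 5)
Your overall plan---an auxiliary set of primes, an idelic four-term sequence, passage to the minus part, and a local identification giving the $A_v^-$ terms---is recognisably the strategy of the paper (which follows \cite{RW96}, \cite{Gre07}, \cite{Kur20}, and \cite{AK21}). However, there are a couple of concrete points where your proposal, as written, does not quite work.

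The most serious issue is your choice of the auxiliary set $T$: you ask that every finite prime ramified in $H/F$ lie in $\Sigma \cup T$, i.e.\ that $T$ \emph{absorbs} the ramified primes outside $\Sigma$. The paper imposes exactly the opposite constraint: it takes $S'\supset\Sigma$ with every element of $S'\setminus\Sigma$ \emph{unramified} in $H/F$. This is not cosmetic. The reason the extra primes can be ``eliminated'' harmlessly is that, at an unramified prime $v$, the local Ritter--Weiss module $W_v$ is isomorphic to $\Z[G_v]$ via $\iota_v$ (Proposition~\ref{prop:str_W_2}(1)), so those coordinates contribute nothing to the cokernel. At a ramified prime $v$, the natural injection $W_v \hookrightarrow \Z[G_v]$ is $f_v$, whose cokernel is precisely $A_v$ (Proposition~\ref{prop:str_W_2}(2)). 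If the ramified primes outside $\Sigma$ sit in $T$, their contributions do not vanish, and one would obtain the unwanted terms $A_v^-$ for $v\in T$ rather than an exact sequence ending in $\bigoplus_{v\in\Sigma_f}A_v^-$; a pushout by the $T$-components would then require a separate argument showing the quotient is still the correct module, which you have not supplied. The paper sidesteps this entirely by placing only unramified primes in $S'\setminus\Sigma$.

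Two further ingredients are missing. First, for the snake lemma to produce a four-term exact sequence that actually \emph{surjects} onto $\Cl_H^{\Sigma'}$, the paper insists that $\Cl_{H,S'}^{\Sigma'}=0$ and $\bigcup_{v\in S'\setminus\Sigma}G_v=G$; these drive the surjectivity of $\theta_V$ (Lemma~\ref{lem:theta_surj}). Second, the paper fixes a distinguished prime $v_0\in S'\setminus\Sigma$ with $G_{v_0}=\Gal(H/H^+)$, and Lemma~\ref{lem:ker_theta_W} uses the minus part of $W_{v_0}$ to cut the rank by one, giving the canonical identification $\Ker(\theta_W)^-\simeq\bigoplus_{v\in S'_f\setminus\{v_0\}}W_v^-$. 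Without some device of this sort, the domain of your map has the wrong rank and you cannot land in $\bigoplus_{v\in\Sigma_f}\Z[G]^-$-components with the desired cokernel. Finally, the identification of the local quotient with $A_v^-$ is not really a ``refinement of $\Z[G/G_v]^-$ via Tate cohomology''; it is the explicit computation $\Coker(f_v)\simeq\Z[G/I_v]/(1-\varphi_v^{-1}+\#I_v)$ for the map $f_v(x,y)=x+\nu_{I_v}y$ on the concrete model of $W_v$ from Proposition~\ref{prop:W_str}.
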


Note that the sequence \eqref{eq:fund} does not characterize the module $\Omega_{\Sigma}^{\Sigma'}$ up to isomorphisms; we have to determine the extension class associated to \eqref{eq:fund}.
This will be done in \S \ref{ss:ext}.

The following is an important property of the module $\Omega_{\Sigma}^{\Sigma'}$.

\begin{prop}\label{prop:Omega_ct}
In addition to (H1) and (H2), let us suppose the following.
\begin{itemize}
\item[(H3)]
$\mu(H)^{\Sigma', -}$ vanishes.
\item[(H4)]
$\Sigma \cup \Sigma' \supset S_{\ram}(H/F)$ and $\Sigma \supset S_{\wild}(H/F)$.
\end{itemize}
Then we have $\pd_{\Z[G]^-}(\Omega_{\Sigma}^{\Sigma'}) \leq 1$, where in general $\pd$ denotes the projective dimension.
In other words, the $G$-module $\Omega_{\Sigma}^{\Sigma'}$ is cohomologically trivial.
\end{prop}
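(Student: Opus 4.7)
The plan is to derive the cohomological triviality directly from the explicit construction of $\Omega_\Sigma^{\Sigma'}$ to be carried out in Section \ref{s:Omega}. Since $\Omega_\Sigma^{\Sigma'}$ is a finite $\Z[G]^-$-module and $2$ is invertible on it, a standard local--global argument together with Rim's theorem reduces the statement to the assertion that, for every odd prime $p$, the $p$-primary part $\Omega_\Sigma^{\Sigma'}\otimes_\Z\Z_p$ is cohomologically trivial as a $\Z_p[G]^-$-module, equivalently that $\pd_{\Z_p[G]^-}(\Omega_\Sigma^{\Sigma'}\otimes_\Z\Z_p)\leq 1$. I would therefore fix an odd prime $p$ throughout and work in the category of $\Z_p[G]^-$-modules.

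Next, following the Ritter--Weiss--Greither--Kurihara style construction of \cite{RW96, Gre07, Kur20} and closely paralleling the authors' earlier paper \cite{AK21}, I would realise $\Omega_\Sigma^{\Sigma'}\otimes_\Z\Z_p$ as the rightmost term in a four-term exact sequence
\[
0 \longrightarrow \mu(H)^{\Sigma',-}\otimes_\Z\Z_p \longrightarrow P_1 \longrightarrow P_0 \longrightarrow \Omega_\Sigma^{\Sigma'}\otimes_\Z\Z_p \longrightarrow 0,
\]
in which $P_0$ and $P_1$ are finitely generated projective $\Z_p[G]^-$-modules. The construction glues the local Tate-type complexes at the primes $v\in\Sigma_f$ (which contribute the factors $A_v$ appearing in \eqref{eq:fund}) with the arithmetic data of the ray class group of modulus supported on $\Sigma'$. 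Hypothesis (H4) guarantees that all ramification is absorbed into $\Sigma\cup\Sigma'$, so that the global complex is well-defined and its cokernel indeed fits into the sequence \eqref{eq:fund} of Proposition \ref{prop:Omega}, while hypothesis (H1) ensures the contributions from $\Sigma$ and $\Sigma'$ do not interfere. Hypothesis (H3) now makes the leftmost $\mu$-term vanish, leaving a length-one projective resolution
\[
0 \longrightarrow P_1 \longrightarrow P_0 \longrightarrow \Omega_\Sigma^{\Sigma'}\otimes_\Z\Z_p \longrightarrow 0,
\]
from which $\pd_{\Z_p[G]^-}(\Omega_\Sigma^{\Sigma'}\otimes_\Z\Z_p)\leq 1$ is immediate.

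The main technical difficulty sits in the local input at the wildly ramified primes $v\in\Sigma_{\wild}(H/F)\subset\Sigma_f$: one must show that, at a prime $w$ of $H$ above such a $v$, the $p$-completed local multiplicative group admits, in the derived category of $\Z_p[G_v]^-$-modules, a perfect representative of amplitude $[-1,0]$ with projective terms, so that the local data can be stitched into the global modules $P_0$ and $P_1$ above. The only obstruction to such a projective replacement is the $p$-primary torsion of the local units, and this is precisely what the minus convention combined with hypothesis (H3) is designed to annihilate. Once the local projectivity is established, the global assembly via Poitou--Tate type exactness is formal, and the identification of the resulting cokernel with $\Omega_\Sigma^{\Sigma'}\otimes_\Z\Z_p$ will be built into the construction of Section \ref{s:Omega}, completing the proof.
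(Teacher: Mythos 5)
Your overall architecture is correct and matches the paper: the construction of $\Omega_{\Sigma}^{\Sigma'}$ yields (after $p$-completion) a four-term sequence with $\mu(H)^{\Sigma',-}$ on the left, two middle terms, and $\Omega_{\Sigma}^{\Sigma'}$ on the right; hypothesis (H3) kills the $\mu$-term; and once one knows the two middle terms are cohomologically trivial (equivalently projective, being $\Z_p$-free), one obtains a length-one projective resolution of $\Omega_{\Sigma}^{\Sigma'}$ and hence $\pd \leq 1$. In the paper the middle terms are $\Ker(\theta_V)^-$ and a finite free module $\bigoplus_{v \in S'_f\setminus\{v_0\}}\Z[G]^-$, and the four-term sequence is exactly \eqref{eq:snake_theta^-} spliced with the injection $\Ker(\theta_W)^-\hookrightarrow\bigoplus\Z[G]^-$ from Definition \ref{defn:Omega}.

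However, you have mis-assigned the roles of (H3) and (H4), and this is a genuine gap rather than a stylistic difference. You write that (H4) ``guarantees that all ramification is absorbed into $\Sigma\cup\Sigma'$, so that the global complex is well-defined.'' That is not what (H4) does: Proposition \ref{prop:Omega} already constructs $\Omega_{\Sigma}^{\Sigma'}$ and the sequence \eqref{eq:fund} under (H1) and (H2) alone, with no ramification hypothesis. Conversely, you claim that the local cohomological obstruction ``is precisely what the minus convention combined with hypothesis (H3) is designed to annihilate,'' but (H3) is a purely global condition on $\mu(H)^{\Sigma',-}$ and says nothing about local unit groups. In fact (H4) is precisely what is needed to make your $P_1$ cohomologically trivial. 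The reason is structural: for $v\in\Sigma_f$ the construction uses the Ritter--Weiss module $V_v$, which is \emph{always} cohomologically trivial regardless of ramification (Proposition \ref{prop:VO_ct}(1)), so wildly ramified primes inside $\Sigma$ cause no trouble at all — contrary to where you locate the difficulty. The actual work is at primes where the local data contributing to $\Ker(\theta_V)$ is the unit group $U_{H_v}$ (for $v\notin S'$) or $U^1_{H_v}$ (for $v\in\Sigma'$); there one needs Lemma \ref{lem:U_ct}, which requires either $p\nmid\#I_v$ or a nontriviality condition on the character, and it is (H4) — by forcing $S_{\wild}\subset\Sigma$ and $S_{\ram}\subset\Sigma\cup\Sigma'$ — that guarantees the primes in question satisfy this. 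Without correctly invoking (H4) at this point, the projectivity of $P_1$ is unjustified and the proposed resolution of $\Omega_{\Sigma}^{\Sigma'}$ does not exist; indeed, dropping (H4) the statement of the proposition is false.
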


This proposition will be refined in Proposition \ref{prop:Omega_p_ct} below, which in turn will be proved in \S \ref{ss:ct}.

\subsubsection{A Stickelberger type element}\label{sss:Stickel}

We introduce an analytic element $\theta_{\Sigma}^{\Sigma'}$ that should correspond to $\Omega_{\Sigma}^{\Sigma'}$ under the eTNC$^-$.

For a finite prime $v$ of $F$, we define $N(v)$ as the cardinality of the residue field of $F$ at $v$.
For a $\C$-valued character $\psi$ of $G$, we define the $L$-function $L(\psi, s)$ by
\[
L(\psi, s) 
= \prod_{v \nmid \ff_{\psi}} \parenth{1 - \frac{\psi(v)}{N(v)^s}}^{-1},
\]
where $v$ runs over the finite primes of $F$ that do not divide the conductor $\ff_{\psi}$ of $\psi$.
It is well-known that this infinite product converges absolutely for 
$s \in \C$ whose real part is larger than $1$,
and has an analytic continuation to the whole complex plane $s \in \C$.
We define an element $\omega \in \Q[G]^-$ by
\[
\omega = \sum_{\psi} L(\psi^{-1}, 0) e_{\psi},
\]
where $\psi$ runs over the odd characters of $G$ and $e_{\psi}$ denotes the idempotent associated to $\psi$ defined by
\[
e_{\psi} = \frac{1}{\# G} \sum_{\sigma \in G} \psi(\sigma) \sigma^{-1}.
\]
We actually have $\omega \in \Q[G]^-$, thanks to the Siegel--Klingen theorem.
In other words, the element $\omega$ is characterized by $\psi(\omega) = L(\psi^{-1}, 0)$ for any odd character $\psi$, where by abuse of notation we write $\psi$ for the induced $\Q$-algebra homomorphism from $\Q[G]^-$ to $\C$.

For a finite set $\Sigma'$ of finite primes of $F$, we also define the ``smoothed'' $L$-value
\[
L^{\Sigma'}(\psi, 0) 
= \prod_{v \in \Sigma', v \nmid \ff_{\psi}} \parenth{1 - N(v) \psi(v)} \cdot L(\psi, 0)
\]
and, accordingly, an element
\begin{equation}\label{eq:omega^S}
\omega^{\Sigma'} = \sum_{\psi} L^{\Sigma'}(\psi^{-1}, 0) e_{\psi} \in \Q[G]^-.
\end{equation}

For each finite prime $v$ of $F$, we define a non-zero-divisor $h_v \in \Q[G]$ by 
\begin{equation}\label{eq:hv}
h_v 
= 1 - \frac{\nu_{I_v}}{\# I_v} (\varphi_v^{-1} - \# I_v).
\end{equation}
Here, in general, for a finite group $N$, we write $\nu_N = \sum_{\sigma \in N} \sigma$ for the norm element in a group ring.
Then, in our case, $\nu_{I_v}$ induces a well-defined map $\Q[G/I_v] \to \Q[G]$, which is implicit in the definition of $h_v$.
This element $h_v$ dates back to Greither \cite{Gre07}, and used by Kurihara \cite{Kur20} and the authors \cite{AK21} for computing the Fitting ideals of (duals of) class groups.

Now, for finite sets $\Sigma$ and $\Sigma'$ satisfying the conditions (H1) and (H2), 
we define the Stickelberger element by
\begin{equation}\label{eq:theta_defn}
\theta_{\Sigma}^{\Sigma'} = 
\prod_{v \in \Sigma_f} h_v ^-
\cdot \omega^{\Sigma'} \in \Q[G]^-.
\end{equation}
We will remark on the integrality of this element just after Conjecture \ref{conj:main} below.

\subsubsection{The formulation of the eTNC}\label{sss:eTNC}

Now we can formulate the eTNC$^-$ as follows.
In general, for a commutative noetherian ring $R$, let $\Fitt_R(-)$ denote the initial Fitting ideals for finitely generated $R$-modules.

\begin{conj}[The eTNC$^-$]\label{conj:main}
Let $H/F$ be a finite abelian CM-extension.
Let $\Sigma$ and $\Sigma'$ be finite sets of places of $F$ satisfying (H1), (H2), (H3), and (H4). 
Then we have
\[
\Fitt_{\Z[G]^-}(\Omega_{\Sigma}^{\Sigma'}) 
= (\theta_{\Sigma}^{\Sigma'})
\]
as ideals of $\Z[G]^-$.
\end{conj}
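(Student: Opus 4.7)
The plan is to adapt the strategy of Dasgupta-Kakde from the Brumer-Stark conjecture, exploiting the projective dimension bound of Proposition \ref{prop:Omega_ct} to upgrade a containment of Fitting ideals into the full equality predicted by the conjecture.

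\emph{Step 1 (reductions).} Since $\Z[G]^{-}$ already has $2$ inverted, we have the product decomposition $\Z[G]^{-} = \prod_{p} \Z_p[G]^{-}$ over odd primes $p$, and the formation of Fitting ideals commutes with flat base change, so it suffices to prove the equality $p$-locally for each odd prime $p$. After extending scalars to a sufficiently large complete discrete valuation ring $\OO$ and decomposing by the primitive idempotents $e_\chi$ attached to the $\overline{\Q}_p$-valued characters $\chi$ of $G$, one can further reduce to a $\chi$-by-$\chi$ analysis.

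\emph{Step 2 (principality of the algebraic side).} By Proposition \ref{prop:Omega_ct}, the finite module $\Omega_{\Sigma}^{\Sigma'}$ has projective dimension at most $1$ over $\Z[G]^{-}$ and so admits a square presentation
\[
0 \to (\Z_p[G]^{-})^n \xrightarrow{M} (\Z_p[G]^{-})^n \to \Omega_{\Sigma}^{\Sigma'} \otimes_{\Z} \Z_p \to 0
\]
for some $n$ and some matrix $M$ whose determinant is a non-zero-divisor. Consequently $\Fitt_{\Z_p[G]^{-}}(\Omega_{\Sigma}^{\Sigma'} \otimes \Z_p)$ is the principal ideal $(\det M)$, and the conjecture becomes the assertion that $\det M$ and $\theta_{\Sigma}^{\Sigma'}$ are associates in $\Z_p[G]^{-}$.

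\emph{Step 3 (one containment via Brumer-Stark).} Apply the Brumer-Stark conjecture, now a theorem of Dasgupta-Kakde \cite{DK20}, to control the Fitting ideal of the class-group term appearing on the left of the exact sequence \eqref{eq:fund}. Combined with an explicit computation of the Fitting ideals of the local modules $A_v^{-}$, which should be matched by the local factors $h_v^{-}$ in the definition of $\theta_{\Sigma}^{\Sigma'}$, this yields the containment $(\theta_{\Sigma}^{\Sigma'}) \subseteq \Fitt_{\Z[G]^{-}}(\Omega_{\Sigma}^{\Sigma'})$. Care must be taken about the extension class of \eqref{eq:fund}, since the computation of Fitting ideals depends not only on the outer terms but also on how the sequence is glued.

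\emph{Step 4 (the main obstacle).} The reverse containment is the hard part and is what distinguishes the eTNC from mere annihilation. The natural route is to set up an Iwasawa-theoretic cyclotomic tower, deform $\Omega_{\Sigma}^{\Sigma'}$ together with auxiliary characters in the manner of \cite{DK20}, and compare the characteristic element $\det M$ with $\theta_{\Sigma}^{\Sigma'}$ through congruences between $L$-values of different conductors in the spirit of Wiles. The ramification hypotheses (i)-(iii) of Theorem \ref{thm:current_main} enter precisely here: at a tamely ramified $p$-adic prime the local factor $h_v^{-}$ behaves as a unit in the relevant $\chi$-component, so the matching goes through cleanly, whereas at a wildly ramified prime extra care is required and the algebraic and analytic sides couple in a more intricate way. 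Overcoming the wildly ramified case in full generality is what remains open, and is why the paper falls back on enlarging the coefficient ring in the companion result Theorem \ref{thm:main_p_2}.
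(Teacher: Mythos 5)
The statement you have been asked to prove is labelled as a \emph{conjecture} in the paper, and the paper itself does not prove it in full generality: it is still open. The paper's contribution is to prove it in the special cases of Theorems \ref{thm:current_main}, \ref{thm:main_p_2}, and \ref{thm:current_main_p}. Your Step 1 and Step 2 match the paper's preliminary reductions, and you correctly identify that the core input is the Dasgupta--Kakde strategy, but your Steps 3 and 4 misdescribe the logic of the actual argument in several ways.

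First, the containment direction in Step 3 is reversed, and its proposed source is wrong. The paper does not deduce $(\theta_{\Sigma}^{\Sigma'}) \subseteq \Fitt_{\Z[G]^-}(\Omega_{\Sigma}^{\Sigma'})$ from the Brumer--Stark theorem, and indeed it cannot: the sequence \eqref{eq:fund} does not allow one to recover $\Fitt(\Omega_{\Sigma}^{\Sigma'})$ from the Fitting ideals of the outer two terms, precisely because Fitting ideals are not multiplicative in short exact sequences — this is why Section \ref{ss:ext} goes to the trouble of computing the extension class $\eta$. What the paper actually establishes (Theorem \ref{thm:current_main_p}) is the containment $\Fitt_{\Z_p[G]^\chi}(\Omega_{\Sigma}^{\Sigma',\chi}) \subseteq (\theta_{\Sigma}^{\Sigma',\chi})$ plus a possible $\nu_{I_p}$ defect, and it then upgrades this one inclusion to the equality using the analytic class number formula (Proposition \ref{prop:divisibility3}). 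Both inclusions are not needed: once one is in hand, orders force the other.

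Second, Step 4 is not how the argument proceeds. The paper explicitly does \emph{not} deploy an Iwasawa-theoretic cyclotomic tower for the main inclusion; the whole point of the Dasgupta--Kakde method, and of the present paper, is to avoid descent from a main conjecture. Instead, Sections \ref{s:Eis}--\ref{s:Gal_rep} construct group-ring-valued Hilbert modular Eisenstein series, modify them (Propositions \ref{prop:f_k}, Theorem \ref{thm:cuspform}) to produce a cuspform $F_k(\bpsi)$ whose constant terms are controlled by $\theta_{\Sigma}^{\Sigma'}$, and then exploit the associated Galois representation (Theorem \ref{thm:Gal_rep}) to manufacture modules $\ol{B_0}\subset\ol{B_p}$ that receive a surjection from the sequence \eqref{eq:fund} (Theorem \ref{thm:abc}), giving $\Fitt(\Omega) \subseteq \Fitt(\ol{B_p}^{\sharp})\subseteq(\theta^{\sharp},\nu_{I_p})$ (Theorem \ref{thm:Fitt_B}). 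The Wiles-style avoiding-trivial-zeros argument appears only as an auxiliary device in the proof of Theorem \ref{thm:current_main}(iii) (Section \ref{ss:deduction(iii)}), not as the backbone. Your characterization of where the hypotheses (i)--(iii) enter is also off: tame ramification does not make $h_v^-$ a unit in the relevant $\chi$-component; rather it is the combination of all $p$-adic primes being wildly ramified with $p$-group decomposition groups that forces the $\nu_{I_p}$ defect to be non-trivial, and (i)--(iii) are precisely conditions that rule this out or allow one to remove it.

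In summary, your proposal gets the large-scale contours right (character-component reduction, principality from $\pd\le 1$, one inclusion plus class number formula), but would fail at Step 3 because Brumer--Stark cannot yield the stated containment and the extension class of \eqref{eq:fund} is an essential ingredient, and at Step 4 because the actual mechanism is modular forms and Galois representations, not Iwasawa descent.
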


This conjecture implicitly assumes the integrality of $\theta_{\Sigma}^{\Sigma'}$.
Thanks to the work of Deligne and Ribet \cite{DR80} or Cassou-Nogu\`{e}s \cite{CN79}, we have the integrality as long as we slightly strengthen the condition (H3) (see \S \ref{ss:int_Stickel}).
However, unfortunately it seems hard to directly deduce the integrality under the current hypotheses (H1), (H2), (H3), and (H4).

The statement of this conjecture is independent from the choice of the pair $(\Sigma, \Sigma')$ (see \S \ref{sss:Sigma}).
It is clear that Conjecture \ref{conj:main} can be divided into $p$-components for odd prime numbers $p$:
\[
\Fitt_{\Z_p[G]^-}(\Z_p \otimes_{\Z} \Omega_{\Sigma}^{\Sigma'}) 
= (\theta_{\Sigma}^{\Sigma'})
\]
as ideals of $\Z_p[G]^-$.
We refer to this equality as the eTNC$_p^-$.

This formulation of the eTNC$^-$ is essentially what the authors' preceding work \cite{AK21} used.
In fact, the main content of \cite{AK21} is to compute the Fitting ideal of $\Cl_H^{\Sigma', -}$, using the sequence \eqref{eq:fund} and assuming the eTNC$^-$.
Therefore, Theorem \ref{thm:current_main} directly yields a lot of unconditional cases of the main result of \cite{AK21}.
 The equivalence between Conjecture \ref{conj:main} and a more standard formulation of the eTNC will be shown in \S \ref{Ap:eTNC}.

Now we have explained the statement of Theorem \ref{thm:current_main}.
Here we give two remarks on the theorem.
 
\begin{rem}\label{rem:Wiles_cond}
We have a simple equivalent condition for the condition (iii) in Theorem \ref{thm:current_main}.
Let $M_p$ be the maximal $2$-extension of $\Q$ in $\Q(\mu_p)$.
Then, for a CM-field $K$, we have $K^+(\mu_p) \supset K$ if and only if $K \supset M_p$.

This is shown as follows.
Since $[\Q(\mu_p): M_p]$ is odd and $[K:K^+] = 2$, we have $K^+(\mu_p) \supset K$ if and only if $K^+ M_p \supset K$.
Since $M_p/\Q$ is a cyclic extension, so is $K^+ M_p/K^+$, and thus the unique intermediate CM-field of $K^+ M_p/K^+$ is $K^+ M_p$.
Therefore, $K^+ M_p \supset K$ is equivalent to $K^+ M_p = K$, that is, $M_p \subset K$.

Note that this argument also shows the following:
Given a totally real field $k$, there exists at most one CM-field $K$ such that $K^+ = k$ and $K^+(\mu_p) \supset K$; in fact, the unique possible choice is $K = k M_p$ when $k \supset M_p^+$.
For this reason, the condition (iii) is very mild.
\end{rem}

\begin{rem}
Let us illustrate the result of Theorem \ref{thm:current_main} when $F = \Q$.
As already remarked, Burns and Greither \cite{BuGr03} have unconditionally proved the eTNC$^-$ for the case $F = \Q$.
Though Theorem \ref{thm:current_main} shows the validity of the eTNC$_p^-$ for quite a lot of cases as in Remark \ref{rem:Wiles_cond}, unfortunately, we cannot recover the full statement.
For instance, consider the case where $H = \Q(\mu_{m p^n})$ with $n \geq 2$ and $p \nmid m$ with $m \geq 3$
such that $p$ splits completely in $\Q(\mu_m)/\Q$.
Then none of the conditions (i)(ii)(iii) hold.
Indeed, (i) the prime $p$ is wildly ramified in $H/\Q$; (ii) the concerned intermediate field is $\Q(\mu_m)$, which is a CM-field; (iii) $H$ contains $M_p$ introduced in Remark \ref{rem:Wiles_cond}.
\end{rem}

Even without assuming (i), (ii), or (iii) of Theorem \ref{thm:current_main},
we also have the following second main theorem.
For each subgroup $N$ of $G$, using the norm element $\nu_N$ of $N$, we define a $\Z_p[G]$-algebra $\Z_p[G]_{(N)}$ by
\begin{equation}\label{eq:ZpGN}
\Z_p[G]_{(N)} = \Z_p[G/N] \times \Z_p[G]/(\nu_{N}),
\end{equation}
which can be naturally regarded as a subring of $\Q_p[G]$ that contains $\Z_p[G]$.
We put $I_p = \sum_{\fp \in S_p(F)} I_{\fp} \subset G$.

\begin{thm}\label{thm:main_p_2}
Let $H/F$ be a finite abelian CM-extension.
Let $p$ be an odd prime number.
Let $\Sigma$ and $\Sigma'$ be finite sets of places of $F$ satisfying 
(H1), (H2), (H3), and  (H4). 
Let $N$ be a subgroup of $I_p$ that contains the $p$-Sylow subgroup of $I_{\fp}$ for some $\fp \in S_p(F)$.
Then we have
\[
\parenth{\Fitt_{\Z_p[G]^-}(\Z_p \otimes_{\Z} \Omega_{\Sigma}^{\Sigma'})} \cdot \Z_p[G]_{(N)}^-
= \theta_{\Sigma}^{\Sigma'} \cdot \Z_p[G]_{(N)}^-
\]
as ideals of $\Z_p[G]_{(N)}^-$.
In other words, the eTNC$_p^-$ holds if we enlarge the coefficient ring from $\Z_p[G]^-$ to $\Z_p[G]_{(N)}^-$.
\end{thm}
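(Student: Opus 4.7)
The plan is to deduce Theorem \ref{thm:main_p_2} from the character-theoretic refinement stated as Theorem \ref{thm:current_main_p}, by exploiting the product decomposition
\[
\Z_p[G]_{(N)}^- = \Z_p[G/N]^- \times \parenth{\Z_p[G]/(\nu_N)}^-.
\]
Since Fitting ideals are compatible with product decompositions of the coefficient ring, it suffices to verify the claimed equality on each factor separately, and the strategy is then to show that on each factor the odd characters appearing satisfy a hypothesis covered by Theorem \ref{thm:current_main_p}.

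On the first factor $\Z_p[G/N]^-$, the odd characters $\psi$ of $G$ factoring through the quotient are precisely those with $\psi|_N$ trivial, so they descend to characters of $\Gal(H^N/F)$. Because $N$ contains the $p$-Sylow subgroup of some $I_{\fp}$ with $\fp \in S_p(F)$, the image of $I_{\fp}$ in $G/N$ has order prime to $p$, so $\fp$ is at most tamely ramified in $H^N/F$. This matches the character-wise analog of condition (i) of Theorem \ref{thm:current_main}, which should be covered by Theorem \ref{thm:current_main_p}.

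On the second factor $\parenth{\Z_p[G]/(\nu_N)}^-$, the odd characters satisfy $\psi(\nu_N) = 0$, equivalently $\psi|_N \neq 1$. Because $N \subset I_p = \sum_{\fp \in S_p(F)} I_{\fp}$, this forces $\psi|_{I_{\fp}} \neq 1$ for some $\fp \in S_p(F)$, so every such $\psi$ is nontrivially ramified at a $p$-adic prime. Combined with oddness, this realizes the character-wise analog of condition (ii) of Theorem \ref{thm:current_main}, and should again fall under the scope of Theorem \ref{thm:current_main_p}.

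The principal obstacle is of course Theorem \ref{thm:current_main_p} itself, which is where the real work of adapting the Dasgupta--Kakde method must take place. Once that is in hand, Theorem \ref{thm:main_p_2} reduces to the bookkeeping above together with the base change identity
\[
\Fitt_{\Z_p[G]_{(N)}^-}\parenth{M \otimes_{\Z_p[G]^-} \Z_p[G]_{(N)}^-} = \Fitt_{\Z_p[G]^-}(M) \cdot \Z_p[G]_{(N)}^-
\]
for $M = \Z_p \otimes_{\Z} \Omega_{\Sigma}^{\Sigma'}$, which is standard given the cohomological triviality of $\Omega_{\Sigma}^{\Sigma'}$ from Proposition \ref{prop:Omega_ct}.
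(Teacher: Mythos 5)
Your decomposition into the two factors of $\Z_p[G]_{(N)}^-$ is indeed the paper's plan, but the treatment of the second factor contains a genuine gap. You argue that any odd $\psi$ with $\psi|_N \neq 1$ is ``nontrivially ramified at a $p$-adic prime'' and hence falls under the scope of Theorem~\ref{thm:current_main_p}. But part (1) of that theorem is stated for characters $\chi$ of the prime-to-$p$ part $G'$, with a hypothesis about the decomposition groups in $H^\chi/F$, and the Fitting-ideal inclusion it gives lives in $\Z_p[G]^\chi$ — it cannot be refined along characters $\psi$ lying above a fixed $\chi$. Since $N$ contains a $p$-Sylow subgroup of some $I_\fp$, $N$ may well be a $p$-group, in which case $\psi|_N \neq 1$ says nothing about $\chi = \psi|_{G'}$, and the hypothesis of Theorem~\ref{thm:current_main_p}(1) — all $p$-adic decomposition groups in $H^\chi/F$ are $p$-groups and all $p$-adic primes ramify — can simply hold, so that (1) does not apply. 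What the paper actually does is use the unconditional part, Theorem~\ref{thm:current_main_p}(2), which puts the Fitting ideal inside $(\theta_{\Sigma}^{\Sigma', \chi}) + (\nu_{I_p}^{\chi})$, and then observes that $N \subset I_p$ forces $(\nu_{I_p}) \subset (\nu_N)$, so the unwanted term $\nu_{I_p}^{\chi}$ dies upon passing to $\Z_p[G]^\chi/(\nu_N^{\chi})$. Without that step the second factor is not handled.

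Two further points are worth flagging. First, on $\Z_p[G/N]^-$ your plan is sound but incomplete: the base change of $\Omega_{\Sigma}^{\Sigma'}$ for $H/F$ to $\Z_p[G/N]^-$ is not literally the module $\Omega_{\Sigma}^{\Sigma'}$ for $H^N/F$; they differ by a factor involving the $h_v$'s. One needs the field-change functoriality of Proposition~\ref{prop:fld_var} together with the matching identity \eqref{eq:theta_bar} on the Stickelberger side, after which the first factor really is the eTNC$_p^-$ inclusion for $H^N/F$, covered by Theorem~\ref{thm:current_main_p}(1) since a $p$-adic prime is tamely ramified in $H^N/F$. Second, one should prove only a single inclusion: Proposition~\ref{prop:divisibility3}(2), resting on the analytic class number formula, upgrades the inclusion to the desired equality, and for both integrality of $\theta_{\Sigma}^{\Sigma'}$ and that proposition to apply one first replaces $(\Sigma,\Sigma')$ with a pair satisfying (H3')$_p$ and (H4)$_p$, which is harmless by Proposition~\ref{prop:Om_var}.
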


Both Theorems \ref{thm:current_main} and \ref{thm:main_p_2} will be deduced from Theorem \ref{thm:current_main_p} below in \S \ref{ss:pf_thm}.

\subsection{A finer theorem for character-components}\label{ss:eTNCp}

We fix an odd prime number $p$.
In this subsection, we state a finer main theorem (Theorem \ref{thm:current_main_p}) concerning the character-components of the eTNC$_p^-$.

We introduce some general notation.
For a finite abelian group $G$, let $G'$ denote the maximal subgroup of $G$ of order prime to $p$.
For a character $\chi$ of $G'$, put $\OO_{\chi} = \Z_p[\Imag(\chi)]$ regarded as a $\Z_p[G]$-algebra via $\chi$, and let $\Z_p[G]^{\chi} = \Z_p[G] \otimes_{\Z_p[G]} \OO_{\chi}$ be the $\chi$-component of $\Z_p[G]$.
For a finitely generated $\Z_p[G]$-module $M$, 
we put $M^{\chi} = \Z_p[G]^{\chi} \otimes_{\Z[G]} M$, which is regarded as a $\Z_p[G]^{\chi}$-module.
Even more generally, if $M$ is a $\Z[G]$-module whose $p$-adic completion $\widehat{M}$ is finitely generated over $\Z_p[G]$, we define $M^{\chi} = (\widehat{M})^{\chi}$.
For each $x \in M$, we write $x^{\chi}$ for the image of $x$ by the natural map $M \to M^{\chi}$.

Now we return to a finite abelian CM-extension $H/F$ and its Galois group $G = \Gal(H/F)$.
Let $G'$ denote the maximal subgroup of $G = \Gal(H/F)$ whose order is prime to $p$.
We also put $I_v' = I_v \cap G'$ for each finite prime $v$ of $F$.

Let $\Sigma$ and $\Sigma'$ be finite sets of places of $F$ satisfying the conditions (H1) and (H2).
When we focus on the $\chi$-components for a given odd character $\chi$ of $G'$,
the conditions (H3) and (H4) are relaxed to the following.
\begin{itemize}
\item[(H3)$_p^{\chi}$]
$\mu_{p^{\infty}}(H)^{\Sigma', \chi}$ vanishes,
where $\mu_{p^{\infty}}(H)^{\Sigma'}$ denotes the $p$-primary component of $\mu(H)^{\Sigma'}$.
\item[(H4)$_p^{\chi}$]
$\Sigma_f \cup \Sigma' \supset S_{\bad}^{\chi}$ and $\Sigma_f \supset S_{\bad}^{\chi} \cap S_p(F)$.
Here, we write $S_{\bad}^{\chi}$ for the set of finite primes $v$ of $F$ such that $p \mid \# I_v$ and $\chi$ is trivial on $I_v'$.
\end{itemize}
It is easy to see that (H3) (resp.~(H4)) implies (H3)$_p^{\chi}$ (resp.~(H4)$_p^{\chi}$).

Now, under the conditions (H1) and (H2), we have a $\Z_p[G]^{\chi}$-module $\Omega_{\Sigma}^{\Sigma', \chi} = (\Z_p \otimes_{\Z} \Omega_{\Sigma}^{\Sigma'})^{\chi}$.
The following is a refinement of Proposition \ref{prop:Omega_ct}.
It will be proved in \S \ref{ss:ct}.

\begin{prop}\label{prop:Omega_p_ct}
Let $\chi$ be an odd character of $G'$.
Let $\Sigma$ and $\Sigma'$ be finite sets of places of $F$ satisfying (H1), (H2), (H3)$_p^{\chi}$, and (H4)$_p^{\chi}$.
Then we have $\pd_{\Z_p[G]^{\chi}}(\Omega_{\Sigma}^{\Sigma', \chi}) \leq 1$.
\end{prop}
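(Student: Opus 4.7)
The plan is to deduce the bound on projective dimension from cohomological triviality of $\Omega_{\Sigma}^{\Sigma', \chi}$ as a module over $\Z_p[G]^{\chi}$. Since $G$ decomposes as $G' \times P$ with $P$ the $p$-Sylow subgroup of $G$, we may identify $\Z_p[G]^{\chi} \simeq \OO_{\chi}[P]$, and for a finitely generated module over this group ring the condition $\pd \leq 1$ is equivalent to cohomological triviality as a $P$-module. The first step is to take the $\chi$-component of the exact sequence in Proposition \ref{prop:Omega}:
\begin{equation*}
0 \to \Cl_H^{\Sigma', -, \chi} \to \Omega_{\Sigma}^{\Sigma', \chi} \to \bigoplus_{v \in \Sigma_f} A_v^{-, \chi} \to 0,
\end{equation*}
which reduces the problem to checking cohomological triviality of the two outer terms.

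For the local contributions $A_v^{-, \chi}$, I would split cases according to $\chi|_{I_v'}$ and whether $p \mid \# I_v$. If $\chi|_{I_v'}$ is nontrivial, then the idempotent $e_\chi$ annihilates $\Z_p[G/I_v]$, so $A_v^{-, \chi} = 0$. If $\chi|_{I_v'}$ is trivial but $p \nmid \# I_v$ (so $v \notin S_{\bad}^{\chi}$), then $I_v = I_v'$ and a direct computation shows that $A_v^{-, \chi}$ is a cyclic $\OO_{\chi}[P]$-module whose defining relation is a non-zero-divisor in the total ring of fractions, so its projective dimension over $\OO_{\chi}[P]$ is at most one. The remaining case is $v \in S_{\bad}^{\chi}$, where (H4)$_p^\chi$ forces $v \in \Sigma_f$ whenever $v \in S_p(F)$ and $v \in \Sigma \cup \Sigma'$ otherwise; here cohomological triviality of $A_v^{-, \chi}$ should follow by mirroring the argument for Proposition \ref{prop:Omega_ct} on the $\chi$-component, using that the relevant $p$-part of $I_v$ sits inside $P$ and admits an explicit finite free resolution over $\OO_{\chi}[P]$.

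For the class group piece $\Cl_H^{\Sigma', -, \chi}$, I would invoke the standard Tate/Ritter--Weiss resolution presenting it in terms of global $\Sigma'$-units, where the vanishing hypothesis $\mu_{p^{\infty}}(H)^{\Sigma', \chi} = 0$ from (H3)$_p^\chi$ removes the only obstruction to obtaining a two-term resolution by cohomologically trivial modules, just as (H3) did in Proposition \ref{prop:Omega_ct}. The hard part will be the wildly ramified $\chi$-bad primes above $p$: one must track the extension class of \eqref{eq:fund} through the $\chi$-localization so that the interaction between the wild ramification (captured in $P$ itself) and the construction of $\Omega_{\Sigma}^{\Sigma'}$ recalled in Section \ref{s:Omega} produces no obstruction beyond what is already controlled by (H4)$_p^\chi$. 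The careful bookkeeping of this cancellation, making essential use of both clauses ``$\Sigma_f \cup \Sigma' \supset S_{\bad}^{\chi}$'' and ``$\Sigma_f \supset S_{\bad}^{\chi} \cap S_p(F)$'' separately, is the main new input over the proof of Proposition \ref{prop:Omega_ct}.
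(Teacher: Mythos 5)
Your opening step is already the fatal one: taking the $\chi$-component of \eqref{eq:fund} and then ``checking cohomological triviality of the two outer terms'' cannot work, because those outer terms are \emph{not} cohomologically trivial in general. The class group piece $\Cl_H^{\Sigma',\chi}$ is precisely the kind of module whose Galois cohomology is nontrivial and arithmetically interesting (this is the whole point of Iwasawa theory), and there is no ``standard Tate/Ritter--Weiss resolution'' that presents the bare class group by cohomologically trivial modules — Tate's sequence crucially mixes the class group with $S$-units, which is exactly why $\Omega_{\Sigma}^{\Sigma'}$ is built as a mixture in the first place. Likewise, for $v \in S_{\bad}^{\chi} \cap \Sigma_f$ (so $p \mid \# I_v$ and $\chi$ trivial on $I_v'$), the module $A_v^{\chi} = (\Z_p[G/I_v]/(1-\varphi_v^{-1}+\# I_v))^{\chi}$ is a quotient of $\Z_p[G/I_v]^{\chi}$ with nontrivial $I_v$-invariance and need not have $\pd_{\Z_p[G]^{\chi}} \leq 1$. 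What you call ``careful bookkeeping of this cancellation'' is not bookkeeping: it is the entire content of the proposition, namely that the \emph{specific} extension class $\eta$ of Definition \ref{defn:eta} makes the middle term cohomologically trivial even though neither outer term is. That fact is invisible at the level of the sequence \eqref{eq:fund} alone.

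The paper sidesteps this by working with the construction of $\Omega_{\Sigma}^{\Sigma'}$ directly (Definition \ref{defn:Omega}), not with the sequence \eqref{eq:fund}. Under (H3)$_p^{\chi}$ the first arrow of \eqref{eq:snake_theta^-} is injective on $\chi$-components, so $\Omega_{\Sigma}^{\Sigma',\chi}$ is the cokernel of an injection $\Ker(\theta_V)^{\chi} \hookrightarrow \bigoplus_{v} \Z_p[G]^{\chi}$; hence it suffices to show $\Ker(\theta_V)^{\chi}$ is projective (equivalently cohomologically trivial). Since $\theta_V$ is surjective (Lemma \ref{lem:theta_surj}) with cohomologically trivial target $\fO$ (Proposition \ref{prop:VO_ct}(2)), and the $V_v$-factors of $V_{S'}^{\Sigma'}$ are cohomologically trivial (Proposition \ref{prop:VO_ct}(1)), everything reduces to checking that the unit-group factors $U_{H_v}^{\chi}$ (or $(U_{H_v}^1)^{\chi}$) are cohomologically trivial for the relevant $v$, which is Lemma \ref{lem:U_ct}, and here (H4)$_p^{\chi}$ guarantees $S_{\bad}^{\chi} \subset S'_f \cup (\Sigma' \setminus S_p(F))$. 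Your proposal as stated would fail; the correct route goes through the id\`ele-class-group construction, not through the final two-step filtration.
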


By Proposition \ref{prop:Omega_p_ct},
 as long as the conditions (H1), (H2), (H3)$_p^{\chi}$, and (H4)$_p^{\chi}$ hold, we naturally expect the $\chi$-component of the eTNC$_p^-$ (see Conjecture \ref{conj:main}): 
\[
\Fitt_{\Z_p[G]^{\chi}}(\Omega_{\Sigma}^{\Sigma', \chi}) 
= (\theta_{\Sigma}^{\Sigma', \chi})
\]
as ideals of $\Z_p[G]^{\chi}$.
(Recall that the integrality of the Stickelberger element is not available in general; see \S \ref{ss:int_Stickel}.)

%

Now we can state the refined version of the main theorem of this paper, from which Theorems \ref{thm:current_main} and \ref{thm:main_p_2} will follow (see \S \ref{ss:pf_thm}).
As before, we put $I_p = \sum_{\fp \in S_p(F)} I_{\fp} \subset G$ and define $\nu_{I_p} \in \Z_p[G]$ as the norm element.

\begin{thm}\label{thm:current_main_p}
Let $H/F$ be a finite abelian CM-extension.
Let $p$ be an odd prime number.
Let $G'$ denote the maximal subgroup of $G = \Gal(H/F)$ of order prime to $p$.
Let $\chi$ be an odd character of $G'$ and put $H^{\chi} = H^{\Ker(\chi)}$.
Let $\Sigma$ and $\Sigma'$ be finite sets of places of $F$ satisfying (H1), (H2), (H3)$_p^{\chi}$, 
and (H4)$_p^{\chi}$. 
Then the following are true.
\begin{itemize}
\item[(1)]
Unless the decomposition groups of all the $p$-adic primes in $H^{\chi}/F$ are $p$-groups and all the $p$-adic primes are ramified in $H^\chi/F$,
we have
\[
\Fitt_{\Z_p[G]^{\chi}}(\Omega_{\Sigma}^{\Sigma', \chi}) 
\subset (\theta_{\Sigma}^{\Sigma' , \chi})_{\Z_p[G]^{\chi}}
\]
as principal (fractional) ideals of $\Z_p[G]^{\chi}$.
\item[(2)]
In any case, we have
\[
\Fitt_{\Z_p[G]^{\chi}}(\Omega_{\Sigma}^{\Sigma', \chi}) 
\subset (\theta_{\Sigma}^{\Sigma', \chi})_{\Z_p[G]^{\chi}} + (\nu_{I_p}^{\chi})_{\Frac(\Z_p[G]^{\chi})}
\]
in the total fraction ring $\Frac(\Z_p[G]^{\chi})$ of $\Z_p[G]^{\chi}$.
\end{itemize}
\end{thm}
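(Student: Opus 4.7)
Since $\Omega_\Sigma^{\Sigma', \chi}$ has projective dimension at most $1$ over $\Z_p[G]^\chi$ by Proposition \ref{prop:Omega_p_ct}, it admits a square presentation matrix $M$ over $\Z_p[G]^\chi$, and its Fitting ideal is the principal ideal $(\det M)$. Both parts of the theorem are therefore divisibility statements: part (1) asks that $\theta_\Sigma^{\Sigma', \chi}$ divide $\det M$ in $\Z_p[G]^\chi$, while part (2) asks the same only after inverting $\nu_{I_p}^\chi$. The plan is to build $M$ explicitly so that $\det M$ can be identified with $\theta_\Sigma^{\Sigma', \chi}$ up to a unit, by adapting the Ribet-style machinery of \cite{DK20} to the present Ritter--Weiss setup.

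First I would set up convenient auxiliary primes: enlarge $\Sigma'$ by one or more finite primes $\ell \notin \Sigma \cup \Sigma' \cup S_p(F)$, coprime to $\# G$ and chosen strongly enough to trigger Deligne--Ribet integrality for $\theta^{\Sigma' \cup \{\ell\}, \chi}_\Sigma$. Under this enlargement both sides of the desired inclusion are multiplied by the Euler factor $(1 - N\ell \cdot \varphi_\ell^{-1})^\chi$, the Stickelberger side by the definition \eqref{eq:omega^S} and the Fitting side by the explicit contribution of $A_\ell^-$ to the exact sequence \eqref{eq:fund}, so divisibility is preserved and we may work with the enlarged $\Sigma'$. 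A parallel reduction on $\Sigma$ allows us to arrange that $\chi$ is faithful on $G'$ and that the remaining primes in $\Sigma_f$ are exactly the ones forced by (H4)$_p^\chi$.

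The core step, following \cite{DK20}, is to construct a group-ring-valued Hilbert modular cuspidal Hecke eigenform over $F$ whose Hecke eigenvalues are congruent modulo $\theta_\Sigma^{\Sigma', \chi}$ to those of an explicit Eisenstein series whose constant term is $\theta_\Sigma^{\Sigma', \chi}$. The associated Galois representation $\rho\colon G_F \to \GL_2$ with values in an appropriate Hecke-algebra localization above $\Z_p[G]^\chi$ has reducibility ideal containing $(\theta_\Sigma^{\Sigma', \chi})$. Evaluating the upper-right entries of $\rho$ on Frobenius and inertia elements at the finite primes in $\Sigma_f \cup \{\ell\}$ yields explicit matrix entries that can be assembled into a square presentation matrix $M$ for $\Omega_\Sigma^{\Sigma', \chi}$, and $\det M$ is computed from $\det \rho$ to be a unit multiple of $\theta_\Sigma^{\Sigma', \chi}$.

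The dichotomy between (1) and (2) appears at the final step: the passage from $\rho$ to a matrix over $\Z_p[G]^\chi$, rather than only over a localization thereof, requires a $p$-adic prime $\fp$ whose Euler factor in $H^\chi/F$ is a non-zero-divisor in $\Z_p[G]^\chi$. If $\fp$ is unramified in $H^\chi/F$, or if its decomposition group contains an element of order prime to $p$, this holds automatically and yields part (1). Otherwise, precisely in the exceptional configuration of (1), the required invertibility holds only after inverting $\nu_{I_p}$, explaining the $\nu_{I_p}^\chi$ correction in (2). The principal technical obstacle I anticipate is controlling the integrality of $\rho$ on inertia subgroups at primes $v \in S_{\bad}^\chi$ that are wildly ramified in $H^\chi/F$: this is the case that lies beyond Nickel's ``almost tamely ramified'' hypothesis, and transcribing the Dasgupta--Kakde construction into our Ritter--Weiss module $\Omega_\Sigma^{\Sigma'}$ (rather than a classical ray class group) will require a careful matching of the local contributions of the $A_v^-$ in \eqref{eq:fund} with the inertia-image data of $\rho$.
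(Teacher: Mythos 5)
The high-level plan (reduce to faithful $\chi$ via functoriality, construct a group-ring-valued cuspform congruent to an Eisenstein series modulo $\theta_{\Sigma}^{\Sigma',\chi}$, exploit the associated Galois representation in the Ribet/Dasgupta--Kakde style) is the right one and is what the paper does. But there are two genuine gaps. First, you propose to assemble the inertia/Frobenius entries of $\rho$ directly into a square presentation matrix $M$ for $\Omega_{\Sigma}^{\Sigma',\chi}$ and compute $\det M$ to be a unit times $\theta_{\Sigma}^{\Sigma',\chi}$. The machinery does not give this. What the Galois representation naturally produces is a $\Z_p[G]^{\chi}$-module $\ol{B_p}$ (built from the off-diagonal entries $b(\sigma)$ and the elements $\AC{\fp}$), fitting in an exact sequence $0\to \ol{B_0}\to \ol{B_p}\to \ol{B_1}\to 0$. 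One then needs a \emph{surjection} $\Omega_{\Sigma}^{\Sigma',\chi^{-1}}\twoheadrightarrow \ol{B_p}^{\sharp}$, which requires identifying the extension class of \eqref{eq:fund} with the extension class of the $\ol{B}$-sequence; this is a genuinely nontrivial step (it is where the explicit description of $\eta_v$ in Proposition~\ref{prop:ext_desc}, hence all of \S\ref{ss:ext}, is needed) and the output is only the \emph{inclusion} $\Fitt(\Omega_{\Sigma}^{\Sigma',\chi^{-1}})\subset\Fitt(\ol{B_p}^{\sharp})$, not equality. Equality in the eTNC is recovered separately from the analytic class number formula (Proposition~\ref{prop:divisibility3}), not from the cuspform construction.

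Second, and more importantly, your explanation of where $\nu_{I_p}$ comes from is incorrect. It is not a matter of ``passing from $\rho$ to a matrix over $\Z_p[G]^{\chi}$ rather than a localization'' or of inverting a zero-divisor Euler factor (indeed, part (2) does not invert $\nu_{I_p}^{\chi}$ --- it adjoins it to the ideal). The actual obstruction occurs already in the cuspform congruence: $F_k(\bpsi)$ is congruent modulo $(\wtil{x_k}\theta^{\sharp})$ not to $\wtil{x_k}\ol{W}_1^{H/F}(\bpsi_{\fn},\btr)^{\chi}$ alone, but to that minus a correction term $\delta_{\fP_{\ba}=\fP}\,\nu_{\fP}\,W_k^{H^{\fP}/F}(\bpsi_{\fn/\fP},\btr_p)^{\chi}$ involving the weight-$k>1$ Eisenstein series (Theorem~\ref{thm:cuspform}). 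In the Fitting-ideal computation one must evaluate Fourier coefficients of $t\,F_k(\bpsi)$, and the coefficients of the weight-$k$ piece are unmanageable. They are annihilated by the Hecke operators $U_{\fp'}-\bpsi(\fp')$ whenever there is an unramified $p$-adic prime $\fp'$, and they do not arise at all if some ramified $p$-adic prime has non-$p$-group decomposition ($\fP_{\ba}\ne\fP$). Exactly when neither escape is available --- all $p$-adic primes ramified with $p$-group decomposition, i.e.\ $\fP_{\ba}'=1$ --- the only leverage is that the term carries the factor $\nu_{\fP}$, which is divisible by $\nu_{I_p}$; this is the source of the $(\nu_{I_p}^{\chi})$ correction in part~(2). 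Without identifying this mechanism you cannot correctly formulate the case split, and a proof that rests on ``inverting Euler factors of $\rho$'' would not go through.
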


Although this theorem concerns only the single inclusion, it is a standard fact that the analytic class number formula allows us to deduce the equality from it.
See \S \ref{ss:divisibility} for the details.

The existence of $\nu_{I_p}$ in the claim (2) is exactly why the full statement of the eTNC$_p^-$ cannot be deduced.
The reason why we need the element $\nu_{I_p}$ will be explained in \S \ref{ss:outline}.
Nevertheless, from the claim (2) we can deduce Theorem \ref{thm:current_main} (iii).
For that purpose, we will adapt the ``avoiding trivial zero'' argument of Wiles \cite{Wil90}.

\subsection{An outline of the proof and the organization of this paper}\label{ss:outline}

As already mentioned, the proof of Theorem \ref{thm:current_main_p} is very much inspired by the work \cite{DK20} of Dasgupta and Kakde on the Brumer--Stark conjecture.

Before explaining the method, let us explain a crucial difference between the eTNC$_p^-$ and the work \cite{DK20}.
The main theorem \cite[Theorem 3.3]{DK20} of that paper states the equality
\begin{equation}\label{eq:DK_main}
\Fitt_{\Z_p[G]^-}(\nabla_{\Sigma}^{\Sigma'}(H)_p^-)
= (\Theta_{\Sigma}^{\Sigma'})
\end{equation}
as ideals of $\Z_p[G]^-$, where $\nabla_{\Sigma}^{\Sigma'}(H)$ is the transpose Selmer module (studied in \cite{BKS16}) and $\Theta_{\Sigma}^{\Sigma'}$ is a Stickelberger element (we do not review the precise definitions here; cf.~\S \ref{Ap:eTNC}).
The formula \eqref{eq:DK_main} is indeed of the same form as the eTNC$_p^-$ formulated as Conjecture \ref{conj:main}.
However, on the algebraic side, the module $\nabla_{\Sigma}^{\Sigma'}(H)_p^-$ may be infinite in general, while our module $\Omega_{\Sigma}^{\Sigma'}$ is always finite.
Correspondingly, on the analytic side, the element $\Theta_{\Sigma}^{\Sigma'}$ may be a zero-divisor, while our element $\theta_{\Sigma}^{\Sigma'}$ is always a non-zero-divisor.
It is even possible that both sides of \eqref{eq:DK_main} are zero.
Nevertheless, Dasgupta and Kakde used \eqref{eq:DK_main} (for various intermediate extensions of $H/F$) in an ingenious way to deduce the Brumer--Stark conjecture.
However, there seems to be little hope to deduce the eTNC$_p^-$ from \eqref{eq:DK_main}.

Generally speaking, we have two different strategies to prove relationships (e.g., the eTNC and the Iwasawa main conjecture) between algebraic objects and analytic objects.
One is the theory of Euler systems.
For instance, the Iwasawa main conjecture over $\Q$ can be proved using the Euler system of cyclotomic units, which led to the proof by Burns and Greither \cite{BuGr03} of the eTNC when the base field is $\Q$.
The theory of Euler systems has been developed for decades by many contributors.
The other general strategy has its origin in the work of Ribet \cite{Rib76} on the converse of the Herbrand theorem.
A key idea is to construct a suitable cuspform and then study the associated Galois representation.
The strategy is sophisticated by a lot of subsequent work, and as a milestone Wiles \cite{Wil90} proved the Iwasawa main conjecture over totally real fields.
It is also well-known that these two strategies give the opposite divisibilities between algebraic objects and analytic objects.
When we are dealing with $\G_m$, thanks to the analytic class number formula, a single divisibility is enough for the equality.

This latter strategy is the basis of the work of Dasgupta and Kakde \cite{DK20}.
They construct a suitable cuspform and study the associated Galois representation.
A key point we need to care about is that we have to work over a group ring like $\Z_p[G]$, not over a domain like $\C$.

Now let us explain the organization of this paper and at the same time the outline the proof of Theorem \ref{thm:current_main_p}.
It closely follows \cite{DK20}, but both the Ritter--Weiss type modules and the Stickelberger elements are different, so we need appropriate modifications throughout.

\begin{itemize}
\item[\S \S \ref{s:Omega} -- \ref{s:ded}.]
These sections are essentially preliminaries concerning the formulation of the eTNC$^-$ and the statements of the main theorems of this paper.
Firstly, \S \ref{s:Omega} is devoted to the construction and the proof of basic properties of $\Omega_{\Sigma}^{\Sigma'}$.
Then in \S \ref{s:func}, we show functorial properties of those modules.
They enable us to show the independency of the eTNC$^-$ from the choice of $(\Sigma, \Sigma')$, and also allow us to reduce the proof of the main theorems to the case of the faithful odd characters $\chi$.
In \S \ref{ss:int_Stickel}, we study the integrality of the Stickelberger element $\theta_{\Sigma}^{\Sigma'}$.
In \S \S \ref{ss:divisibility} -- \ref{ss:pf_thm}, we deduce Theorems \ref{thm:current_main} and \ref{thm:main_p_2} from Theorem \ref{thm:current_main_p}.
In \S \ref{Ap:eTNC}, we show that our eTNC$^-$ is equivalent to more standard formulations.
\item[\S \S \ref{s:Eis}--\ref{s:cuspform}.]
The remaining sections are devoted to the proof of Theorem \ref{thm:current_main_p} (for a particular choice of $(\Sigma, \Sigma')$ and faithful characters $\chi$).
In \S \ref{s:Eis}, after reviewing basic notation on Hilbert modular forms, we introduce the Eisenstein series and modify them so that the relevant $L$-values appear in the constant terms.
Then in \S \ref{s:cuspform}, using the Eisenstein series and applying the work of Silliman \cite{Sil20}, we construct an appropriate cuspform $F_k(\bpsi)$.
\item[\S \S \ref{ss:hom_Hecke}--\ref{ss:strategy}.]
Using the Galois representation associated to the cuspform $F_k(\bpsi)$, we define $\Z_p[G]^{\chi}$-modules $\ol{B_p}$, $\ol{B_0}$, and $\ol{B_1}$ that fit into an exact sequence
\[
0 \to \ol{B_0} \to \ol{B_p} \to \ol{B_1} \to 0.
\]
Compared to the work \cite{DK20}, the construction needs an additional idea because the Hecke action on the cuspform is much more complicated.
\item[\S \ref{ss:abc}.]
We construct a commutative diagram of $\Z_p[G]^{\chi^{-1}}$-modules
\[
\xymatrix{
0 \ar[r]
& \Cl_H^{\Sigma', \chi^{-1}} \ar[r] \ar@{->>}[d]
& \Omega_{\Sigma}^{\Sigma', \chi^{-1}} \ar[r] \ar@{->>}[d]
& \bigoplus_{v \in \Sigma_f} A_v^{\chi^{-1}} \ar[r] \ar@{->>}[d]
& 0\\
0 \ar[r]
& \ol{B_0}^{\sharp} \ar[r]
& \ol{B_p}^{\sharp} \ar[r]
& \ol{B_1}^{\sharp} \ar[r]
& 0,
}
\]
which corresponds to \cite[Equation (125)]{DK20}.
Here, the upper sequence is the $\chi^{-1}$-component of \eqref{eq:fund}, and in the lower sequence
the superscript $\sharp$ denotes the twist by the involution of the group ring that inverts every group element.
This diagram is constructed by comparing the extension classes of the both sequences.
As a consequence, we obtain an inclusion $\Fitt_{\Z_p[G]^{\chi^{-1}}}(\Omega_{\Sigma}^{\Sigma', \chi^{-1}}) \subset \Fitt_{\Z_p[G]^{\chi^{-1}}}(\ol{B_p}^{\sharp})$.
\item[\S \ref{ss:Fitt_B}.]
We show that $\Fitt_{\Z_p[G]^{\chi}}(\ol{B_p})$ is contained in the right hand side of the claims of Theorem \ref{thm:current_main_p}.
This is accomplished by a hard computation based on the idea of \cite{DK20}.
This is where we need the norm element $\nu_{I_p}$ in general.
The origin of $\nu_{I_p}$ is Theorem \ref{thm:cuspform} on $F_k(\bpsi)$; it is multiplied to a weight $k$ modular form.
In this step, we have to compute Fourier coefficients of $F_k(\bpsi)$ (after a certain Hecke action), and the authors found it difficult to manage the coefficients of the weight $k$ modular form.
\end{itemize}

\section*{Acknowledgments}\label{s:ack}

We are sincerely grateful to Masato Kurihara for his encouragements.
We also thank Andreas Nickel for informative correspondences.
The second author is supported by JSPS KAKENHI Grant Number 19J00763.

\section{Construction of the Ritter--Weiss type module}\label{s:Omega}

In this section, we construct the module $\Omega_{\Sigma}^{\Sigma'}$ satisfying Propositions \ref{prop:Omega}, \ref{prop:Omega_ct}, and \ref{prop:Omega_p_ct}.
The construction closely follows Kurihara \cite{Kur20} and the work \cite{AK21} of the authors, which in turn relies on work of Ritter and Weiss \cite{RW96} and Greither \cite{Gre07}.
The construction is more or less standard and has nothing essentially new (we may also refer to the exposition of Dasgupta and Kakde \cite[\S A]{DK20}).
However, the actual construction is necessary to obtain a concrete expression of the extension class associated to the sequence \eqref{eq:fund} as in \S \ref{ss:ext}, and also to show functorial properties in \S \ref{s:func}.
This is why we include a complete construction of $\Omega_{\Sigma}^{\Sigma'}$ in this section.

In \S \ref{ss:translation}, we review Gruenberg's translation functor \cite[\S 10.5]{Gru76}.
Then in \S \ref{ss:local} and \S \ref{ss:global}, following the work \cite{RW96} by Ritter and Weiss, we review the construction of local and global arithmetic modules by applying the translation functor to the fundamental classes in class field theory.
Using certain local and global compatibility that we review in \S \ref{ss:local_global}, we construct the module $\Omega_{\Sigma}^{\Sigma'}$ satisfying Proposition \ref{prop:Omega} in \S \ref{ss:Omega_constr}.
We prove Proposition \ref{prop:Omega_p_ct} (and thus Proposition \ref{prop:Omega_ct}) on the cohomological triviality in \S \ref{ss:ct}.
Finally in \S \ref{ss:ext} we obtain a concrete expression of the extension class associated to the sequence \eqref{eq:fund}.

\subsection{Gruenberg's translation functor}\label{ss:translation}

In this subsection, we review Gruenberg's translation functor.
Let $G$ be any finite group (in the applications we only need the case where $G$ is abelian).

Let $A$ be a (left) $\Z[G]$-module.
It is well-known that the cohomology group $H^2(G, A)$ can be identified with the set of equivalence classes of group extensions
\begin{equation}\label{eq:group_ext}
1 \to A \to X \to G \to 1
\end{equation}
which are compatible with the $G$-module structure on $A$.
Here, the last restraint means that the $G$-module structure on $A$ coincides with the structure defined by $g \cdot a = \wtil{g} a \wtil{g}^{-1}$ for $g \in G$ and $a \in A$, where $\wtil{g} \in X$ is a lift of $g$.

In general we define the augmentation ideal by $\Delta G = \Ker(\Z[G] \to \Z)$, so we have an exact sequence
\begin{equation}\label{eq:aug}
0 \to \Delta G \to \Z[G] \to \Z \to 0.
\end{equation}
It is also well-known that $\Ext^1_{\Z[G]}(\Delta G, A)$ can be identified with the set of the equivalence classes of $\Z[G]$-module extensions
\begin{equation}\label{eq:module_ext}
0 \to A \to Y \to \Delta G \to 0.
\end{equation}

Then Gruenberg's translation functor \cite[\S 10.5]{Gru76} (see also \cite[\S 2]{RW96}) can be formally defined as follows.

\begin{defn}\label{defn:translation}
For a finite group $G$ and a $\Z[G]$-module $A$, we have natural isomorphisms
\[
\bft: H^2(G, A) \simeq \Ext^2_{\Z[G]}(\Z, A) \simeq \Ext^1_{\Z[G]}(\Delta G, A),
\]
where the second isomorphism is induced by the exact sequence \eqref{eq:aug}.
We regard $\bft$ as a functor which sends group extensions \eqref{eq:group_ext} to module extensions \eqref{eq:module_ext}, and call $\bft$ the translation functor.
\end{defn}

We will need the following concrete expression of the translation functor.
See \cite[\S 2, Lemma 3]{RW96} for a proof; actually the following description is usually used as the definition of the translation functor, and the equivalence with the above definition is proved in the article.



\begin{prop}\label{prop:translation}
Let $A$ be a $\Z[G]$-module.
Let 
\begin{equation}\label{eq:trans_1}
1 \to A \overset{\alpha}{\to} X \overset{\beta}{\to} G \to 1
\end{equation}
 be a group extension which is compatible with the $G$-module structure on $A$.
We define a $\Z[G]$-module $Y$ by
\[
Y = \Z[G] \otimes_{\Z[X]} \Delta X.
\]
Here, the right $\Z[X]$-module structure on $\Z[G]$ is defined by $\beta$.
Then we have an exact sequence
\begin{equation}\label{eq:trans_2}
0 \to A \overset{\alpha'}{\to} Y \overset{\beta'}{\to} \Delta G \to 0,
\end{equation}
where we define $\alpha'$ by $\alpha'(a) = 1 \otimes (\alpha(a) - 1)$ for $a \in A$, and $\beta'$ as the induced map from $\Delta X \to \Delta G$ induced by $\beta$.
Moreover, the translation functor $\bft$ sends the extension \eqref{eq:trans_1} to the extension \eqref{eq:trans_2}.
\end{prop}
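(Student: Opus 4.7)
The plan is to establish the proposition in two stages. In the first, I would construct the exact sequence $0 \to A \to Y \to \Delta G \to 0$ with the stated maps $\alpha'$ and $\beta'$; in the second, I would verify that the translation functor $\bft$ sends \eqref{eq:trans_1} to \eqref{eq:trans_2}. The first stage begins by comparing the augmentation sequences for $X$ and $G$: the surjection $\beta$ induces a commutative square of augmentation sequences, and an application of the snake lemma yields $0 \to I_{X,A} \to \Delta X \to \Delta G \to 0$, where $I_{X,A} := \Ker(\Z[X] \to \Z[G])$ is the two-sided ideal generated by $\{\alpha(a) - 1 : a \in A\}$.

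Next, I would apply the right-exact functor $\Z[G] \otimes_{\Z[X]} (-)$, with $\Z[G]$ regarded as a right $\Z[X]$-module via $\beta$. Since $\Z[G] \otimes_{\Z[X]} M = M/(I_{X,A} M)$ for any left $\Z[X]$-module $M$, this produces $Y = \Delta X/(I_{X,A}\Delta X)$ together with a surjection $\beta' \colon Y \to \Delta G$ whose kernel is $I_{X,A}/(I_{X,A}\Delta X)$. The remaining essential step is to identify this kernel with $A$ as a $\Z[G]$-module via the map $a \mapsto \alpha(a) - 1$: additivity follows from $(\alpha(a) - 1)(\alpha(a') - 1) \in I_{X,A}^2 \subset I_{X,A}\Delta X$, and $G$-equivariance follows from the compatibility hypothesis $\wtil{g}\alpha(a)\wtil{g}^{-1} = \alpha(g \cdot a)$, which yields $\wtil{g}(\alpha(a) - 1) = (\alpha(g\cdot a) - 1)\wtil{g}$, whose trailing factor $\wtil{g} - 1 \in \Delta X$ is absorbed into $I_{X,A}\Delta X$ modulo which we are working. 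Bijectivity would be established by writing $\Z[X]$ as a free left $\Z[A]$-module on a set-theoretic section of $\beta$, reducing the question to the well-known isomorphism $\Delta A / (\Delta A)^2 \cong A$ for the abelian group $A$.

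For the second stage, I would unwind the definition of $\bft$ as the composition $H^2(G, A) \cong \Ext^2_{\Z[G]}(\Z, A) \cong \Ext^1_{\Z[G]}(\Delta G, A)$, the last isomorphism being the connecting map attached to $0 \to \Delta G \to \Z[G] \to \Z \to 0$. Concretely, the class of \eqref{eq:trans_1} in $H^2(G, A)$ is represented by the $2$-cocycle $f(g, g') = s(g)s(g')s(gg')^{-1}$ for a set-theoretic section $s \colon G \to X$ of $\beta$, and the plan is to verify directly that the Yoneda splice of \eqref{eq:trans_2} with $0 \to \Delta G \to \Z[G] \to \Z \to 0$ yields the class of $f$ in $\Ext^2_{\Z[G]}(\Z, A)$; equivalently, one constructs a chain map from a projective resolution of $\Z$ (say, the standard bar resolution) whose degree-two component recovers $f$. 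The main obstacle will be this second stage, which requires careful bookkeeping with sign conventions, the explicit form of the connecting map in $\Ext$, and the matching of cocycles arising from $s$; this is precisely the computation in \cite[\S 2, Lemma 3]{RW96}, whose argument I would reproduce.
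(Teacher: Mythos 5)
The paper offers no proof of its own for this proposition: it simply refers the reader to \cite[\S 2, Lemma 3]{RW96}. Your proposal reproduces that strategy, deferring the identification of $\bft(\eqref{eq:trans_1})$ with $\eqref{eq:trans_2}$ (your ``second stage'') to the same source, and in addition supplies a correct direct argument for the exactness of the sequence $0 \to A \to Y \to \Delta G \to 0$; so you are essentially doing what the paper does, plus a bit more.

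Your stage one is sound: the snake lemma applied to the two augmentation sequences gives $0 \to I_{X,A} \to \Delta X \to \Delta G \to 0$ with $I_{X,A} = \Ker(\Z[X]\to\Z[G])$; tensoring with $\Z[G]$ over $\Z[X]$ presents $Y$ as $\Delta X/(I_{X,A}\Delta X)$ with cokernel $\Delta G$; and the kernel of $\beta'$ is $I_{X,A}/(I_{X,A}\Delta X)$. The additivity and $G$-equivariance of $a \mapsto \alpha(a)-1$ are exactly as you describe. The one spot where the sketch is tighter than it looks is the bijectivity reduction. Surjectivity of $\Delta A/(\Delta A)^2 \to I_{X,A}/(I_{X,A}\Delta X)$ is clear once one notes $I_{X,A} = (\Delta A)\Z[X]$ and $(\Delta A)(s(g)-1) \subset I_{X,A}\Delta X$ for a set-theoretic section $s$; but injectivity amounts to the equality $\Delta A \cap (\Delta A)\Delta X = (\Delta A)^2$, which one should verify explicitly using the decomposition $\Z[X] = \bigoplus_{g} \Z[A]\,s(g)$: writing $\Delta X = \Delta A + \sum_g \Z[A](s(g)-1)$, an element of $(\Delta A)\Delta X$ has the form $\sum_g u_g(s(g)-1) + v$ with $u_g \in \Delta A$, $v \in (\Delta A)^2$, and lies in the $s(1)$-component $\Z[A]$ only when all $u_g$ vanish, leaving $v \in (\Delta A)^2$. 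With that point filled in, your argument is complete.
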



\subsection{Local consideration}\label{ss:local}

Let $H_w/F_v$ be a finite Galois extension of non-archimedean local fields of mixed characteristic.
Let $G_w = \Gal(H_w/F_v)$ be the Galois group and $I_w \subset G_w$ the inertia group.
Of course we have an extension $H/F$ of number fields in mind, but in this subsection we deal with only local fields.


By local class field theory, we have the local fundamental class denoted by
\[
u_{H_w/F_v} \in H^2(G_w, H_w^{\times}).
\]
Let
\[
\ol{u}_{H_w/F_v} \in H^2(G_w, \Z).
\]
be the image of $u_{H_w/F_v}$ by the homomorphism induced by the normalized valuation map $\ord_w: H_w^{\times} \to \Z$.

\begin{defn}\label{defn:V_W}
By applying the translation functor $\bft$ to the local fundamental class $u_{H_w/F_v}$, we define a $\Z[G_w]$-module $V_w$ which fits in a $\Z[G_w]$-module extension 
\begin{equation}\label{eq:V}
0 \to H_w^{\times} \to V_w \to \Delta G_w \to 0
\end{equation}
that represents $\bft(u_{H_w/F_v}) \in \Ext^1_{\Z[G_w]}(\Delta G_w, H_w^{\times})$.
Similarly, we define a $\Z[G_w]$-module $W_w$ with an exact sequence
\begin{equation}\label{eq:W}
0 \to \Z \to W_w \to \Delta G_w \to 0
\end{equation}
that represents $\bft(\ol{u}_{H_w/F_v}) \in \Ext^1_{\Z[G_w]}(\Delta G_w, \Z)$.
\end{defn}

By the definition, the exact sequences \eqref{eq:V} and \eqref{eq:W} fit in a commutative diagram with exact rows and columns
\begin{equation}\label{eq:VW}
\xymatrix{
& U_{H_w} \ar@{=}[r] \ar@{^{(}->}[d] &
U_{H_w} \ar@{^{(}->}[d] & &\\
0 \ar[r] &
H_w^{\times} \ar[r] \ar@{->>}[d] &
V_w \ar[r] \ar@{->>}[d] &
\Delta G_w \ar[r] \ar@{=}[d] &
0 \\
0 \ar[r] &
\Z \ar[r] &
W_w \ar[r] &
\Delta G_w \ar[r] &
0.
}
\end{equation}
Here, $U_{H_w}$ denotes the unit group of the local field $H_w$.

For a more concrete description of $W_w$, we introduce the Weil groups.
Let $H_w^{\ur}$ be the maximal unramified extension of $H_w$.
Let $W(H_w^{\ur}/H_w) \subset \Gal(H_w^{\ur}/H_w)$ be the Weil group, that is, the subgroup generated by the arithmetic Frobenius $\varphi_w$.
We similarly define the Weil group $W(F_v^{\ur}/F_v) \subset \Gal(F_v^{\ur}/F_v)$ generated by $\varphi_v$.

We then define the Weil group of the extension $H_w^{\ur}/F_v$ by
\[
W(H_w^{\ur}/F_v) = \{\sigma \in \Gal(H_w^{\ur}/F_v) \mid \ol{\sigma} \in W(F_v^{\ur}/F_v)\},
\]
where $\ol{\sigma}$ denotes the natural image of $\sigma$ in $\Gal(F_v^{\ur}/F_v)$.
Then the natural restriction homomorphism $\Gal(H_w^{\ur}/F_v) \to \Gal(F_v^{\ur}/F_v) \times G_w$ induces the first isomorphism of
\begin{align}\label{eq:Weil_str}
W(H_w^{\ur}/F_v)
& \simeq \{(h, g) \in W(F_v^{\ur}/F_v) \times G_w \mid \text{$\ol{h} = \ol{g}$ in $G_w/I_w$}\}\\
& \simeq \{(n, g) \in \Z \times G_w \mid \text{$\varphi_v^n = \ol{g}$ in $G_w/I_w$}\},
\end{align}
where the overlines denote the images to $G_w/I_w$, but we simply write $\varphi_v$ for $\ol{\varphi_v} \in G_w/I_w$ by abuse of notation.

We now have the following fact.

\begin{prop}\label{prop:W_Weil}
The group extension 
\[
1 \to W(H_w^{\ur}/H_w) \to W(H_w^{\ur}/F_v) \to G_w \to 1,
\]
which is obtained as the restriction of the natural exact sequence $1 \to \Gal(H_w^{\ur}/H_w) \to \Gal(H_w^{\ur}/F_v) \to G_w \to 1$, 
represents the class $\ol{u}_{H_w/F_v} \in H^2(G_w, \Z)$ (we identify $W(H_w^{\ur}/H_w)$ with $\Z$ by sending $\varphi_w$ to $1$).
\end{prop}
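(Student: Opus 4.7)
The plan is to reduce the proposition to a well-known fact of local class field theory---that the relative Weil group of the maximal abelian subextension realizes the local fundamental class---and then conclude by a pushout argument along $\ord_w$.

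First, recall the relative Weil group $W(H_w^{\ab}/F_v)$, defined as the preimage of $W(F_v^{\ur}/F_v) \subset \Gal(F_v^{\ur}/F_v)$ under the restriction map $\Gal(H_w^{\ab}/F_v) \to \Gal(F_v^{\ur}/F_v)$. Under local reciprocity, the subgroup $W(H_w^{\ab}/H_w)$ is identified with $H_w^{\times}$, and the extension
\[
1 \to H_w^{\times} \to W(H_w^{\ab}/F_v) \to G_w \to 1
\]
represents the local fundamental class $u_{H_w/F_v} \in H^2(G_w, H_w^{\times})$. This is a standard fact of local class field theory (see, e.g., Tate's survey on local number-theoretic background).

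Next, still under local reciprocity, the unit group $U_{H_w} \subset H_w^{\times}$ corresponds to $\Gal(H_w^{\ab}/H_w^{\ur}) \subset W(H_w^{\ab}/H_w)$, since $H_w^{\ur}$ is the maximal unramified abelian extension of $H_w$. Consequently, the quotient $\ord_w \colon H_w^{\times} \twoheadrightarrow \Z$ is compatible with the quotient $W(H_w^{\ab}/H_w) \twoheadrightarrow W(H_w^{\ur}/H_w) \cong \Z$ (the latter sending $\varphi_w$ to $1$), provided we normalize reciprocity so that a uniformizer of $H_w$ maps to $\varphi_w$. I would form the pushout of the above extension along $\ord_w$: by functoriality of extension classes, this pushout represents $\ol{u}_{H_w/F_v} \in H^2(G_w, \Z)$. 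On the other hand, $\Gal(H_w^{\ab}/H_w^{\ur})$ lies in $W(H_w^{\ab}/F_v)$ (as it restricts trivially to $F_v^{\ur}$) and is exactly the kernel of the natural surjection $W(H_w^{\ab}/F_v) \twoheadrightarrow W(H_w^{\ur}/F_v)$. Hence the pushout is canonically identified with $W(H_w^{\ur}/F_v)$, proving the proposition.

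The main obstacle is aligning the various normalizations: the direction of the reciprocity map (arithmetic vs.\ geometric Frobenius) and the compatibility of the identification $W(H_w^{\ur}/H_w) \cong \Z$ via $\varphi_w \leftrightarrow 1$ with the map induced by $\ord_w$. As a reassuring cross-check, one can write down an explicit $2$-cocycle for $1 \to \Z \to W(H_w^{\ur}/F_v) \to G_w \to 1$ using the coordinates in \eqref{eq:Weil_str}, and match it against a standard representative for $\ol{u}_{H_w/F_v}$ obtained through the connecting isomorphism $\delta \colon H^1(G_w, \Q/\Z) \xrightarrow{\sim} H^2(G_w, \Z)$ coming from $0 \to \Z \to \Q \to \Q/\Z \to 0$; both compute the character of $G_w$ pulled back from $G_w/I_w$ that sends $\varphi_v$ to $1/f \bmod \Z$, where $f = [H_w^{I_w}:F_v]$.
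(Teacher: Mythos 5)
Your argument is correct. The paper itself gives no proof of this proposition, treating it as a standard fact of local class field theory (it is essentially what Ritter--Weiss record in \cite{RW96}). Your route is the canonical one: the extension $1 \to H_w^{\times} \to W(H_w^{\ab}/F_v) \to G_w \to 1$ realizes $u_{H_w/F_v}$, the subgroup $\Gal(H_w^{\ab}/H_w^{\ur})$ is normal in $W(H_w^{\ab}/F_v)$ (since $H_w^{\ur}/F_v$ is Galois, as $H_w/F_v$ is Galois and the maximal unramified extension is canonical), and quotienting by it realizes the pushout along $\ord_w$, which by functoriality of $H^2$ carries $u_{H_w/F_v}$ to $\ol{u}_{H_w/F_v}$. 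You correctly flag the normalization point: the paper's identification $\varphi_w \leftrightarrow 1$ is compatible with $\ord_w$ exactly under the convention sending uniformizers to the arithmetic Frobenius, which is what the paper uses throughout. Your explicit-cocycle cross-check via $\delta: H^1(G_w,\Q/\Z) \simeq H^2(G_w,\Z)$ is a sound independent verification, though not needed once the pushout argument is in place.
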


By Propositions \ref{prop:translation} and \ref{prop:W_Weil}, we obtain an isomorphism
\begin{equation}\label{eq:W_desc}
W_w \simeq \Z[G_w] \otimes_{\Z[W(H_w^{\ur}/F_v)]} \Delta W(H_w^{\ur}/F_v).
\end{equation}

This description of $W_w$ gives the following more concrete one.

\begin{prop}\label{prop:W_str}
We have an isomorphism of $\Z[G_w]$-modules
\[
W_w \simeq 
\{(x, y) \in \Delta G_w \oplus \Z[G_w/I_w] 
\mid \text{$\ol{x} = (1 - \varphi_v^{-1}) y$ in $\Z[G_w/I_w]$} \},
\]
where $\ol{x}$ denotes the image of $x$ in $\Z[G_w/I_w]$.
Moreover, under this isomorphism, the exact sequence \eqref{eq:W} is described as follows: 
the map $\Z \to W_w$ sends $1$ to $(0, \nu_{G_w/I_w})$, and the map $W_w \to \Delta G_w$ sends $(x, y)$ to $x$.
Here, $\nu_{G_w/I_w} = \sum_{i = 1}^{\#(G_w/I_w)} \varphi_v^{i} \in \Z[G_w/I_w]$ is the norm element.
\end{prop}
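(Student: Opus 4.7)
The plan is to use \eqref{eq:W_desc} combined with the parametrization \eqref{eq:Weil_str} to construct an explicit $\Z[G_w]$-linear isomorphism from $W_w$ onto the module $\wtil{W_w}$ defined by the right-hand side of the statement; the maps in \eqref{eq:W} will then match the claimed maps, and the proof will conclude via the five lemma. Throughout, I write elements of $W := W(H_w^{\ur}/F_v)$ as pairs $(n, g)$ with $n \in \Z$, $g \in G_w$, and $\varphi_v^n = \ol{g}$ in $G_w/I_w$, so that multiplication in $W$ is componentwise.

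First, for each $n \in \Z$ fix an explicit $y_n \in \Z[G_w/I_w]$ satisfying $(1 - \varphi_v^{-1}) y_n = \varphi_v^n - 1$: take $y_n = \varphi_v + \varphi_v^2 + \cdots + \varphi_v^n$ for $n \geq 0$ (so $y_0 = 0$) and $y_n = -(1 + \varphi_v^{-1} + \cdots + \varphi_v^{n+1})$ for $n \leq -1$. A direct computation yields the telescoping identity $y_{n+m} = y_n + \varphi_v^n y_m$ for all $n, m \in \Z$. Using this, define
\[
\phi\bigl(g \otimes ((n, g') - 1)\bigr) = \bigl(g (g' - 1),\ g \cdot y_n\bigr) \in \wtil{W_w}.
\]
The well-definedness of $\phi$ on the tensor product amounts to checking its compatibility with the identity $g \otimes ((n_1, g_1)(n_2, g_2) - 1) = g \otimes ((n_1, g_1) - 1) + g g_1 \otimes ((n_2, g_2) - 1)$, which is precisely the telescoping identity, since $g_1 \in G_w$ acts on $\Z[G_w/I_w]$ as $\varphi_v^{n_1}$.

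Next, verify that $\wtil{W_w}$ fits in a short exact sequence $0 \to \Z \to \wtil{W_w} \to \Delta G_w \to 0$ with the maps specified in the statement. Surjectivity of $(x, y) \mapsto x$ follows because the image of $\Delta G_w$ in $\Z[G_w/I_w]$ is $\Delta(G_w/I_w) = (1 - \varphi_v^{-1}) \Z[G_w/I_w]$ (since $G_w/I_w$ is cyclic, generated by $\varphi_v$), and exactness in the middle uses $\ker(1 - \varphi_v^{-1}) = \Z \cdot \nu_{G_w/I_w}$. Finally, $\varphi_w \in W(H_w^{\ur}/H_w) \subset W$ corresponds under \eqref{eq:Weil_str} to $(f, 1)$ with $f = \#(G_w/I_w)$, so the map $\Z \to W_w$ sends $1$ to $1 \otimes ((f, 1) - 1)$, whose $\phi$-image is $(0, y_f) = (0, \nu_{G_w/I_w})$, while the map $W_w \to \Delta G_w$ of \eqref{eq:W} corresponds to $(x, y) \mapsto x$ under $\phi$. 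The five lemma then forces $\phi$ to be an isomorphism. The only real bookkeeping is the telescoping identity, which must be checked separately for the various sign combinations of $n$ and $m$, but this is routine arithmetic in $\Z[G_w/I_w]$.
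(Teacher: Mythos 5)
Your proof is correct and takes essentially the same route as the paper: both define the isomorphism by the explicit formula $g \otimes ((n,g')-1) \mapsto (g(g'-1), g\cdot y_n)$ with $y_n$ the formal expansion of $(\varphi_v^n-1)/(1-\varphi_v^{-1})$, with the only real computation being the cocycle/telescoping identity $y_{n+m}=y_n+\varphi_v^n y_m$ (which you make explicit where the paper leaves it as "can directly be checked"), followed by the five lemma.
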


\begin{proof}
Let us write $W_w'$ for the right hand side of the displayed isomorphism.
We will construct a commutative diagram with exact rows of the form
\[
\xymatrix{
	0 \ar[r]
	& \Z \ar[r] \ar@{=}[d]
	& W_w \ar[r] \ar@{..>}[d]
	& \Delta G_w \ar[r] \ar@{=}[d]
	& 0\\
	0 \ar[r]
	& \Z \ar[r]
	& W_w' \ar[r]
	& \Delta G_w \ar[r]
	& 0.
}
\]
Here, the upper sequence is \eqref{eq:W}, and the lower sequence is the one described in the statement.
It is easy to see that the lower sequence is also exact.
Then it is enough to construct the dotted arrow which makes the diagram commutative.

Using the identification \eqref{eq:Weil_str}, 
we define a $\Z$-homomorphism
\[
\Delta W(H_w^{\ur}/F_v) \to \Delta G_w \oplus \Z[G_w/I_w]
\]
 by sending $(n, g) - 1$ to $(g - 1, \sum_{i=1}^{n} \varphi_v^{i})$ if $n > 0$, to $(g - 1, 0)$ if $n = 0$, and to $(g - 1, - \sum_{i=0}^{-n - 1} \varphi_v^{-i})$ if $n < 0$ (in any case the second component is defined as a formal expansion of $(\varphi_v^{n} - 1)/(1 - \varphi_v^{-1})$).
It can directly checked that this is actually a $\Z[W(H_w^{\ur}/F_v)]$-homomorphism and the image is contained in $W_w'$.
Thus, by \eqref{eq:W_desc}, a $\Z[G_w]$-homomorphism $W_w \to W_w'$ is induced.

Let us check that the diagram is commutative.
By the description in Proposition \ref{prop:translation}, the map $\Z \to W_w$ sends $1$ to $\varphi_w - 1 = (\#(G_w/I_w), \id_{G_w}) - 1$ since $\varphi_w = \varphi_v^{\# (G_w/I_w)}$.
Then this is sent to $(0, \sum_{i = 1}^{\#(G_w/I_w)} \varphi_v^{i}) = (0, \nu_{G_w/I_w})$ in $W_w'$, so the left square is commutative.
Each element $(n, g) - 1 \in \Delta W(H_w^{\ur}/F_v)$ is sent by the map $W_w \to W_w' \to \Delta G_w$ to $g - 1$, so the right square is also commutative.
\end{proof}

The following proposition is elementary but of critical importance.

\begin{prop}\label{prop:str_W_2}
We make use of the isomorphism in Proposition \ref{prop:W_str}.
The following are true.
\begin{itemize}
\item[(1)]
If $H_w/F_v$ is unramified, then we have an isomorphism
\[
\iota_w: W_w \simeq \Z[G_w],
\]
which sends $(x, y)$ to $y$.
\item[(2)]
In any case we have an exact sequence
\[
0 \to W_w \overset{f_w}{\to} \Z[G_w] \to \Z[G_w/I_w]/(1 - \varphi_v^{-1} + \# I_w) \to 0,
\]
where $f_w$ sends $(x, y)$ to $x + \nu_{I_w} y$.
\end{itemize}
\end{prop}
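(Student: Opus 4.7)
The plan is to derive both parts as direct consequences of the explicit presentation of $W_w$ from Proposition \ref{prop:W_str}, which identifies $W_w$ with the submodule of pairs $(x,y) \in \Delta G_w \oplus \Z[G_w/I_w]$ satisfying $\overline{x} = (1-\varphi_v^{-1})y$. For part (1), when $H_w/F_v$ is unramified we have $I_w = \{1\}$, so $G_w/I_w = G_w$ and the defining relation forces $x = (1-\varphi_v^{-1})y$. Thus $y$ determines $x$ (injectivity of $(x,y)\mapsto y$), while for an arbitrary $y \in \Z[G_w]$ the element $(1-\varphi_v^{-1})y$ lies in $\Delta G_w$ because $1-\varphi_v^{-1}$ has augmentation $0$ (surjectivity). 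This delivers the isomorphism $\iota_w$.

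For part (2), I would first check that $\nu_{I_w}y$ makes sense as an element of $\Z[G_w]$, independently of a lift of $y$, because $\nu_{I_w}$ annihilates $\ker(\Z[G_w] \to \Z[G_w/I_w])$; this makes $f_w$ a well-defined $\Z[G_w]$-homomorphism. Surjectivity of $\Z[G_w] \to \Z[G_w/I_w]/(1-\varphi_v^{-1}+\#I_w)$ is immediate. For the middle exactness, given $(x,y) \in W_w$, the image of $f_w(x,y) = x + \nu_{I_w}y$ in $\Z[G_w/I_w]$ equals $(1-\varphi_v^{-1})y + (\#I_w)y = (1-\varphi_v^{-1}+\#I_w)y$, which is $0$ modulo $(1-\varphi_v^{-1}+\#I_w)$, so $\Imag(f_w)$ lies in the kernel. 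Conversely, if $a \in \Z[G_w]$ satisfies $\overline{a} = (1-\varphi_v^{-1}+\#I_w)y$ for some $y \in \Z[G_w/I_w]$, I set $x := a - \nu_{I_w}\widetilde{y}$ for a lift $\widetilde{y}$. Then $\overline{x} = (1-\varphi_v^{-1})y$, and the augmentation identity $\varepsilon(a) = \varepsilon\bigl((1-\varphi_v^{-1}+\#I_w)y\bigr) = (\#I_w)\varepsilon(y)$ forces $\varepsilon(x) = 0$; hence $x \in \Delta G_w$, $(x,y) \in W_w$, and $f_w(x,y) = a$.

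The main technical step is the injectivity of $f_w$. If $f_w(x,y) = 0$, then reducing mod $I_w$ yields $(1-\varphi_v^{-1}+\#I_w)y = 0$ in $\Z[G_w/I_w]$. I would then argue that $1-\varphi_v^{-1}+\#I_w$ is a non-zero-divisor in $\Z[G_w/I_w]$: for every character $\psi$ of the cyclic group $G_w/I_w$, the value $1 - \psi(\varphi_v)^{-1} + \#I_w$ has real part $1 + \#I_w - \Re(\psi(\varphi_v)) \geq \#I_w \geq 1 > 0$, so it is nonzero in $\C$. Hence $1-\varphi_v^{-1}+\#I_w$ is a unit in $\C[G_w/I_w]$ and therefore a non-zero-divisor in the $\Z$-free ring $\Z[G_w/I_w]$. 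This forces $y=0$, and then $x = -\nu_{I_w}\widetilde{y} = 0$. The non-zero-divisor verification is the only substantive point; everything else is a formal diagram chase.
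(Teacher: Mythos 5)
Your proof is correct. The paper itself does not spell out an argument for either part: it asserts that (1) is easy and defers (2) to \cite[Lemma 3.4]{AK21}. Your write-up supplies exactly the argument one expects to find there: well-definedness of $\nu_{I_w}y$ via the annihilation of $\ker(\Z[G_w]\to\Z[G_w/I_w])$, exactness at $\Z[G_w]$ by solving $\ol a = (1-\varphi_v^{-1}+\#I_w)y$ for $y$ and checking the augmentation vanishes, and injectivity of $f_w$ from the fact that $1-\varphi_v^{-1}+\#I_w$ is a non-zero-divisor in $\Z[G_w/I_w]$. The character-by-character real-part estimate $\Re\bigl(1-\psi(\varphi_v)^{-1}+\#I_w\bigr)\geq\#I_w>0$ is exactly the right way to see that this element is invertible in $\C[G_w/I_w]$ and hence a non-zero-divisor in $\Z[G_w/I_w]$; that is the only non-formal step, and you correctly flag it as such.
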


\begin{proof}
The claim (1) is easy.
See the paper \cite[Lemma 3.4]{AK21} of the authors for the proof of the claim (2).
\end{proof}

\begin{rem}\label{rem:fw_var}
As a variant of Proposition \ref{prop:str_W_2}(2), given a nonzero integer $c_w \in \Z$, we have an exact sequence
\begin{equation}\label{eq:W_fund_var}
0 \to W_w \overset{\ol{f_w}}{\to} \Z[G_w] \to \Z[G_w/I_w]/(1 - \varphi_v^{-1} + c_w \# I_w) \to 0,
\end{equation}
where $\ol{f_w}$ sends $(x, y)$ to $x + c_w \nu_{I_w} y$.
Moreover, when we work over $\Z_p$ instead of $\Z$, an analogous construction is available for a nonzero $p$-adic integer $c_w \in \Z_p$.
This remark will be used in later in \S \ref{s:func}.
\end{rem}

\subsection{Global consideration}\label{ss:global}

Let $H/F$ be a finite Galois extension of number fields.
Let $G = \Gal(H/F)$ be the Galois group.
The argument in this subsection is in parallel with \S \ref{ss:local}.

Let $C_H$ be the idele class group of $H$.
By global class field theory, we have the global fundamental class
\[
u_{H/F} \in H^2(G, C_H).
\]
Let $\Sigma'$ be a finite set of finite primes of $F$.
Let $\Cl_H^{\Sigma'}$ be the ray class group of $H$ of modulus $\prod_{w \in \Sigma'_H} w$, where $\Sigma'_H$ denotes the set of primes of $H$ that lie above primes in $\Sigma'$.
We define
\[
\ol{u}_{H/F} \in H^2(G, \Cl_H^{\Sigma'})
\]
as the image of $u_{H/F}$ by the homomorphism $H^2(G, C_H) \to H^2(G, \Cl_H^{\Sigma'})$ induced by the natural map $C_H \to \Cl_H^{\Sigma'}$.

Similarly as in the local case, we introduce the following.

\begin{defn}\label{defn:O_H}
By applying the translation functor $\bft$, we define a $\Z[G]$-module $\fO$ which fits in a $\Z[G]$-module extension 
\begin{equation}\label{eq:O}
0 \to C_H \to \fO \to \Delta G \to 0
\end{equation}
that represents $\bft(u_{H/F}) \in \Ext^1_{\Z[G]}(\Delta G, C_H)$.
Similarly, for a finite set $\Sigma'$ of finite primes of $F$, we define a $\Z[G]$-module $\fH^{\Sigma'}$ which fits in an exact sequence
\begin{equation}\label{eq:H}
0 \to \Cl_H^{\Sigma'} \to \fH^{\Sigma'} \to \Delta G \to 0
\end{equation}
that represents $\bft(\ol{u}_{H/F}) \in \Ext^1_{\Z[G]}(\Delta G, \Cl_H^{\Sigma'})$.
\end{defn}

By the definition, the sequences \eqref{eq:O} and \eqref{eq:H} fit in a commutative diagram with exact rows
\[
\xymatrix{
0 \ar[r] &
C_H \ar[r] \ar@{->>}[d] &
\fO \ar[r] \ar@{->>}[d] &
\Delta G \ar[r] \ar@{=}[d] &
0 \\
0 \ar[r] &
\Cl_H^{\Sigma'} \ar[r] &
\fH^{\Sigma'} \ar[r] &
\Delta G \ar[r] &
0.
}
\]

We need more concrete description of $\fH^{\Sigma'}$.
Let $H^{\Sigma', \ab}$ be the ray class field of $H$ of modulus $\prod_{w \in \Sigma'_H} w$, so we have an isomorphism $\Cl_H^{\Sigma'} \simeq \Gal(H^{\Sigma', \ab}/H)$ by the Artin reciprocity map.

We now have the following fact.

\begin{prop}\label{prop:H_Weil}
The group extension $1 \to \Gal(H^{\Sigma', \ab}/H) \to \Gal(H^{\Sigma', \ab}/F) \to G \to 1$ represents the class $\ol{u}_{H/F} \in H^2(G, \Cl_H^{\Sigma'})$.
\end{prop}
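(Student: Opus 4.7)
The plan is to globalize the argument of Proposition \ref{prop:W_Weil}, replacing the local Weil group by the relative Weil group of $H/F$ and exploiting the Artin reciprocity map. By a classical theorem of Artin--Tate, the global fundamental class $u_{H/F}$ is represented by the relative Weil group extension
\[
1 \to C_H \to W_{H/F} \to G \to 1.
\]
Hence, by definition of $\ol{u}_{H/F}$ as the image of $u_{H/F}$, the class $\ol{u}_{H/F}$ is represented by the pushforward of this extension along the composition
\[
C_H \to \Gal(H^{\ab}/H) \twoheadrightarrow \Gal(H^{\Sigma', \ab}/H) \simeq \Cl_H^{\Sigma'},
\]
where the first map is the Artin reciprocity map for $H$ and the last identification is given by the Artin reciprocity map followed by the obvious projection.

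Next, I would construct a canonical surjective homomorphism $W_{H/F} \twoheadrightarrow \Gal(H^{\Sigma', \ab}/F)$ fitting into a commutative diagram of group extensions
\[
\xymatrix{
1 \ar[r] & C_H \ar[r] \ar[d] & W_{H/F} \ar[r] \ar@{->>}[d] & G \ar[r] \ar@{=}[d] & 1\\
1 \ar[r] & \Gal(H^{\Sigma', \ab}/H) \ar[r] & \Gal(H^{\Sigma', \ab}/F) \ar[r] & G \ar[r] & 1,
}
\]
whose left vertical arrow is precisely the composition displayed above. The middle vertical map is produced by the universal property of the relative Weil group (functoriality with respect to the inclusion $H^{\Sigma', \ab} \hookrightarrow H^{\ab}$, using that $H^{\Sigma', \ab}/F$ is Galois). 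Surjectivity follows since the image is a closed subgroup projecting onto $G$ whose intersection with $\Gal(H^{\Sigma', \ab}/H)$ contains the image of $C_H$, and the Artin map is surjective onto $\Cl_H^{\Sigma'}$. Once this diagram is in place, the lower row is by construction the pushout of the upper along the left vertical map, which is exactly the assertion that it represents $\ol{u}_{H/F}$.

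The only genuine obstacle is a bookkeeping check: one must verify that the restriction of $W_{H/F} \twoheadrightarrow \Gal(H^{\Sigma', \ab}/F)$ to the subgroup $C_H$ coincides with the Artin reciprocity composed with the projection $\Gal(H^{\ab}/H) \twoheadrightarrow \Gal(H^{\Sigma', \ab}/H)$. This is a standard compatibility of global class field theory, built into the very definition of the relative Weil group (in which $C_H$ sits via reciprocity), so no serious difficulty arises. An alternative avenue, in case one prefers to avoid invoking the Weil group machinery, would be to check the statement locally at every place and appeal to the characterization of $u_{H/F}$ by its collection of local invariants, reducing the problem to Proposition \ref{prop:W_Weil}; this would require additionally tracing through the commutativity of local-to-global maps between idelic and Galois-theoretic incarnations of the Artin map.
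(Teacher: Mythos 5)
The paper states this proposition without any proof, treating it as a classical fact from global class field theory (it parallels the local Proposition~\ref{prop:W_Weil}, which is likewise stated without argument, and ultimately goes back to the framework of Ritter--Weiss). Your proposal reconstructs the standard argument correctly: Tate's theorem on the relative Weil group gives that $1 \to C_H \to W_{H/F} \to G \to 1$ represents $u_{H/F}$, and since $\Sigma'$ consists of primes of $F$, the modulus $\prod_{w\in\Sigma'_H}w$ is $G$-stable, so $H^{\Sigma',\ab}/F$ is Galois and one has a surjection $W_{H/F}\twoheadrightarrow\Gal(H^{\Sigma',\ab}/F)$ (restrict the natural map $W_F\twoheadrightarrow\Gal(H^{\Sigma',\ab}/F)$, noting it kills $\overline{[W_H,W_H]}$). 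The diagram you draw exhibits the lower row as the pushout of the upper along $C_H\to\Cl_H^{\Sigma'}$, which is exactly the statement that it represents $\ol{u}_{H/F}$. The compatibility you flag as the ``only genuine obstacle''—that $W_{H/F}\to\Gal(H^{\Sigma',\ab}/F)$ restricted to $C_H\subset W_{H/F}$ is the Artin map for $H$ followed by the projection—is indeed part of the axiomatic package defining the relative Weil group (the identification $W_H^{\ab}\simeq C_H$ is the Artin isomorphism, and the map to $\Gal(H^{\ab}/H)$ is its inverse), so no further verification is needed. Your argument is sound and is essentially what one would extract from the literature to justify the unstated proposition.
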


By Propositions \ref{prop:translation} and \ref{prop:H_Weil}, we have
\begin{equation}\label{eq:H_desc}
\fH^{\Sigma'} \simeq
\Z[G] \otimes_{\Z[\Gal(H^{\Sigma', \ab}/F)]} \Delta \Gal(H^{\Sigma', \ab}/F).
\end{equation}

\subsection{Compatibility between the local and global diagrams}\label{ss:local_global}

Let $H/F$ be as in \S \ref{ss:global}.
For each finite prime $w$ of $H$, let $I_w \subset G_w \subset G$ be the inertia group and the decomposition group, respectively.
We can apply the results in \S \ref{ss:local} to the extension $H_w/F_v$, where $v$ denotes the prime of $F$ lying below $w$.

Since we have appropriate compatibility between the local and the global fundamental classes,
we obtain a commutative diagram concerning the sequences \eqref{eq:V} and \eqref{eq:O}:
\begin{equation}\label{eq:local_to_global}
\xymatrix{
0 \ar[r] &
H_w^{\times} \ar[r] \ar[d] &
V_w \ar[r] \ar[d] &
\Delta G_w \ar[r] \ar@{^{(}->}[d] &
0 \\
0 \ar[r] &
C_H \ar[r] &
\fO \ar[r] &
\Delta G \ar[r] &
0.
}
\end{equation}
Similarly, for a finite set $\Sigma'$ of finite primes of $F$ with $v \not\in \Sigma'$, we have a commutative diagram concerning the sequences \eqref{eq:W} and \eqref{eq:H}:
\begin{equation}\label{eq:local_to_global2}
\xymatrix{
0 \ar[r] &
\Z \ar[r] \ar[d] &
W_w \ar[r] \ar[d]_{z_w} &
\Delta G_w \ar[r] \ar@{^{(}->}[d] &
0 \\
0 \ar[r] &
\Cl_H^{\Sigma'} \ar[r] &
\fH^{\Sigma'} \ar[r] &
\Delta G \ar[r] &
0.
}
\end{equation}
See \cite[page 169]{RW96} for a three-dimensional diagram which incorporates \eqref{eq:local_to_global} and \eqref{eq:local_to_global2}.

The following computation will be used later.
Let 
\[
\bpsi: G_F \twoheadrightarrow G \hookrightarrow \Z[G]^{\times}
\]
be the tautological character.
Here, $G_F = \Gal(\ol{F}/F)$ denotes the absolute Galois group of $F$.
We also write $G_{F_v}$ for the absolute Galois group of $F_v$, and $I_{F_v}$ for its inertia subgroup.
These are regarded as subgroups of $G_F$ by fixing a place of $\ol{F}$ above $w$.

\begin{prop}\label{prop:compute_local_global}
We suppose $v \not \in \Sigma'$.
Let us recall the descriptions of $W_w$ in Proposition \ref{prop:W_str} and of $\fH^{\Sigma'}$ in \eqref{eq:H_desc}.
For each $\sigma \in G_{F}$, we write $\ol{\sigma} \in \Gal(H^{\Sigma', \ab}/F)$ for the image of $\sigma$ by the restriction map.
Then the middle vertical arrow, denoted by $z_w$, of \eqref{eq:local_to_global2} is described as follows.
\begin{itemize}
\item[(1)]
For each $\sigma \in I_{F_v}$,
we have
\[
z_w((\bpsi(\sigma) - 1, 0)) = 1 \otimes (\ol{\sigma} - 1).
\]
\item[(2)]
Let $\wtil{\varphi_v} \in G_{F_v}$ be a lift of the arithmetic Frobenius.
Then we have
\[
z_w((\bpsi(\wtil{\varphi_v}) - 1, \varphi_v))
= 1 \otimes (\ol{\wtil{\varphi_v}} - 1).
\]
\end{itemize}
\end{prop}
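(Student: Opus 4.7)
The plan is to unwind both descriptions through the translation functor and reduce the computation to tracking a specific element of $\Delta W(H_w^{\ur}/F_v)$ through the map $W(H_w^{\ur}/F_v) \to \Gal(H^{\Sigma', \ab}/F)$ that carries the local fundamental class to the pushed-forward global one.

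First, I would combine Propositions \ref{prop:translation} and \ref{prop:W_Weil} to record the presentation \eqref{eq:W_desc}, and then re-examine the explicit homomorphism $\Delta W(H_w^{\ur}/F_v) \to W_w$ constructed in the proof of Proposition \ref{prop:W_str}. Using the parametrization \eqref{eq:Weil_str}, the elements appearing in the statement arise as follows: for $\sigma \in I_{F_v}$ the pair $(0, \bpsi(\sigma)) \in W(H_w^{\ur}/F_v)$ corresponds to $n = 0$, $g = \bpsi(\sigma)$, and so $(0, \bpsi(\sigma)) - 1$ is sent to $(\bpsi(\sigma) - 1, 0) \in W_w$; for a Frobenius lift $\wtil{\varphi_v}$, the pair $(1, \bpsi(\wtil{\varphi_v})) \in W(H_w^{\ur}/F_v)$ corresponds to $n = 1$, $g = \bpsi(\wtil{\varphi_v})$, and $(1, \bpsi(\wtil{\varphi_v})) - 1$ is sent to $(\bpsi(\wtil{\varphi_v}) - 1, \varphi_v)$. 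Hence, under the presentation $W_w = \Z[G_w] \otimes_{\Z[W(H_w^{\ur}/F_v)]} \Delta W(H_w^{\ur}/F_v)$, the two elements we must evaluate $z_w$ on are $1 \otimes ((0, \bpsi(\sigma)) - 1)$ and $1 \otimes ((1, \bpsi(\wtil{\varphi_v})) - 1)$, respectively.

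Second, I would identify $z_w$ at the level of Weil and Galois groups. The compatibility between the local and global fundamental classes, combined with the fact that $v \notin \Sigma'$ makes $w$ unramified in $H^{\Sigma', \ab}/H$, produces a morphism of group extensions from the local extension of Proposition \ref{prop:W_Weil} to the global extension of Proposition \ref{prop:H_Weil}. After fixing the chosen place of $\ol{F}$ above $w$, this morphism is the restriction map: an element $\tau \in G_{F_v}$, viewed as $(n_\tau, \bpsi(\tau)) \in W(H_w^{\ur}/F_v)$ via \eqref{eq:Weil_str}, is sent to $\ol{\tau} \in \Gal(H^{\Sigma', \ab}/F)$. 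Applying the translation functor and passing via \eqref{eq:W_desc} and \eqref{eq:H_desc} then yields $z_w(1 \otimes (\gamma - 1)) = 1 \otimes (\ol{\gamma} - 1)$ for every $\gamma$ in the image of $G_{F_v}$. Specializing to the two elements identified in the previous step gives the two formulas.

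The main obstacle is justifying that the morphism of group extensions indicated above is exactly the one inducing the diagram \eqref{eq:local_to_global2} on translation. This amounts to checking that the subgroup arrow $W(H_w^{\ur}/H_w) \simeq \Z \to \Cl_H^{\Sigma'} \simeq \Gal(H^{\Sigma', \ab}/H)$ is the Frobenius map $1 \mapsto \Frob_w$, matching the composition of local and global reciprocity under the normalization chosen in Proposition \ref{prop:W_Weil}, and that the quotient arrow is the inclusion $G_w \hookrightarrow G$. Once these normalizations are confirmed, both identities fall out of the explicit descriptions recalled in the previous paragraphs with no further computation.
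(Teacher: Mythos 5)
Your proposal matches the paper's proof: both pass from the concrete description of $W_w$ in Proposition~\ref{prop:W_str} back to the presentation \eqref{eq:W_desc}, identify the relevant elements as $1 \otimes (s-1)$ with $s = (0, \bpsi(\sigma))$ or $(1, \bpsi(\wtil{\varphi_v}))$ via \eqref{eq:Weil_str}, and then observe that $z_w$ is induced by the restriction map $W(H_w^{\ur}/F_v) \to \Gal(H^{\Sigma',\ab}/F)$ sending $s$ to $\ol{\sigma}$ (resp.\ $\ol{\wtil{\varphi_v}}$). The normalization check you flag as the ``main obstacle'' is exactly the local--global compatibility the paper invokes by citing \cite[page 169]{RW96}, so there is no real gap.
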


\begin{proof}
We first move from the description of Proposition \ref{prop:W_str} to one using \eqref{eq:W_desc}.
Note that, for each $\sigma \in G_{F_v}$, we have $\bpsi(\sigma) \in G_w$.
Then, thanks to the description \eqref{eq:Weil_str}, 
for $\sigma \in I_{F_v}$ as in the claim (1) (resp.~$\wtil{\varphi_v} \in G_{F_v}$ as in the claim (2)),
we have an element $s := (0, \bpsi(\sigma))$ (resp.~$s := (1, \bpsi(\wtil{\varphi_v}))$) of $W(H_w^{\ur}/F_v)$.
Then, by the proof of Proposition \ref{prop:W_str}, we can directly check that the element $1 \otimes (s - 1)$ of $W_w$ using \eqref{eq:W_desc} corresponds to the element $(\bpsi(\sigma) - 1, 0)$ (resp.~$(\bpsi(\wtil{\varphi_v}) - 1, \varphi_v)$).

The map $W(H_w^{\ur}/F_v) \to \Gal(H^{\Sigma', \ab}/F)$ sends $s$ to $\ol{\sigma}$ (resp.~$\ol{\wtil{\varphi_v}}$).
Therefore, the map $z_w$, which is induced by the natural map between the right components of \eqref{eq:W_desc} and \eqref{eq:H_desc}, sends $1 \otimes (s - 1)$ to $1 \otimes (\ol{\sigma} - 1)$ (resp.~$1 \otimes (\ol{\wtil{\varphi_v}} - 1)$).
This completes the proof.
\end{proof}

\subsection{Construction of the Ritter--Weiss type module}\label{ss:Omega_constr}

In this subsection, we construct the module $\Omega_{\Sigma}^{\Sigma'}$ satisfying the exact sequence \eqref{eq:fund} in Proposition \ref{prop:Omega}.
Let $H/F$ be a finite abelian CM-extension and put $G = \Gal(H/F)$.
Let $\Sigma$ and $\Sigma'$ be finite sets of places of $F$ satisfying (H1) and (H2).

Let us take an auxiliary finite set $S'$ of places of $F$ such that
\begin{itemize}
\item
$S' \cap \Sigma' = \emptyset$, 
\item
$S' \supset \Sigma$, 
\item
each element of $S' \setminus \Sigma$ is unramified in $H/F$, 
\item
$\Cl_{H, S'}^{\Sigma'} = 0$, where $\Cl_{H, S'}^{\Sigma'}$ denotes the quotient of $\Cl_H^{\Sigma'}$ by the prime ideals in $(S'_f)_H$, and 
\item
$\bigcup_{v \in S' \setminus \Sigma} G_v = G$.
\end{itemize}
Moreover, we assume that there exists an element $v_0 \in S' \setminus \Sigma$ such that 
\[
G_{v_0} = \Gal(H/H^+).
\]
The role that $v_0$ plays will become clear later.
The existence of such an $S'$ can be checked in a straightforward way using Chebotarev's density theorem.

We write $S'_f = S' \setminus S_{\infty}(F)$.
We introduce the following $\Z[G]$-modules:
\begin{align}
J^{\Sigma'} 
&= \prod_{v \in S_{\infty}(F)} H_v^{\times}
\times \prod_{v \in S'_f} U_{H_v} 
\times \prod_{v \in \Sigma'} U_{H_v}^1 
\times \prod_{v \not \in S' \cup \Sigma'} U_{H_v},\\
V_{S'}^{\Sigma'} 
&= \prod_{v \in S_{\infty}(F)} H_v^{\times}
\times \prod_{v \in S'_f} V_v
\times \prod_{v \in \Sigma'} U_{H_v}^1 
\times \prod_{v \not \in S' \cup \Sigma'} U_{H_v},\\
W_{S'}
&= V_{S'}^{\Sigma'} / J^{\Sigma'} = \prod_{v \in S'_f} W_v.
\end{align}
Here, we define $V_v = \bigoplus_{w \mid v} V_w$ and $W_v = \bigoplus_{w \mid v} W_w$ by using the modules introduced in \S \ref{ss:local}.
We also write $H_v = H \otimes_F F_v \simeq \prod_{w \mid v} H_w$, and $U_{H_v}$ and $U_{H_v}^1$ denote the unit group and the principal unit group of $H_v$, respectively.

Then, by combining \eqref{eq:local_to_global} for $w \in (S_f')_H$ with the tautological maps, we obtain a commutative diagram
\begin{equation}\label{eq:theta}
\xymatrix{
0 \ar[r] &
J^{\Sigma'} \ar[r] \ar[d]_{\theta_J} &
V_{S'}^{\Sigma'} \ar[r] \ar[d]_{\theta_V} &
W_{S'} \ar[r] \ar[d]_{\theta_W} &
0 \\
0 \ar[r] &
C_H \ar[r] &
\fO \ar[r] &
\Delta G \ar[r] &
0.
}
\end{equation}

\begin{lem}[{\cite[Lemma A.1]{DK20}}]\label{lem:theta_surj}
The homomorphism $\theta_V$ in \eqref{eq:theta} is surjective.
\end{lem}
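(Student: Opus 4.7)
The strategy is to apply a diagram chase (or the snake lemma) to the two exact rows of \eqref{eq:theta} and exploit the defining hypothesis $\Cl_{H, S'}^{\Sigma'} = 0$ on the auxiliary set $S'$. Concretely, I will verify two facts: (a) $\theta_W$ is surjective; and (b) $\Imag(\theta_V) \supset C_H$. Once both are in place, the exactness of the bottom row $0 \to C_H \to \fO \to \Delta G \to 0$ forces $\Imag(\theta_V) = \fO$, which is the desired conclusion.

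For (a), decompose $W_{S'} = \bigoplus_{v \in S'_f} \bigoplus_{w \mid v} W_w$, and recall from \eqref{eq:W} that each $W_w$ surjects onto $\Delta G_w \subset \Delta G$. Hence $\Imag(\theta_W) \supset \sum_{v \in S'_f} \Delta G_v$. Since $\Sigma \supset S_\infty(F)$, we have $S' \setminus \Sigma \subset S'_f$, and the defining hypothesis $\bigcup_{v \in S' \setminus \Sigma} G_v = G$ therefore yields $\bigcup_{v \in S'_f} G_v = G$, which gives $\sum_{v \in S'_f} \Delta G_v = \Delta G$.

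For (b), the key observation is that $J^{\Sigma'}$ carries only the local unit group $U_{H_v}$ at each $v \in S'_f$, whereas $V_{S'}^{\Sigma'}$ carries the whole $V_v$, and $V_v = \bigoplus_{w \mid v} V_w$ contains $H_v^\times = \prod_{w \mid v} H_w^\times$ as an ambient subgroup by \eqref{eq:V}. Thus, for each prime $w \in (S'_f)_H$, any uniformizer $\pi_w \in H_w^\times$ already lies in $V_{S'}^{\Sigma'}$; by commutativity of \eqref{eq:local_to_global}, $\theta_V(\pi_w)$ sits inside $C_H \subset \fO$ and coincides with the idele class of $\pi_w$, projecting to the ideal class $[w] \in \Cl_H^{\Sigma'}$. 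Combining these classes with $\theta_J(J^{\Sigma'})$ identifies $\Imag(\theta_V) \cap C_H$ with the image in $C_H$ of $J^{\Sigma'} \cdot \prod_{v \in S'_f} H_v^\times$, whose cokernel in $C_H = J_H/H^\times$ is precisely the ray class group $\Cl_{H, S'}^{\Sigma'}$, which vanishes by the choice of $S'$.

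The main subtlety — and the structural reason for imposing $\Cl_{H, S'}^{\Sigma'} = 0$ rather than just $\Cl_H^{\Sigma'} = 0$ — is precisely this passage from $\Cl_H^{\Sigma'}$, which is $\Coker(\theta_J)$, down to $\Cl_{H, S'}^{\Sigma'}$: enlarging $U_{H_v}$ to $H_v^\times$ at each $v \in S'_f$ supplies exactly the uniformizer classes needed to kill the remaining ideal classes. A naive snake-lemma argument starting from \eqref{eq:theta} alone would only yield a surjection $\Cl_H^{\Sigma'} \twoheadrightarrow \Coker(\theta_V)$, which is generally nonzero; it is the fact that $V_v$ sits above $H_v^\times$ (not merely $U_{H_v}$) in the local sequence \eqref{eq:V} that closes this gap.
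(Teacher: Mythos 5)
Your proof is correct and is essentially the paper's argument with the snake lemma unwound. The paper forms the auxiliary diagram with $J_{S'}^{\Sigma'}$ and $\ol{W}_{S'}$ (equation \eqref{eq:theta_prime}) and applies the snake lemma using $\Coker(\theta'_J) = \Cl_{H,S'}^{\Sigma'} = 0$ and $\Coker(\theta'_W) = 0$; your steps (a) and (b) verify exactly these two vanishings — (b) is $\Coker(\theta'_J) = 0$ restated (the identification of $\Imag(\theta_V) \cap C_H$ with $\theta_V(J_{S'}^{\Sigma'})$ need only be the containment $\supset$, but this does not affect the conclusion), and (a) is $\Coker(\theta_W') = 0$ via the surjection $W_{S'} \twoheadrightarrow \ol{W}_{S'}$ — and your final diagram chase is the relevant half of the snake sequence.
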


\begin{proof}
We put
\begin{align}
J_{S'}^{\Sigma'} 
&= \prod_{v \in S_{\infty}(F)} H_v^{\times}
\times \prod_{v \in S'_f} H_v^{\times} 
\times \prod_{v \in \Sigma'} U_{H_v}^1 
\times \prod_{v \not \in S' \cup \Sigma'} U_{H_v},\\
\ol{W}_{S'}
&= V_{S'}^{\Sigma'} / J_{S'}^{\Sigma'} = \prod_{v \in S'_f} \Ind_{G_v}^G \Delta G_v,
\end{align}
where $\Ind_{G_v}^G$ denotes the induction from $\Z[G_v]$-modules to $\Z[G]$-modules.
Then, similarly as \eqref{eq:theta}, we obtain a commutative diagram
\begin{equation}\label{eq:theta_prime}
\xymatrix{
0 \ar[r] &
J_{S'}^{\Sigma'} \ar[r] \ar[d]_{\theta_J'} &
V_{S'}^{\Sigma'} \ar[r] \ar[d]_{\theta_V} &
\ol{W}_{S'} \ar[r] \ar[d]_{\theta_W'} &
0 \\
0 \ar[r] &
C_H \ar[r] &
\fO \ar[r] &
\Delta G \ar[r] &
0.
}
\end{equation}
By the choice of $S'$, the cokernels of $\theta'_J$ and of $\theta'_W$ vanish (the former follows from $\Cl_{H, S'}^{\Sigma'} = 0$ and the latter from $\bigcup_{v \in S' \setminus \Sigma} G_v = G$).
Hence $\theta_V$ is surjective by the snake lemma.
\end{proof}

Let $\OO_H^{\times}$ be the unit group of $H$.
We put
\[
\OO_H^{\times, \Sigma'}
= \{ x \in \OO_H^{\times} \mid x \equiv 1 (\bmod w), \forall w \in \Sigma'_H\}.
\]
By applying the snake lemma to \eqref{eq:theta}, thanks to Lemma \ref{lem:theta_surj}, we obtain an exact sequence
\begin{equation}\label{eq:snake_theta}
0 \to \OO_H^{\times, \Sigma'} \to \Ker(\theta_V) \to \Ker(\theta_W) \overset{\delta_{\theta}}{\to} \Cl_H^{\Sigma'} \to 0.
\end{equation}
Since we have $(\OO_H^{\times, \Sigma'})^- = \mu(H)^{\Sigma', -}$ by the Dirichlet unit theorem, the minus component of \eqref{eq:snake_theta} yields
\begin{equation}\label{eq:snake_theta^-}
0 \to \mu(H)^{\Sigma', -} \to \Ker(\theta_V)^- \to \Ker(\theta_W)^- \overset{\delta_{\theta}^-}{\to}  \Cl_H^{\Sigma', -} \to 0.
\end{equation}

We next investigate $\Ker(\theta_W)^-$.
For that purpose, we use the element $v_0 \in S' \setminus \Sigma$ such that $G_{v_0} = \Gal(H/H^+)$.
We write $c \in \Gal(H/H^+)$ for the complex conjugation.

\begin{lem}\label{lem:ker_theta_W}
The natural projection map $W_{S'} \to \bigoplus_{v \in S'_f \setminus \{v_0\}} W_v$ induces an isomorphism
\[
\Ker(\theta_W)^- 
\simeq \bigoplus_{v \in S'_f \setminus \{v_0\}} W_v^-.
\]
\end{lem}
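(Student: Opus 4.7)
The plan is to show that the restriction $p \colon \Ker(\theta_W) \to \bigoplus_{v \in S'_f \setminus \{v_0\}} W_v$ of the natural projection is injective and surjective separately, after applying $(-)^-$. Throughout, I will use that $(-)^-$ is exact, since $\Z[G]^-$ is a ring-theoretic direct factor of $\Z[1/2][G]$; this lets me commute the functor with kernels and images freely.

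For injectivity, the kernel of $p|_{\Ker(\theta_W)}$ is $W_{v_0} \cap \Ker(\theta_W)$. Because each natural map $\Delta G_v \hookrightarrow \Delta G$ is injective, this intersection equals $\Ker(W_{v_0} \to \Delta G_{v_0})$, which by the defining exact sequence \eqref{eq:W} applied at each $w \mid v_0$ is a direct sum of copies of $\Z$, each with trivial $G$-action. Since $c$ acts trivially on $\Z$ whereas the minus component enforces $c = -1$, inverting $2$ gives $\Z^- = 0$. Hence the kernel vanishes after $(-)^-$.

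For surjectivity, since $\theta_W$ is surjective (as already noted in the proof of Lemma \ref{lem:theta_surj}), it suffices to prove that $W_{v_0}^- \to (\Delta G)^-$ is surjective: one can then correct any element of $\bigoplus_{v \neq v_0} W_v^-$ by a term from $W_{v_0}^-$ so that the total lies in $\Ker(\theta_W)^-$. Here the choice $G_{v_0} = \Gal(H/H^+) = \langle c \rangle$ is crucial. Writing $W_{v_0} = \mathrm{Ind}_{G_{v_0}}^G W_{w_0}$ for a fixed $w_0 \mid v_0$ and using that $W_{w_0} \twoheadrightarrow \Delta G_{v_0}$ by \eqref{eq:W}, the image of the $\Z[G]$-linear map $W_{v_0} \to \Delta G$ is the principal ideal $(c-1)\Z[G]$. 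Using $\Z^- = 0$ to identify $(\Delta G)^- \simeq \Z[G]^-$ from the augmentation sequence, the element $c-1$ becomes $-2$, a unit in $\Z[G]^-$, so the image fills $(\Delta G)^-$.

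The only non-routine ingredient is this last computation, and its success rests entirely on the choice of $v_0$ with $G_{v_0} = \langle c \rangle$: this is precisely the reason the set-up demands the existence of such an auxiliary prime inside $S'$.
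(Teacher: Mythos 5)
Your proof is correct and follows essentially the same route as the paper: both boil down to showing that the $v_0$-component $W_{v_0}^- \to (\Delta G)^-$ of $\theta_W^-$ is an isomorphism, using that $c \in G_{v_0}$ acts trivially on $\Ker(W_{w_0} \to \Delta G_{w_0}) \simeq \Z$ (so it dies under $(-)^-$) for injectivity and that $c-1$ becomes a unit in $\Z[G]^-$ for surjectivity. The paper packages this as the chain of isomorphisms $W_{w_0}^- \simeq (\Delta G_{w_0})^- \simeq \Z[G_{w_0}]^-$ coming from \eqref{eq:W}, inducing $W_{v_0}^- \simeq \Z[G]^-$, which is the same computation presented in one step rather than split into injectivity and surjectivity.
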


\begin{proof}
For each prime $w_0 \mid v_0$ of $H$, by the sequence \eqref{eq:W}, we have isomorphisms
\[
W_{w_0}^- \simeq (\Delta G_{w_0})^- \simeq \Z[G_{w_0}]^-.
\]
Note that, under the description in Proposition \ref{prop:W_str}, this isomorphism sends $\frac{1}{2} (1 - c, 1) \in W_{w_0}$ to the identity element $\frac{1}{2}(1 - c) \in \Z[G_{w_0}]^-$.
Thus we have an isomorphism $W_{v_0}^- \simeq \Z[G]^-$.
This isomorphism is the component of $\theta_W^-$ at $v_0$, so the lemma follows.
\end{proof}

\begin{defn}\label{defn:f_hom}
We define an injective homomorphism 
\[
f: \Ker(\theta_W)^- \hookrightarrow \bigoplus_{v \in S'_f \setminus \{v_0\}} \Z[G]^-
\]
as the composite map of the isomorphism in Lemma \ref{lem:ker_theta_W} and the map 
\[
\bigoplus_{v \in S'_f \setminus \{v_0\}} W_v^- \to \bigoplus_{v \in S'_f \setminus \{v_0\}} \Z[G]^-
\]
which is defined, by using Proposition \ref{prop:str_W_2}, as $f_v^-$ at the components for $v \in \Sigma_f$ and as $\iota_v^-$ at the components for $v \in S'_f \setminus (\Sigma_f \cup \{v_0\})$.
\end{defn}

\begin{defn}\label{defn:Omega}
We define a $\Z[G]^-$-module $\Omega_{\Sigma}^{\Sigma'}$ as the cokernel of the composite map
\begin{equation}\label{eq:defn_Omega}
\Ker(\theta_V)^- \to \Ker(\theta_W)^- \overset{f}{\hookrightarrow} \bigoplus_{v \in S'_f \setminus \{v_0\}} \Z[G]^-,
\end{equation}
where the first map is the middle map of the sequence \eqref{eq:snake_theta^-}.
\end{defn}

By Proposition \ref{prop:str_W_2}, we have $\Coker(f) \simeq \bigoplus_{v \in \Sigma_f} A_v^-$, where $A_v$ is defined in \eqref{eq:A_v}.
Then the exact sequence \eqref{eq:fund} follows easily from \eqref{eq:snake_theta^-}.
This finishes the proof of Proposition \ref{prop:Omega} (the construction of $\Omega_{\Sigma}^{\Sigma'}$).

Though we omit the detail, the construction of $\Omega_{\Sigma}^{\Sigma'}$ does not depend, up to isomorphisms, on the choice of $S'$ and $v_0$.
This may be deduced from the determination of the extension class as we will discuss in \S \ref{ss:ext}.

The following proposition will be useful for investigation of the homomorphism $\delta_{\theta}$ in the exact sequence \eqref{eq:snake_theta} (defined by the snake lemma).

\begin{prop}[{\cite[Theorem 5]{RW96}}]\label{prop:snake_desc}
Let us consider the commutative diagram
\begin{equation}\label{eq:local_to_global3}
\xymatrix{
0 \ar[r] &
\bigoplus_{v \in S'_f} \Ind_{G_v}^G \Z \ar[r] \ar[d] &
\bigoplus_{v \in S'_f} W_v \ar[r] \ar[d] &
\bigoplus_{v \in S'_f} \Ind_{G_v}^G \Delta G_v \ar[r] \ar@{^{(}->}[d] &
0 \\
0 \ar[r] &
\Cl_H^{\Sigma'} \ar[r] &
\fH^{\Sigma'} \ar[r] &
\Delta G \ar[r] &
0
}
\end{equation}
induced by \eqref{eq:local_to_global2} for $w \in (S'_f)_H$.
Then the image of $\Ker(\theta_W)$ under the middle vertical arrow is contained in the image of $\Cl_H^{\Sigma'}$, and the induced homomorphism $\Ker(\theta_W) \to \Cl_H^{\Sigma'}$ coincides with the homomorphism $\delta_{\theta}$ in \eqref{eq:snake_theta}.
\end{prop}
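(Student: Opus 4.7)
The proposition splits into two assertions: a containment ``the image of $\Ker(\theta_W)$ lies in $\Cl_H^{\Sigma'}$'' and an identification of two maps $\Ker(\theta_W) \to \Cl_H^{\Sigma'}$. My plan is to prove both by a diagram chase in a three-dimensional commutative diagram obtained by combining \eqref{eq:theta} (global snake lemma setup) with \eqref{eq:local_to_global3} (local-to-global induced from \eqref{eq:local_to_global2}). The crucial input will be the compatibility between the local and global translation-functor pictures of \S\ref{ss:local}--\S\ref{ss:local_global}; this is essentially the content of the three-dimensional diagram on \cite[page 169]{RW96} which merges \eqref{eq:local_to_global} and \eqref{eq:local_to_global2}.

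For the containment, I would argue as follows. By construction, the map $\theta_W : W_{S'} \to \Delta G$ of \eqref{eq:theta} factors as the top row right map $W_{S'} \to \bigoplus_{v \in S'_f}\Ind_{G_v}^G \Delta G_v$ of \eqref{eq:local_to_global3} followed by the right vertical arrow $\bigoplus_{v \in S'_f}\Ind_{G_v}^G \Delta G_v \hookrightarrow \Delta G$ of \eqref{eq:local_to_global3}. Hence for $x \in \Ker(\theta_W)$, commutativity of the right square of \eqref{eq:local_to_global3} forces the image in $\fH^{\Sigma'}$ to map to $0$ in $\Delta G$, so it lies in $\Cl_H^{\Sigma'}$ by exactness of the bottom row of \eqref{eq:local_to_global3}.

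For the identification of maps, I will compute $\delta_\theta(x)$ using the snake lemma applied to \eqref{eq:theta}. Given $x \in \Ker(\theta_W)$, I lift it to $\tilde{x} \in V_{S'}^{\Sigma'}$ via the surjection $V_{S'}^{\Sigma'} \twoheadrightarrow W_{S'}$, and consider $y := \theta_V(\tilde{x}) \in \fO$. By commutativity of the right square of \eqref{eq:theta}, the element $y$ maps to $\theta_W(x) = 0$ in $\Delta G$, hence $y \in C_H$, and by definition $\delta_\theta(x)$ is the image $\bar{y} \in \Cl_H^{\Sigma'}$. The three-dimensional compatibility (i.e., the commutativity of the square $V_{S'}^{\Sigma'} \to \fO \to \fH^{\Sigma'}$ versus $V_{S'}^{\Sigma'} \to W_{S'} \to \fH^{\Sigma'}$, where the vertical arrows are local-to-global and the horizontal quotients respectively) yields that the image of $y$ in $\fH^{\Sigma'}$ equals the image $z_{S'}(x)$ of $x$ under the middle vertical arrow of \eqref{eq:local_to_global3}. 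Since $\Cl_H^{\Sigma'} \hookrightarrow \fH^{\Sigma'}$ is injective, this gives the equality of $\delta_\theta(x)$ and $z_{S'}(x)$ in $\Cl_H^{\Sigma'}$.

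The only real obstacle is the three-dimensional compatibility itself; everything else is formal. This compatibility ultimately comes from the standard fact that the global fundamental class $u_{H/F}$ restricts, at each finite place $w$ of $H$, to the local fundamental class $u_{H_w/F_v}$ under the embedding $H_w^\times \hookrightarrow C_H$, together with the functoriality of the translation functor from Definition \ref{defn:translation} with respect to $G_v \hookrightarrow G$ and $H_w^\times \hookrightarrow C_H \twoheadrightarrow \Cl_H^{\Sigma'}$. Since this is precisely the content of \cite[Theorem~5]{RW96} in the relevant form, I would simply invoke it rather than reprove it, and present the argument above as the formal consequence.
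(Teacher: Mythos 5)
The paper gives no proof here at all — it simply cites \cite[Theorem~5]{RW96} — so there is no in-paper argument to compare against, and your job was really to supply the diagram chase that the citation encapsulates. You have done this correctly.

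Your two steps are sound. For the containment, $\theta_W$ does indeed factor as (top-right map of \eqref{eq:local_to_global3}) followed by (right vertical arrow of \eqref{eq:local_to_global3}): this is by the very construction of $\theta_W$ in \S\ref{ss:Omega_constr}, since $W_v \to \Delta G$ is obtained from $V_v \to \fO \to \Delta G$ and that in turn equals $V_v \to \Delta G_v \hookrightarrow \Delta G$. Commutativity of the right square then forces $z_{S'}(x)$ to die in $\Delta G$, so it lies in $\Cl_H^{\Sigma'}$ by exactness of the bottom row. Note that injectivity of the right vertical arrow $\bigoplus_v \Ind_{G_v}^G \Delta G_v \to \Delta G$ plays no role and in fact fails in general (it is even surjective by the choice of $S'$); the hook in \eqref{eq:local_to_global3} is a stylistic carry-over from \eqref{eq:local_to_global2}. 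For the identification, your computation of $\delta_\theta(x)$ by the snake lemma is correct; the comparison with $z_{S'}(x)$ through $\fH^{\Sigma'}$ rests on two facts: the three-dimensional compatibility for the $V_v$-components with $v\in S'_f$, and the vanishing in $\Cl_H^{\Sigma'}=\Coker(\theta_J)$ of the contributions from the remaining components of the lift $\tilde{x}$ (which all lie in $J^{\Sigma'}$). Both are handled correctly. One small attributional correction: the three-dimensional diagram is on \cite[page 169]{RW96}, whereas \cite[Theorem~5]{RW96} is (the statement of) this proposition itself; your argument, as you present it, really shows that the former implies the latter.
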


\subsection{Cohomological triviality}\label{ss:ct}

In this subsection, we prove Proposition \ref{prop:Omega_p_ct}, from which Proposition \ref{prop:Omega_ct} follows.
A key observation is the following facts from local and global class field theory.

\begin{prop}\label{prop:VO_ct}
The following are true.
\begin{itemize}
\item[(1)]
In the situation of \S \ref{ss:local}, the module $V_w$ is cohomologically trivial over $G_w$.
\item[(2)]
In the situation of \S \ref{ss:global}, the module $\fO$ is cohomologically trivial over $G$.
\end{itemize}
\end{prop}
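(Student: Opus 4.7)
The plan is to deduce both claims uniformly from Tate's theorem (Tate--Nakayama) applied to the fundamental class of class field theory. Write $\Gamma$ for the Galois group in question ($G_w$ in the local case, $G$ in the global case), $A$ for the relevant class-field-theoretic module ($H_w^{\times}$ or $C_H$), $\alpha \in H^2(\Gamma, A)$ for the fundamental class, and $Y$ for either $V_w$ or $\fO$. By construction, $Y$ fits into a $\Z[\Gamma]$-module extension
\[
0 \to A \to Y \to \Delta \Gamma \to 0
\]
whose class in $\Ext^1_{\Z[\Gamma]}(\Delta \Gamma, A)$ is $\bft(\alpha)$. What I want is $\widehat{H}^n(\Gamma_0, Y) = 0$ for every subgroup $\Gamma_0 \subset \Gamma$ and every $n \in \Z$, which by the standard characterization is equivalent to the cohomological triviality of $Y$ over $\Gamma$.

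The first step is to interpret the connecting homomorphism of this extension in terms of cup product. Splicing the extension with the augmentation sequence $0 \to \Delta \Gamma \to \Z[\Gamma] \to \Z \to 0$, and using that $\Z[\Gamma]$ is Tate-acyclic over any $\Gamma_0 \subset \Gamma$, I obtain for every $n$ a composite map
\[
\widehat{H}^{n-1}(\Gamma_0, \Z) \overset{\sim}{\to} \widehat{H}^n(\Gamma_0, \Delta \Gamma) \to \widehat{H}^{n+1}(\Gamma_0, A),
\]
which, under the defining identifications $\Ext^1_{\Z[\Gamma]}(\Delta \Gamma, A) \simeq \Ext^2_{\Z[\Gamma]}(\Z, A) \simeq H^2(\Gamma, A)$ of Gruenberg's functor $\bft$ from Definition \ref{defn:translation}, is precisely cup product with the restriction of $\alpha$ to $\Gamma_0$.

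The second step is to invoke Tate's theorem. In both the local and the global situations, class field theory supplies, for every $\Gamma_0 \subset \Gamma$, the vanishing $\widehat{H}^1(\Gamma_0, A) = 0$ (Hilbert 90 for $A = H_w^{\times}$, and its idelic analogue for $A = C_H$) together with the statement that $\widehat{H}^2(\Gamma_0, A)$ is cyclic of order $\# \Gamma_0$, generated by $\res_{\Gamma_0}^{\Gamma}(\alpha)$. Tate's theorem then guarantees that cup product with $\alpha$ induces an isomorphism $\widehat{H}^{n-1}(\Gamma_0, \Z) \simeq \widehat{H}^{n+1}(\Gamma_0, A)$ in every degree. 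Combined with the first step, this forces the connecting map attached to the extension to be an isomorphism in every degree and for every subgroup, and the long exact sequence of Tate cohomology then yields $\widehat{H}^*(\Gamma_0, Y) = 0$, as desired.

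The only delicate point is the compatibility assertion of the first step, namely that the two-step connecting map really agrees with cup product by $\alpha$ under the chain of identifications built into $\bft$. This is a standard Yoneda-extension calculation and is essentially already recorded in Ritter--Weiss \cite[\S 2]{RW96}, so it should pose no genuine obstacle; everything else is formal once Tate's theorem is in hand.
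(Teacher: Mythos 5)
Your argument is correct and is the canonical proof of this fact. The paper omits a proof of Proposition \ref{prop:VO_ct} entirely, implicitly treating it as standard; your route via Tate--Nakayama (splice the Gruenberg extension with the augmentation sequence, identify the connecting map with cup product against the fundamental class, invoke the isomorphism $\widehat{H}^{n-1}(\Gamma_0,\Z)\simeq\widehat{H}^{n+1}(\Gamma_0,A)$, and read off Tate-acyclicity of $Y$ from the long exact sequence) is precisely the intended argument and is also the one given in Ritter--Weiss.
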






From now on, let us fix an odd prime number $p$.
Recall the convention introduced in \S \ref{ss:eTNCp} on the character components.
For a $\Z$-module $M$, we write $\widehat{M}$ for its $p$-adic completion.
For example, let us consider a non-archimedean local field $H_w$ with mixed characteristic.
When the residue characteristic of $H_w$ is $p$, we have $\widehat{U_{H_w}} \simeq U_{H_w}^1$.
On the other hand, when the residue characteristic of $H_w$ is not $p$, we have $\widehat{U_{H_w}} \simeq \widehat{U_{H_w}/U_{H_w}^1} \simeq \Z_p \otimes_{\Z} U_{H_w}/U_{H_w}^1$.
In both cases, the $\Z_p$-module $\widehat{U_{H_w}}$ is finitely generated.


\begin{lem}\label{lem:U_ct}
We consider the situation in \S \ref{ss:local} (we do not restrict the residue characteristic).
Let us suppose that $G_w = \Gal(H_w/F_v)$ is abelian.
Let $G_w' \subset G_w$ denote the maximal subgroup of order prime to $p$.
Let $\chi$ be a character of $G'_w$.
Suppose either $p \nmid \# I_w$ or $\chi$ is non-trivial on $G_w'$.
Then $U_{H_w}^{\chi}$ is cohomologically trivial over $G_w$.
\end{lem}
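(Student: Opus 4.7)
The plan is to reduce the problem to Tate cohomology over the $p$-Sylow subgroup $P$ of $G_w$, and then handle the two hypotheses of the lemma separately. Since $U_{H_w}^{\chi}$ is a $p$-primary $\Z_p[G_w]$-module and $|G_w'|$ is prime to $p$, a standard restriction–corestriction argument shows that for any subgroup $H \subset G_w$ we have $\widehat{H}^i(H, U_{H_w}^{\chi}) = \widehat{H}^i(H \cap P, U_{H_w}^{\chi})$. Hence cohomological triviality of $U_{H_w}^{\chi}$ over $G_w$ reduces to checking $\widehat{H}^i(Q, U_{H_w}^{\chi}) = 0$ for every subgroup $Q \subset P$ and every integer $i$.

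Under the first hypothesis $p \nmid \#I_w$, we have $I_w \subset G_w'$ and hence $Q \cap I_w = 1$ for every $Q \subset P$. Therefore the intermediate extension $H_w/H_w^Q$ has trivial inertia, i.e., it is unramified. The classical fact that local units of an unramified extension are cohomologically trivial (cf.\ Serre's \emph{Local Fields}, Ch.\ V) then gives $\widehat{H}^i(Q, \widehat{U_{H_w}}) = 0$ for all $i$. Since $U_{H_w}^{\chi}$ is a direct summand of $\OO_{\chi} \otimes_{\Z_p} \widehat{U_{H_w}}$ cut out by the $\chi$-idempotent $e_{\chi} \in \OO_{\chi}[G_w']$, the vanishing descends to $U_{H_w}^{\chi}$.

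Under the second hypothesis, that $\chi$ is non-trivial, the plan is to pass instead to $H_w^{\times}$ via the $p$-completed valuation sequence
\[
0 \to \widehat{U_{H_w}} \to \widehat{H_w^{\times}} \to \Z_p \to 0.
\]
Here $\Z_p$ has trivial $G_w'$-action, and for non-trivial $\chi$ the element $\chi(g) - 1$ is a unit in $\OO_{\chi}$ for some $g \in G_w'$; consequently $(\Z_p)^{\chi} = 0$. Taking $\chi$-components therefore yields $U_{H_w}^{\chi} \simeq (\widehat{H_w^{\times}})^{\chi}$. Local class field theory (Tate--Nakayama applied to the local fundamental class $u_Q \in H^2(Q, H_w^{\times})$) then supplies a canonical isomorphism $\widehat{H}^{i+2}(Q, \widehat{H_w^{\times}}) \simeq \widehat{H}^{i}(Q, \Z_p)$ given by cup product with $u_Q$. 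Since $u_Q$ is characterized by the local invariant map and is therefore canonical, it is stable under the outer $G_w/Q$-action; in particular the isomorphism is $G_w'$-equivariant. On the right, $\Z_p$ has trivial $G_w'$-action and conjugation on $Q$ is trivial because $G_w$ is abelian, so $G_w'$ acts trivially on $\widehat{H}^{i}(Q, \Z_p)$. The $\chi$-component therefore vanishes, yielding $\widehat{H}^i(Q, (\widehat{H_w^{\times}})^{\chi}) = 0$ for all $i$.

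The main obstacle will be the careful verification of the $G_w'$-equivariance of the Tate--Nakayama isomorphism; this amounts to showing that the local fundamental class is fixed by the outer action of $G_w/Q$, which is ultimately a formal consequence of the functoriality of the local invariant map with respect to field automorphisms.
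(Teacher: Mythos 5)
Your proof is correct, but it takes a genuinely different route from the paper. The paper exploits the machinery it has already built in \S\ref{ss:local}: using the middle column of diagram \eqref{eq:VW} together with the cohomological triviality of $V_w$ (Proposition \ref{prop:VO_ct}(1)), it dimension-shifts the question to $W_w^{\chi}$, and then uses the explicit presentation $0 \to W_w \to \Z[G_w] \to \Z[G_w/I_w]/(1-\varphi_v^{-1}+\#I_w) \to 0$ of Proposition \ref{prop:str_W_2}(2) to reduce to the cohomological triviality of the $\chi$-component of that cyclic quotient. The two hypotheses are then handled by elementary commutative algebra: when $p \nmid \#I_w$ the module $\Z_p[G_w/I_w]$ is $\Z_p[G_w]$-projective, and when $\chi$ is non-trivial on $G_w'$ the paper reduces mod $p$ and invokes Nakayama to conclude that the relevant $\chi$-component is in fact zero (a stronger statement than cohomological triviality). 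You instead argue directly at the level of Tate cohomology: reduce to subgroups of the $p$-Sylow by restriction--corestriction, use the classical vanishing of $\widehat{H}^{*}$ of local units for unramified extensions in the first case, and in the second case pass to $\widehat{H_w^{\times}}$ via the valuation sequence and apply Tate--Nakayama. Both approaches ultimately rest on local class field theory (your Tate--Nakayama cup-product plays the role of the fundamental class baked into $W_w$), and the $G_w'$-equivariance you flag as the ``main obstacle'' is precisely the content that the paper has already encapsulated in the $\Z[G_w]$-module structure of $W_w$ --- so the paper's argument is shorter given its setup, while yours is more self-contained and does not rely on the $W_w$ construction. One small point worth being explicit about in your write-up: since by the paper's convention $U_{H_w}^{\chi}$ means $(\widehat{U_{H_w}})^{\chi}$, you should note that the passage from $\widehat{H}^{i}(Q, U_{H_w})$ (resp.\ $\widehat{H}^{i}(Q, H_w^{\times})$) to $\widehat{H}^{i}(Q, \widehat{U_{H_w}})$ (resp.\ $\widehat{H}^{i}(Q, \widehat{H_w^{\times}})$) is harmless because $Q$ is a $p$-group, so these Tate groups are finite $p$-groups and the prime-to-$p$ discrepancy between a module and its $p$-completion contributes nothing.
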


\begin{proof}
We make use of the results in \S \ref{ss:local}.
By the middle vertical exact sequence in \eqref{eq:VW} and Proposition \ref{prop:VO_ct}(1), the cohomological triviality of $U_{H_w}^{\chi}$ is equivalent to that of $W_w^{\chi}$, which in turn is equivalent to that of
\[
(\Z_p[G_w/I_w]/(1 - \varphi_v^{-1} + \# I_w))^{\chi}
\]
 by Proposition \ref{prop:str_W_2}(2).
When $p \nmid \# I_w$, then the $\Z_p[G_w]$-module $\Z_p[G_w/I_w]$ is projective, from which we can easily deduce the claim.
Suppose $p \mid \# I_w$ and $\chi$ is non-trivial on $G_w'$.
Then we have
\[
\F_p \otimes_{\Z_p} (\Z_p[G_w/I_w]/(1 - \varphi_v^{-1} + \# I_w))^{\chi}
\simeq (\F_p[G_w/I_w]/(1 - \varphi_v^{-1}))^{\chi}
\simeq (\F_p)^{\chi}
= 0.
\]
By Nakayama's lemma, $(\Z_p[G_w/I_w]/(1 - \varphi_v^{-1} + \# I_w))^{\chi}$ vanishes.
\end{proof}

Now we are ready to prove Proposition \ref{prop:Omega_p_ct}.
\begin{proof}[Proof of Proposition \ref{prop:Omega_p_ct}]
By the sequence \eqref{eq:snake_theta^-}, the homomorphism $\Ker(\theta_V)^{\chi} \to \Ker(\theta_W)^{\chi}$ is injective under (H3)$_p^{\chi}$.
Thus, by Definition \ref{defn:Omega}, it is enough to show that $\Ker(\theta_V)^{\chi}$ is cohomologically trivial over $G$.
Since $\theta_V$ is surjective by Lemma \ref{lem:theta_surj}, thanks to Proposition \ref{prop:VO_ct}, it remains only to show that $(U_{H_v}^1)^{\chi}$ is cohomologically trivial for $v \in \Sigma'$ and $U_{H_v}^{\chi}$ is cohomologically trivial for $v \not \in S' \cup \Sigma'$.
As $(U_{H_v}^1)^{\chi} = 0$ for $v \not \in S_p(F)$ and $(U_{H_v}^1)^{\chi} = U_{H_v}^{\chi}$ for $v \in S_p(F)$, these are combined to that $U_{H_v}^{\chi}$ is cohomologically trivial for $v \not \in S' \cup (\Sigma' \setminus S_p(F))$.
By Lemma \ref{lem:U_ct}, for a finite prime $v$ of $F$, $U_{H_v}^{\chi}$ is cohomologically trivial if $v \not \in S_{\bad}^{\chi}$.
By (H4)$_p^{\chi}$, we have $S_{\bad}^{\chi} \subset \Sigma_f \cup (\Sigma' \setminus S_p(F)) \subset S'_f \cup (\Sigma' \setminus S_p(F))$.
This completes the proof.
\end{proof}

This proof shows that the conclusion of Proposition \ref{prop:Omega_p_ct} holds even if we weaken the condition (H4)$_p^{\chi}$ by replacing $S_{\bad}^{\chi}$ by the smaller set of finite primes $v$ of $F$ such that $p \mid \# I_v$ and $\chi$ is trivial on $G_v'$, not only on $I_v'$.
However, for the purpose of this paper it is more convenient to use the current condition (H4)$_p^{\chi}$.

\subsection{The extension class}\label{ss:ext}

We keep the notation in \S \ref{ss:Omega_constr}.
The goal of this subsection is to obtain a description of the element $\eta_v$ introduced below.
This corresponds to \cite[\S A.5]{DK20}.

\begin{defn}\label{defn:eta}
We define
\[
\eta \in \Ext^1_{\Z[G]^-} \parenth{\bigoplus_{v \in \Sigma_f} A_v^-, \Cl_H^{\Sigma', -}}
\]
as the element which corresponds to the extension \eqref{eq:fund}.
For each $v \in \Sigma_f$, we define $\eta_v \in \Ext^1_{\Z[G]^-} (A_v^-, \Cl_H^{\Sigma', -})$ as the component of $\eta$ at $v$.
\end{defn}

\begin{defn}\label{defn:phi_cocycle}
Recall that, by \cite[Lemma 6.3]{DK20}, the restriction map induces an isomorphism 
\[
H^1(G_F, \Cl_H^{\Sigma', -}) \simeq H^1(G_H, \Cl_H^{\Sigma', -})^G = \Hom_G(G_H, \Cl_H^{\Sigma', -}).
\]
We regard the natural homomorphism
\[
\phi_H: G_H \twoheadrightarrow \Gal(H^{\Sigma', \ab}/H) \simeq \Cl_H^{\Sigma'} \twoheadrightarrow \Cl_H^{\Sigma', -},
\]
where the isomorphism is given by the Artin reciprocity map, as an element of $\Hom_G(G_H, \Cl_H^{\Sigma', -})$.
We then define a cocycle class $[\phi] \in H^1(G_F, \Cl_H^{\Sigma', -})$ so that the restriction of $[\phi]$ equals to $\phi_H$.
Note that we defined only the class $[\phi]$, not $\phi$.
\end{defn}

\begin{lem}[{\cite[\S A.5, (159)]{DK20}}]\label{lem:desc_phi}
Let $\wtil{c} \in G_F$ be a lift of the complex conjugation $c \in G$.
We define a map $\phi: G_F \to \Cl_H^{\Sigma', -}$ by 
\[
\phi(g) = \phi_H(g \wtil{c}g^{-1}\wtil{c}^{-1})^{1/2}.
\]
Then $\phi$ is a cocycle and represents the cocycle class $[\phi] \in H^1(G_F, \Cl_H^{\Sigma', -})$.
\end{lem}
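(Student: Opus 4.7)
The plan is to verify three things: (i) $\phi$ is well-defined as a set-theoretic map $G_F \to \Cl_H^{\Sigma', -}$; (ii) $\phi$ is a $1$-cocycle for the natural $G_F$-action (which factors through $G$); and (iii) the restriction of $\phi$ to $G_H$ coincides with $\phi_H$. Granting (ii) and (iii), the identification of $\phi$ as a cocycle representing $[\phi]$ will follow at once from the isomorphism $H^1(G_F, \Cl_H^{\Sigma',-}) \simeq \Hom_G(G_H, \Cl_H^{\Sigma',-})$ recorded in Definition \ref{defn:phi_cocycle}.

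For (i), since $G$ is abelian the image of the commutator $[g, \wtil{c}] := g \wtil{c} g^{-1} \wtil{c}^{-1}$ in $G$ is trivial, hence $[g,\wtil{c}] \in G_H$ and $\phi_H([g,\wtil{c}])$ is defined; the exponent $1/2$ then makes sense because $2$ is invertible on $\Cl_H^{\Sigma',-}$. For (ii), I would use the commutator identity
\[
[gh,\wtil{c}] = g[h,\wtil{c}]g^{-1}\cdot [g,\wtil{c}]
\]
in $G_H$, which is a direct computation in $G_F$. Applying $\phi_H$, which is a $G$-equivariant homomorphism on $G_H$, and remembering that the $G_F$-action on $\Cl_H^{\Sigma',-}$ factors through $G$ by conjugation, gives
\[
2\phi(gh) = \phi_H([g,\wtil{c}]) + \ol{g} \cdot \phi_H([h,\wtil{c}]) = 2\phi(g) + 2\,\ol{g}\cdot \phi(h),
\]
and dividing by $2$ yields the cocycle relation.

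Finally, for (iii), take $g \in G_H$; then both $g$ and $\wtil{c}g^{-1}\wtil{c}^{-1}$ lie in $G_H$, so
\[
2\phi(g) = \phi_H(g) + \phi_H(\wtil{c}g^{-1}\wtil{c}^{-1}) = \phi_H(g) - c\cdot\phi_H(g) = 2\phi_H(g),
\]
where the second equality uses $G$-equivariance of $\phi_H$ together with $\phi_H(g^{-1}) = -\phi_H(g)$, and the third uses that $c$ acts as $-1$ on the minus part. Hence $\phi|_{G_H} = \phi_H$, and by \cite[Lemma 6.3]{DK20} the cocycle $\phi$ represents the class $[\phi]$ of Definition \ref{defn:phi_cocycle}. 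The only place that requires any real care is the commutator identity in (ii); everything else is formal from the $G$-equivariance of $\phi_H$ and the invertibility of $2$ on the minus component, so I expect no serious obstacle beyond bookkeeping.
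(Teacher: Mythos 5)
Your proof is correct and follows essentially the same route as the paper's (very terse) argument: well-definedness from abelianness of $G$, the cocycle identity via the commutator relation $[gh,\wtil{c}] = g[h,\wtil{c}]g^{-1}\cdot[g,\wtil{c}]$ together with $G$-equivariance of $\phi_H$, and agreement on $G_H$ from the fact that $c$ acts by $-1$ on the minus part. You simply supply the bookkeeping that the paper leaves as ``easy to see.''
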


\begin{proof}
Since $H/F$ is abelian, the element $g \wtil{c}g^{-1}\wtil{c}^{-1}$ is contained in $G_H$, so the map $\phi$ is well-defined.
It is easy to see that $\phi$ is a cocycle.
Finally, when $g \in G_H$, by the definition of the action of the complex conjugation, we have $\phi(g) = \phi_H(g)$.
\end{proof}

For each finite prime $v$ of $F$, we define an ideal $J_v$ of $\Z[G]$ as the annihilator ideal of the module $A_v$ defined in \eqref{eq:A_v}.
Since $A_v$ is a cyclic module, we have an exact sequence
\begin{equation}\label{eq:J_v}
0 \to J_v \to \Z[G] \to A_v \to 0,
\end{equation}
which induces an exact sequence
\begin{equation}\label{eq:Jv_ext}
\Hom_{\Z[G]^-}(\Z[G]^-, \Cl_H^{\Sigma', -}) 
\to \Hom_{\Z[G]^-}(J_v^-, \Cl_H^{\Sigma', -}) 
\overset{\delta_v}{\to} \Ext^1_{\Z[G]^-}(A_v^-, \Cl_H^{\Sigma', -}) \to 0.
\end{equation}

Let
\[
\bpsi: G_F \twoheadrightarrow G \hookrightarrow \Z[G]^{\times}
\]
be the tautological character.
Then $J_v$ is generated by 
\[
\bpsi(\sigma) - 1
\]
for $\sigma \in I_{F_v}$ and 
\[
\bpsi(\wtil{\varphi_v}) - 1 + \nu_{I_v} \bpsi(\wtil{\varphi_v})
\]
 for a lift $\wtil{\varphi_v} \in G_{F_v}$ of $\varphi_v \in \Gal(F_v^{\ur}/F_v)$.
Therefore, a $\Z[G]$-homomorphism from $J_v$ is determined by the values of these elements.

\begin{prop}\label{prop:ext_desc}
Let $v \in \Sigma_f$.
Let us choose a lift $\wtil{c} \in G_F$ of the complex conjugation $c \in G$ and define a cocycle $\phi: G_F \to \Cl_H^{\Sigma', -}$ as in Lemma \ref{lem:desc_phi}.
Then there exists an element 
\[
\wtil{\eta_v} \in \Hom_{\Z[G]^-}(J_v^-, \Cl_H^{\Sigma', -})
\]
 which satisfies the following.
\begin{itemize}
\item[(a)]
We have $\delta_v(\wtil{\eta_v}) = \eta_v$, where $\delta_v$ is the homomorphism in \eqref{eq:Jv_ext}.
\item[(b)]
For any $\sigma \in I_{F_v}$, 
we have $\wtil{\eta_v}(\bpsi(\sigma) - 1) = \phi(\sigma)$.
\item[(c)]
For any lift $\wtil{\varphi_v} \in G_{F_v}$ of the arithmetic Frobenius, 
we have $\wtil{\eta_v}(\bpsi(\wtil{\varphi_v}) - 1 + \nu_{I_v} \bpsi(\wtil{\varphi_v})) = \phi(\wtil{\varphi_v})$.
\end{itemize}
\end{prop}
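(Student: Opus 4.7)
My plan is to construct $\wtil{\eta_v}$ via the snake-lemma description of $\eta_v$ built into the cokernel construction of $\Omega_{\Sigma}^{\Sigma'}$ (Definition \ref{defn:Omega}), and then to evaluate it on the two specified generators of $J_v$ by combining the explicit local descriptions from Propositions \ref{prop:W_str} and \ref{prop:str_W_2} with the local-global compatibility encoded in Propositions \ref{prop:snake_desc} and \ref{prop:compute_local_global}.

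First, for each $j \in J_v^-$, Proposition \ref{prop:str_W_2}(2) identifies $A_v^- = \Coker(f_v^-)$, so $j$ admits a (unique) lift $w_v \in W_v^-$ under $f_v^-$. Lemma \ref{lem:ker_theta_W} then produces a unique $w_{v_0} \in W_{v_0}^-$ extending the tuple concentrated at $v$ to an element $w \in \Ker(\theta_W)^-$, and I set $\wtil{\eta_v}(j) := \delta_\theta^-(w) \in \Cl_H^{\Sigma', -}$. Tracing through Definition \ref{defn:Omega} shows that this construction realizes $\wtil{\eta_v}$ as a preimage of $\eta_v$ under $\delta_v$ in \eqref{eq:Jv_ext} obtained from the snake lemma on \eqref{eq:fund}, which yields (a).

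Second, for the two generators I exhibit explicit lifts in $W_w^- \subset W_v^-$ using Proposition \ref{prop:W_str}. For $j = \bpsi(\sigma) - 1$ with $\sigma \in I_{F_v}$, the element $(\bpsi(\sigma) - 1, 0)$ lies in $W_w$ (since $\bpsi(\sigma) \in I_w$ forces the relation $\ol{x} = (1-\varphi_v^{-1})y$), and by Proposition \ref{prop:str_W_2}(2) it is sent by $f_w$ to $\bpsi(\sigma) - 1$. Similarly, $(\bpsi(\wtil{\varphi_v}) - 1, \varphi_v) \in W_w$ is sent by $f_w$ to $\bpsi(\wtil{\varphi_v}) - 1 + \nu_{I_v}\bpsi(\wtil{\varphi_v})$. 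Then Proposition \ref{prop:snake_desc} identifies $\delta_\theta^-$ with the restriction to $\Ker(\theta_W)^-$ of the middle vertical map in \eqref{eq:local_to_global3}, and Proposition \ref{prop:compute_local_global} computes the $v$-component contribution in $\fH^{\Sigma'}$ to be $1 \otimes (\ol{\sigma} - 1)$ for (b) and $1 \otimes (\ol{\wtil{\varphi_v}} - 1)$ for (c).

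The final and most delicate step is to handle the $v_0$-contribution and match the total with $\phi$. Since $v_0$ is unramified with $G_{v_0} = \langle c \rangle$, the complementary $w_{v_0}$ is forced by $\theta_{W,v_0}^-(w_{v_0}) = -\theta_{W,v}^-(w_v)$, and applying Proposition \ref{prop:compute_local_global}(2) at $v_0$ (with $\wtil{c}$ as a Frobenius lift, since $\varphi_{v_0} = c$) produces an $\fH^{\Sigma'}$-contribution involving the conjugate $\wtil{c}\,g\,\wtil{c}^{-1}$ for $g \in \{\sigma, \wtil{\varphi_v}\}$. The main obstacle is this last unwinding: one must carefully track the isomorphism $W_{v_0}^- \simeq \Z[G]^-$ from Lemma \ref{lem:ker_theta_W}, the induced form of $\theta_{W,v_0}^-$, and how the minus-part idempotent $(1-c)/2$ symmetrizes the $v$- and $v_0$-contributions into the commutator $g\,\wtil{c}\,g^{-1}\,\wtil{c}^{-1} \in G_H$, whose image in $\Cl_H^{\Sigma', -}$ under the Artin reciprocity map is $\phi_H(g\wtil{c}g^{-1}\wtil{c}^{-1})^{1/2} = \phi(g)$ by Lemma \ref{lem:desc_phi}.
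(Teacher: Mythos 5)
Your proposal follows essentially the same route as the paper's proof: lift the generators of $J_v^-$ to $W_v^-$ via $f_v$, view them inside $\Ker(\theta_W)^-$ using Lemma \ref{lem:ker_theta_W}, apply $\delta_\theta^-$, and then compute using Propositions \ref{prop:snake_desc} and \ref{prop:compute_local_global}, with the $v_0$-contribution and the minus-projector combining to produce the commutator in $G_H$. The two places you leave as assertions — that your snake-map definition of $\wtil{\eta_v}$ really is a $\delta_v$-preimage of $\eta_v$ (the paper checks this with a diagram chase of $\Hom$s and $\Ext^1$s), and the final group-ring manipulation that yields $1 \otimes \frac{1}{2}(\ol{\sigma}\ol{\wtil{c}}\ol{\sigma}^{-1}\ol{\wtil{c}}^{-1}-1)$ — are exactly the steps the paper carries out explicitly, so the overall argument is correct.
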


\begin{proof}
In order to distinguish with unspecified $v$'s, we write $v_1$ for the fixed $v$.
We define $\wtil{x} \in \bigoplus_{v \in S'_f \setminus \{v_0\}} \Z[G]^-$ as the element whose component at $v_1$ is $1$ and whose other components are $0$.
We then define $x \in \Omega_{\Sigma}^{\Sigma'}$ as the image of $\wtil{x}$ (recall Definition \ref{defn:Omega}).
By the definitions, the map in \eqref{eq:fund} sends $x$ to the element of $\bigoplus_{v \in \Sigma_f} A_v^-$ whose component at $v_1$ is the class of $1$ and whose other components are zero.
Then we have a commutative diagram with exact rows
\[
\xymatrix{
	0 \ar[r]
	& J_{v_1}^- \ar[r] \ar[d]_{\wtil{\eta_{v_1}}}
	& \Z_p[G]^- \ar[r] \ar[d]
	& A_{v_1} \ar[r] \ar@{^(->}[d]
	& 0\\
	0 \ar[r]
	& \Cl_H^{\Sigma', -} \ar[r] 
	& \Omega_{\Sigma}^{\Sigma'} \ar[r]
	& \bigoplus_{v \in \Sigma_f} A_v^- \ar[r]
	& 0,
}
\]
where the upper sequence is the minus component of \eqref{eq:J_v}, the lower sequence is \eqref{eq:fund}, the right vertical arrow is the injection into the $v_1$-component, the middle vertical arrow is the $\Z_p[G]^-$-homomorphism that sends $1$ to $x$, and the left vertical arrow (named $\wtil{\eta_{v_1}}$) is the induced one.

We first check that $\wtil{\eta_{v_1}}$ satisfies the condition (a).
From the above diagram we obtain a commutative diagram
\[
\xymatrix{
	\Hom_{\Z_p[G]^-}(J_{v_1}^-, \Cl_H^{\Sigma', -}) \ar[r]^{\delta_{v_1}}
	& \Ext^1_{\Z_p[G]^-}(A_{v_1}^-, \Cl_H^{\Sigma', -})\\
	\Hom_{\Z_p[G]^-}(\Cl_H^{\Sigma', -}, \Cl_H^{\Sigma', -}) \ar[r] \ar[u]
	& \Ext^1_{\Z_p[G]^-}(\bigoplus_{v \in \Sigma_f} A_{v}^-, \Cl_H^{\Sigma', -}), \ar@{->>}[u]
}
\]
where $\delta_{v_1}$ is the same map as in \eqref{eq:Jv_ext}.
By the construction, the identity map on $\Cl_H^{\Sigma', -}$ as an element of the left bottom module goes to $\wtil{\eta_{v_1}}$ of the left upper module, and to $\eta$ of the right bottom module.
The right vertical arrow is nothing but the projection to the $v_1$-component.
Therefore, the condition (a) follows from the commutativity of the square diagram.

It remains to show the conditions (b) and (c).
We recall the homomorphism $f$ in Definition \ref{defn:f_hom}:
\[
f: \Ker(\theta_W)^- 
\simeq \bigoplus_{v \in S'_f \setminus \{v_0\}} W_v^-
\hookrightarrow \bigoplus_{v \in S'_f \setminus \{v_0\}} \Z[G]^-,
\]
where the final arrow is $f_v^-$ at $v \in \Sigma_f$ and $\iota_v^-$ at the others.
For each $\sigma$ in (b) (resp. $\wtil{\varphi} := \wtil{\varphi_{v_1}}$ in (c)), let $y_{\sigma}$ (resp. $y_{\wtil{\varphi}}$) be the element of $\Ker(\theta_W)^-$ which is sent by $f$ to $(\bpsi(\sigma) - 1)\wtil{x}$ (resp. $(\bpsi(\wtil{\varphi}) - 1 + \nu_{I_{v_1}} \bpsi(\wtil{\varphi})) \wtil{x}$).
Let 
\[
\delta_{\theta}^-: \Ker(\theta_W)^- \to \Cl_H^{\Sigma', -}
\]
 be the snake map in \eqref{eq:snake_theta^-}.
By the definition of $\wtil{\eta_{v_1}}$, we then have $\wtil{\eta_{v_1}}(\bpsi(\sigma) - 1) = \delta_{\theta}^-(y_{\sigma})$ (resp. $\wtil{\eta_{v_1}}(\bpsi(\wtil{\varphi}) - 1 + \nu_{I_{v_1}} \bpsi(\wtil{\varphi})) = \delta_{\theta}^-(y_{\wtil{\varphi}})$).

Note that, by the definition of $f_{v_1}$, we have $f_{v_1}(\bpsi(\sigma) - 1, 0) = \bpsi(\sigma) - 1$ (resp.~$f_{v_1}(\bpsi(\wtil{\varphi}) - 1, \varphi) = \bpsi(\wtil{\varphi}) - 1 + \nu_{I_{v_1}} \bpsi(\wtil{\varphi})$ with $\varphi := \varphi_{v_1}$).
Also recall that we have an isomorphism $W_{v_0}^- \simeq \Z[G]^-$ which sends $\frac{1}{2} (1 - c, 1)$ to the identity element $\frac{1}{2}(1 - c)$ (see Lemma \ref{lem:ker_theta_W}).
Therefore, the element $y_{\sigma}$ (resp. $y_{\wtil{\varphi}}$) is the element whose component is $(\bpsi(\sigma) - 1, 0)$ (resp. $(\bpsi(\wtil{\varphi}) - 1, \varphi)$) at $v_1$, $-(\bpsi(\sigma) - 1)\frac{1}{2} (1 - c, 1)$  (resp. $-(\bpsi(\wtil{\varphi}) - 1)\frac{1}{2} (1 - c, 1)$) at $v_0$, and zero at the others.

Now we apply Propositions \ref{prop:compute_local_global} and \ref{prop:snake_desc} and, as a consequence, we obtain
\[
\delta_{\theta}^-(y_{\sigma}) 
= 1 \otimes \left(\frac{1}{2}(1 - \ol{\wtil{c}}) (\ol{\sigma} - 1) + (\ol{\sigma} - 1) \frac{1}{2}(\ol{\wtil{c}} - 1) \right)
\]
(resp.
\[
\delta_{\theta}^-(y_{\wtil{\varphi}})
= 1 \otimes \left(\frac{1}{2}(1 - \ol{\wtil{c}})(\ol{\wtil{\varphi}} - 1) + (\ol{\wtil{\varphi}} - 1) \frac{1}{2}(\ol{\wtil{c}} - 1) \right))
\]
as elements of $\fH^{\Sigma'}$ via \eqref{eq:H_desc}.
Here, the overlines mean the restrictions to $\Gal(H^{\Sigma', \ab}/F)$ and the coefficients $\frac{1}{2}(1 - \ol{\wtil{c}})$ comes from projection to the minus component.
This element can be computed as
\[
\delta_{\theta}^-(y_{\sigma}) 
= 1 \otimes \frac{1}{2}(\ol{\sigma} \ol{\wtil{c}} - \ol{\wtil{c}} \ol{\sigma})
= 1 \otimes \frac{1}{2}(\ol{\sigma} \ol{\wtil{c}} \ol{\sigma}^{-1} \ol{\wtil{c}}^{-1} - 1) \ol{\wtil{c}} \ol{\sigma}
= 1 \otimes \frac{1}{2}(\ol{\sigma} \ol{\wtil{c}} \ol{\sigma}^{-1} \ol{\wtil{c}}^{-1} - 1)
\]
(resp.
\[
\delta_{\theta}^-(y_{\wtil{\varphi}}) 
= 1 \otimes \frac{1}{2}(\ol{\wtil{\varphi}} \ol{\wtil{c}} - \ol{\wtil{c}} \ol{\wtil{\varphi}})
= 1 \otimes \frac{1}{2}(\ol{\wtil{\varphi}} \ol{\wtil{c}} \ol{\wtil{\varphi}}^{-1} \ol{\wtil{c}}^{-1} - 1) \ol{\wtil{c}} \ol{\wtil{\varphi}}
= 1 \otimes \frac{1}{2}(\ol{\wtil{\varphi}} \ol{\wtil{c}} \ol{\wtil{\varphi}}^{-1} \ol{\wtil{c}}^{-1} - 1)).
\]
Therefore, via the inclusion $\Cl_H^{\Sigma', -} \hookrightarrow \fH^{\Sigma'}$, 
the element $\delta_{\theta}^-(y_{\sigma})$ (resp.~$\delta_{\theta}^-(y_{\wtil{\varphi}})$) of $\fH^{\Sigma'}$ coincides with $\phi_H(\sigma \wtil{c} \sigma^{-1} \wtil{c}^{-1})^{1/2} = \phi(\sigma) \in \Cl_H^{\Sigma', -}$ (resp. $\phi_H(\wtil{\varphi} \wtil{c} \wtil{\varphi}^{-1} \wtil{c}^{-1})^{1/2} = \phi(\wtil{\varphi}) \in \Cl_H^{\Sigma', -}$).
This completes the proof.
\end{proof}

\section{Compatibility of the eTNC}\label{s:func}

In this section, we prove two functorial properties of our module $\Omega_{\Sigma}^{\Sigma'}$.
These properties are necessary for proving the compatibility of the eTNC and also for reducing the proof of the main theorems of this paper to special cases.

In \S \ref{ss:func_state}, we state the propositions that are to be proved in this section.
The proof is given in \S \ref{ss:pf_state} after preliminary discussion in \S \S \ref{ss:Fitt} -- \ref{ss:p_variant}.

\subsection{Statements}\label{ss:func_state}

Let $H/F$ be a finite abelian CM-extension and put $G = \Gal(H/F)$.
We fix an odd prime number $p$ and write $G'$ for the maximal subgroup of $G$ of order prime to $p$.

\subsubsection{Varying $(\Sigma, \Sigma')$}\label{sss:Sigma}

For each finite prime $v$ of $F$, let us put
\begin{equation}\label{eq:hv'}
h_v' = 1 - N(v) \frac{\nu_{I_v}}{\# I_v} \varphi_v^{-1} \in \Q[G].
\end{equation}
Note that, for a pair $(\Sigma, \Sigma')$ of finite sets of places of $F$ satisfying (H1) and (H2), we have
\begin{equation}\label{eq:theta_h_h'}
\theta_{\Sigma}^{\Sigma'} 
= \prod_{v \in \Sigma_f} h_v^- \cdot \prod_{v \in \Sigma'} h_v^{\prime, -} \cdot \omega
\end{equation}
(see \eqref{eq:theta_defn}).
This immediately gives us formulas for the variance of $\theta_{\Sigma}^{\Sigma'}$ as $(\Sigma, \Sigma')$ varies (we do not write them concretely).
The following proposition is their algebraic counterparts.

\begin{prop}\label{prop:Om_var}
Let $\chi$ be an odd character of $G'$.
Let $(\Sigma, \Sigma')$ be a pair of finite sets of places of $F$ satisfying (H1), (H2), (H3)$_p^{\chi}$, and (H4)$_p^{\chi}$.
Then the following hold.
\begin{itemize}
\item[(1)]
For a finite prime $v \not \in \Sigma \cup \Sigma'$, we have
\[
\Fitt_{\Z_p[G]^{\chi}}({\Omega_{\Sigma \cup \{v\}}^{\Sigma', \chi}})
= {h_v^{\chi}} \Fitt_{\Z_p[G]^{\chi}}({\Omega_{\Sigma}^{\Sigma', \chi}}).
\]
\item[(2)]
For a finite prime $v \not \in \Sigma \cup \Sigma'$, we have
\[
\Fitt_{\Z_p[G]^{\chi}}({\Omega_{\Sigma}^{\Sigma' \cup \{v\}, \chi}})
= h_v^{\prime, \chi} \Fitt_{\Z_p[G]^{\chi}}({\Omega_{\Sigma}^{\Sigma', \chi}}).
\]
\item[(3)]
For a finite prime $v \in \Sigma_f \setminus (S_{\bad}^{\chi} \cap S_p(F))$ (note that $(\Sigma \setminus \{v\}, \Sigma' \cup \{v\})$ again satisfies the conditions (H1), (H2), (H3)$_p^{\chi}$, and (H4)$_p^{\chi}$), we have
\[
\Fitt_{\Z_p[G]^{\chi}}({\Omega_{\Sigma \setminus \{v\}}^{\Sigma' \cup \{v\}, \chi}})
= \frac{h_v^{\prime, \chi}}{{h_v^{\chi}}}\Fitt_{\Z_p[G]^{\chi}}({\Omega_{\Sigma}^{\Sigma', \chi}}).
\]
\end{itemize}
\end{prop}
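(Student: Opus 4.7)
My plan is to establish (1) and (2) directly from the explicit construction of $\Omega_\Sigma^{\Sigma'}$ in Section~\ref{s:Omega}, and to deduce (3) by combining them.

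For~(1), I would choose the auxiliary set $S'$ of Section~\ref{ss:Omega_constr} so that it serves both $\Omega_\Sigma^{\Sigma'}$ and $\Omega_{\Sigma \cup \{v\}}^{\Sigma'}$ simultaneously. The two modules are then cokernels of maps $\phi_\Sigma, \phi_{\Sigma \cup \{v\}}\colon \Ker(\theta_V)^- \to \bigoplus_u \Z[G]^-$ that differ only at the $v$-coordinate, where the first uses $\iota_v^-$ and the second uses $f_v^-$ (Definition~\ref{defn:f_hom}). A direct computation from Proposition~\ref{prop:W_str} should yield the identity $f_v = h_v \cdot \iota_v$ for unramified $v$, so that $\phi_{\Sigma \cup \{v\}}$ is obtained from $\phi_\Sigma$ by scaling the $v$-th coordinate by $h_v$. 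A brief snake-lemma argument, using that $h_v$ is a non-zero-divisor, then produces a short exact sequence
\[
0 \to \Omega_\Sigma^{\Sigma'} \to \Omega_{\Sigma \cup \{v\}}^{\Sigma'} \to A_v^- \to 0,
\]
since the cokernel of multiplication by $h_v$ on $\Z[G]^-$ is $A_v^-$ by definition. Passing to $\chi$-components and applying multiplicativity of Fitting ideals for short exact sequences of modules of projective dimension $\leq 1$ over the local ring $\Z_p[G]^\chi$ (justified by Proposition~\ref{prop:Omega_p_ct} for the outer two terms and by a direct verification for $A_v^\chi$), together with the elementary computation $\Fitt_{\Z_p[G]^\chi}(A_v^\chi) = (h_v^\chi)$, gives~(1).

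For~(2), the change alters the left-hand term of~\eqref{eq:fund} rather than the right-hand term. I would invoke the classical exact sequence relating the ray class groups of moduli $\Sigma'$ and $\Sigma' \cup \{v\}$, combined with~(H3)$_p^\chi$ (which kills the image of $\OO_H^{\times, \Sigma'}$ on the $\chi$-component, as that image is controlled by $\mu_{p^\infty}(H)^{\Sigma', \chi} = 0$), to obtain a short exact sequence $0 \to (k_v^\times)^\chi \to \Cl_H^{\Sigma' \cup \{v\}, \chi} \to \Cl_H^{\Sigma', \chi} \to 0$, where $k_v^\times = \bigoplus_{w \mid v} k_w^\times$. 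Combining with~\eqref{eq:fund} (whose right-hand term is unchanged since $\Sigma$ is fixed) upgrades this to $0 \to (k_v^\times)^\chi \to \Omega_\Sigma^{\Sigma' \cup \{v\}, \chi} \to \Omega_\Sigma^{\Sigma', \chi} \to 0$. Another application of Fitting-ideal multiplicativity, together with the identification $k_w^\times \simeq \Z[G_w/I_w]/(\varphi_v - N(v))$ as $\Z[G_w/I_w]$-modules (the action of $\varphi_v$ on $k_w^\times$ being the $N(v)$-th power map), from which $\Fitt_{\Z_p[G]^\chi}((k_v^\times)^\chi) = (h_v'^\chi)$ follows by inspection, completes~(2).

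For~(3), I would apply~(1) and~(2) to the intermediate pair $(\Sigma \setminus \{v\}, \Sigma')$: from~(1) applied to the added prime $v$ one obtains $\Fitt(\Omega_\Sigma^{\Sigma', \chi}) = h_v^\chi \Fitt(\Omega_{\Sigma \setminus \{v\}}^{\Sigma', \chi})$, while~(2) yields $\Fitt(\Omega_{\Sigma \setminus \{v\}}^{\Sigma' \cup \{v\}, \chi}) = h_v'^\chi \Fitt(\Omega_{\Sigma \setminus \{v\}}^{\Sigma', \chi})$; dividing (in the total fraction ring of $\Z_p[G]^\chi$, legitimate because $h_v^\chi$ is a non-zero-divisor) yields~(3). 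The main obstacle throughout is justifying Fitting-ideal multiplicativity in the short exact sequences above, where the hypotheses (H3)$_p^\chi$/(H4)$_p^\chi$ enter essentially via Proposition~\ref{prop:Omega_p_ct}. A secondary technical issue arises in~(3) when $v \in S_{\bad}^\chi \setminus S_p(F)$: then the intermediate pair $(\Sigma \setminus \{v\}, \Sigma')$ itself fails~(H4)$_p^\chi$, so one would either handle this case via a direct presentation-level comparison or by establishing~(1) and~(2) under suitably relaxed hypotheses.
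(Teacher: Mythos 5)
Your plans for (1) and (2) are essentially right in spirit and close to the paper's route, but there are two gaps, one minor and one substantial.

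For (1), your identity $f_v = h_v \cdot \iota_v$ holds only when $v$ is unramified (indeed $\iota_v$ is only defined in that case), and you need $v$ unramified in order to include it in the auxiliary set $S'$ simultaneously for both modules (the construction of $S'$ requires $S' \setminus \Sigma$ to consist of unramified primes). But the hypotheses of the proposition permit $v$ ramified as long as $v \notin S_{\bad}^{\chi}$. The paper avoids this by choosing $S'$ with $v \notin S'$, using $S'$ for $\Omega_{\Sigma}^{\Sigma'}$ and $S' \cup \{v\}$ for $\Omega_{\Sigma \cup \{v\}}^{\Sigma'}$; the exact sequence $0 \to V_{S'}^{\Sigma'} \to V_{S' \cup \{v\}}^{\Sigma'} \to W_v \to 0$ then yields $0 \to \Omega_\Sigma^{\Sigma'} \to \Omega_{\Sigma \cup \{v\}}^{\Sigma'} \to A_v^- \to 0$ without any unramifiedness assumption. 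Your plan for (2), using the classical ray-class-group kernel–cokernel sequence, is an acceptable alternative to the paper's snake-lemma argument on the $V$-modules, and the use of (H3)$_p^{\chi}$ to kill the unit contribution is correct.

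The real gap is in (3). You rightly note that for $v \in S_{\bad}^{\chi} \setminus S_p(F)$ the intermediate pair $(\Sigma \setminus \{v\}, \Sigma')$ fails (H4)$_p^{\chi}$, but you dismiss this as a ``secondary technical issue'' to be fixed by a direct presentation-level comparison or by relaxing hypotheses. That understates the problem. When (H4)$_p^{\chi}$ fails, $\Omega_{\Sigma \setminus \{v\}}^{\Sigma', \chi}$ is no longer of projective dimension $\leq 1$, so Fitting-ideal multiplicativity in a three-term sequence breaks down; concretely, the relevant kernel and cokernel are both (non-canonically) $\mu_{p^\infty}(H_w)$, which is not cohomologically trivial. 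The paper's resolution is substantial: it introduces the family of variants $\ol{\Omega_{\Sigma}^{\Sigma'}}$ depending on scalars $(c_v)$ in \S\ref{ss:p_variant}, proves Proposition~\ref{prop:variance_bar} relating $\Fitt(\Omega^{\chi})$ and $\Fitt(\ol{\Omega}^{\chi})$, and, crucially, proves Lemma~\ref{lem:Fitt_loc2} by passing to the unramified $\Z_p$-tower over $H_w$ (so that the kernel/cokernel becomes $\Z_p(1)$, whose $\mu$-invariant vanishes) and then descending via Proposition~\ref{prop:descent}. Only with this Iwasawa-theoretic input, combined with Lemma~\ref{lem:Fitt_diagram}, can one establish $\Fitt(\ol{\Omega_{\Sigma \setminus \{v\}}^{\Sigma' \cup \{v\}, \chi}}) = \Fitt(\ol{\Omega_{\Sigma}^{\Sigma', \chi}})$ directly, bypassing the intermediate pair, and then conclude (3) via Proposition~\ref{prop:variance_bar}. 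Your plan as written gives no mechanism to bridge this, and ``relaxing the hypotheses'' is not available because cohomological triviality genuinely fails.
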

The proof will be given in \S \ref{ss:pf_state}.
The proof of the claims (1) and (2) is not hard as we have simple relations between the modules concerned.
On the other hand, the proof of the claim (3) is much harder.
This is because we cannot bypass the claim (1) or (2) since the condition (H4)$_p^{\chi}$ fails for the pair $(\Sigma \setminus \{v\}, \Sigma')$ if $v \in S_{\bad}^{\chi} \setminus (S_p(F) \cap S_{\bad}^{\chi})$. 

Proposition \ref{prop:Om_var} implies that the $\chi$-component of the eTNC$_p^-$ is independent from the choice of $(\Sigma, \Sigma')$.
Moreover, the statement of Theorem \ref{thm:current_main_p} is also independent from $(\Sigma, \Sigma')$.

\subsubsection{Varying $H$}\label{sss:H}

Let $K$ be an intermediate CM-field of the original finite abelian CM-extension $H/F$.
In order to clarify the field concerned, we write $h_{v, H}$, $\theta_{\Sigma, H}^{\Sigma'}$, and $\Omega_{\Sigma, H}^{\Sigma'}$ for the objects $h_{v}$, $\theta_{\Sigma}^{\Sigma'}$, and $\Omega_{\Sigma}^{\Sigma'}$ defined for the extension $H/F$.
Then we also have $h_{v, K}$, $\theta_{\Sigma, K}^{\Sigma'}$, and $\Omega_{\Sigma, K}^{\Sigma'}$ defined for $K/F$ instead of $H/F$.

We study the behavior of the analytic and algebraic objects with respect to the natural map
\[
\pi_{H/K}: \Z[G] \to \Z[\GK].
\]
By abuse of notation, we also write $\pi_{H/K}$ for the induced map $\Q[G] \to \Q[\GK]$.

First we observe the behavior on the analytic side.
Let $(\Sigma, \Sigma')$ be a pair of finite sets of places of $F$ satisfying (H1) and (H2).
By \eqref{eq:theta_h_h'}, we immediately obtain
\begin{equation}\label{eq:theta_bar}
\pi_{H/K}(\theta_{\Sigma, H}^{\Sigma'})
= \parenth{\prod_{v \in \Sigma_f} \frac{\pi_{H/K}(h_{v, H})}{h_{v, K}}}
\theta_{\Sigma, K}^{\Sigma'}
\end{equation}
as elements of $\Q[\Gal(K/F)]$.

Note that we have $\pi_{H/K}(h_{v, H}) = h_{v, K}$ if and only if $v$ is unramified in $H/K$.
This is the subtle point of our formulation of the eTNC using $\theta_{\Sigma}^{\Sigma'}$ and $\Omega_{\Sigma}^{\Sigma'}$.
Since the definitions of $A_v$ in \eqref{eq:A_v} and of $h_v$ in \eqref{eq:hv} depend on (the order of) the inertia subgroup, they do not enjoy the most desirable compatibility with respect to field extensions.
This seems to be inevitable in order to make $A_v$ a finite module and $h_v$ a non-zero-divisor.

Now the following is the algebraic counterpart of \eqref{eq:theta_bar}.
As usual, let $G'$ and $\GK'$ be the maximal subgroup of $G$ and $\GK$ of order prime to $p$, respectively.

\begin{prop}\label{prop:fld_var}
Let $\chi$ be an odd character of $\GK'$, which is also regarded as an odd character of $G'$ via the natural map $G' \twoheadrightarrow \GK'$.
Let $(\Sigma, \Sigma')$ be a pair of finite sets of places of $F$ satisfying (H1), (H2), (H3)$_p^{\chi}$, and (H4)$_p^{\chi}$ for $H/F$.
Note that those conditions also hold for $K/F$.
Then we have
\[
\Fitt_{\Z_p[\GK]^{\chi}} \parenth{(\Omega_{\Sigma, H}^{\Sigma', \chi})_{\Gal(H/K)}}
= \parenth{\prod_{v \in \Sigma_f} \frac{\pi_{H/K}(h_{v, H}^{\chi})}{h_{v, K}^{\chi}}}
\Fitt_{\Z_p[\GK]^{\chi}} (\Omega_{\Sigma, K}^{\Sigma', \chi})
\]
as ideals of $\Z_p[\GK]^{\chi}$.
\end{prop}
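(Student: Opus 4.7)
The plan is to deduce the proposition from a direct comparison between the Ritter--Weiss constructions of $\Omega_{\Sigma,H}^{\Sigma',\chi}$ and $\Omega_{\Sigma,K}^{\Sigma',\chi}$ from \S\ref{ss:Omega_constr}. By Proposition \ref{prop:Omega_p_ct}, the module $\Omega_{\Sigma,H}^{\Sigma',\chi}$ has projective dimension $\leq 1$ over $R_H := \Z_p[G]^{\chi}$, hence its Fitting ideal is invertible and principal, and the base-change formula
\[
\Fitt_{\Z_p[\GK]^{\chi}}\bigl((\Omega_{\Sigma,H}^{\Sigma',\chi})_{\Gal(H/K)}\bigr) = \pi_{H/K}\bigl(\Fitt_{R_H}(\Omega_{\Sigma,H}^{\Sigma',\chi})\bigr)
\]
holds without difficulty. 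The problem is thus reduced to relating the right-hand side to $\Fitt_{\Z_p[\GK]^{\chi}}(\Omega_{\Sigma,K}^{\Sigma',\chi})$.

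To carry out the comparison, I would choose a single auxiliary set $S'$ of places of $F$ satisfying the conditions of \S\ref{ss:Omega_constr} simultaneously for $H/F$ and for $K/F$; such a set exists by Chebotarev. Using norm maps on ideles and units, one obtains a $\pi_{H/K}$-equivariant morphism between the defining diagrams \eqref{eq:theta} for $H$ and for $K$, so that the middle free module $\bigoplus_{v \in S'_f \setminus \{v_0\}} \Z_p[\GK]^{\chi}$ is the common coinvariants on both sides. Taking $\chi$-components and $\Gal(H/K)$-coinvariants, this yields matching length-one presentations of $(\Omega_{\Sigma,H}^{\Sigma',\chi})_{\Gal(H/K)}$ and of $\Omega_{\Sigma,K}^{\Sigma',\chi}$, so the ratio of their Fitting ideals is a product of purely local contributions at each $v \in S'_f \setminus \{v_0\}$ coming from the comparison of $W_v$ on the two sides. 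At $v \in S'_f \setminus \Sigma_f$, the prime $v$ is unramified in $H/F$ and $\iota_v$ of Proposition \ref{prop:str_W_2}(1) is an isomorphism on both sides, so the local ratio is $1$; at $v \in \Sigma_f$, the sequence of Proposition \ref{prop:str_W_2}(2) together with the freedom afforded by Remark \ref{rem:fw_var} identifies the local ratio with $\pi_{H/K}(h_{v,H}^{\chi})/h_{v,K}^{\chi}$. One repeatedly uses the elementary identity $\pi_{H/K}(\nu_{I_{v,H}}/\#I_{v,H}) = \nu_{I_{v,K}}/\#I_{v,K}$, which follows from $\pi_{H/K}(\nu_{I_{v,H}}) = [I_{v,H}:I_{v,K}]\,\nu_{I_{v,K}}$.

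The technical heart lies in the local comparison at a prime $v \in \Sigma_f$ ramified in $H/K$. There the module $(A_{v,H}^{\chi})_{\Gal(H/K)}$ is presented over $\Z_p[\GK/I_{v,K}]^{\chi}$ by the relation $1 - \varphi_v^{-1} + \#I_{v,H}$, while $A_{v,K}^{\chi}$ is presented by $1 - \varphi_v^{-1} + \#I_{v,K}$; these two modules are generically non-isomorphic, and the mismatch of their presentations yields exactly the factor $\pi_{H/K}(h_{v,H}^{\chi})/h_{v,K}^{\chi}$ when one passes to the presenting matrix for $\Omega^{\chi}$. Packaging this calculation into the global construction without introducing parasitic contributions from the comparison of $\Cl_H^{\Sigma',\chi}$ with $\Cl_K^{\Sigma',\chi}$ (which would otherwise pollute the formula) is where the main work lies.
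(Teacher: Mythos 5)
Your proposal is correct and follows essentially the same route as the paper: both reduce to comparing the $\Ker(\theta_V) \to \bigoplus \Z_p[G]^-$ presentations of $\Omega_H$ and $\Omega_K$ over a common auxiliary set $S'$, and both identify the local discrepancy at each $v \in \Sigma_f$ with the factor $\pi_{H/K}(h_{v,H}^{\chi})/h_{v,K}^{\chi}$ via the freedom of Remark \ref{rem:fw_var}. Concretely, the paper makes this precise by taking $c_v = \#I_{v,H/K}$ in the variant $\ol{\Omega_{\Sigma,K}^{\Sigma'}}$ of \S\ref{ss:p_variant}, so that $\pi_{H/K}(h_{v,H}) = \ol{h_{v,K}}$, proving the isomorphism $\ol{\Omega_{\Sigma,K}^{\Sigma',\chi}} \simeq (\Omega_{\Sigma,H}^{\Sigma',\chi})_{\Gal(H/K)}$ via a single commutative diagram and then invoking Proposition \ref{prop:variance_bar}. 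Your worry about ``parasitic contributions'' from comparing $\Cl_H^{\Sigma',\chi}$ with $\Cl_K^{\Sigma',\chi}$ does not actually arise: the comparison is carried out entirely at the level of the free resolutions, whose sources $\Ker(\theta_V)^{\chi}$ are projective, and the class-group comparison is absorbed into the identification $(\Z_p \otimes \Ker(\theta_{V,H})^-)_{\Gal(H/K)} \simeq \Z_p \otimes \Ker(\theta_{V,K})^-$ of \cite[Lemma B.1]{DK20}, which never needs to be made explicit.
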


The proof will be given in \S \ref{ss:pf_state}.

By Propositions \ref{prop:Om_var} and \ref{prop:fld_var}, in order to prove the full statement of Theorem \ref{thm:current_main_p}, we may suppose that $\chi$ is faithful and may choose any pair $(\Sigma, \Sigma')$ (satisfying the conditions (H1), (H2), (H3)$_p^{\chi}$, and (H4)$_p^{\chi}$).
In practice, the choice will be as in Setting \ref{setting}.

\subsection{Preliminaries on Fitting ideals}\label{ss:Fitt}

This subsection is an algebraic preliminary to proving Propositions \ref{prop:Om_var} and \ref{prop:fld_var}.
Most of the contents are just reformulations of materials that are known to experts of this field.

Let $\cG$ be a profinite group which is isomorphic to the product of a finite abelian group and $\Z_p^d$ for some $d \geq 0$.
We consider the completed group ring $R = \Z_p[[\cG]]$.

It is well-known that, for a finitely generated torsion $R$-module $M$ such that $\pd_R(M) \leq 1$, the Fitting ideal $\Fitt_R(M)$ is a principal ideal of $R$ which is generated by a non-zero-divisor.

\begin{defn}\label{defn:Fitt_ind}
Let $M$ and $M'$ be finitely generated $R$-modules such that $\pd_R(M) \leq 1$ and $\pd_R(M') \leq 1$.
Let $\phi: M \to M'$ be an $R$-homomorphism whose kernel and cokernel are both torsion over $R$.
We shall define a principal invertible fractional ideal $\Fitt_R(\phi)$ of $R$, which we call the Fitting ideal of $\phi$, as follows.

We take a projective $R$-module $F$ and an injective homomorphism $\psi: F \to M$ whose cokernel is torsion (there do exist such an $F$ and a $\psi$).
Note that then $\phi \circ \psi: F \to M'$ is also injective with torsion cokernel.
The situation is described by a commutative diagram with exact rows
\[
\xymatrix{
	0 \ar[r]
	& F \ar[r]^{\psi} \ar@{=}[d]
	& M \ar[r] \ar[d]_{\phi}
	& \Coker(\psi) \ar[r] \ar[d]
	& 0\\
	0 \ar[r]
	& F \ar[r]_{\phi \circ \psi}
	& M' \ar[r]
	& \Coker(\phi \circ \psi) \ar[r]
	& 0.	
}
\]
Then we define
\[
\Fitt_R(\phi) = \Fitt_R(\Coker(\psi))^{-1} \Fitt_R(\Coker(\phi \circ \psi)).
\]
Since $\Coker(\psi)$ and $\Coker(\phi \circ \psi)$ are both torsion and of $\pd_R \leq 1$, the Fitting ideals in the right hand are principal and invertible.
We omit the proof of the independency from the choices.
\end{defn}

\begin{lem}\label{lem:quasi_add}
For $\phi: M \to M'$ as in Definition \ref{defn:Fitt_ind}, we have
\[
\Fitt_R(\Coker(\phi)) = \Fitt_R(\phi) \Fitt_R(E_R^1(\Ker(\phi)))
\]
as (not necessarily principal) ideals of $R$.
Here, we put $E^1_R(M) = \Ext^1_R(M, R)$ for an $R$-module $M$.
\end{lem}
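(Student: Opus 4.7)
My plan is to unwind the definition of $\Fitt_R(\phi)$ via the snake lemma, reducing the assertion to an identity about the map $\bar\phi$ induced on cokernels (both of which are torsion and of projective dimension $\leq 1$), and then to establish that identity by analyzing an associated mapping cone complex. Concretely, I fix a projective $F$ with an injection $\psi \colon F \hookrightarrow M$ whose cokernel is torsion, and put $C := \Coker(\psi)$, $C' := \Coker(\phi \circ \psi)$; both have $\pd_R \leq 1$, and $\Fitt_R(\phi) = \Fitt_R(C)^{-1}\Fitt_R(C')$ by definition. The snake lemma applied to the defining commutative diagram yields the $4$-term exact sequence
\[
0 \to \Ker(\phi) \to C \xrightarrow{\bar\phi} C' \to \Coker(\phi) \to 0,
\]
so the desired formula becomes $\Fitt_R(C)\cdot\Fitt_R(\Coker\bar\phi) = \Fitt_R(C')\cdot\Fitt_R(E^1_R(\Ker\bar\phi))$.

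To establish this reduced identity, I take square presentations $R^n \xrightarrow{A} R^n \twoheadrightarrow C$ and $R^m \xrightarrow{B} R^m \twoheadrightarrow C'$ (so $\Fitt_R(C) = (\det A)$ and $\Fitt_R(C') = (\det B)$), lift $\bar\phi$ to a chain map $(\phi_0, \phi_1)$ satisfying $B\phi_1 = \phi_0 A$, and form the mapping cone
\[
P^\bullet \colon R^n \xrightarrow{(A, \phi_1)^T} R^n \oplus R^m \xrightarrow{(\phi_0, -B)} R^m.
\]
This is a perfect complex of free modules of rank-Euler-characteristic zero, with cohomology $H^0 = 0$, $H^1 = \Ker\bar\phi$, and $H^2 = \Coker\bar\phi$; in particular, $\Fitt_R(\Coker\bar\phi)$ equals the ideal of $m \times m$-minors of the matrix $(\phi_0, -B)$.

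Finally, I would combine the standard identity $\Fitt_R(X) = \Fitt_R(E^1_R(X))$ for torsion $\pd_R \leq 1$ modules $X$ (obtained by transposing a square presentation, applied here to $X = C$ and $X = C'$) with the long exact $\Ext$-sequences coming from $0 \to \Ker\bar\phi \to C \to \Imag\bar\phi \to 0$ and $0 \to \Imag\bar\phi \to C' \to \Coker\bar\phi \to 0$; the vanishing $E^i_R(C) = E^i_R(C') = 0$ for $i \geq 2$ should let me match Fitting ideals across these sequences. The main obstacle is that $\Ker\bar\phi$ need not itself have $\pd_R \leq 1$, so naive multiplicativity of Fitting ideals in the $4$-term sequence above would produce $\Fitt_R(\Ker\bar\phi)$ rather than $\Fitt_R(E^1_R(\Ker\bar\phi))$, and these can genuinely differ by a pseudo-null defect. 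Overcoming this requires carefully exploiting that $E^1_R$ extracts precisely the codimension-one (divisorial) part of a torsion module; alternatively, one can phrase the whole argument through the Knudsen--Mumford determinant of $P^\bullet$, whose trivialization inherited from the matrix data must equal both $\Fitt_R(C)^{-1}\Fitt_R(C')$ (from ranks) and $\Fitt_R(E^1_R(\Ker\bar\phi))^{-1}\Fitt_R(\Coker\bar\phi)$ (from cohomology), yielding the identity at once.
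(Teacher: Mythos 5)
Your opening move is exactly the paper's: apply the snake lemma to the diagram defining $\Fitt_R(\phi)$ to obtain the four-term exact sequence
\[
0 \to \Ker(\phi) \to \Coker(\psi) \to \Coker(\phi \circ \psi) \to \Coker(\phi) \to 0,
\]
and reduce the lemma to the identity $\Fitt_R(\Coker\psi)\cdot\Fitt_R(\Coker\phi) = \Fitt_R(\Coker(\phi\circ\psi))\cdot\Fitt_R(E^1_R(\Ker\phi))$. At this point the paper simply invokes a known result (``a formula that is often used in this field,'' citing \cite[Remark 4.8]{Kata_05} and the references given there); your proposal instead tries to supply a proof of that formula from scratch.

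This is where the gap lies. You correctly identify the crux — that $\Ker(\phi)$ need not have $\pd_R \leq 1$, so no naive multiplicativity applies and one must explain why $E^1_R(\Ker\phi)$, rather than $\Ker\phi$, enters — but you do not actually resolve it. The sentence ``Overcoming this requires carefully exploiting that $E^1_R$ extracts precisely the codimension-one (divisorial) part'' is a heuristic, not an argument (and, over $R = \Z_p[[\cG]]$, which is Gorenstein but typically not regular, the ``divisorial part'' picture needs justification, not invocation). The alternative via the Knudsen--Mumford determinant of the mapping cone $P^\bullet$ simply \emph{asserts} that the trivialization computed from cohomology equals $\Fitt_R(E^1_R(\Ker\bar\phi))^{-1}\Fitt_R(\Coker\bar\phi)$; but the standard determinant--cohomology comparison is only available when the cohomology modules are themselves perfect, and $\Ker\bar\phi$ is precisely the term that is not. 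So the claimed equality is exactly the content still to be proved. Similarly, writing down the long exact $\Ext$-sequences for $0 \to \Ker\bar\phi \to C \to \Imag\bar\phi \to 0$ and $0 \to \Imag\bar\phi \to C' \to \Coker\bar\phi \to 0$ does not by itself yield an equality of Fitting ideals, since Fitting ideals are not additive in such sequences; converting those $\Ext$-sequences into the stated product identity is the nontrivial step that the cited reference carries out. In short: your reduction is correct and matches the paper, but the verification of the reduced identity is a sketch of two incomplete strategies rather than a proof, and the paper's proof consists of that same reduction followed by a citation to a complete argument.
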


\begin{proof}
For $\psi$ as in Definition \ref{defn:Fitt_ind}, we have an exact sequence
\[
0 \to \Ker(\phi) \to \Coker(\psi) \to \Coker(\phi \circ \psi) \to \Coker(\phi) \to 0.
\]
Then the lemma is a reformulation of a formula that is often used in this field (see \cite[Remark 4.8]{Kata_05} and the references mentioned there).
\end{proof}

\begin{rem}
If we assume $\pd_R(\Coker(\phi)) \leq 1$, then we also have  $\pd_R(\Ker(\phi)) \leq 1$ and Lemma \ref{lem:quasi_add} implies
\[
\Fitt_R(\phi) = \Fitt_R(\Ker(\phi))^{-1} \Fitt_R(\Coker(\phi)),
\]
which we may regard as a definition of $\Fitt_R(\phi)$ in this case.
However, we will have to deal with the case where the assumption does not hold.
\end{rem}

We omit the proof of the following two elementary lemmas.

\begin{lem}\label{lem:Fitt_comp}
Let $M$, $M'$, and $M''$ be finitely generated $R$-modules with $\pd_R \leq 1$.
Let $\phi: M \to M'$ and $\phi': M' \to M''$ be $R$-homomorphisms whose kernels and cokernels are all torsion over $R$.
Then the kernel and the cokernel of $\phi' \circ \phi$ are also torsion and we have
\[
\Fitt_R(\phi' \circ \phi)
= \Fitt_R(\phi) \Fitt_R(\phi').
\]
\end{lem}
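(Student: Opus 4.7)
The plan is to reduce the identity to a telescoping product computed via a single projective resolution. First, I would verify that $\Ker(\phi'\circ\phi)$ and $\Coker(\phi'\circ\phi)$ are torsion over $R$. This follows from the short exact sequences
\[
0 \to \Ker(\phi) \to \Ker(\phi'\circ\phi) \to \Ker(\phi') \cap \Imag(\phi) \to 0
\]
and
\[
0 \to \Imag(\phi')/\phi'(\Imag(\phi)) \to \Coker(\phi'\circ\phi) \to \Coker(\phi') \to 0,
\]
since the outer terms are torsion by assumption (note that $\Imag(\phi')/\phi'(\Imag(\phi))$ is a quotient of $\Coker(\phi)$). Thus $\Fitt_R(\phi'\circ\phi)$ is defined.

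Next, choose a projective $R$-module $F$ and an injective $\psi \colon F \to M$ with torsion cokernel as in Definition~\ref{defn:Fitt_ind}. The key observation is that $\phi\circ\psi \colon F \to M'$ is itself a legitimate auxiliary map for computing $\Fitt_R(\phi')$: it is injective because $F$ is torsion-free (a summand of a free module over the torsion-free ring $R$), so if $(\phi\circ\psi)(x) = 0$ then $\psi(x) \in \Ker(\phi)$ is torsion, hence killed by a non-zero-divisor $r$, yielding $rx = 0$ and $x = 0$. Its cokernel is torsion: the snake lemma applied to the tautological diagram comparing $\psi$ to $\phi\circ\psi$ identifies the induced map $\Coker(\psi) \to \Coker(\phi\psi)$ as having kernel $\Ker(\phi)$ and cokernel $\Coker(\phi)$, both torsion. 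The projective-dimension bound $\pd_R(\Coker(\phi\psi)) \leq 1$ follows at once from $0 \to F \to M' \to \Coker(\phi\psi) \to 0$, since $F$ is projective and $\pd_R(M') \leq 1$.

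Using $\psi$ to compute $\Fitt_R(\phi)$ and $\Fitt_R(\phi'\circ\phi)$, and $\phi\circ\psi$ (legitimate by the choice-independence asserted after Definition~\ref{defn:Fitt_ind}) to compute $\Fitt_R(\phi')$, I obtain the telescoping identity
\[
\Fitt_R(\phi)\Fitt_R(\phi') = \Fitt_R(\Coker(\psi))^{-1}\Fitt_R(\Coker(\phi\psi)) \cdot \Fitt_R(\Coker(\phi\psi))^{-1}\Fitt_R(\Coker(\phi'\phi\psi)),
\]
which collapses to $\Fitt_R(\Coker(\psi))^{-1}\Fitt_R(\Coker(\phi'\phi\psi)) = \Fitt_R(\phi'\circ\phi)$, as desired. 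The main subtlety is the verification that $\phi\circ\psi$ is a valid auxiliary map for $\phi'$; injectivity tacitly uses that $R$ is torsion-free and that finitely generated torsion $R$-modules are annihilated by a non-zero-divisor, both standard for the completed group rings $R = \Z_p[[\cG]]$ under consideration.
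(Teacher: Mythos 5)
Your argument is correct, and since the paper explicitly omits the proof (``We omit the proof of the following two elementary lemmas''), there is nothing to compare it against; the telescoping computation via a single auxiliary injection $\psi\colon F \to M$ is the natural proof suggested by Definition~\ref{defn:Fitt_ind}. The one load-bearing observation, that $\phi\circ\psi$ is a legitimate choice of auxiliary map for computing $\Fitt_R(\phi')$, is exactly right: the paper itself already notes that $\phi\circ\psi$ is injective with torsion cokernel, and your check of $\pd_R(\Coker(\phi\circ\psi)) \leq 1$ via the sequence $0 \to F \to M' \to \Coker(\phi\circ\psi) \to 0$ closes the remaining gap, so the independence-of-choices assertion following Definition~\ref{defn:Fitt_ind} applies and the product collapses as claimed.
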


\begin{lem}\label{lem:Fitt_diagram}
Let us consider a commutative diagram with exact rows
\[
\xymatrix{
	0 \ar[r]
	& M_1 \ar[r] \ar[d]_{\phi_1}
	& M_2 \ar[r] \ar[d]_{\phi_2}
	& M_3 \ar[r] \ar[d]_{\phi_3}
	& 0\\
	0 \ar[r]
	& M_1' \ar[r]
	& M_2' \ar[r]
	& M_3' \ar[r]
	& 0,
}
\]
where all modules are finitely generated $R$-modules with $\pd_R \leq 1$.
We assume that the kernels and cokernels of $\phi_1, \phi_2, \phi_3$ are all torsion over $R$.
Then we have
\[
\Fitt_R(\phi_2) 
= \Fitt_R(\phi_1) \Fitt_R(\phi_3).
\]
\end{lem}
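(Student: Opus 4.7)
The plan is to prove Lemma \ref{lem:Fitt_diagram} by the standard ``projective approximation'' trick: choose injective maps from projective modules into each $M_i$ in a way that is compatible with the horizontal short exact sequence, then reduce the claim to the multiplicativity of Fitting ideals on short exact sequences of torsion $R$-modules of projective dimension $\leq 1$.

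First, I would fix injections $\psi_1\colon F_1\hookrightarrow M_1$ and $\psi_3\colon F_3\hookrightarrow M_3$ from projective $R$-modules $F_1,F_3$ with torsion cokernels, as in Definition \ref{defn:Fitt_ind}. I then set $F_2 = F_1 \oplus F_3$; using the projectivity of $F_3$ together with the surjectivity of $M_2\twoheadrightarrow M_3$, lift $\psi_3$ to a map $\wtil{\psi}_3\colon F_3\to M_2$, and define $\psi_2\colon F_2\to M_2$ by sending $(x_1,x_3)$ to the sum of the image of $\psi_1(x_1)$ in $M_2$ and $\wtil{\psi}_3(x_3)$. A short diagram chase (projecting to $M_3$ to eliminate $x_3$, then injectivity of $\psi_1$) shows that $\psi_2$ is injective with torsion cokernel, and by construction it fits into a morphism from the split short exact sequence on the $F_i$'s to the given row of $M_i$'s.

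Next, I would apply the snake lemma to this morphism (and to its composition with the $\phi_i$'s) to obtain two short exact sequences of cokernels
\[
0 \to \Coker(\psi_1) \to \Coker(\psi_2) \to \Coker(\psi_3) \to 0,
\]
\[
0 \to \Coker(\phi_1\circ\psi_1) \to \Coker(\phi_2\circ\psi_2) \to \Coker(\phi_3\circ\psi_3) \to 0.
\]
Each of these six cokernels is a finitely generated torsion $R$-module, and using the standard fact that an extension of a $\pd_R \leq 1$ module by a projective module again has $\pd_R \leq 1$, each has projective dimension at most $1$. Hence each has a principal invertible Fitting ideal. Over the ring $R=\Z_p[[\cG]]$ under consideration, for a short exact sequence of such modules the Fitting ideals are multiplicative, so both sequences translate into identities of principal invertible fractional ideals.

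Finally, I divide the second identity by the first and apply the definition
\[
\Fitt_R(\phi_i) = \Fitt_R(\Coker(\psi_i))^{-1}\,\Fitt_R(\Coker(\phi_i\circ\psi_i))
\]
component by component to conclude $\Fitt_R(\phi_2)=\Fitt_R(\phi_1)\Fitt_R(\phi_3)$. The main obstacle is Step 1: producing the injection $\psi_2$ in a way that is simultaneously compatible with the rows of $M_i$'s and of $M_i'$'s, which is exactly where the projectivity of $F_3$ (allowing the lift of $\psi_3$) is essential. A secondary technical point, easily disposed of, is verifying that all cokernels appearing stay in the class of torsion modules of $\pd_R\leq 1$ so that Fitting ideals are principal and multiplicativity applies.
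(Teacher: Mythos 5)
Your proof is correct, and since the paper explicitly omits the proof of this lemma as ``elementary,'' yours is exactly the standard argument one would expect to fill that gap. The construction of the compatible injection $\psi_2$ via the horseshoe-type lift from $F_2 = F_1 \oplus F_3$, the application of the snake lemma to produce the two short exact sequences of cokernels, and the reduction to multiplicativity of Fitting ideals for short exact sequences of torsion modules with $\pd_R \le 1$ are all sound. One small wording issue: the phrase ``an extension of a $\pd_R \le 1$ module by a projective module again has $\pd_R \le 1$'' does not quite describe what you actually use, which is the bound $\pd_R(C) \le \max(\pd_R(M), \pd_R(P) + 1) \le 1$ applied to the sequence $0 \to P \to M \to C \to 0$ with $P$ projective and $\pd_R(M) \le 1$; but the conclusion you draw is the correct one, and the rest of the argument — including the final division of the two Fitting-ideal identities — goes through without trouble.
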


We will also need the following descent property.
Let $\Gamma$ be a closed subgroup of $\cG$ which is isomorphic to $\Z_p$, and we put $\cH = \cG/\Gamma$.
Note that, for each $\Z_p[[\cG]]$-module $M$ with $\pd_{\Z_p[[\cG]]}(M) \leq 1$ and $M^{\Gamma} = 0$, we have $\pd_{\Z_p[[\cH]]}(M_{\Gamma}) \leq 1$.
Let $\pi_{\Gamma}: \Z_p[[\cG]] \to \Z_p[[\cH]]$ be the natural map.
Let us define a multiplicative subset $\cS$ of $\Z_p[[\cG]]$ as the set of elements which are sent by $\pi_{\Gamma}$ to non-zero-divisors of $\Z_p[[\cH]]$.
Then, by localization, $\pi_{\Gamma}$ induces an algebra homomorphism
\[
\cS^{-1} \Z_p[[\cG]] \to \Frac(\Z_p[[\cH]]),
\]
where $\Frac(\Z_p[[\cH]])$ denotes the ring of fractions of $\Z_p[[\cH]]$.
We write $\pi_{\Gamma}$ for this induced homomorphism again.

\begin{prop}\label{prop:descent}
Let $\phi: M \to M'$ be as in Definition \ref{defn:Fitt_ind}.
%
We assume that $M^{\Gamma} = 0$ and $(M')^{\Gamma} = 0$.
We moreover suppose that the kernel and the cokernel of the induced $\Z_p[[\cH]]$-homomorphism $\phi_{\Gamma}: M_{\Gamma} \to M'_{\Gamma}$ are both torsion over $\Z_p[[\cH]]$, so Definition \ref{defn:Fitt_ind} applies to $\phi_{\Gamma}$.
Then we have 
\[
\Fitt_{\Z_p[[\cH]]}(\phi_{\Gamma})
= \pi_{\Gamma} \parenth{\Fitt_{\Z_p[[\cG]]}(\phi)}.
\]
\end{prop}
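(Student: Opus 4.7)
The plan is to reduce the identity to a statement about Fitting ideals of torsion modules of projective dimension at most one and their behaviour under $(-)_\Gamma = - \otimes_R \Z_p[[\cH]]$. First I would choose a free $R$-module $F$ and $\psi\colon F \hookrightarrow M$ as in Definition \ref{defn:Fitt_ind}, and set $C = \Coker(\psi)$ and $C' = \Coker(\phi \circ \psi)$. Both $C$ and $C'$ are torsion $R$-modules with $\pd_R \leq 1$, and by construction $\Fitt_R(\phi) = \Fitt_R(C)^{-1} \Fitt_R(C')$ as a principal invertible fractional ideal of $R$.

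The central elementary fact is that for any torsion $R$-module $N$ with $\pd_R(N) \leq 1$, one has $\pi_\Gamma(\Fitt_R(N)) = \Fitt_{\Z_p[[\cH]]}(N_\Gamma)$: indeed, $N$ admits a square presentation $R^n \xrightarrow{X} R^n \twoheadrightarrow N$ with $\Fitt_R(N) = (\det X)$, and applying $-\otimes_R \Z_p[[\cH]]$ preserves right-exactness, yielding a square presentation $\Z_p[[\cH]]^n \xrightarrow{\bar X} \Z_p[[\cH]]^n \twoheadrightarrow N_\Gamma$, so $\Fitt_{\Z_p[[\cH]]}(N_\Gamma) = (\det \bar X) = \pi_\Gamma((\det X))$. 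Applied to $N = C$ and $N = C'$, this gives
\[
\pi_\Gamma(\Fitt_R(\phi)) = \Fitt_{\Z_p[[\cH]]}(C_\Gamma)^{-1} \Fitt_{\Z_p[[\cH]]}(C'_\Gamma).
\]

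It remains to match this with $\Fitt_{\Z_p[[\cH]]}(\phi_\Gamma)$. The hypothesis $M^\Gamma = 0 = (M')^\Gamma$, combined with length-one free resolutions of $M$ and $M'$ over $R$, shows (via $\operatorname{Tor}_1^R(M,\Z_p[[\cH]]) = M^\Gamma = 0$) that $\pd_{\Z_p[[\cH]]}(M_\Gamma), \pd_{\Z_p[[\cH]]}(M'_\Gamma) \leq 1$; hence $\Fitt_{\Z_p[[\cH]]}(\phi_\Gamma)$ is well-defined. If one could choose $\psi$ so that $C^\Gamma = 0 = (C')^\Gamma$, then $\psi_\Gamma$ and $(\phi\psi)_\Gamma$ would be injective with torsion cokernels $C_\Gamma$, $(C')_\Gamma$ (by the long exact sequence of $\Gamma$-(co)invariants, using $F^\Gamma = M^\Gamma = 0$), and Definition \ref{defn:Fitt_ind} applied to $\phi_\Gamma$ via $\psi_\Gamma$ would finish the proof at once.

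The hard part is to compensate for possibly nonzero $C^\Gamma$ and $(C')^\Gamma$. I would apply Lemma \ref{lem:quasi_add} to $\psi_\Gamma$ and $(\phi\psi)_\Gamma$ together with Lemma \ref{lem:Fitt_comp} for the composite $(\phi\psi)_\Gamma = \phi_\Gamma \circ \psi_\Gamma$, obtaining
\[
\Fitt_{\Z_p[[\cH]]}(C_\Gamma)^{-1} \Fitt_{\Z_p[[\cH]]}(C'_\Gamma) = \Fitt_{\Z_p[[\cH]]}(\phi_\Gamma) \cdot \frac{\Fitt_{\Z_p[[\cH]]}(E^1((C')^\Gamma))}{\Fitt_{\Z_p[[\cH]]}(E^1(C^\Gamma))},
\]
and then show that the ratio of error terms is trivial. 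For this last step, the comparison diagram with identity on $F$ descends under $(-)_\Gamma$ to an injection $C^\Gamma \hookrightarrow (C')^\Gamma$ whose cokernel embeds into $\Imag(\psi_\Gamma) \cap \Ker(\phi_\Gamma) \subset M_\Gamma$; a careful analysis via the induced long exact sequence for $\Ext^\bullet_{\Z_p[[\cH]]}(-,\Z_p[[\cH]])$ should yield the desired cancellation, ultimately reflecting the compatibility of determinants of perfect complexes with derived base change.
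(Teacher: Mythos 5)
Your base-change fact for Fitting ideals of torsion $R$-modules of $\pd\le 1$ is correct, and your observation that the proof would be immediate if one could arrange $C^\Gamma=0=(C')^\Gamma$ is exactly right --- but you then abandon this and attempt a compensation argument, and that is where the gap lies. The missing idea is that the vanishing of $C^\Gamma$ and $(C')^\Gamma$ is cheap to arrange: rather than picking $\psi\colon F\to M$ upstairs and hoping, one should pick $\ol{\psi}\colon \ol{F}\hookrightarrow M_\Gamma$ \emph{downstairs} (projective $\ol{F}$ over $\Z_p[[\cH]]$, torsion cokernel), lift $\ol{F}$ to a projective $F$ over $R$ with $F_\Gamma=\ol{F}$, and use projectivity of $F$ to lift $\ol{\psi}$ to $\psi\colon F\to M$ with $\psi_\Gamma=\ol{\psi}$. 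Then $C_\Gamma=\Coker(\ol{\psi})$ is torsion by construction, which forces $C$ torsion over $R$ and (by a rank count) $\psi$ injective, so $\psi$ is admissible in Definition \ref{defn:Fitt_ind}; and since $F^\Gamma=0=M^\Gamma$, the long exact sequence of $\Gamma$-(co)invariants for $0\to F\to M\to C\to 0$ identifies $C^\Gamma$ with $\Ker(\ol{\psi})=0$. The same goes for $C'$, and your base-change fact applied to $C$ and $C'$ then finishes the proof immediately.

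Your fallback compensation does not go through as written. When $C^\Gamma\neq 0$, one can have $\pd_{\Z_p[[\cH]]}(C_\Gamma)>1$, so $\Fitt_{\Z_p[[\cH]]}(C_\Gamma)$ need not be a principal invertible ideal and the inverse $\Fitt_{\Z_p[[\cH]]}(C_\Gamma)^{-1}$ appearing in your displayed formula is ill-defined; the ``ratio'' of Fitting ideals of the $E^1$-modules, which Lemma \ref{lem:quasi_add} produces only as ideals (not necessarily principal or invertible), has the same problem. The cancellation you postpone to a ``careful analysis'' is not a formal consequence of the comparison diagram; the well-chosen $\psi$ above dissolves the difficulty rather than requiring one to solve it.
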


\begin{proof}
Let us take a projective module $\ol{F}$ over $\Z_p[[\cH]]$ and an injective homomorphism $\ol{\psi}: \ol{F} \to M_{\Gamma}$ whose cokernel is torsion over $\Z_p[[\cH]]$.
Let $F$ be a projective module over $\Z_p[[\cG]]$ such that we have an identification $\ol{F} = F_{\Gamma}$.
By the projectivity of $F$, we can construct a homomorphism $\psi: F \to M$ over $\Z_p[[\cG]]$ whose base change coincides with $\ol{\psi}$.
Since we have $\Coker(\psi)_{\Gamma} \simeq \Coker(\ol{\psi})$, which is assumed to be torsion over $\Z_p[[\cH]]$, the module $\Coker(\psi)$ is also torsion over $\Z_p[[\cG]]$.
By observing the ranks, we also see that $\psi$ is injective.
Therefore, we can use $\psi$ to compute $\Fitt_R(\phi)$.
Then the basic property of Fitting ideals shows
\[
\Fitt_{\Z_p[[\cH]]}(\Coker(\ol{\psi})) 
= \Fitt_{\Z_p[[\cH]]}(\Coker(\psi)_{\Gamma}) 
= \pi_{\Gamma} \parenth{\Fitt_{\Z_p[[\cG]]}(\Coker(\psi))}.
\]
We also have a similar formula for $\phi \circ \psi$ instead of $\psi$, and those imply the proposition.
\end{proof}

\subsection{A minor variant of the Ritter--Weiss type module}\label{ss:p_variant}

As usual, let $H/F$ be a finite abelian CM-extension and $p$ an odd prime number.
Let $\Sigma$ and $\Sigma'$ be finite sets of places of $F$ satisfying (H1) and (H2).

In this subsection, given a family of scalars $(c_v)_{v \in \Sigma_f}$, using Remark \ref{rem:fw_var}, we construct a variant $\ol{\Omega_{\Sigma}^{\Sigma'}}$ of $\Z_p \otimes \Omega_{\Sigma}^{\Sigma'}$.
Note that the case where $c_v = 1$ for all $v$ would recover the original.
The variant will be useful for the proof of Propositions \ref{prop:Om_var} and \ref{prop:fld_var}.


Let us suppose that we are given a family $(c_v)_{v \in \Sigma_f} \subset \Z_p \setminus \{0\}$.
See \eqref{eq:cv2} and \eqref{eq:cv1} below for practical choices.
The choice of $(c_v)_{v \in \Sigma_f}$ will be implicit in the notation; the objects with overlines do depend on the choice of $(c_v)_v$.

For each $v \in \Sigma_f$, 
as a variant of \eqref{eq:A_v}, we define a $\Z_p[G]$-module $\ol{A_v}$ by
\[
\ol{A_v} = \Z_p[G/I_v]/(1 - \varphi_v^{-1} + c_v \# I_v).
\]
Then, as a variant of Proposition \ref{prop:str_W_2}(2) (see Remark \ref{rem:fw_var}), we have an exact sequence of $\Z_p[G]$-modules
\begin{equation}\label{eq:VW_var}
0 \to \Z_p \otimes W_v \overset{\ol{f_v}}{\to} \Z_p[G] \to \ol{A_v} \to 0,
\end{equation}
where $\ol{f_v}$ sends $(x, y)$ to $x + c_v \nu_{I_v} y$.


We choose $S'$ (and $v_0$) as in \S \ref{ss:Omega_constr} and use the same notation.
Then, as a variant of Definition \ref{defn:f_hom}, we can define a $\Z_p[G]^-$-homomorphism
\[
\ol{f}: \Z_p \otimes \Ker(\theta_W)^- \hookrightarrow \bigoplus_{v \in S'_f \setminus \{v_0\}} \Z_p[G]^-
\]
as the composite map of the isomorphism in Lemma \ref{lem:ker_theta_W} and the map 
\[
\Z_p \otimes \bigoplus_{v \in S'_f \setminus \{v_0\}} W_v^- \to \bigoplus_{v \in S'_f \setminus \{v_0\}} \Z_p[G]^-
\]
which is defined as $\ol{f_v}^-$ at the components for $v \in \Sigma_f$ and as $\iota_v^-$ at the components for $v \in S' \setminus (\Sigma \cup \{v_0\})$.

Now, as a variant of Definition \ref{defn:Omega}, we define a $\Z_p[G]^-$-module $\ol{\Omega_{\Sigma}^{\Sigma'}}$ as the cokernel of the composite map
\begin{equation}\label{eq:defn_Omega_bar}
\Z_p \otimes \Ker(\theta_V)^- 
\to \Z_p \otimes \Ker(\theta_W)^- 
\overset{\ol{f}}{\hookrightarrow} \bigoplus_{v \in S'_f \setminus \{v_0\}} \Z_p[G]^-,
\end{equation}
where the first map is the middle map of the sequence \eqref{eq:snake_theta^-}.
Then, as a variant of \eqref{eq:fund}, we have an exact sequence
\begin{equation}\label{eq:fund_bar}
0 \to \Z_p \otimes \Cl_H^{\Sigma', -} 
\to \ol{\Omega_{\Sigma}^{\Sigma'}}
\to \bigoplus_{v \in \Sigma_f} \ol{A_v}^-
\to 0.
\end{equation}
We write $\ol{\Omega_{\Sigma}^{\Sigma', \chi}}$ for the $\chi$-component of $\ol{\Omega_{\Sigma}^{\Sigma'}}$ for each odd character $\chi$ of $G'$.

As a variant of Proposition \ref{prop:Omega_p_ct}, 
if the conditions (H3)$_p^{\chi}$ and (H4)$_p^{\chi}$ hold, then we have $\pd_{\Z_p[G]^{\chi}}(\ol{\Omega_{\Sigma}^{\Sigma', \chi}}) \leq 1$.

We investigate the difference between the Fitting ideals of $\Omega_{\Sigma}^{\Sigma', \chi}$ and of $\ol{\Omega_{\Sigma}^{\Sigma', \chi}}$.
For a finite prime $v$ not lying above $p$, as a variant of $h_v$ defined in \eqref{eq:hv}, we put
\[
\ol{h_v}
= 1 - \frac{\nu_{I_v}}{\# I_v} (\varphi_v^{-1} - c_v \# I_v)
\in \Q_p[G].
\]

\begin{prop}\label{prop:variance_bar}
Let $\chi$ be an odd character of $G'$ and suppose that the conditions (H3)$_p^{\chi}$ and (H4)$_p^{\chi}$ hold for $(\Sigma, \Sigma')$.
Then we have
\[
\Fitt_{\Z_p[G]^{\chi}}(\Omega_{\Sigma}^{\Sigma', \chi})
= 
\parenth{\prod_{v \in \Sigma_f} 
\frac{h_v^{\chi}}{\ol{h_v}^{\chi}}}
\Fitt_{\Z_p[G]^{\chi}}(\ol{\Omega_{\Sigma}^{\Sigma', \chi}}).
\]
\end{prop}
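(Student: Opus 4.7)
The plan is to exploit the parallel constructions of $\Omega_{\Sigma}^{\Sigma', \chi}$ and $\ol{\Omega_{\Sigma}^{\Sigma', \chi}}$ from \eqref{eq:defn_Omega} and \eqref{eq:defn_Omega_bar}: both are cokernels of injective $\Z_p[G]^{-, \chi}$-homomorphisms
\[
g, \ol{g} \colon N \longrightarrow F,
\]
where $N := (\Z_p \otimes \Ker(\theta_V))^{-, \chi}$ has projective dimension $\le 1$ (by the argument proving Proposition \ref{prop:Omega_p_ct}) and $F := \bigoplus_{v \in S'_f \setminus \{v_0\}} \Z_p[G]^{-, \chi}$ is free. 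The two maps agree on the summands indexed by $v \in S'_f \setminus (\Sigma_f \cup \{v_0\})$, and differ precisely on the summands indexed by $v \in \Sigma_f$, where $g$ uses $f_v^{-, \chi}$ while $\ol{g}$ uses $\ol{f_v}^{-, \chi}$, both applied to the $W_v^{-, \chi}$-component of the common initial factor $N \to \bigoplus_v W_v^{-, \chi}$ coming from Lemma \ref{lem:ker_theta_W}.

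By Definition \ref{defn:Fitt_ind}, since both $g$ and $\ol{g}$ are injective with finite cokernels and since $N, F$ both have $\pd \leq 1$, we have $\Fitt(\Omega_{\Sigma}^{\Sigma', \chi}) = \Fitt(g)$ and $\Fitt(\ol{\Omega_{\Sigma}^{\Sigma', \chi}}) = \Fitt(\ol{g})$ as principal invertible fractional ideals of $\Z_p[G]^{-, \chi}$. After tensoring with $\Frac(\Z_p[G]^{-, \chi})$, both $g$ and $\ol{g}$ become isomorphisms; by choosing compatible bases I would express the two maps by matrices agreeing outside the blocks indexed by $v \in \Sigma_f$, giving
\[
\frac{\Fitt(\ol{g})}{\Fitt(g)} = \prod_{v \in \Sigma_f} \frac{\det(\ol{f_v}^{-, \chi})}{\det(f_v^{-, \chi})}
\]
as principal fractional ideals, where $\det(f_v^{-, \chi})$ is the generator of the principal fractional ideal $\Fitt(f_v^{-, \chi})$ defined via the exact sequence $0 \to W_v^{-, \chi} \xrightarrow{f_v^{-, \chi}} \Z_p[G]^{-, \chi} \to A_v^{-, \chi} \to 0$, and analogously for $\ol{f_v}^{-, \chi}$ and $\ol{A_v}^{-, \chi}$.

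The last task is to identify the local ratio $\det(f_v^{-, \chi})/\det(\ol{f_v}^{-, \chi})$ with $h_v^{\chi}/\ol{h_v}^{\chi}$. I will do this by an explicit computation using the description of $W_v$ given in Proposition \ref{prop:W_str}, distinguishing according to whether $\chi$ is trivial on $I_v'$. When $\chi|_{I_v'} \ne 1$, both $A_v^{-, \chi}$ and $\ol{A_v}^{-, \chi}$ vanish and both $h_v^{\chi}, \ol{h_v}^{\chi}$ equal $1$, so the ratio is trivial. When $\chi|_{I_v'} = 1$, one sees that the defining relation $1 - \varphi_v^{-1} + \# I_v$ of $A_v$ (respectively $1 - \varphi_v^{-1} + c_v \#I_v$ of $\ol{A_v}$) becomes the numerator of $h_v^{\chi}$ (respectively $\ol{h_v}^{\chi}$) once one accounts, in passing to $\Frac$, for the idempotent $\nu_{I_v^{(p)}}/\# I_v^{(p)}$ associated to the $p$-Sylow $I_v^{(p)}$ of $I_v$.

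The main obstacle I anticipate is precisely this local identification when $I_v^{(p)}$ is nontrivial. In that case $\Fitt_{\Z_p[G]^{-, \chi}}(A_v^{-, \chi})$ itself is not principal as an ideal of $\Z_p[G]^{-, \chi}$ (it contains the augmentation ideal of $I_v^{(p)}$), so one cannot simply compute $\det(f_v^{-, \chi})$ as a single element of $\Z_p[G]^{-, \chi}$; instead, one must apply Definition \ref{defn:Fitt_ind} through an auxiliary projective presentation of $W_v^{-, \chi}$ and track the determinant of the resulting square matrix up to units in $\Z_p[G]^{-, \chi}$. Reconciling the integral and the fractional pictures in this step is the technical heart of the argument.
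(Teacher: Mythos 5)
Your overall plan—reduce to comparing the two injective maps $g$, $\ol{g}$ built from $f_v$ and $\ol{f_v}$ respectively, and compute the discrepancy locally at $v \in \Sigma_f$—is the right one and is how the paper proceeds. The step where you go wrong is the local identification, and you have in fact put your finger on the exact trouble without resolving it.

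The problem is that under (H4)$_p^{\chi}$ the module $W_v^{\chi}$ need \emph{not} have $\pd_{\Z_p[G]^{\chi}} \leq 1$ for $v \in \Sigma_f$. Cohomological triviality of $W_v^{\chi}$ is tied to that of $U_{H_v}^{\chi}$ (Lemma \ref{lem:U_ct} and the diagram \eqref{eq:VW}), and the sufficient criterion there involves $G_v'$, not $I_v'$; the paper comments explicitly after Proposition \ref{prop:Omega_p_ct} that (H4)$_p^{\chi}$ is intentionally weaker than this, so there are genuine primes $v \in \Sigma_f$ with $p \mid \# I_v$, $\chi$ trivial on $G_v'$, and $W_v^{\chi}$ of infinite projective dimension. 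For such $v$ Definition \ref{defn:Fitt_ind} simply does not apply to $f_v^{\chi}: W_v^{\chi} \to \Z_p[G]^{\chi}$—there is no auxiliary $\psi: F \to W_v^{\chi}$ with $\Coker(\psi)$ of projective dimension $\leq 1$, so the "Fitting ideal of $\phi$" machinery gives nothing invertible. Correspondingly, $\Fitt_{\Z_p[G]^{\chi}}(A_v^{\chi})$ is a genuinely non-principal ideal (it contains the augmentation ideal of the $p$-part of $I_v$), and your final paragraph's proposed fix of "track the determinant of the resulting square matrix up to units" cannot go through because the relevant ideal is not principal in the first place.

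The paper avoids this by never asking $f_v^{\chi}$ to have a Fitting ideal of its own. It instead introduces multiplication maps $h, \ol{h}$ by the diagonal matrices with entries $h_v^-$, $\ol{h_v}^-$ (and $1$ elsewhere) acting on the \emph{free} module $\bigoplus_{v \in S'_f \setminus \{v_0\}} \Z_p[G]^-$, and verifies by an elementary identity that $\ol{h_v}(x + \nu_{I_v}y) = h_v(x + c_v \nu_{I_v} y)$ for $(x,y) \in W_v$, hence that $\ol{h} \circ f = h \circ \ol{f}$. One then composes with the common inclusion $\Ker(\theta_V)^{-,\chi} \hookrightarrow \Ker(\theta_W)^{-,\chi}$ and applies Lemma \ref{lem:Fitt_comp} to the two factorizations of the resulting map, getting
\[
\Bigl(\prod_{v\in\Sigma_f} \ol{h_v}^{\chi}\Bigr)\Fitt_{\Z_p[G]^{\chi}}(\Omega_{\Sigma}^{\Sigma',\chi})
= \Bigl(\prod_{v\in\Sigma_f} h_v^{\chi}\Bigr)\Fitt_{\Z_p[G]^{\chi}}(\ol{\Omega_{\Sigma}^{\Sigma',\chi}})
\]
directly, since $\Fitt(h)$ and $\Fitt(\ol{h})$ are honest determinants of diagonal matrices of free-module endomorphisms. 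This is the step you would need to replace your local-determinant computation with: the ratio $h_v^{\chi}/\ol{h_v}^{\chi}$ arises not as a quotient of two (possibly ill-defined) local Fitting ideals, but from the commutativity identity on $W_v$ combined with multiplicativity of Fitting ideals of morphisms between $\pd \leq 1$ modules.
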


\begin{proof}
First, for each $v \in \Sigma_f$, let us observe that the following diagram commutes:
\[
\xymatrix{
	& \Z_p[G] \ar[dr]^{\times \ol{h_v}}& \\
	\Z_p \otimes W_v \ar[ru]^-{f_v} \ar[rd]_-{\ol{f_v}}
	& & \frac{1}{\# I_v} \Z_p[G]\\
	& \Z_p[G] \ar[ru]_-{\times h_v}&
}
\]
The commutativity says
\[
\ol{h_v} (x + \nu_{I_v} y) 
= h_v \parenth{x + c_v \nu_{I_v} y}
\]
for $(x, y) \in W_v$.
We can directly compute
\[
h_v - \ol{h_v}
= \frac{\nu_{I_v}}{\# I_v} \cdot \# I_v (1 - c_v)
= \nu_{I_v} (1 - c_v)
\]
and
\[
\nu_{I_v} (c_v h_v - \ol{h_v})
= \nu_{I_v} (c_v - 1) \parenth{1 - \frac{\nu_{I_v}}{\# I_v} \varphi_v^{-1}}
= (c_v - 1) \nu_{I_v} \parenth{1 - \varphi_v^{-1}}.
\]
Then the desired formula follows from the relation $\nu_{I_v} x = \nu_{I_v} (1 - \varphi_v^{-1}) y$ by the description of $W_v$.

By definition, $\Fitt_{\Z_p[G]^{\chi}}(\Omega_{\Sigma}^{\Sigma', \chi})$ (resp. $\Fitt_{\Z_p[G]^{\chi}}(\ol{\Omega_{\Sigma}^{\Sigma', \chi}})$) equals to the Fitting ideal of the $\chi$-component of the homomorphism \eqref{eq:defn_Omega} (resp. \eqref{eq:defn_Omega_bar}), in the sense of Definition \ref{defn:Fitt_ind}.
We consider a commutative diagram
\[
\xymatrix{
	& \bigoplus_{v \in S'_f \setminus \{v_0\}} \Z_p[G]^- \ar[rd]^-{\ol{h}} &\\
	\Ker(\theta_W)^- \ar[ru]_-{f} \ar[rd]_-{\ol{f}}& &
	\bigoplus_{v \in S'_f \setminus \{v_0\}} \frac{1}{\# I_v} \Z_p[G]^- \\
	& \bigoplus_{v \in S'_f \setminus \{v_0\}} \Z_p[G]^- \ar[ru]_-{{h}},
& }
\]
Here, $h$ and $\ol{h}$ are defined as $\times h_v^-$ and $\times \ol{h_v}^-$ for $v \in \Sigma_f$ respectively, and as the identity (the inclusion) for the other $v$'s.
Then this diagram also commutes thanks to the above observation.
It then follows from Lemma \ref{lem:Fitt_comp} that 
\[
\Fitt_{\Z_p[G]^{\chi}}(\ol{h}^{\chi})
\Fitt_{\Z_p[G]^{\chi}}(\Omega_{\Sigma}^{\Sigma', \chi})
= \Fitt_{\Z_p[G]^{\chi}}(h^{\chi})
\Fitt_{\Z_p[G]^{\chi}}(\ol{\Omega_{\Sigma}^{\Sigma', \chi}}),
\]
which is equivalent to
\[
\parenth{\prod_{v \in \Sigma_f} \ol{h_v}^{\chi} } \Fitt_{\Z_p[G]^{\chi}}(\Omega_{\Sigma}^{\Sigma', \chi})
= \parenth{\prod_{v \in \Sigma_f} h_v^{\chi} } \Fitt_{\Z_p[G]^{\chi}}(\ol{\Omega_{\Sigma}^{\Sigma', \chi}}).
\]
This shows the proposition.
\end{proof}

\subsection{The proof of Propositions \ref{prop:Om_var} and \ref{prop:fld_var}}\label{ss:pf_state}

First we prove Proposition \ref{prop:fld_var}.

\begin{proof}[Proof of Proposition \ref{prop:fld_var}]
We define $(c_v)_{v \in \Sigma_f} \subset \Z_p \setminus \{0\}$ by
\begin{equation}\label{eq:cv2}
c_v = \# I_{v, H/K}
\end{equation}
for each $v$, where we write $I_{v, H/K}$ for the inertia group of $H/K$.
We define $\ol{h_{v, K}}$ and $\ol{\Omega_{\Sigma, K}^{\Sigma'}}$ using this choice of $(c_v)_{v \in \Sigma_f}$ as in \S \ref{ss:p_variant}.
By the choice \eqref{eq:cv2}, concerning the natural map $\pi_{H/K}: \Z[G] \to \Z[\GK]$, we have
\begin{equation}\label{eq:hv_var}
\pi_{H/K}(h_{v, H}) = \ol{h_{v, K}}.
\end{equation}
This formula is the motivation for the choice \eqref{eq:cv2}; see the discussion after \eqref{eq:theta_bar}.
Then by Proposition \ref{prop:variance_bar}, in order to prove Proposition \ref{prop:fld_var}, it is enough to show an isomorphism
\begin{equation}\label{eq:bc_Omega}
\ol{\Omega_{\Sigma, K}^{\Sigma', \chi}} 
\simeq (\Omega_{\Sigma, H}^{\Sigma', \chi})_{\Gal(H/K)}.
\end{equation}
Let us write $\theta_{V, H}$ and $\theta_{V, K}$ for the homomorphisms $\theta_V$ in \eqref{eq:theta} for the extension $H/F$ and $K/F$, respectively.
We write $\theta_{W, H}$ and $\theta_{W, K}$ similarly.
Then we have the following commutative diagram:
\[
\xymatrix{
	(\Z_p \otimes \Ker(\theta_{V, H})^-)_{\Gal(H/K)} \ar[r] \ar[d]_{\simeq}
	& (\Z_p \otimes \Ker(\theta_{W, H})^-)_{\Gal(H/K)} \ar[r]^-{f_H} \ar[d]
	& \bigoplus_{v \in S'_f \setminus \{v_0\}} \Z_p[\Gal(H/F)]^-_{\Gal(H/K)} \ar[d]_{\simeq}\\
	\Z_p \otimes \Ker(\theta_{V, K})^- \ar[r] 
	& \Z_p \otimes \Ker(\theta_{W, K})^- \ar[r]_-{\ol{f_{K}}}
	& \bigoplus_{v \in S'_f \setminus \{v_0\}} \Z_p[\Gal(K/F)]^-.
}
\]
Here, the left vertical isomorphism follows from \cite[Lemma B.1]{DK20} and the right square is commutative because of the choice \eqref{eq:cv2} of $(c_v)_{v \in \Sigma_f}$.
Then we obtain an isomorphism \eqref{eq:bc_Omega}.
This completes the proof of Proposition \ref{prop:fld_var}.
\end{proof}

In the rest of this subsection, we aim at proving Proposition \ref{prop:Om_var}.
The proof makes use of the local considerations as in Lemmas \ref{lem:Fitt_loc} and \ref{lem:Fitt_loc2}.

\begin{lem}\label{lem:Fitt_loc}
Let $\chi$ be an odd character of $G'$.
Let $v$ be a finite prime of $F$.
We suppose either $p \nmid \# I_v$ or $\chi$ is non-trivial on $I_v'$.
Then the following are true.
\begin{itemize}
\item[(1)]
We have $\Fitt_{\Z_p[G]^{\chi}} ({A_v^{\chi}}) = ({h_v^{\chi}})$.
\item[(2)]
We have $\Fitt_{\Z_p[G]^{\chi}} \parenth{U_{H_v}/U_{H_v}^1}^{\chi} = (h_v^{\prime, \chi})$.
\end{itemize}
\end{lem}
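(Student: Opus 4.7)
The plan is to reduce to cases via the hypothesis. I first decompose $G = G' \times G(p)$, where $G(p)$ is the $p$-Sylow subgroup of $G$, and accordingly $I_v = I_v' \times I_v(p)$. This yields
\[
\Z_p[G/I_v]^{\chi} = \Z_p[G'/I_v']^{\chi} \otimes_{\Z_p} \Z_p[G(p)/I_v(p)],
\]
where $\Z_p[G'/I_v']^{\chi}$ equals $\OO_{\chi}$ if $\chi|_{I_v'}$ is trivial and vanishes otherwise. Similarly, the image of $\nu_{I_v}$ in $\Z_p[G]^{\chi}$ is $\chi(\nu_{I_v'}) \nu_{I_v(p)}$, which vanishes when $\chi$ is non-trivial on $I_v'$.

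When $\chi$ is non-trivial on $I_v'$, both $\Z_p[G/I_v]^{\chi}$ and $\nu_{I_v}^{\chi}$ vanish, so $A_v^{\chi} = 0$ and $h_v^{\chi} = h_v^{\prime,\chi} = 1$. Since inertia acts trivially on $U_{H_w}/U_{H_w}^1 \simeq \kappa_w^{\times}$, the module $U_{H_v}/U_{H_v}^1$ is naturally a $\Z_p[G/I_v]$-module, so its $\chi$-component also vanishes. In this case both Fitting ideals become the unit ideal, matching.

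The remaining case is $p \nmid \#I_v$ with $\chi|_{I_v'}$ trivial. Then $I_v = I_v' \subset G'$, so $\Z_p[G/I_v]^{\chi} = \Z_p[G]^{\chi}$ and $\nu_{I_v}^{\chi}/\#I_v = 1$, giving $h_v^{\chi} = 1 - \varphi_v^{-1} + \#I_v$ and $h_v^{\prime,\chi} = 1 - N(v)\varphi_v^{-1}$. Part (1) then follows directly from the cyclic presentation $A_v^{\chi} = \Z_p[G]^{\chi}/(1 - \varphi_v^{-1} + \#I_v)$. For (2), I plan to first establish the structural identification
\[
\widehat{U_{H_v}/U_{H_v}^1} \simeq \Z_p[G/I_v]/(1 - N(v)\varphi_v^{-1})
\]
as $\Z_p[G]$-modules, by inducing from $G_v$ to $G$ the local isomorphism $\widehat{\kappa_w^{\times}} \simeq \Z_p[G_v/I_v]/(1 - N(v)\varphi_v^{-1})$ for a fixed $w\mid v$. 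The local version is obtained by sending $1$ to a primitive root of $\kappa_w^{\times}$, which is well-defined and surjective because $\varphi_v$ acts as the $N(v)$-th power, and is an isomorphism by a direct cardinality comparison (both sides have order equal to the $p$-part of $N(v)^{f_w}-1$). Taking $\chi$-components then yields the cyclic presentation $(U_{H_v}/U_{H_v}^1)^{\chi} = \Z_p[G]^{\chi}/(1 - N(v)\varphi_v^{-1})$, with Fitting ideal $(h_v^{\prime,\chi})$.

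The main technical ingredient I anticipate is the structural identification of $\widehat{U_{H_v}/U_{H_v}^1}$ used in (2); the cardinality comparison that closes the argument is elementary but must be verified carefully. The remaining manipulations with the decomposition $G = G' \times G(p)$ and the vanishing criteria for $\Z_p[G'/I_v']^{\chi}$ are routine bookkeeping.
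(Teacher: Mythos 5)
Your proof is correct and follows essentially the same route as the paper's: dispose of the case $\chi$ nontrivial on $I_v'$ by the vanishing of $\Z_p[G'/I_v']^{\chi}$ and of $\nu_{I_v}^{\chi}$, then in the case $p \nmid \# I_v$ with $\chi$ trivial on $I_v'$ reduce both (1) and (2) to explicit cyclic presentations $\Z_p[G]^{\chi}/(h_v^{\chi})$ and $\Z_p[G]^{\chi}/(h_v^{\prime,\chi})$. The only cosmetic differences are that the paper proves the equality $\Fitt_{\Z_p[G]}(A_v) = (h_v)$ over the full $\Z_p[G]$ (decomposing by characters of $I_v$, possible since $p \nmid \# I_v$) and then passes to $\chi$-components, whereas you work in $\Z_p[G]^{\chi}$ throughout via the factorization $G = G' \times G(p)$; and for (2) you verify the identification $\widehat{U_{H_v}/U_{H_v}^1} \simeq \Z_p[G/I_v]/(1 - N(v)\varphi_v^{-1})$ from scratch, which the paper states without proof.
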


\begin{proof}
(1)
If $\chi$ is non-trivial on $I_v'$, the both sides are the unit ideal by the definitions of ${A_v}$ and of $h_v$.
Let us assume that $p \nmid \# I_v$ and show 
\[
\Fitt_{\Z_p[G]} ({A_v})
= ({h_v})
\]
as ideals of $\Z_p[G]$.
Since $p \nmid \# I_v$, we may decompose this equality with respect to the characters of $I_v$.
For the nontrivial characters of $I_v$, the components of the both sides are the unit ideal.
For the trivial character of $I_v$, the components of the both sides are the same as $(1 - \varphi_v^{-1} + \# I_v)$, so the equality holds.

(2)
Similarly, if $\chi$ is non-trivial on $I_v'$, the both sides are the unit ideal.
Let us assume that $p \nmid \# I_v$ and show 
\[
\Fitt_{\Z_p[G]} \parenth{\Z_p \otimes U_{H_v}/U_{H_v}^1} = \parenth{1 - N(v) \frac{\nu_{I_v}}{\# I_v} \varphi_v^{-1}}
\]
as ideals of $\Z_p[G]$.
When $v \mid p$, we have $\Z_p \otimes U_{H_v}/U_{H_v}^1 = 0$ and $1 - N(v) \frac{\nu_{I_v}}{\# I_v} \varphi_v^{-1}$ is a unit, so the both sides are again the unit ideal.
When $v \nmid p$, we have 
\[
\Z_p \otimes U_{H_v}/U_{H_v}^1 \simeq \mu_{p^{\infty}}(H_v) \simeq \Z_p[G/I_v]/(1 - N(v) \varphi_v^{-1}),
\]
from which we can deduce the claim by considering the decomposition with respect to characters of $I_v$.
\end{proof}

\begin{proof}[Proof of Proposition \ref{prop:Om_var}(1)(2)]
As we need to use the construction in \S \ref{ss:Omega_constr} for various $\Sigma$ and $\Sigma'$, we write $\theta_{S'}^{\Sigma'}$ for $\theta_V$ in \eqref{eq:theta}.

(1)
We may choose $S'$ so that $v \not \in S'$.
Let us use $S'$ for the construction of ${\Omega_{\Sigma}^{\Sigma'}}$, and $S' \cup \{v\}$ for ${\Omega_{\Sigma \cup \{v\}}^{\Sigma'}}$.
By definition, we have an exact sequence 
\[
0 \to V_{S'}^{\Sigma'} \to V_{S' \cup \{v\}}^{\Sigma'} \to W_v \to 0,
\]
 which induces the upper exact sequence in a commutative diagram
\[
\xymatrix{
	0 \ar[r]
	& \Z_p \otimes \Ker(\theta_{S'}^{\Sigma'})^- \ar[r] \ar@{^(->}[d]
	& \Z_p \otimes \Ker(\theta_{S' \cup \{v\}}^{\Sigma'})^- \ar[r] \ar@{^(->}[d]
	& \Z_p \otimes W_v^- \ar[r] \ar@{^(->}[d]
	&0\\
	0 \ar[r]
	& \bigoplus_{v \in S_f' \setminus \{v_0\}} \Z_p[G]^- \ar[r]
	& \bigoplus_{v \in (S_f' \cup \{v\}) \setminus \{v_0\}} \Z_p[G]^- \ar[r]
	& \Z_p[G]^- \ar[r]
	& 0.
}\]
Here, the right vertical arrow is $f_v^-$ and the left and the middle arrows are those defining $\Omega_{\Sigma}^{\Sigma'}$ and $\Omega_{\Sigma \cup \{v\}}^{\Sigma'}$, respectively.
By applying the snake lemma, we obtain an exact sequence
\[
0 
\to {\Omega_{\Sigma}^{\Sigma'}} 
\to {\Omega_{\Sigma \cup \{v\}}^{\Sigma'}}
\to {A_v}^-
 \to 0.
\]
Since $v \not \in S_{\bad}^{\chi}$ by (H4)$_p^{\chi}$, we can apply Lemma \ref{lem:Fitt_loc}(1) and we obtain the claim.

(2)
We may choose $S'$ so that $v \not \in S'$ and use $S'$ for the constructions of both ${\Omega_{\Sigma}^{\Sigma'}}$ and ${\Omega_{\Sigma}^{\Sigma' \cup\{v\}}}$.
By definition, we have an exact sequence 
\[
0 \to V_{S'}^{\Sigma' \cup \{v\}} \to V_{S'}^{\Sigma'} \to U_{H_v}/U_{H_v}^1 \to 0,
\]
 which induces the upper exact sequence in a commutative diagram
\[
\xymatrix{
	0 \ar[r]
	& \Z_p \otimes \Ker(\theta_{S'}^{\Sigma' \cup \{v\}})^- \ar[r] \ar@{^(->}[d]
	& \Z_p \otimes \Ker(\theta_{S'}^{\Sigma'})^- \ar[r] \ar@{^(->}[d]
	& \Z_p \otimes (U_{H_v}/U_{H_v}^1)^- \ar[r]
	&0\\
	& \bigoplus_{v \in S_f' \setminus \{v_0\}} \Z_p[G]^- \ar@{=}[r]
	& \bigoplus_{v \in S_f' \setminus \{v_0\}} \Z_p[G]^-.
	&&	
}\]
By applying the snake lemma, we obtain an exact sequence
\[
0 \to \Z_p \otimes \parenth{U_{H_v}/U_{H_v}^1}^- 
\to {\Omega_{\Sigma}^{\Sigma' \cup \{v\}}} 
\to {\Omega_{\Sigma}^{\Sigma'}}
 \to 0.
\]
Then we can apply Lemma \ref{lem:Fitt_loc}(2) and obtain the claim.
\end{proof}

As already remarked, Proposition \ref{prop:Om_var}(3) is the harder part and in order to prove it we make use of the variants with overlines introduced in \S \ref{ss:p_variant}.
Thanks to Proposition \ref{prop:variance_bar}, for any choice of $(c_v)_{v \in \Sigma_f} \subset \Z_p \setminus \{0\}$, 
the proposition is equivalent to the corresponding statement for objects with overlines; simply add the overlines on all $\Omega$'s and all $h_v$'s for $v \in \Sigma_f$ (not on $h_v'$'s).
Indeed it is convenient to take
\begin{equation}\label{eq:cv1}
c_v = 
\begin{cases}
(N(v)^{-1} - 1)/\# I_v & (v \nmid p)\\
1 & (v \mid p).
\end{cases}
\end{equation}
By local class field theory, $c_v$ is actually $p$-adically integral.
Accordingly we construct objects with overlines as in \S \ref{ss:p_variant}.


\begin{lem}\label{lem:Fitt_loc2}
Let $v$ be a finite prime of $F$ such that $v \nmid p$.
Let us consider the composite map
\[
\phi_v: \widehat{V_v} \to \Z_p \otimes W_v \to \Z_p[G],
\]
where the first map is the surjection in \eqref{eq:VW} and the second is the injection $\ol{f_v}$.
Here, recall that $\widehat{(-)}$ denotes the $p$-adic completion, so we have $\widehat{W_v} \simeq \Z_p \otimes W_v$.
Then we have
\[
\Fitt_{\Z_p[G]}(\phi_v) 
= (1).
\]
\end{lem}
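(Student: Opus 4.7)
My plan is to apply Lemma~\ref{lem:quasi_add} to $\phi_v$ and reduce to a matching of Fitting ideals. The factorization $\phi_v = \ol{f_v} \circ \pi$, where $\pi \colon \widehat{V_v} \to \widehat{W_v}$ is the surjection from diagram~\eqref{eq:VW} and $\ol{f_v}$ is the injection from~\eqref{eq:VW_var}, shows immediately that $\Ker(\phi_v) = \widehat{U_{H_v}}$ and $\Coker(\phi_v) = \ol{A_v}$. Since $V_w$ is cohomologically trivial over $G_w$ by Proposition~\ref{prop:VO_ct}(1), the induced module $V_v = \Ind_{G_v}^{G} V_w$ is cohomologically trivial over $G$, so $\pd_{\Z_p[G]}(\widehat{V_v}) \le 1$ and Definition~\ref{defn:Fitt_ind} applies to $\phi_v$. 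Lemma~\ref{lem:quasi_add} then reduces $\Fitt_{\Z_p[G]}(\phi_v) = (1)$ to showing
\[
\Fitt_{\Z_p[G]}(E^1_{\Z_p[G]}(\widehat{U_{H_v}})) = \Fitt_{\Z_p[G]}(\ol{A_v}).
\]

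I will identify both $\widehat{U_{H_v}}$ and $\ol{A_v}$ with one and the same cyclic $\Z_p[G]$-module $M := \Z_p[G/I_v]/(1 - N(v)\varphi_v^{-1})$. For $\widehat{U_{H_v}}$ this is standard: since $v \nmid p$, the principal units $U_{H_v}^{1}$ form a pro-$\ell$ group with $\ell \ne p$, and the $p$-adic completion collapses $U_{H_v}$ onto the $p$-part of the residue-field units, on which $\varphi_v$ acts as multiplication by $N(v)$. For $\ol{A_v}$, the choice $c_v = (N(v)^{-1}-1)/\#I_v$ (which lies in $\Z_p$ precisely because $v \nmid p$) converts the defining relation $1 - \varphi_v^{-1} + c_v \# I_v$ into $N(v)^{-1} - \varphi_v^{-1}$, generating the same ideal as $1 - N(v)\varphi_v^{-1}$ since $N(v) \in \Z_p^{\times}$. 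Consequently $\Fitt_{\Z_p[G]}(\ol{A_v}) = \Fitt_{\Z_p[G]}(M)$, and it suffices to prove that $E^1_{\Z_p[G]}(M) \simeq M$ as $\Z_p[G]$-modules.

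I will prove this last assertion by change of rings to $R' := \Z_p[G/I_v]$. Over $R'$ the element $1 - N(v)\varphi_v^{-1}$ is a non-zero-divisor, so $M$ admits the length-one free resolution $0 \to R' \to R' \to M \to 0$; dualizing this square presentation yields $\Ext^1_{R'}(M, R') \simeq M$. To transport this up to $R := \Z_p[G]$, note that $R$ is free as a $\Z_p[I_v]$-module, so Shapiro's lemma gives $\Ext^q_R(R', R) \simeq H^q(I_v, R) = 0$ for $q \ge 1$ together with $\Hom_R(R', R) \simeq R'$. The change-of-rings spectral sequence
\[
\Ext^p_{R'}(M, \Ext^q_R(R', R)) \Rightarrow \Ext^{p+q}_R(M, R)
\]
then degenerates at $E_2$ and gives $E^1_{\Z_p[G]}(M) \simeq \Ext^1_{R'}(M, R') \simeq M$, as needed. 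The principal technical obstacle is precisely this change-of-rings identification of $\Ext^1_R(M, R)$ with $M$; once it is in hand, the desired equality $\Fitt_{\Z_p[G]}(\phi_v) = (1)$ is immediate.
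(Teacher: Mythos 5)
The principal technical obstacle you identify --- the change-of-rings step --- is in fact fine (with one caveat noted below). The genuine gap is at the reduction. Lemma~\ref{lem:quasi_add} gives
\[
\Fitt_{\Z_p[G]}(\ol{A_v}) = \Fitt_{\Z_p[G]}(\phi_v)\,\Fitt_{\Z_p[G]}\bigl(E^1_{\Z_p[G]}(\widehat{U_{H_v}})\bigr),
\]
and your identification $E^1_{\Z_p[G]}(\widehat{U_{H_v}}) \simeq \ol{A_v}$ then yields $\Fitt(\ol{A_v}) = \Fitt(\phi_v)\Fitt(\ol{A_v})$. To conclude $\Fitt(\phi_v)=(1)$ you must now cancel $\Fitt(\ol{A_v})$, and this is exactly where the argument breaks down: when $p \mid \# I_v$ the ideal $\Fitt_{\Z_p[G]}(\ol{A_v}) = (\sigma - 1,\ \tau - N(v))$ (with $\sigma$ generating the cyclic group $I_v$, $\tau$ a lift of $\varphi_v$) is not principal, hence not invertible, and for a non-invertible ideal $J$ of $\Z_p[G]$ the relation $(\lambda)J = J$ with $\lambda \in \Frac(\Z_p[G])^\times$ does not in itself force $\lambda$ to be a unit. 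This is precisely the difficulty the paper flags as ``a hard point'': $\widehat{U_{H_v}} \simeq \mu_{p^\infty}(H_v)$ is not cohomologically trivial, so the Fitting ideals occurring do not behave well. The paper's proof evades it by ascending to the unramified $\Z_p$-tower, producing a four-term exact sequence over $\Z_p[[G_{w,\infty}]]$ whose two end terms become $\Z_p(1)$; there, the vanishing of the $\mu$-invariant of $\Z_p(1)$ is exactly what licenses the cancellation (via the cited reference), and the conclusion is then transported down by Proposition~\ref{prop:descent}. Your argument needs that detour, or an explicit substitute cancellation lemma valid at the finite level, before it is complete.

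Two smaller points. The cyclicity of $I_v$, which your proof uses both to identify $\widehat{U_{H_v}}$ with $\Z_p[G/I_v]/(1-N(v)\varphi_v^{-1})$ and to obtain the $\Ext$-vanishing, holds because $v \nmid p$ makes $I_v$ a quotient of tame inertia; this hypothesis should be invoked explicitly. And the vanishing $\Ext^q_{\Z_p[G]}(\Z_p[G/I_v], \Z_p[G]) = 0$ for $q \ge 1$ is not an instance of Shapiro's lemma (which concerns induced modules); it follows from the cohomological triviality of the free $\Z_p[I_v]$-module $\Z_p[G]$, read off from the periodic free resolution of $\Z_p[G/I_v] = \Z_p[G]/(\sigma-1)$ over $\Z_p[G]$.
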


\begin{proof}
Let us fix a prime $w$ of $H$ lying above $v$.
It is enough to study the $w$-component $\phi_w$ of $\phi_v$.
The kernel and the cokernel of $\phi_w$ can be determined by the middle vertical sequence in \eqref{eq:VW} and the sequence \eqref{eq:VW_var}, respectively.
We then have an exact sequence
\[
0 \to \mu_{p^{\infty}}(H_w) \to \widehat{V_w} \overset{\phi_w}{\to} \Z_p[G_w] \to \mu_{p^{\infty}}(H_w) \to 0.
\]
Here, we used a non-canonical isomorphism between $\ol{A_w} = \Z_p[G_w]/(N(v)^{-1} - \varphi_v^{-1})$ and $\mu_{p^{\infty}}(H_w)$.
When $H_w$ does not contain a non-trivial $p$-th power root of unity, $\phi_w$ is an isomorphism, so the assertion is clear.
In the rest of the proof, we assume that $H_w$ contains a non-trivial $p$-th power roots of unity.
A hard point is that the module $\mu_{p^{\infty}}(H_w)$ is not cohomologically trivial in general, so the Fitting ideals do not behave well.
To overcome such a difficulty, we use an idea from Iwasawa theory; we go up the unramified $\Z_p$-tower and then descend to the finite extension.

For each integer $n \geq 0$, we consider the unramified extension $H_{w, n}/H_w$ of degree $p^n$.
We can apply the constructions in \S \ref{ss:local} to the extension $H_{w, n}/F_v$, and then we obtain $\Z_p[\Gal(H_{w, n}/F_v)]$-modules $V_{w, n}$ and $W_{w, n}$.
Then, as in (the proof of) \cite[Lemma B.1]{DK20}, we have natural isomorphisms 
\[
V_{w, n} 
\simeq (V_{w, n'})_{\Gal(H_{w, n'}/H_{w, n})}
\]
for each $n' \geq n$.
We define $V_{w, \infty} = \varprojlim_n \widehat{V_{w, n}}$.
Then, by Proposition \ref{prop:VO_ct}(1), we can see that $V_{w, \infty}$ is cohomologically trivial over $G_{w, \infty} = \Gal(H_{w, \infty}/F_v)$ and moreover we have $(V_{w, \infty})_{\Gal(H_{w, \infty}/H_w)} \simeq \widehat{V_w}$.

We construct a homomorphism $\phi_{w, n}: \widehat{V_{w, n}} \to \Z_p[G_{w, n}]$ in an analogous way.
It is straightforward to show that the homomorphisms $\phi_{w, n}$ are compatible;
for $n' \geq n \geq 0$, we have a commutative diagram
\[
\xymatrix{
	\widehat{V_{w, n'}} \ar@{->>}[d] \ar@{->>}[r]
	& \Z_p \otimes W_{w, n'} \ar@{->>}[d] \ar@{^{(}->}[r]^-{\ol{f_{v, n'}}}
	& \Z_p[G_{w, n'}] \ar@{->>}[d]\\
	\widehat{V_{w, n}} \ar@{->>}[r]
	& \Z_p \otimes W_{w, n} \ar@{^{(}->}[r]_-{\ol{f_{v, n}}}
	& \Z_p[G_{w, n}].
}
\]
Here, we use the same $c_v$ as in \eqref{eq:cv1} for every $n \geq 0$.
The commutativity of the right square is verified as the extension $H_{n'}/H_n$ is unramified.
As a consequence, we obtain a homomorphism $\phi_{w, \infty}$ which fits in the following diagram with exact rows:
\[
\xymatrix{
	0 \ar[r]
	& \Z_p(1) \ar[r] \ar@{->>}[d]
	& V_{w, \infty} \ar[r]^-{\phi_{w, \infty}} \ar@{->>}[d]
	& \Z_p[[G_{w, \infty}]] \ar@{->>}[d] \ar[r]
	& \Z_p(1) \ar[r]	\ar@{->>}[d]
	& 0\\
	0 \ar[r]
	& \mu_{p^{\infty}}(H_w) \ar[r]
	& \widehat{V_w} \ar[r]_-{\phi_w} 
	& \Z_p[G_w] \ar[r]
	& \mu_{p^{\infty}}(H_w) \ar[r]
	& 0.
}
\]
Here, we again used a non-canonical isomorphism between the cokernel of $\phi_{w, \infty}$ and $\Z_p(1)$.
It follows from the upper sequence and Lemma \ref{lem:quasi_add} that
\[
\Fitt_{\Z_p[[G_{w, \infty}]]}(\Z_p(1)) = \Fitt_{\Z_p[[G_{w, \infty}]]}(\phi_{w, \infty}) \Fitt_{\Z_p[[G_{w, \infty}]]}(\Z_p(1)).
\]
The ideal $\Fitt_{\Z_p[[G_{w, \infty}]]}(\Z_p(1))$ is not principal in general.
Nevertheless, since the $\mu$-invariant of $\Z_p(1)$ is zero, this formula implies $\Fitt_{\Z_p[[G_{w, \infty}]]}(\phi_{w, \infty}) = (1)$ (see, e.g., \cite[Lemma 3.12]{GKK_09}).
Finally, by applying Proposition \ref{prop:descent} to $\phi_{w, \infty}$, we obtain the proposition.
\end{proof}

Now we are ready to prove Proposition \ref{prop:Om_var}(3).

\begin{proof}[Proof of Proposition \ref{prop:Om_var}(3)]
First we assume that $v \mid p$.
By the hypothesis $v \in \Sigma_f \setminus (S_{\bad}^{\chi} \cap S_p(F))$, we then have $v \not \in S_{\bad}^{\chi}$.
Then the condition (H4)$_p^{\chi}$ still holds for $(\Sigma \setminus \{v\}, \Sigma')$.
Thus we can apply the claims (1) and (2) to show
\begin{align}
\Fitt_{\Z_p[G]^{\chi}}({\Omega_{\Sigma \setminus \{v\}}^{\Sigma' \cup \{v\}, \chi}})
& = h_v^{\prime, \chi} \Fitt_{\Z_p[G]^{\chi}}({\Omega_{\Sigma \setminus \{v\}}^{\Sigma', \chi}})\\
& = \frac{h_v^{\prime, \chi}}{h_v^{\chi}} \Fitt_{\Z_p[G]^{\chi}}({\Omega_{\Sigma}^{\Sigma', \chi}}).
\end{align}

Now we assume that $v \nmid p$.
As in the proof of (1) and (2), we have a commutative diagram with exact rows
\[
\xymatrix{
	0 \ar[r]
	& \Z_p \otimes \Ker(\theta_{S' \setminus \{v\}}^{\Sigma' \cup \{v\}})^- \ar[r] \ar@{^(->}[d]
	& \Z_p \otimes \Ker(\theta_{S'}^{\Sigma'})^- \ar[r] \ar@{^(->}[d]
	& \widehat{V_v}^- \ar[r] \ar[d]^{\phi_v}
	&0\\
	0 \ar[r]
	& \bigoplus_{v \in (S_f' \setminus \{v\}) \setminus \{v_0\}} \Z_p[G]^- \ar[r]
	& \bigoplus_{v \in S_f' \setminus \{v_0\}} \Z_p[G]^- \ar[r]
	& \Z_p[G]^- \ar[r]
	& 0.
}\]
Here, the right vertical arrow $\phi_v$ is the same as in Lemma \ref{lem:Fitt_loc2}.
The left and the middle vertical arrows are those denoted by $\ol{f}$, whose cokernels are $\ol{\Omega_{\Sigma \setminus \{v\}}^{\Sigma' \cup \{v\}}}$ and $\ol{\Omega_{\Sigma}^{\Sigma'}}$, respectively.
Then by Lemmas \ref{lem:Fitt_diagram} and \ref{lem:Fitt_loc2}, we obtain
\[
\Fitt_{\Z_p[G]^{\chi}}(\ol{\Omega_{\Sigma \setminus \{v\}}^{\Sigma' \cup \{v\}, \chi}})
 = \Fitt_{\Z_p[G]^{\chi}}(\ol{\Omega_{\Sigma}^{\Sigma', \chi}}).
 \]
 By Proposition \ref{prop:variance_bar}, this can be rewritten as
\[
\Fitt_{\Z_p[G]^{\chi}}({\Omega_{\Sigma \setminus \{v\}}^{\Sigma' \cup \{v\}, \chi}})
 = \frac{\ol{h_v}^{\chi}}{h_v^{\chi}} \Fitt_{\Z_p[G]^{\chi}}({\Omega_{\Sigma}^{\Sigma', \chi}}).
 \]
 Since $\ol{h_v} = h_v'$ by \eqref{eq:cv1}, this completes the proof.
\end{proof}

%

\section{Reduction of the main theorems}\label{s:ded}

In \S \ref{ss:int_Stickel}, we show the integrality of the Stickelberger element.
In \S \ref{ss:divisibility}, we show how to deduce the main theorems on the eTNC$_p^-$ from single inclusions.
Then in \S \ref{ss:pf_thm}, we deduce Theorems \ref{thm:current_main} and \ref{thm:main_p_2} from Theorem \ref{thm:current_main_p}.
We will begin the actual proof of Theorem \ref{thm:current_main_p} from the next section.
In \S \ref{Ap:eTNC}, we show that our eTNC$^-$ is equivalent to more standard formulations.

\subsection{The integrality of the Stickelberger element}\label{ss:int_Stickel}

Let $H/F$ be a finite abelian CM-extension with $G = \Gal(H/F)$.
Let $(\Sigma, \Sigma')$ be a pair of finite sets of places of $F$ satisfying the conditions (H1) and (H2).
In this subsection, we show the integrality of the Stickelberger element $\theta_{\Sigma}^{\Sigma'}$ under certain hypotheses.

Before the main discussion, let us introduce conditions (H3)$_p$ and (H4)$_p$ for each odd prime number $p$:
\begin{itemize}
\item[(H3)$_p$]
$\mu_{p^{\infty}}(H)^{\Sigma'}$ vanishes.
\item[(H4)$_p$]
$\Sigma_f \cup \Sigma' \supset S_{\bad}$ and $\Sigma_f \supset S_{\bad} \cap S_p(F)$.
Here, we write $S_{\bad}$ for the set of finite primes $v$ of $F$ such that $p \mid \# I_v$ and $H^{I_v'}$ is a CM-field.
\end{itemize}
It is easy to see that (H3) (resp.~(H4)) implies (H3)$_p$ (resp.~(H4)$_p$) (for all odd prime $p$).
Moreover, the condition (H3)$_p$ (resp.~(H4)$_p$) is equivalent to that (H3)$_p^{\chi}$ (resp.~(H4)$_p^{\chi}$) holds for all odd characters $\chi$ of $G'$ (the maximal subgroup of $G$ of order prime to $p$).
In order to show the equivalence between (H4)$_p$ and (H4)$_p^{\chi}$, it is enough to show $S_{\bad} = \bigcup_{\chi} S_{\bad}^{\chi}$, where $\chi$ runs over the odd characters of $G'$.
This follows immediately from Lemma \ref{lem:H4} below (applied to $N = I_v'$).

Then, by Proposition \ref{prop:Omega_p_ct}, we have $\pd_{\Z_p[G]^-}(\Z_p \otimes_{\Z} \Omega_{\Sigma}^{\Sigma'}) \leq 1$ as long as the pair $(\Sigma, \Sigma')$ satisfies (H3)$_p$ and (H4)$_p$.
Therefore, the eTNC$_p^-(H/F)$ is expected under the condition.

Now we begin the discussion on the integrality of $\theta_{\Sigma}^{\Sigma'}$.
The essential ingredient is the result of Deligne and Ribet \cite{DR80} or Cassou-Nogu\`{e}s \cite{CN79}.
In order to apply it, we have to strengthen the condition (H3), (H3)$_p$, and (H3)$_p^{\chi}$ (when $p$ is an odd prime number and $\chi$ is an odd character of $G'$) to the following:
\begin{itemize}
\item[(H3')]
$\mu(H)^{\Sigma_{\ur}', -}$ vanishes, 
where we put $\Sigma_{\ur}' = \Sigma' \setminus (S_{\ram}(H/F) \cap \Sigma')$.
\item[(H3')$_p$]
$\mu_{p^{\infty}}(H)^{\Sigma_{\ur}'}$ vanishes,
where $\mu_{p^{\infty}}(H)^{\Sigma_{\ur}'}$ is the $p$-primary component of $\mu(H)^{\Sigma_{\ur}'}$.
\item[(H3')$_p^{\chi}$]
$\mu(H)^{\Sigma_{\ur}', \chi} = \mu_{p^{\infty}}(H)^{\Sigma_{\ur}', \chi}$ vanishes.
\end{itemize}
Clearly these three conditions respectively imply (H3), (H3)$_p$, and (H3)$_p^{\chi}$.
Moreover, (H3') is equivalent to that (H3')$_p$ holds for any $p$, and (H3')$_p$ is equivalent to that (H3')$_p^{\chi}$ holds for any odd character $\chi$ of $G'$.

Now we have the following integrality property.

\begin{prop}\label{prop:theta_int}
Let $\Sigma$ and $\Sigma'$ be finite sets of places of $F$ satisfying 
the conditions (H1) and (H2).
\begin{itemize}
\item[(1)]
Let $p$ be an odd prime number and let $\chi$ be an odd character of $G'$.
Suppose the conditions (H3')$_p^{\chi}$ and (H4)$_p^{\chi}$. 
Then we have $\theta_{\Sigma}^{\Sigma', \chi} \in \Z_p[G]^{\chi}$.
\item[(2)]
Let $p$ be an odd prime number.
Suppose the conditions (H3')$_p$ and (H4)$_p$. 
Then we have $\theta_{\Sigma}^{\Sigma'} \in \Z_p[G]^{-}$.
\item[(3)]
Suppose the conditions (H3') and (H4). 
Then we have $\theta_{\Sigma}^{\Sigma'} \in \Z[G]^{-}$.
\end{itemize}
\end{prop}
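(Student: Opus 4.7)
The plan is a standard cascade: part $(3)$ reduces to $(2)$ via $\Z[1/2][G]^{-} = \Q[G]^{-} \cap \bigcap_{p \text{ odd}} \Z_{p}[G]^{-}$ (using that $\theta_{\Sigma}^{\Sigma'} \in \Q[G]^{-}$ a priori), and $(2)$ reduces to $(1)$ via the product decomposition $\Z_{p}[G]^{-} = \prod_{\chi} \Z_{p}[G]^{\chi}$ over odd characters $\chi$ of $G'$, together with the remark that (H3')$_{p}$ and (H4)$_{p}$ are equivalent to (H3')$_{p}^{\chi}$ and (H4)$_{p}^{\chi}$ holding for every such $\chi$. The substance is in $(1)$.

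For $(1)$, the external input is the integrality theorem of Deligne--Ribet \cite{DR80} and Cassou-Nogu\`es \cite{CN79} in its character-component form: whenever $\tilde{S} \supset S_{\infty}(F) \cup S_{\ram}(H/F)$, $T$ is disjoint from $\tilde{S}$, and $\mu_{p^{\infty}}(H)^{T, \chi} = 0$, the standard $(\tilde{S}, T)$-smoothed Stickelberger element
\[
\Theta_{\tilde{S}}^{T} := \sum_{\psi} \Bigl[\prod_{v \in T}\bigl(1 - N(v)\, \psi^{-1}(v)\bigr)\, L_{\tilde{S}}(\psi^{-1}, 0)\Bigr] e_{\psi}
\]
has $\chi$-component in $\Z_{p}[G]^{\chi}$. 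Under (H3')$_{p}^{\chi}$, this applies to $(\tilde{S}, T) = (S_{\infty}(F) \cup S_{\ram}(H/F) \cup A,\, \Sigma_{\ur}')$ for every $A \subset \Sigma_{f}$, since enlarging $\tilde{S}$ preserves all the hypotheses.

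The substantive step is to express $\theta_{\Sigma}^{\Sigma', \chi}$ as a $\Z_{p}[G]^{\chi}$-linear combination of these standard elements. The crucial identity is $h_{v} = \tilde{h}_{v} + \nu_{I_{v}}$, with $\tilde{h}_{v} := 1 - (\nu_{I_{v}}/\# I_{v})\varphi_{v}^{-1}$; acting on any character $\psi$ of $G$, the factor $\tilde{h}_{v}$ realizes the classical Euler factor $(1 - \psi(v)^{-1})$ when $\psi|_{I_{v}} = 1$ and equals $1$ otherwise. Expanding
\[
\prod_{v \in \Sigma_{f}} h_{v} \;=\; \sum_{A \subset \Sigma_{f}} \Bigl(\prod_{v \in A} \tilde{h}_{v}\Bigr)\Bigl(\prod_{v \in \Sigma_{f} \setminus A} \nu_{I_{v}}\Bigr)
\]
and multiplying through by $\prod_{v \in \Sigma'} h_{v}' \cdot \omega$, each resulting summand is the product of the manifestly integral element $\prod_{v \in \Sigma_{f} \setminus A}\nu_{I_{v}}$ with a Stickelberger-like element; indeed, the projector $\prod_{v \in \Sigma_{f} \setminus A}(\nu_{I_{v}}/\# I_{v})$ restricts attention to characters $\psi$ with $\psi|_{I_{v}} = 1$ for all $v \in \Sigma_{f} \setminus A$, which is precisely the subspace on which $\prod_{v \in A}\tilde{h}_{v}$ yields the correct $L$-smoothing and the resulting character values coincide with those of $\Theta_{S_{A}}^{\Sigma_{\ur}', \chi}$ for $S_{A} = S_{\infty}(F) \cup S_{\ram}(H/F) \cup A$. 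A parallel treatment, using the companion decomposition of $h_{v}'$ for $v \in \Sigma_{r}' := \Sigma' \cap S_{\ram}(H/F)$, accounts for the ramified primes in $\Sigma'$.

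The main obstacle is to realize this decomposition genuinely inside $\Z_{p}[G]^{\chi}$ and not merely character by character: character-wise integrality (integrality of every $\psi$-coordinate in the relevant local ring) does not imply group-ring integrality. The subtlety is that $\tilde{h}_{v}^{\chi}$ carries the non-$p$-integral denominator $\# I_{v, p}$ precisely when $v \in S_{\bad}^{\chi}$. Hypothesis (H4)$_{p}^{\chi}$ is essential at this point, since it forces every $p$-adic $v \in S_{\bad}^{\chi}$ to lie in $\Sigma_{f}$, guaranteeing that the matching $\nu_{I_{v}}$-factor needed to clear the denominator inside the group ring is available; for non-$p$-adic $v \in S_{\bad}^{\chi}$, the analogous cancellation operates regardless of whether $v$ lies in $\Sigma_{f}$ or in $\Sigma'$. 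Once this bookkeeping is carried out, summing the integral contributions yields $\theta_{\Sigma}^{\Sigma', \chi} \in \Z_{p}[G]^{\chi}$ and therefore completes the proposition.
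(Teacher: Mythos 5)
Your cascade from (3) to (2) to (1) matches the paper and is fine, but the core of (1) has a genuine gap. You invoke the Deligne--Ribet/Cassou-Nogu\`es theorem in its \emph{classical} form, requiring the depletion set $\tilde S$ to contain all of $S_{\infty}(F)\cup S_{\ram}(H/F)$, and you then claim that each summand $\bigl(\prod_{v\in\Sigma_f\setminus A}\nu_{I_v}\bigr)\bigl(\prod_{v\in A}\tilde h_v\bigr)\bigl(\prod_{v\in\Sigma'}h_v'\bigr)\omega$ agrees (after projecting by the idempotent) with $\Theta_{S_A}^{\Sigma_{\ur}',\chi}$ for $S_A = S_\infty\cup S_{\ram}\cup A$. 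This is false in general: $\Theta_{S_A}^{\Sigma_{\ur}'}$ carries an Euler factor $\tilde h_v$ at \emph{every} $v\in (S_{\ram})_f$, whereas your summand has $\tilde h_v$ only for $v\in A$ and nothing at $v\in (S_{\ram})_f\setminus(\Sigma_f\cup\Sigma')$. Since (H4)$_p^\chi$ only forces $S_{\bad}^\chi\subset\Sigma_f\cup\Sigma'$, and $S_{\bad}^\chi$ can be a proper subset of $S_{\ram}(H/F)$, such primes genuinely exist. Passing from $\Theta_{S_A}^{\Sigma_{\ur}'}$ to your summand thus requires dividing by $\prod_{v\in(S_{\ram})_f\setminus(A\cup\Sigma')}\tilde h_v^\chi$, and these factors are integral but \emph{not units} of $\Z_p[G]^\chi$ (e.g.\ a $\psi$ trivial on the decomposition group $G_v$ kills $\tilde h_v$), so integrality does not transfer. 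Your ``parallel treatment'' of $\Sigma_r'$, and your closing remark about $\# I_v$ denominators, address real but separate subtleties; they do not touch this depletion mismatch. The same obstruction persists if one switches to the refined Deligne--Ribet integrality (the form cited by the paper from \cite{DK20}) but keeps everything over $H/F$: the hypothesis (H4)$_p^\chi$ for $(\Sigma,\Sigma')$ does not yield (H4)$_p^\chi$ for $(S_\infty\cup A,\Sigma_{\ur}')$ once some $v\in S_{\bad}^\chi\cap(\Sigma_f\setminus A)$ is dropped from the depletion set.

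The paper circumvents both problems simultaneously by an intermediate-field reduction that your write-up omits. For each $J\subset\Sigma_f$ it replaces $\prod_{v\in J}\nu_{I_v}$ by the honest norm element $\nu_{H/H^J}$ of $\Gal(H/H^J)=\sum_{v\in J}I_v$ (the integer factor $\frac{\prod_{v\in J}\#I_v}{[H:H^J]}$ absorbing the mismatch), and identifies the remaining factor with $\Theta_{\Sigma\setminus J}^{\Sigma'}(H^J/F)$. The crucial gain is that every $v\in J$ becomes unramified in $H^J/F$, so it leaves $S_{\bad}^\chi$ for the quotient extension; consequently (H4)$_p^\chi$ \emph{does} persist for $(\Sigma\setminus J,\Sigma')$ relative to $H^J/F$, and the refined Deligne--Ribet integrality from \cite{DK20} applies. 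It is worth noting that under the weak hypotheses (H4)$_p^\chi$ rather than (H4') one really does need both ingredients --- the refined (partial-depletion) form of Deligne--Ribet integrality \emph{and} the descent to $H^J/F$; neither the classical Deligne--Ribet statement nor a purely $\psi$-component argument over $H/F$ suffices. To repair your argument, you would want to replace $\Theta_{S_A}^{\Sigma_{\ur}'}$ by the $H^J/F$-level element, verify that (H4)$_p^\chi$ descends along $H\to H^J$, and observe that this leaves only the cases where $\chi$ is trivial on $\Gal(H/H^J)$ since $\nu_{H/H^J}^{\chi}=0$ otherwise.
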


\begin{proof}
It is clear that the claim (1) implies the claims (2) and (3).
Let us prove (1).
Put 
\[
\Theta_{\Sigma}^{\Sigma'}(H/F) 
= \prod_{v \in \Sigma_f} \parenth{1 - \frac{\nu_{I_v}}{\# I_v} \varphi_v^{-1}}
 \cdot \omega^{\Sigma'}
 \in \Q[G]^-.
\]
Then, as in \cite[Formula (2) and Remark 3.6]{DK20} (also see \cite[Proposition 2.1]{Nickel21}), 
the celebrated theorem of Deligne and Ribet \cite{DR80} or Cassou-Nogu\`{e}s \cite{CN79} implies that
\[
\Theta_{\Sigma}^{\Sigma'}(H/F)^{\chi} \in \Z_p[G]^{\chi}
\]
as long as the conditions (H1), (H2), (H3')$_p^{\chi}$, and (H4)$_p^{\chi}$ hold.
We shall deduce the proposition from this integrality of $\Theta_{\Sigma}^{\Sigma'}(H/F)^{\chi}$.

By \eqref{eq:hv} and \eqref{eq:theta_defn}, we obtain
\[
\theta_{\Sigma}^{\Sigma'} 
= \sum_{J \subset \Sigma_f} \parenth{\prod_{v \in J} \nu_{I_v}} \prod_{v \in \Sigma_f \setminus J} \parenth{1 - \frac{\nu_{I_v}}{\# I_v} \varphi_v^{-1}} \omega^{\Sigma'},
\]
where $J$ runs over all (possibly empty) subsets of $\Sigma_f$.
Then it is enough to show the integrality of the $\chi$-component of the term in the right hand side for each $J$.
For each $J$, let us define an intermediate field $H^J$ of $H/F$ by $\Gal(H/H^J) = \sum_{v \in J} I_v \subset \Gal(H/F)$.
Then we have
\[
\parenth{\prod_{v \in J} \nu_{I_v}} \prod_{v \in \Sigma_f \setminus J} \parenth{1 - \frac{\nu_{I_v}}{\# I_v} \varphi_v^{-1}} \omega^{\Sigma'}
= \frac{\prod_{v \in J} \# I_v}{[H: H^J]} \nu_{H/H^J} \Theta_{\Sigma \setminus J}^{\Sigma'}(H^J/F),
\]
where $\nu_{H/H^J}$ denotes the norm element of $\Gal(H/H^J)$.
If $\chi$ is non-trivial on $\Gal(H/H^J)$, then we have $\nu_{H/H^J}^{\chi} = 0$, so we can ignore this case.
Let us suppose that $\chi$ is trivial on $\Gal(H/H^J)$.
Then $\chi$ can be regarded as an odd character of $H^J/F$, and the conditions (H3')$_p^{\chi}$ and (H4)$_p^{\chi}$ still holds for the extension $H^J/F$ and the pair $(\Sigma \setminus J, \Sigma')$. 
Therefore, we have the integrality of $\Theta_{\Sigma \setminus J}^{\Sigma'}(H^J/F)^{\chi}$, from which the conclusion follows.
\end{proof}

\subsection{An application of the analytic class number formula}\label{ss:divisibility}

Let $H/F$ be a finite abelian CM-extension and put $G = \Gal(H/F)$.
Let $p$ be an odd prime number.
For a pair $(\Sigma, \Sigma')$ of finite sets of places of $F$ satisfying (H1) and (H2), we simply write
\[
{}_p\Omega_{\Sigma}^{\Sigma'} = \Z_p \otimes_{\Z} \Omega_{\Sigma}^{\Sigma'}.
\]
The aim of this subsection is to prove the following proposition, which allows us to deduce the conclusions of Theorems \ref{thm:current_main} and \ref{thm:main_p_2} from single inclusions.

\begin{prop}\label{prop:divisibility3}
Let $\Sigma$ and $\Sigma'$ be finite sets of places of $F$ satisfying (H1), (H2), (H3')$_p$, and (H4)$_p$.
\begin{itemize}
\item[(1)]
If we have
\[
\Fitt_{\Z_p[G]^-}({}_p\Omega_{\Sigma}^{\Sigma'}) \subset (\theta_{\Sigma}^{\Sigma'})
\]
 as ideals of $\Z_p[G]^-$, then the equality also holds.
\item[(2)]
Let $N$ be a subgroup of $G$ and recall the algebra $\Z_p[G]_{(N)}$ introduced before Theorem \ref{thm:main_p_2}.
If we have
\[
\Fitt_{\Z_p[G]^-}({}_p\Omega_{\Sigma}^{\Sigma'}) \cdot \Z_p[G]^-_{(N)}
\subset \theta_{\Sigma}^{\Sigma'} \cdot \Z_p[G]^-_{(N)}
\]
 as ideals of $\Z_p[G]^-_{(N)}$, then the equality also holds.
\end{itemize}
\end{prop}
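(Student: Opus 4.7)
The plan is to reduce the sought equality of principal ideals to an equality of $\Z_p$-lengths of the associated quotients, and then to establish the latter by the equivariant analytic class number formula. Write $\mathcal{R} = \Z_p[G]^-$, which is a free $\Z_p$-algebra of rank $|G|/2$.

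By Proposition~\ref{prop:Omega_p_ct}, applied character by character (since (H3')$_p$ implies (H3)$_p$, which amounts to (H3)$_p^{\chi}$ for every odd character $\chi$ of $G'$, and likewise (H4)$_p$ is (H4)$_p^{\chi}$ for every $\chi$), we have $\pd_{\mathcal{R}}({}_p\Omega_{\Sigma}^{\Sigma'}) \leq 1$. Combined with the finiteness of $\Omega_{\Sigma}^{\Sigma'}$, this makes $\Fitt_{\mathcal{R}}({}_p\Omega_{\Sigma}^{\Sigma'})$ a principal ideal generated by some non-zero-divisor $f \in \mathcal{R}$. By Proposition~\ref{prop:theta_int}(2), $\theta_{\Sigma}^{\Sigma'}$ lies in $\mathcal{R}$, and it is a non-zero-divisor because each factor $h_v$ is and, under (H3')$_p$, no value $L^{\Sigma'}(\psi^{-1},0)$ for odd $\psi$ vanishes. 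The inclusion in (1) then yields $f = u\,\theta_{\Sigma}^{\Sigma'}$ with $u \in \mathcal{R}$, and since $\mathcal{R}$ is semilocal with both $\mathcal{R}/(f)$ and $\mathcal{R}/(\theta_{\Sigma}^{\Sigma'})$ finite, proving $u \in \mathcal{R}^{\times}$ reduces to
\[
\#(\mathcal{R}/(f)) = \#(\mathcal{R}/(\theta_{\Sigma}^{\Sigma'})).
\]

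I will compute both sides. The left side equals $\#\,{}_p\Omega_{\Sigma}^{\Sigma'}$ by additivity of $\Z_p$-length on a length-one free resolution (equivalently, both equal $|\Nrm_{\mathcal{R}/\Z_p}(f)|_p^{-1}$), and by~\eqref{eq:fund} this factors as $\#\bigl((\Z_p \otimes_{\Z} \Cl_H^{\Sigma'})^-\bigr) \cdot \prod_{v \in \Sigma_f} \#(\Z_p \otimes_{\Z} A_v^-)$. The right side, computed via extension of scalars to $\ol{\Q_p}$ and character decomposition, equals $\prod_{\psi \text{ odd}} |\psi(\theta_{\Sigma}^{\Sigma'})|_p^{-1}$, which by~\eqref{eq:theta_defn} splits as
\[
\prod_{\psi \text{ odd}} |L^{\Sigma'}(\psi^{-1},0)|_p^{-1} \cdot \prod_{v \in \Sigma_f} \prod_{\psi \text{ odd}} |\psi(h_v)|_p^{-1}.
\]
The required equality is then the equivariant analytic class number formula for the CM-extension $H/F$: the first product computes $\#\bigl((\Z_p \otimes_{\Z} \Cl_H^{\Sigma'})^-\bigr)$ up to a $p$-adic unit, where (H3')$_p$ is exactly what is needed to preclude trivial zeros, while for each $v \in \Sigma_f$ a direct character-wise computation using~\eqref{eq:hv} and~\eqref{eq:A_v} identifies $\prod_{\psi \text{ odd}}|\psi(h_v)|_p^{-1}$ with $\#(\Z_p \otimes_{\Z} A_v^-)$.

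For part~(2), the same argument applies after passage to $\mathcal{R}_{(N)}^- = \Z_p[G/N]^- \times (\Z_p[G]/(\nu_N))^-$: this ring is still a free $\Z_p$-module of rank $|G|/2$ sitting inside the same total fraction ring as $\mathcal{R}$, so for any non-zero-divisor $g \in \mathcal{R}_{(N)}^-$ the cardinality $\#(\mathcal{R}_{(N)}^-/(g))$ again equals the character product $\prod_{\psi \text{ odd}} |\psi(g)|_p^{-1}$, since this only depends on the determinant of multiplication by $g$ on the common ambient $\Q_p$-algebra. Consequently the length comparison in $\mathcal{R}_{(N)}^-$ reduces to the same $L$-value/local-factor identity as in~(1). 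The main obstacle I anticipate is the character-wise verification $\prod_{\psi \text{ odd}}|\psi(h_v)|_p^{-1} = \#(\Z_p\otimes_{\Z}A_v^-)$ for each $v \in \Sigma_f$, together with the precise matching of $\Sigma'$-smoothing Euler factors in the equivariant analytic class number formula with those in our Stickelberger element; these are standard but require careful bookkeeping.
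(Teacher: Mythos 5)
Your proposal is correct and follows essentially the same route as the paper: both reduce the claimed equality to a cardinality comparison and then invoke the equivariant analytic class number formula (this is Proposition~\ref{prop:order_equal} in the text, where the identity $\prod_{\psi}|\psi(h_v)|_p^{-1} = \#(\Z_p\otimes A_v^-)$ and the matching of smoothing factors are carried out exactly as you anticipate). The only cosmetic difference is that the paper organizes the count character-by-character over the local rings $\Z_p[G]^{\chi}$ (via Lemmas~\ref{lem:Fitt_ideal} and~\ref{lem:Fitt_ideal2}) whereas you work directly with the global $\Z_p$-length of $\mathcal{R}/(f)$ versus $\mathcal{R}/(\theta_{\Sigma}^{\Sigma'})$, which is equivalent; one small inaccuracy is that the nonvanishing of $L^{\Sigma'}(\psi^{-1},0)$ for odd $\psi$ is automatic and does not rely on (H3')$_p$, which is instead used for the integrality of $\theta_{\Sigma}^{\Sigma'}$.
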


For the proof of this proposition, we apply a standard method using the analytic class number formula (e.g.,~Greither \cite[Theorem 4.11]{Grei00}), following \cite[\S 2]{DK20}.
Note that the situation is actually easier than \cite{DK20} because we have only to deal with finite modules and non-zero-divisors.

In Lemmas \ref{lem:Fitt_ideal} and \ref{lem:Fitt_ideal2} below, we consider a general finite abelian group $G$.
Let $G'$ its maximal subgroup of order prime to $p$, and $\chi$ a character of $G'$. 
We consider the set of $\ol{\Q_p}$-valued characters of $G$ whose restrictions to $G'$ coincide with $\chi$.
We have an equivalence relation $\sim$ on the set defined as $\psi_1 \sim \psi_2$ if and only if $\psi_2 = \sigma \psi_1$ for some $\sigma \in \Gal(\ol{\Q_p}/\Q_p)$.
We fix a complete system of representatives of the equivalence classes modulo $\sim$, and write $\Psi_{\chi}$ for the set of representatives.
Then we have a natural injective algebra homomorphism with finite cokernel
\[
\Z_p[G]^{\chi} \hookrightarrow \prod_{\psi \in \Psi_\chi} \OO_{\psi},
\]
where we put $\OO_{\psi} = \Z_p[\Imag(\psi)]$.
Let $\Psi$ be any subset of $\Psi_\chi$.
We define an algebra $\Z_p[G]^{\Psi}$ as the image of the natural map
\[
\Z_p[G]^{\chi} \to \prod_{\psi \in \Psi} \OO_{\psi}.
\]
Note that $\Z_p[G]^{\Psi}$ is a local ring unless $\Psi$ is empty.

\begin{lem}\label{lem:Fitt_ideal}
Let $N$ be a finite $\Z_p[G]^{\Psi}$-module with $\pd_{\Z_p[G]^{\Psi}}(N) \leq 1$.
Then we have
\[
\# N 
= \prod_{\psi \in \Psi} \# \parenth{ \OO_{\psi}/\Fitt_{\OO_{\psi}}(N \otimes_{\Z_p[G]^{\Psi}} \OO_{\psi})}.
\]
\end{lem}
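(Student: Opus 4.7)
The plan is to reduce the statement to a determinant computation using a free resolution. As recorded just before the lemma, $\Z_p[G]^{\Psi}$ is a local ring; combined with the hypothesis $\pd_{\Z_p[G]^{\Psi}}(N) \leq 1$, this yields a free resolution
\[
0 \to (\Z_p[G]^{\Psi})^r \xrightarrow{M} (\Z_p[G]^{\Psi})^r \to N \to 0,
\]
in which the two ranks coincide because $N$ becomes zero after inverting $p$ (so the generic ranks agree over the semisimple ring $\Q_p \otimes_{\Z_p} \Z_p[G]^{\Psi} = \prod_{\psi \in \Psi} \Frac(\OO_{\psi})$). By McCoy's theorem the injectivity of $M$ forces $\det M$ to be a non-zero-divisor in $\Z_p[G]^{\Psi}$, so by the definition of Fitting ideals $\Fitt_{\Z_p[G]^{\Psi}}(N) = (\det M)$.

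Next, viewing $M$ as a $\Z_p$-linear endomorphism of the finitely generated free $\Z_p$-module $(\Z_p[G]^{\Psi})^r$, the elementary formula over $\Z_p$ gives $\# N = |\det_{\Z_p}(M)|_p^{-1}$. The crucial step is to transport this computation across the natural inclusion $\Z_p[G]^{\Psi} \hookrightarrow \prod_{\psi \in \Psi} \OO_{\psi}$, which has finite cokernel. Taking $r$-fold direct sums, $M$ extends to an endomorphism of $\prod_{\psi} \OO_{\psi}^r$ that acts block-diagonally by $\psi(M)$ on each factor; since any endomorphism of a free $\Z_p$-lattice $L'$ preserving a finite-index sublattice $L$ has the same $\Z_p$-determinant on $L$ and on $L'$, I obtain
\[
\det_{\Z_p}(M) = \prod_{\psi \in \Psi} \det_{\Z_p}\bigl(\psi(M)\colon \OO_{\psi}^r \to \OO_{\psi}^r\bigr) = \prod_{\psi \in \Psi} \Nrm_{\OO_{\psi}/\Z_p}\bigl(\psi(\det M)\bigr).
\]

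To finish, I base-change the resolution along $\Z_p[G]^{\Psi} \to \OO_{\psi}$. First I verify $\psi(\det M) \neq 0$: otherwise, since $\Q_p \otimes_{\Z_p} \Z_p[G]^{\Psi} = \prod_{\psi} \Frac(\OO_{\psi})$ contains the idempotent $e_{\psi}$, some nonzero $\Z_p$-multiple of $e_{\psi}$ would lie in $\Z_p[G]^{\Psi}$ and be annihilated by $\det M$, contradicting that $\det M$ is a non-zero-divisor. Hence $\psi(M)$ remains injective, the base-changed sequence $0 \to \OO_{\psi}^r \to \OO_{\psi}^r \to N_{\psi} \to 0$ stays short exact (with $N_{\psi} := N \otimes_{\Z_p[G]^{\Psi}} \OO_{\psi}$), and $\Fitt_{\OO_{\psi}}(N_{\psi}) = (\psi(\det M))$. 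Because $\OO_{\psi}$ is a DVR, $\#(\OO_{\psi}/\psi(\det M)) = |\Nrm_{\OO_{\psi}/\Z_p}(\psi(\det M))|_p^{-1}$, and taking the product over $\psi$ and combining with the displayed identity yields the claim. The only genuinely delicate point is the $\Z_p$-determinant comparison across the finite-cokernel inclusion $\Z_p[G]^{\Psi} \hookrightarrow \prod_{\psi} \OO_{\psi}$; it is precisely here that the character components get assembled into a statement about $N$ itself.
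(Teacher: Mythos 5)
Your proof is correct and, as far as I can tell, follows essentially the same route as the reference the paper cites (\cite[Lemmas 2.4 and 2.5]{DK20}), namely: produce a length-one free resolution $0 \to R^r \xrightarrow{M} R^r \to N \to 0$ over the local ring $R = \Z_p[G]^{\Psi}$, identify $\#N$ with $|\det_{\Z_p}(M)|_p^{-1}$, and split the $\Z_p$-determinant across the finite-cokernel inclusion $R \hookrightarrow \prod_{\psi}\OO_{\psi}$ to compare with the Fitting ideals of $N \otimes_R \OO_{\psi}$. All the intermediate steps check out: the rank equality $s=r$ follows from $N$ being $\Z_p$-torsion; McCoy gives that $\det M$ is a non-zero-divisor; the $\Z_p$-determinant is insensitive to passing between the two commensurable lattices $R^r$ and $\prod_{\psi}\OO_{\psi}^r$ (Smith normal form argument); $\psi(\det M) \neq 0$ follows from $p^m e_{\psi} \in R$ for $m \gg 0$; and $\det_{\Z_p}(\psi(M)) = \Nrm_{\OO_{\psi}/\Z_p}(\psi(\det M))$ is the standard norm compatibility. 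Since the paper only gives a citation here, your argument serves as a self-contained write-up of what that citation contains, and the work is correct.
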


\begin{proof}
See \cite[Lemmas 2.4 and 2.5]{DK20}.
\end{proof}

\begin{lem}\label{lem:Fitt_ideal2}
Let $N_1$ and $N_2$ be finite $\Z_p[G]^{\Psi}$-modules with $\pd_{\Z_p[G]^{\Psi}}(N_i) \leq 1$ for $i = 1, 2$.
Then we have $\Fitt_{\Z_p[G]^{\Psi}}(N_1) = \Fitt_{\Z_p[G]^{\Psi}}(N_2)$ 
if and only if both $\Fitt_{\Z_p[G]^{\Psi}}(N_1) \subset \Fitt_{\Z_p[G]^{\Psi}}(N_2)$ and $\# N_1 = \# N_2$ hold.
\end{lem}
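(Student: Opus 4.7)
The plan is to dispose of the ``only if'' direction as a two-line observation: the containment is immediate, while Lemma \ref{lem:Fitt_ideal} shows that $\#N_i$ is determined entirely by $\Fitt_{\Z_p[G]^\Psi}(N_i)$ via base change to the various $\OO_\psi$, so equal Fitting ideals force equal cardinalities.

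For the ``if'' direction, I would set $R = \Z_p[G]^\Psi$. As noted just before the lemma, $R$ is a local ring (it is a quotient of the local ring $\Z_p[G]^\chi$, which is in turn a group ring of the $p$-Sylow subgroup of $G$ over the DVR $\OO_\chi$); the edge case $\Psi = \emptyset$ is trivial. Since $\pd_R(N_i) \leq 1$ and $N_i$ is finite (hence torsion over $R$), the Fitting ideal $\Fitt_R(N_i) = (x_i)$ is principal and generated by a non-zero-divisor $x_i \in R$. The hypothesis $\Fitt_R(N_1) \subset \Fitt_R(N_2)$ then produces $y \in R$ with $x_1 = x_2 y$, reducing the whole statement to the assertion $y \in R^\times$.

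Next, I would base-change along each $R \to \OO_\psi$. Each $\OO_\psi = \Z_p[\Imag(\psi)]$ is a DVR (being the ring of integers of a finite extension of $\Q_p$), so by the standard base-change property of Fitting ideals one has $\Fitt_{\OO_\psi}(N_i \otimes_R \OO_\psi) = (x_i)\OO_\psi = \mathfrak{m}_\psi^{v_\psi(x_i)}$, where $v_\psi$ is the normalized valuation and $\mathfrak{m}_\psi$ the maximal ideal. Inserting this into Lemma \ref{lem:Fitt_ideal} gives $\#N_i = \prod_{\psi \in \Psi} q_\psi^{v_\psi(x_i)}$ with $q_\psi = \#(\OO_\psi/\mathfrak{m}_\psi)$. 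The relation $v_\psi(x_1) = v_\psi(x_2) + v_\psi(y)$ combined with $v_\psi(y) \geq 0$ and the hypothesis $\#N_1 = \#N_2$ forces $v_\psi(y) = 0$ for every $\psi \in \Psi$; that is, $y$ is a unit in each $\OO_\psi$.

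The step I expect to be the main subtlety --- and where the locality of $R$ is essential --- is the promotion from ``unit in each $\OO_\psi$'' to ``unit in $R$''. For each $\psi$, the composite $R \to \OO_\psi \twoheadrightarrow \OO_\psi/\mathfrak{m}_\psi$ is a ring homomorphism to a field, so its kernel is a maximal ideal of $R$; since $R$ is local, this kernel must coincide with the unique maximal ideal $\mathfrak{m}_R$, yielding an injection $R/\mathfrak{m}_R \hookrightarrow \OO_\psi/\mathfrak{m}_\psi$. Because the image of $y$ in $\OO_\psi/\mathfrak{m}_\psi$ is nonzero, so is its image in $R/\mathfrak{m}_R$, whence $y \notin \mathfrak{m}_R$ and $y \in R^\times$. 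This concludes $\Fitt_R(N_1) = \Fitt_R(N_2)$. The argument would break down if we tried to work over each $\OO_\psi$ individually without the local ambient ring $R$, which is why it is advantageous that the paper's framework carries along $\Z_p[G]^\Psi$ rather than passing immediately to the semisimple decomposition.
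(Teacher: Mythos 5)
Your proof is correct and follows essentially the same route as the paper: reduce to principality of the Fitting ideals to produce a multiplier (your $y$, the paper's $\lambda$), use Lemma \ref{lem:Fitt_ideal} to show each $\psi(y)$ is a unit in $\OO_\psi$, and conclude $y \in R^\times$ from the fact that $R \to \OO_\psi$ is a local homomorphism of local rings. You simply spell out the local-homomorphism step in more detail than the paper does, which is fine.
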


\begin{proof}
If $\Psi$ is empty, then the claim is trivial, so let us assume that $\Psi$ is non-empty.
The ``only if'' part immediately follows from Lemma \ref{lem:Fitt_ideal}.
For the ``if'' part, let $\lambda \in \Z_p[G]^{\Psi}$ be an element such that $\Fitt_{\Z_p[G]^{\Psi}}(N_1) = \lambda \Fitt_{\Z_p[G]^{\Psi}}(N_2)$.
Then by Lemma \ref{lem:Fitt_ideal} and $\# N_1 = \# N_2$, we must have $\psi(\lambda) \in \OO_{\psi}^{\times}$ for any $\psi \in \Psi$.
This implies $\lambda \in (\Z_p[G]^{\Psi})^{\times}$ since, for any $\psi \in \Psi$, the algebra homomorphism $\Z_p[G]^{\Psi} \to \OO_{\psi}$ is a local homomorphism.
\end{proof}

We now return to the arithmetic setting.
Note that we have an isomorphism
\[
{}_p\Omega_{\Sigma}^{\Sigma'}
\simeq \prod_{\chi} \Omega_{\Sigma}^{\Sigma', \chi},
\]
where $\chi$ runs over all equivalence classes of odd characters of $G'$.
By using the analytic class number formula, we obtain the following.

\begin{prop}\label{prop:order_equal}
Let $\Sigma$ and $\Sigma'$ be finite sets of places of $F$ satisfying (H1), (H2), (H3')$_p$, and (H4)$_p$.
Then we have
\[
\# {}_p\Omega_{\Sigma}^{\Sigma'}
= \# (\Z_p[G]^- / (\theta_{\Sigma}^{\Sigma'})).
\]
\end{prop}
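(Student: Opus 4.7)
The plan is to compare both cardinalities character-by-character, following the strategy of \cite[\S 2]{DK20} and \cite[Theorem 4.11]{Grei00}, with the classical analytic class number formula as the key input.

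First I would verify the hypotheses needed to apply Lemma \ref{lem:Fitt_ideal} to both sides. The left-hand side ${}_p\Omega_{\Sigma}^{\Sigma'}$ has projective dimension at most one over $\Z_p[G]^-$ by Proposition \ref{prop:Omega_p_ct}, since (H3')$_p$ implies (H3)$_p$ and we have (H4)$_p$. For the right-hand side, $\theta_{\Sigma}^{\Sigma'}$ lies in $\Z_p[G]^-$ by Proposition \ref{prop:theta_int}, and it is a non-zero-divisor: for every odd character $\psi$ of $G$, one has $\psi(\theta_{\Sigma}^{\Sigma'}) = L^{\Sigma'}(\psi^{-1},0) \cdot \prod_{v \in \Sigma_f} \psi(h_v)$, which is nonzero because $L(\psi^{-1},0) \neq 0$ for a nontrivial $\psi$, the smoothing factors at $\Sigma'$ are nonzero, and each $\psi(h_v)$ is nonzero (its absolute value is at least $|1 + \# I_v| - 1 > 0$ when $\psi$ is trivial on $I_v$, and equals $1$ otherwise). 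Hence $\Z_p[G]^-/(\theta_{\Sigma}^{\Sigma'})$ is a finite $\Z_p[G]^-$-module of projective dimension $\leq 1$.

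Second, I would fix an odd character $\chi$ of $G'$ and apply Lemma \ref{lem:Fitt_ideal} to each $\chi$-component. This reduces the problem to establishing, for each such $\chi$, the equality
\[
\prod_{\psi \in \Psi_\chi} \#\bigl(\OO_\psi/\psi(\theta_{\Sigma}^{\Sigma'})\bigr)
 = \#\Cl_H^{\Sigma',\chi} \cdot \prod_{v \in \Sigma_f} \# A_v^\chi,
\]
where for the right-hand side I use the $\chi$-component of the fundamental sequence \eqref{eq:fund}, which is exact and stays exact after passage to cardinalities (all terms being finite). Applying Lemma \ref{lem:Fitt_ideal} once more to $\Cl_H^{\Sigma',\chi}$ and each $A_v^\chi$, the right-hand side further decomposes over $\Psi_\chi$.

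Third, I would match the local Euler factors $\psi(h_v)$ with $\# A_v^\psi$: if $\psi$ is nontrivial on $I_v$, then $\psi(h_v) = 1$ and $A_v^\psi = 0$; if $\psi$ is trivial on $I_v$, then both $\psi(h_v)$ and the generator of the cyclic module $A_v^\psi = \OO_\psi/(1-\psi(\varphi_v)^{-1} + \# I_v)$ equal $1 + \# I_v - \psi(\varphi_v)^{-1}$, yielding equal cardinalities. After this cancellation the desired equality reduces to
\[
\prod_{\psi} \#\bigl(\OO_\psi/L^{\Sigma'}(\psi^{-1},0)\bigr) = \# {}_p\Cl_H^{\Sigma',-},
\]
the product being over Galois orbits of odd characters of $G$. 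This is the $p$-part of the classical analytic class number formula for the CM extension $H/F$, equivalently the product over odd $\psi$ of the well-known expressions of $L^{\Sigma'}(\psi^{-1},0)$ in terms of partial zeta values together with the Dirichlet class number formula applied to $H$.

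The main obstacle is purely bookkeeping rather than conceptual: individual factors $h_v^-$ need not be $p$-integral (only their product with $\omega^{\Sigma'}$ is, by Proposition \ref{prop:theta_int}), so one cannot multiplicatively split $\#(\Z_p[G]^-/(\theta_{\Sigma}^{\Sigma'}))$ into local and global pieces over $\Z_p[G]^-$ itself. Working character-by-character through $\Psi_\chi$ circumvents this, since over each $\OO_\psi$ both $\psi(h_v)$ and $\psi(\omega^{\Sigma'})$ are well-defined nonzero elements. Everything else is a direct invocation of the analytic class number formula.
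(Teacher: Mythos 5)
Your proposal is correct and takes essentially the same approach as the paper's proof. The paper extends scalars to a ring $\OO$ large enough to split $\OO[G]^-$ into $\psi$-components, whereas you explicitly run Lemma \ref{lem:Fitt_ideal} over $\Psi_\chi$; but in both cases the argument reduces to matching $\psi(h_v)$ with $\# A_v^{\psi}$ via the local computation, and then invoking the analytic class number formula to equate $\prod_\psi \# (\OO_\psi/L^{\Sigma'}(\psi^{-1},0))$ with $\# {}_p\Cl_H^{\Sigma',-}$, which is exactly the paper's decomposition.
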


\begin{proof}
We take a finite extension of $\Q_p$ that contains the values of all the $\ol{\Q_p}$-valued characters of $G$, and let $\OO$ be its ring of integers.
Then we have a natural homomorphism
\[
\OO[G]^- \to \prod_{\psi} \OO,
\]
is injective with finite cokernel.
Here, and in the rest of the proof, $\psi$ runs over the odd characters of $G$.

We aim at showing
$\# (\OO \otimes_{\Z} \Omega_{\Sigma}^{\Sigma'})
= \# (\OO[G]^- / (\theta_{\Sigma}^{\Sigma'})).$
By the sequence \eqref{eq:fund}, we have 
\[
\# (\OO \otimes \Omega_{\Sigma}^{\Sigma'})
= \prod_{v \in \Sigma_f} \# (\OO \otimes A_v^-) \cdot \# (\OO \otimes \Cl_H^{\Sigma', -}).
\]
On the other hand, we have
\[
\# \parenth{\OO[G]^- / (\theta_{\Sigma}^{\Sigma'})}
= \prod_{\psi} \# \parenth{\OO / (\psi(\theta_{\Sigma}^{\Sigma'}))}.
\]
By \eqref{eq:theta_defn}, this implies
\[
\# \parenth{\OO[G]^- / (\theta_{\Sigma}^{\Sigma'})}
= \prod_{v \in \Sigma_f} \prod_{\psi} \# \parenth{\OO / (\psi(h_v))}
\times \prod_{\psi} 
\# \parenth{\OO / \parenth{\psi \parenth{\omega^{\Sigma'}}}}.
\]

We claim that
\[
\# (\OO \otimes A_v^-) = \prod_{\psi} \# \parenth{\OO / (\psi(h_v))}
\]
holds for each $v \in \Sigma_f$.
By the definition of $A_v$, we have
\[
\# (\OO \otimes A_v^-) = \prod_{\psi} \# (\OO/(1 - \psi(\varphi_v)^{-1} + \# I_v)),
\]
where $\psi$ runs over the odd characters that are trivial on $I_v$.
By the definition of $h_v$, we have $\psi(h_v) = 1 - \psi(\varphi_v)^{-1} + \# I_v$ if $\psi$ is trivial on $I_v$, and $\psi(h_v) = 1$ otherwise.
Then we obtain the claim.

The analytic class number formula (cf. \cite[Lemma 2.1]{DK20}) implies
\begin{align}
\# (\OO \otimes \Cl_H^{\Sigma', -})
= \prod_{\psi} 
\# \parenth{\OO / \parenth{L^{\Sigma'}(\psi^{-1}, 0)}}.
\end{align}
By the definition of $\omega^{\Sigma'}$, we have
$L^{\Sigma'}(\psi^{-1}, 0) = \psi \parenth{\omega^{\Sigma'}}$
for each odd character $\psi$ of $G$.

Incorporating these formulas, we obtain the proposition.
\end{proof}

Now we are ready to prove Proposition \ref{prop:divisibility3}.

\begin{proof}[Proof of Proposition \ref{prop:divisibility3}]
(1)
Suppose that $\Fitt_{\Z_p[G]^-}({}_p\Omega_{\Sigma}^{\Sigma'}) \subset (\theta_{\Sigma}^{\Sigma'})$ holds.
Then by Lemma \ref{lem:Fitt_ideal}, for all odd characters $\chi$ of $G'$, we have
\[
 \# (\Z_p[G]^{\chi} / (\theta_{\Sigma}^{\Sigma', \chi})) \mid \# \Omega_{\Sigma}^{\Sigma', \chi}.
\]
Then by Proposition \ref{prop:order_equal}, the divisibility of the orders must be an equality for all $\chi$.
Now we can apply Lemma \ref{lem:Fitt_ideal2} and obtain the claim (1).

(2)
It is easy to see (e.g., by Lemma \ref{lem:Fitt_ideal})
\[
\# \parenth{\Z_p[G]^-/(\theta_{\Sigma}^{\Sigma'})}
= \# \parenth{\Z_p[G]^-_{(N)}/(\theta_{\Sigma}^{\Sigma'})}
\]
and
\[
\# {}_p\Omega_{\Sigma}^{\Sigma'}
= \# \parenth{\Z_p[G]^-_{(N)} \otimes_{\Z_p[G]^-} {}_p\Omega_{\Sigma}^{\Sigma'}}.
\]
By Proposition \ref{prop:order_equal}, these formulas imply
\begin{equation}\label{eq:ord1}
\# \parenth{\Z_p[G]^-_{(N)}/(\theta_{\Sigma}^{\Sigma'})}
= \# \parenth{\Z_p[G]^-_{(N)} \otimes_{\Z_p[G]^-} {}_p\Omega_{\Sigma}^{\Sigma'}}.
\end{equation}


For each odd character $\chi$ of $G'$, 
we introduce $\Psi_{\chi}$ as before and 
let $\Psi_{\chi, 1}$ (resp.~$\Psi_{\chi, 2}$) be the subset of $\Psi_\chi$ whose elements are trivial on $N$ (resp.~non-trivial on $N$). 
Then we have 
\[
(\Z_p[G/N])^{\chi}
\simeq \Z_p[G]^{\Psi_{\chi, 1}}
\]
and
\[
(\Z_p[G]/(\nu_N))^{\chi}
\simeq \Z_p[G]^{\Psi_{\chi, 2}}.
\]
By the assumption and Lemma \ref{lem:Fitt_ideal}, we have
\[
 \# \parenth{\Z_p[G]^{\Psi_{\chi, i}} / (\theta_{\Sigma}^{\Sigma', \Psi_{\chi, i})}}
 \mid \# \parenth{\Z_p[G]^{\Psi_{\chi, i}} \otimes_{\Z_p[G]^-} {}_p\Omega_{\Sigma}^{\Sigma'}}
\]
for $i \in \{1, 2\}$.
Here, $\theta_{\Sigma}^{\Sigma', \Psi_{\chi, i}}$ denotes the image of $\theta_{\Sigma}^{\Sigma'}$ to $\Z_p[G]^{\Psi_{\chi, i}}$.
Then by \eqref{eq:ord1}, the divisibility of the orders must be an equality for all $\chi$ and $i$.
Now we can apply Lemma \ref{lem:Fitt_ideal2} and obtain the claim (2).
\end{proof}

\subsection{Deducing Theorems \ref{thm:current_main} and \ref{thm:main_p_2} from Theorem \ref{thm:current_main_p}}\label{ss:pf_thm}

In this subsection, assuming the validity of Theorem \ref{thm:current_main_p}, we prove Theorems \ref{thm:current_main} and \ref{thm:main_p_2}.
Let $H/F$ be a finite abelian CM-extension and $p$ an odd prime number.
Put $G = \Gal(H/F)$ and define $G'$ as usual.
For each $(\Sigma, \Sigma')$, we keep the notation ${}_p\Omega_{\Sigma}^{\Sigma'} = \Z_p \otimes_{\Z} \Omega_{\Sigma}^{\Sigma'}$.

\subsubsection{The proof of Theorem \ref{thm:main_p_2}}\label{sss:pf_thm1}

We first prove Theorem \ref{thm:main_p_2} from Theorem \ref{thm:current_main_p}

\begin{proof}[Proof of Theorem \ref{thm:main_p_2}]
Thanks to Proposition \ref{prop:Om_var}, the statement of Theorem \ref{thm:main_p_2} is independent from the choice of $(\Sigma, \Sigma')$.
Therefore, we may assume that the conditions (H1), (H2), (H3')$_p$, and (H4)$_p$ hold (we do not need to assume (H3) or (H4)).

By Proposition \ref{prop:divisibility3}(2), we have only to show
\[
\Fitt_{\Z_p[G]^-}({}_p\Omega_{\Sigma}^{\Sigma'}) \cdot \Z_p[G]_{(N)}^-
\subset \theta_{\Sigma}^{\Sigma'} \cdot \Z_p[G]_{(N)}^-.
\]
By the definition of $\Z_p[G]_{(N)}$, this is equivalent to both
\begin{equation}\label{eq:a10}
\Fitt_{\Z_p[G]^-}({}_p\Omega_{\Sigma}^{\Sigma'}) \cdot \Z_p[G/N]^-
\subset \theta_{\Sigma}^{\Sigma'} \cdot \Z_p[G/N]^-
\end{equation}
and
\begin{equation}\label{eq:a11}
\Fitt_{\Z_p[G]^-}({}_p\Omega_{\Sigma}^{\Sigma'}) \cdot \parenth{\Z_p[G]^-/(\nu_{N}^-)}
\subset \theta_{\Sigma}^{\Sigma'} \cdot \parenth{\Z_p[G]^-/(\nu_{N}^-)}
\end{equation}
hold.

Let us show \eqref{eq:a11}.
For any odd character $\chi$ of $G'$, 
recall that we have the integrality of $\theta_{\Sigma}^{\Sigma', \chi}$ by Proposition \ref{prop:theta_int}.
Therefore, by $(\nu_{I_p}^{\chi})_{\Frac(\Z_p[G]^{\chi})} \cap \Z_p[G]^{\chi} = (\nu_{I_p}^{\chi})_{\Z_p[G]^{\chi}}$, Theorem \ref{thm:current_main_p}(2) implies
\[
\Fitt_{\Z_p[G]^{\chi}}(\Omega_{\Sigma}^{\Sigma', \chi}) 
\subset (\theta_{\Sigma}^{\Sigma', \chi}, \nu_{I_p}^{\chi})
\]
as ideals of $\Z_p[G]^{\chi}$.
By the assumption $N \subset I_p$, we have $(\nu_{I_p}) \subset (\nu_N)$.
Then we obtain \eqref{eq:a11}.

Let us check \eqref{eq:a10}.
If $H^{N}$ is totally real (i.e., the complex conjugation lies in $N$),
then $\Z_p[G/N]^- = 0$, so the claim is trivial.
Otherwise, by \eqref{eq:theta_bar} and Proposition \ref{prop:fld_var}, the claim is equivalent to
\[
\parenth{\Fitt_{\Z_p[G/N]^-}({}_p\Omega_{\Sigma, H^{N}}^{\Sigma'})}
\subset (\theta_{\Sigma, H^{N}}^{\Sigma'})
\]
as ideals of $\Z_p[G/N]^-$.
By the choice of $N$, at least one $p$-adic prime is (at most) tamely ramified in $H^{N}/F$, so
this is true by Theorem \ref{thm:current_main_p}(1).
\end{proof}

\subsubsection{The proof of Theorem \ref{thm:current_main}(i)(ii)}\label{ss:deduction(i)}

We first observe elementary lemmas.

\begin{lem}\label{lem:H4}
Let $N \subset G'$ be a subgroup.
Then $H^N$ is a CM-field if and only if there exists an odd character $\chi$ of $G'$ which is trivial on $N$.
\end{lem}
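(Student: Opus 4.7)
The plan is to reduce both sides of the equivalence to the same elementary condition on $c$: namely, that $c \notin N$.

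First I would record the observation that the complex conjugation $c \in G$ has order $2$, which is prime to the odd prime $p$, so $c$ lies in the maximal prime-to-$p$ subgroup $G'$. In particular it makes sense to ask whether $c \in N$ and whether characters of $G'$ are odd.

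Next I would translate the left-hand side. Since $H/F$ is a CM-extension with $F$ totally real, every intermediate field of $H/F$ is either totally real or CM. The intermediate field $H^N$ is totally real precisely when $c$ acts trivially on $H^N$, i.e.\ when $c \in \Gal(H/H^N) = N$. Hence $H^N$ is a CM-field if and only if $c \notin N$.

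Then I would translate the right-hand side using standard duality for finite abelian groups. A character $\chi$ of $G'$ is trivial on $N$ if and only if it factors through $G'/N$, and such a $\chi$ is odd if and only if it sends the image $\bar{c}$ of $c$ in $G'/N$ to $-1$. If $c \in N$ then $\bar{c}$ is trivial and every character of $G'/N$ sends it to $1$, so no odd character exists. Conversely, if $c \notin N$, then $\bar{c}$ has order $2$ in the finite abelian group $G'/N$, and by Pontryagin duality there is a character $\chi$ of $G'/N$ with $\chi(\bar{c}) \neq 1$; since $\chi(\bar{c})^2 = 1$, this forces $\chi(\bar{c}) = -1$, producing the desired odd character.

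Combining these two equivalences yields the lemma. No step here is a real obstacle; the content is entirely formal, and the only minor point to handle carefully is that $c$ genuinely belongs to $G'$ (for which oddness of $p$ is essential).
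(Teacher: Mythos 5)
Your proof is correct and takes essentially the same route as the paper: both reduce the CM condition to $c \notin N$ and then match this against the existence of an odd character trivial on $N$ via character-group duality. You simply spell out the duality step (separating $\bar{c}$ from the identity in $G'/N$) that the paper leaves implicit.
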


\begin{proof}
The field $H^N$ is a CM-field if and only if $N$ does not contain the complex conjugation $c \in G$.
If there exists an odd character $\chi$ of $G'$ which is trivial on $N$, then $\chi(c) = -1$ implies that $c \not \in N$.
If $c \not \in N$, we may find a character $\chi$ of $G'$ which is trivial on $N$ and $\chi(c) = -1$.
\end{proof}

\begin{lem}\label{lem:deduction1}
The following are equivalent.
\begin{itemize}
\item[(a)]
For any odd character $\chi$ of $G'$, the following is false: 
the decomposition group of all $p$-adic primes in $H^{\chi}/F$ are $p$-groups
and all $p$-adic primes are ramified in $H^{\chi}/F$
(recall that $H^{\chi} = H^{\Ker(\chi)}$).
\item[(b)]
Either (i) or (ii) of Theorem \ref{thm:current_main}
holds. 
\end{itemize}
\end{lem}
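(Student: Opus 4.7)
The plan is to rephrase the awkward condition ``all $p$-adic decomposition groups are $p$-groups and all $p$-adic primes are ramified in $H^{\chi}/F$'' as a transparent condition on $\chi$ and the subgroups $G_v'$, $I_v'$, and then match it directly to (i) and (ii).

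First, since $p$ is odd and $G$ is abelian, I would decompose $G = G' \times G_p$ with $G_p$ the Sylow $p$-subgroup; this puts $c$ inside $G'$ and factors every subgroup $H \le G$ as $H = H' \times H_p$ with $H' = H \cap G'$. In particular $G_v = G_v' \times G_{v,p}$ and $I_v = I_v' \times I_{v,p}$ for each $v \in S_p(F)$. A routine calculation using $\Ker(\chi) \subset G'$ then shows that, for an odd character $\chi$ of $G'$, the decomposition group of $v$ in $H^{\chi}/F$ is a $p$-group iff $\chi|_{G_v'}$ is trivial, and $v$ is ramified in $H^{\chi}/F$ iff either $I_{v,p} \neq 1$ or $\chi|_{I_v'}$ is nontrivial. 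Write $\mathrm{Bad}(\chi)$ for the conjunction ``$\chi|_{G_v'} = 1$ and $v$ is ramified in $H^{\chi}/F$'' taken over all $v \in S_p(F)$, so that (a) is precisely the statement that $\mathrm{Bad}(\chi)$ fails for every odd $\chi$ of $G'$.

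Next I would prove both $(i)\Rightarrow(a)$ and $(ii)\Rightarrow(a)$. For (i), take $v_0 \in S_p(F)$ with $p \nmid \#I_{v_0}$, so $I_{v_0} = I_{v_0}' \subset G_{v_0}'$; depending on whether $\chi$ is trivial on $I_{v_0}'$ or not, either $v_0$ becomes unramified in $H^{\chi}/F$ or the decomposition group of $v_0$ in $H^{\chi}/F$ fails to be a $p$-group, defeating $\mathrm{Bad}(\chi)$. For (ii), set $N_p = \langle G_v' : v \in S_p(F)\rangle \subset G'$. The field appearing in (ii) is the fixed field of $\langle G_v : v \in S_p(F)\rangle$, and the factorization $\langle G_v\rangle = \langle G_v'\rangle \times \langle G_{v,p}\rangle$ inside $G = G' \times G_p$ together with $c \in G'$ shows that (ii) is equivalent to $c \in N_p$. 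If $\mathrm{Bad}(\chi)$ held then $\chi$ would be trivial on every $G_v'$, hence on $N_p$, forcing $\chi(c) = 1$ against oddness.

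For the reverse direction I would argue contrapositively: assuming neither (i) nor (ii) holds, we have $p \mid \#I_v$ (equivalently $I_{v,p} \neq 1$) for every $v \in S_p(F)$ and $c \notin N_p$. Then the image $\bar{c}$ has order $2$ in the finite abelian group $G'/N_p$, hence is detected by some character $\psi$ of $G'/N_p$; the inflation $\chi$ is then an odd character of $G'$ trivial on every $G_v'$, and each $v \in S_p(F)$ is automatically ramified in $H^{\chi}/F$ thanks to $I_{v,p} \neq 1$, so $\mathrm{Bad}(\chi)$ holds and (a) fails. There is no serious obstacle: once the reformulation of $\mathrm{Bad}(\chi)$ is in place the argument is almost purely formal. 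The only point requiring care is the bookkeeping with the factorizations $G_v = G_v' \times G_{v,p}$ and $\langle G_v : v\rangle = \langle G_v'\rangle \times \langle G_{v,p}\rangle$ inside $G = G' \times G_p$, and the small but crucial observation that $c$ itself lies in $G'$ because $p$ is odd.
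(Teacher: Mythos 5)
Your proof is correct and follows essentially the same route as the paper: both arguments decompose along $G'$ and the Sylow $p$-part (the paper via the fixed field $F_p = H^{G'}$, you via $G = G' \times G_p$) and reduce to whether $c$ lies in the subgroup generated by the prime-to-$p$ parts of the decomposition groups. The only cosmetic difference is that you reprove the content of Lemma \ref{lem:H4} inline rather than citing it.
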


\begin{proof}
Let us show the equivalence of $\neg$(a) and $\neg$(b), where $\neg$ denotes the negation.
Put $F_p = H^{G'}$.
Let $\chi$ be an odd character of $G'$ and $\fp$ a $p$-adic prime of $F$.
Then the condition that ``the decomposition group of $\fp$ in $H^{\chi}/F$ is a $p$-group and $\fp$ is ramified in $H^{\chi}/F$'' 
is equivalent to that ``$\fp$ is totally split in $H^{\chi}/F_p$ and $\fp$ is ramified in $F_p/F$''.
Therefore, $\neg$(a) is equivalent to that both ($\alpha$) and ($\beta$) hold, where:
\begin{itemize}
\item[($\alpha$)]
Any $p$-adic prime is ramified in $F_p/F$.
\item[($\beta$)]
There exists an odd character $\chi$ of $G'$ such that any $p$-adic prime is totally split in $H^{\chi}/F_p$.
\end{itemize}
Clearly ($\alpha$) is equivalent to $\neg$(i).
It is then enough to show that ($\beta$) is equivalent to $\neg$(ii).
Let $G_p = \sum_{\fp \in S_p(F)}G_{\fp} \subset G$ be the sum of the decomposition groups and put $G_p' = G_p \cap G'$.
Then the condition ($\beta$) is equivalent to that there exists an odd character $\chi$ of $G'$ that is trivial on $G_p'$.
By Lemma \ref{lem:H4}, we see that ($\beta$) is equivalent to that $H^{G_p'}$ is a CM-field.
Since $G_p / G_p'$ is a $p$-group, $H^{G_p'}$ is a CM-field if and only if so is $H^{G_p}$.
Thus ($\beta$) is equivalent to $\neg$(ii).
This completes the proof.
\end{proof}

\begin{proof}[Proof of Theorem \ref{thm:current_main}(i)(ii)]

Assume either (i) or (ii) in Theorem \ref{thm:current_main} is true.
In that case, by Lemma \ref{lem:deduction1}, we may apply Theorem \ref{thm:current_main_p}(1) for any odd character $\chi$ of $G'$, so we obtain one inclusion of the eTNC$_p^{\chi}$.
Then by Proposition \ref{prop:divisibility3}(1), we obtain the eTNC$_p^-$.
\end{proof}

\subsubsection{The proof of Theorem \ref{thm:current_main}(iii)}\label{ss:deduction(iii)}

The proof of Theorem \ref{thm:current_main}(iii) is not so direct as the others.
We assume the condition (iii), that is, $H^{\cl, +} (\mu_p) \not\supset H^{\cl}$.
The key idea is the technique of Wiles \cite{Wil90} on avoiding the trivial zeros.
The technique makes use of the following lemma (in which the field $F$ is not concerned).

\begin{lem}[{\cite[Proposition 4.1]{Grei00}}]\label{lem:trivial_zero}
Assume that $H^{\cl, +} (\mu_p) \not\supset H^{\cl}$.
Then, for any positive integer $n$,
there are infinitely many prime numbers $l$ satisfying the following conditions. 
\begin{itemize}
\item[(1)]
$H / \Q$ is unramified at any prime above $l$.
\item[(2)]
$l \equiv 1 \pmod{p^n}$.
\item[(3)]
No prime of $H^+$ above $l$ splits in $H$. 
\item[(4)]
The prime $p$ is inert in the extension $E_l/\Q$,
where $E_l$ is the subfield of $\Q (\mu_l)$ such that $[E_l : \Q] = p^n$. 
\end{itemize}
\end{lem}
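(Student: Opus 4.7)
The plan is to prove the lemma by a Chebotarev density argument in a suitably chosen Galois compositum, with the key step being the use of the hypothesis to guarantee that the target Chebotarev class is non-empty.

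First, I would introduce the Galois extension
\[
L := H^{\cl}(\mu_{p^n}, p^{1/p})
\]
of $\Q$ and reformulate each of the conditions (1)--(4) as a constraint on the Frobenius conjugacy class $\Frob_l \subset \Gal(L/\Q)$. Condition (1) merely excludes the finitely many primes ramified in $L/\Q$. Condition (2), $l \equiv 1 \pmod{p^n}$, is equivalent to $\Frob_l|_{\Q(\mu_{p^n})} = \id$. Condition (3), that no prime of $H^+$ above $l$ splits in $H$, translates via decomposition theory in the Galois closure to the requirement that $\Frob_l|_{H^{\cl}}$ be conjugate to the complex conjugation $\tilde c \in \Gal(H^{\cl}/\Q)$. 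Finally, given (2), condition (4) says that $p$ is not a $p$-th power modulo $l$, which by Kummer theory is equivalent to $\Frob_l$ acting non-trivially on $p^{1/p}$ over $\Q(\mu_p)$.

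The central step is to construct a single element $\sigma \in \Gal(L/\Q)$ whose restrictions realize all of these prescriptions; Chebotarev will then yield infinitely many $l$ with $\Frob_l$ in the conjugacy class of $\sigma$. The only nontrivial compatibility is between $\sigma|_{H^{\cl}} = \tilde c$ and $\sigma|_{\Q(\mu_{p^n})} = \id$, which demands that $\tilde c$ act trivially on $K := H^{\cl} \cap \Q(\mu_{p^n})$, i.e., that $K$ be totally real. This is exactly where the hypothesis enters: by Remark \ref{rem:Wiles_cond} (applied to the CM field $H^{\cl}$), the hypothesis $H^{\cl, +}(\mu_p) \not\supset H^{\cl}$ is equivalent to $H^{\cl} \not\supset M_p$. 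On the other hand, since $(\Z/p^n)^{\times}$ is cyclic, any CM subfield of $\Q(\mu_{p^n})$ corresponds to a subgroup not containing $-1$, hence a subgroup of odd order, and such a subgroup is automatically contained in the unique odd-order subgroup of $(\Z/p^n)^{\times}$, which in turn corresponds exactly to $M_p$. Therefore every CM subfield of $\Q(\mu_{p^n})$ contains $M_p$. If $K$ were CM then $M_p \subset K \subset H^{\cl}$, contradicting the hypothesis. Hence $K \subset \Q(\mu_{p^n})^+$ and the compatibility on $H^{\cl}(\mu_{p^n})$ holds. Extending further to $L = H^{\cl}(\mu_{p^n})(p^{1/p})$, which is a Kummer extension of degree dividing $p$, I would set $\sigma(p^{1/p}) = \zeta_p \cdot p^{1/p}$ for a nontrivial $p$-th root of unity, enforcing condition (4).

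Finally, applying Chebotarev density to the conjugacy class of $\sigma$ in $\Gal(L/\Q)$ produces infinitely many unramified primes $l$ with $\Frob_l$ in this class, all of which satisfy (1)--(4). The main obstacle is the compatibility argument just sketched, where the hypothesis is translated into the assertion that $H^{\cl} \cap \Q(\mu_{p^n})$ is totally real; this rests on the combinatorial fact that every CM subfield of the cyclotomic field $\Q(\mu_{p^n})$ must contain $M_p$. A minor subtlety in the first step is the translation of condition (3) into a Frobenius condition in $\Gal(H^{\cl}/\Q)$, which requires a careful analysis of decomposition groups under the embeddings of $H$ into its Galois closure, but this is routine given the abelian CM structure of $H/F$.
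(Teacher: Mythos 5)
Your argument is correct and takes essentially the same route as the paper: a Chebotarev density argument in the compositum $H^{\cl}(\mu_{p^n}, p^{1/p})$, merely rephrased as a Frobenius condition over $\Q$ rather than as degree-one primes of $H^{\cl,+}(\mu_{p^n})$ that are inert in a cyclic degree-$2p$ extension. Your explicit compatibility check (that $H^{\cl}\cap\Q(\mu_{p^n})$ is totally real because every CM subfield of $\Q(\mu_{p^n})$ contains $M_p$) unpacks the use of the hypothesis, which the paper compresses into the single assertion that the top extension is cyclic of order $2p$; both versions leave the same point implicit, namely that $p^{1/p}\notin H^{\cl}(\mu_{p^n})$.
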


\begin{proof}
For the readers' convenience, let us explain the key ideas.
We consider the extension $H^{\cl}(\mu_{p^n}, p^{1/p})/H^{\cl, +}(\mu_{p^n})$, which is cyclic of order $2p$, thanks to the assumption $H^{\cl, +} (\mu_p) \not\supset H^{\cl}$.
By Chebotarev's density theorem, we find infinitely many prime ideals $\fL$ of $H^{\cl, +}(\mu_{p^n})$ whose degrees are one and that are inert in the extension.
Then the prime number $l$ lying below $\fL$ satisfies the conditions.
Shortly speaking, (1) is easy, being totally split in $H^{\cl, +}(\mu_{p^n})/\Q$ ensures (2), and being inert in $H^{\cl}(\mu_{p^n})/H^{\cl, +}(\mu_{p^n})$ (resp.~in $H^{\cl, +}(\mu_{p^n}, p^{1/p})/H^{\cl, +}(\mu_{p^n})$) ensures (3) (resp.~(4)).
\end{proof}

In order to show Theorem \ref{thm:current_main}(iii),
we may and do assume that (i) fails, that is, all $p$-adic primes of $F$ are wildly ramified in $H$.
Let $\Sigma$ and $\Sigma'$ be finite sets of places of $F$ satisfying (H1), (H2), (H3')$_p$, and (H4)$_p$.
Also we may assume that $\Sigma \supset S_p(F)$. 

We write $I_{v, H}$, $h_{v, H}$, $\theta_{\Sigma, H}^{\Sigma'}$, and $\Omega_{\Sigma, H}^{\Sigma'}$ for 
$I_{v}$, $h_{v}$, $\theta_{\Sigma}^{\Sigma'}$, and $\Omega_{\Sigma}^{\Sigma'}$
in order to clarify the relevant field (the base field $F$ is always unchanged).
Take an integer $r \geq 0$ such that
\begin{equation}\label{eq:def r}
\frac{p^r}{[H_w: \Q_p]} \in \Z_p
\end{equation}
for any $p$-adic prime $w$ of $H$.
Since $\theta_{\Sigma, H}^{\Sigma'}$ is a non-zero-divisor in $\Z_p[G]^-$, 
we can take a positive integer  $n > r$ such that
\begin{equation}\label{eq:def n}
p^{n-r} \in (\theta_{\Sigma, H}^{\Sigma'})
\end{equation}
holds in $\Z_p[G]^-$.
For this $n$, 
we take a prime number $l$ such that (1) -- (4) in Lemma \ref{lem:trivial_zero} hold.
We moreover require that $S_l \cap (\Sigma_f \cup \Sigma') = \emptyset$, where $S_l = S_l(F)$ denotes the set of $l$-adic primes of $F$.
We write $H_l = H E_l$ and $F_l = F E_l$. 
Note that $H \cap F_l = F$ since each prime above $l$ is totally ramified in $F_l / F$ and is unramified in $H/F$.
This shows that we have
\[
\Gal(H_l/F_l) \times \Gal(H_l/H) = \Gal(H_l/F).
\]

\begin{lem}[{\cite[Lemma 10.2]{Wil90}}]\label{lem:ATZ1}
Let $\psi$ be a character of $\Gal(H_l/F)$ whose restriction to $\Gal(H_l/H)$ has order $> p^r$.
Then we have $\psi(v) \neq 1$ for any $v \in S_p(F)$.
\end{lem}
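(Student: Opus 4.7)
The plan is to prove the stronger statement $\psi|_{D_{w_0}^{H_l/F}} \ne 1$ for a prime $w_0$ of $H_l$ above $v$, which forces $\psi(v) \ne 1$ under either standard convention for the value of $\psi$ at ramified primes (if $\psi$ is unramified at $v$ one has $\psi(v) = \psi(\varphi_v)$; if $\psi$ is ramified at $v$ one sets $\psi(v) = 0$).

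Fix $v \in S_p(F)$, choose $w_0$ above $v$, and set $w = w_0 \cap H$. Since $E_l/\Q$ is totally ramified at $l$ while $H/\Q$ is unramified at $l$ by condition (1) of Lemma \ref{lem:trivial_zero}, we have $H \cap E_l = \Q$, so restriction yields
\[
\Gal(H_l/H) \xrightarrow{\sim} \Gal(F_l/F) \xrightarrow{\sim} \Gal(E_l/\Q),
\]
the latter being cyclic of order $p^n$ generated by the arithmetic Frobenius $\varphi_p$ (condition (4)). Because $E_l/\Q$ is unramified at $p$, so is $H_l/H$ at $w$, and the decomposition subgroup $D_w^{H_l/H}$ is cyclic, generated by $\varphi_w$, which corresponds under the identification above to $\varphi_p^{f}$, where $f := [\kappa(w):\F_p]$ is the residue degree of $w$ over $p$.

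The key numerical observation is that $f$ divides $[H_w:\Q_p]$, and by \eqref{eq:def r} we have $v_p([H_w:\Q_p]) \le r$; hence $\gcd(p^n,f) \le p^r$ and
\[
|D_w^{H_l/H}| = p^n/\gcd(p^n, f) \ge p^{n-r}.
\]
On the other hand, $\eta := \psi|_{\Gal(H_l/H)}$ has order $> p^r$ in the cyclic group $\Z/p^n\Z$, so $|\Ker(\eta)| < p^{n-r}$. Therefore $D_w^{H_l/H} \not\subset \Ker(\eta)$, giving $\eta|_{D_w^{H_l/H}} \ne 1$. Combined with the inclusion $D_w^{H_l/H} = D_{w_0}^{H_l/F} \cap \Gal(H_l/H) \subset D_{w_0}^{H_l/F}$, this yields $\psi|_{D_{w_0}^{H_l/F}} \ne 1$, as required. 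There is no substantive obstacle; the argument is essentially forced by conditions (1) and (4) of Lemma \ref{lem:trivial_zero} (providing the cyclic structure on $\Gal(H_l/H)$ together with a large decomposition subgroup at $p$) and by the bound on local degrees supplied by \eqref{eq:def r}, and the proof amounts to careful bookkeeping of the decomposition subgroups in play.
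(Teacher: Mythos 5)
Your proof is correct, and the stronger assertion $\psi|_{D_{w_0}^{H_l/F}} \ne 1$ does indeed imply the lemma (if $\psi$ is unramified at $v$ it factors through the cyclic quotient $D_{w_0}/I_{w_0}$ generated by $\varphi_v$, and nontriviality there is exactly $\psi(v)\ne 1$; if ramified, $\psi(v)=0$). All the group-theoretic steps check out: the Frobenius $\varphi_w$ in $\Gal(H_l/H)\cong\Gal(E_l/\Q)=\langle\varphi_p\rangle$ corresponds to $\varphi_p^f$ with $f=[\kappa(w):\F_p]$, so $|D_w^{H_l/H}|=p^n/\gcd(p^n,f)\ge p^{n-r}$ because $v_p(f)\le v_p([H_w:\Q_p])\le r$ and $n>r$; meanwhile $|\Ker(\psi|_{\Gal(H_l/H)})|\le p^{n-r-1}$ since the restriction has $p$-power order $>p^r$; subgroups of a cyclic group are totally ordered, so the decomposition group cannot sit inside the kernel.

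Your route is genuinely different from the paper's. The paper exploits the direct-product decomposition $\Gal(H_l/F)=\Gal(H_l/F_l)\times\Gal(H_l/H)$ to write $\psi=\psi_1\psi_2$ with $\psi_1$ a character of $\Gal(F_l/F)$ and $\psi_2$ a character of $\Gal(H/F)$. It then bounds $\ord(\psi_1(v))$ from below (using the residue degree of $v$ over $p$ and condition (4)) and $\ord(\psi_2(v))$ from above by $[H_w:F_v]$, and observes that $\psi(v)=1$ would force these two orders to coincide, contradicting the choice of $r$ in \eqref{eq:def r}. You instead work with the single quantity $f=[\kappa(w):\F_p]$, the full residue degree of $w$ over $p$, and compare the size of the decomposition subgroup of $\Gal(H_l/H)$ directly against the kernel of the restricted character. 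The underlying numerical input (the bound $v_p([H_w:\Q_p])\le r$, condition (4), and $n>r$) is the same, but you avoid the character-splitting bookkeeping entirely, and you in fact establish the slightly stronger nontriviality of $\psi$ on the full decomposition group, which is a cleaner packaging of the estimate.
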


\begin{proof}
We may assume that $\psi$ is unramified at $v$, since otherwise $\psi(v) = 0$.
We can decompose $\psi = \psi_1 \psi_2$, 
where $\psi_1$ (resp.~$\psi_2$) is a character of $\Gal(F_l/F)$ (resp.~$\Gal(H/F)$).
Note that, by (4) in Lemma \ref{lem:trivial_zero}, we have
\[
\ord(\Frob_v(F_l/F)) \geq \frac{p^n}{[F_v: \Q_p]_p}
\]
for each $v \in S_p(F)$, where $[F_v: \Q_p]_p$ denotes the largest $p$-power dividing $[F_v: \Q_p]$.
Then, by combining with the assumption, $\psi_1(v)$ is a $p$-power root of unity with
\[
\ord(\psi_1(v)) > \frac{p^r}{[F_v: \Q_p]_p}.
\]
On the other hand, we have $\ord(\psi_2(v)) \mid [H_w: F_v]$, where $w$ is a prime of $H$ lying above $v$.
If $\psi(v) = 1$, we must have $\ord(\psi_1(v)) = \ord(\psi_2(v))$, so the above formulas imply
\[
[H_w: F_v]_p > \frac{p^r}{[F_v: \Q_p]_p},
\]
where the the left hand side is defined similarly.
This contradicts the choice of $r$ in \eqref{eq:def r}, so the lemma follows.
\end{proof}

We define $\nu \in \Z_p[\Gal(H_l / F)]$ as the norm element of $\Gal(H_l/H)^{p^r}$.
Concretely, if $\tau$ is a generator of $\Gal(H_l / H)$, we put
\[
\nu = \sum_{j = 0}^{p^{n - r} - 1} \tau^{p^r j}
 \in \Z_p[\Gal(H_l / F)].
\]
Note that the condition on $\psi$ in Lemma \ref{lem:ATZ1} is equivalent to that $\psi(\nu) = 0$.

As an intermediate to the final goal, we prove the following.

\begin{claim}\label{claim:ATZ2}
We have
\begin{equation}
\Fitt_{\Z_p [\Gal (H_l / F)]^-} \left( {}_p\Omega_{\Sigma, H_l}^{\Sigma' \cup S_l} \right)
\subset \left(\theta_{\Sigma , H_l}^{\Sigma' \cup S_l} , \nu \right)^- . 
\end{equation} 
\end{claim}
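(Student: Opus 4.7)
The plan is to apply Theorem~\ref{thm:current_main_p}(2) to the enlarged data $(H_l/F,\Sigma,\Sigma'\cup S_l)$, and then to show that the fractional obstruction coming from $(\nu_{I_p,H_l})_{\Frac}$ can be absorbed into the ideal $(\theta,\nu)$ by a Wiles-style ``avoiding trivial zeros'' argument built on Lemma~\ref{lem:ATZ1}.

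First I would verify that $(H_l/F,\Sigma,\Sigma'\cup S_l,\chi)$ satisfies (H1), (H2), (H3)$_p^{\chi}$, (H4)$_p^{\chi}$ for every odd character $\chi$ of $G_l'=\Gal(H_l/F)'$. Disjointness and containment of bad primes are built into the choice of $l$; the subtle point is (H3)$_p^{\chi}$, which follows because $l\equiv 1\pmod{p^n}$ together with the splitting behavior of $l$ in $H$ from Lemma~\ref{lem:trivial_zero}(3) ensures, after enlarging $n$ if needed so that $|\mu_{p^\infty}(H_l)|\le p^n$, that the finite cyclic group $\mu_{p^\infty}(H_l)$ injects into $(\OO_{H_l}/\lambda)^\times$ for any prime $\lambda$ of $H_l$ above $l$. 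Theorem~\ref{thm:current_main_p}(2) then gives, for each such $\chi$,
\[
\Fitt_{\Z_p[\Gal(H_l/F)]^{\chi}}\!\bigl(\Omega_{\Sigma,H_l}^{\Sigma'\cup S_l,\chi}\bigr)
\subset
\bigl(\theta_{\Sigma,H_l}^{\Sigma'\cup S_l,\chi}\bigr)_{\Z_p[\Gal(H_l/F)]^{\chi}}
+
\bigl(\nu_{I_p,H_l}^{\chi}\bigr)_{\Frac(\Z_p[\Gal(H_l/F)]^{\chi})}.
\]
Since the claimed containment in $\Z_p[\Gal(H_l/F)]^-$ can be tested character by character, it remains to show, for each such $\chi$, the integral inclusion $\Fitt\subset(\theta^{\chi},\nu^{\chi})$ inside $\Z_p[\Gal(H_l/F)]^{\chi}$.

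I would split into three cases. If $\chi$ is nontrivial on $I_{\fp}'$ for some $\fp\in S_p(F)$, then $\nu_{I_p,H_l}^{\chi}=0$ and Theorem~\ref{thm:current_main_p}(2) yields $\Fitt\subset(\theta^{\chi})$ directly. If $\chi$ is trivial on every $I_{\fp}'$ but nontrivial on some decomposition subgroup $D_{\fp}'$, then the decomposition group of that $\fp$ in $H_l^{\chi}/F$ fails to be a $p$-group, so Theorem~\ref{thm:current_main_p}(1) applies and again yields $\Fitt\subset(\theta^{\chi})$. The remaining case is when $\chi$ is trivial on every $D_{\fp}'$; under the standing hypothesis that (i) of Theorem~\ref{thm:current_main} fails, all $p$-adic primes are then wildly ramified with $p$-group decomposition in $H_l^{\chi}/F$, and $\nu_{I_p,H_l}^{\chi}$ equals a $p$-adic unit times the norm of the $p$-Sylow $I_{p,p}$, sitting in the ``$\Gal(H/F)$-direction'' of $\Gal(H_l/F)=\Gal(H/F)\times\Gal(F_l/F)$. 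Here I would pass to the quotient $\Z_p[\Gal(H_l/F)]^{\chi}/(\nu^{\chi})$, whose spectrum corresponds to the characters $\psi$ of $\Gal(H_l/F)$ extending $\chi$ with $\psi(\nu)=0$; by Lemma~\ref{lem:ATZ1} these are precisely the characters for which $\psi(v)\neq 1$ for every $v\in S_p(F)$. This no-trivial-zeros property would force the Euler factors $h_v^{\psi}$ at $p$-adic $v$ to remain non-zero-divisors in the quotient and, in fact, force $\theta^{\chi}$ to divide $\nu_{I_p,H_l}^{\chi}$ modulo $\nu^{\chi}$, turning the fractional containment of Theorem~\ref{thm:current_main_p}(2) into an integral containment in $\Z_p[\Gal(H_l/F)]^{\chi}/(\nu^{\chi})$, which is what we need.

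The main obstacle is this last step: converting the fractional inclusion in $\Frac(\Z_p[\Gal(H_l/F)]^{\chi})$ into an integral inclusion in $\Z_p[\Gal(H_l/F)]^{\chi}$ modulo $\nu^{\chi}$. It requires a careful bookkeeping of how $\nu_{I_p,H_l}$ (sitting in the $\Gal(H/F)$-direction) and $\nu$ (sitting in the $\Gal(F_l/F)$-direction) interact inside $\Z_p[\Gal(H_l/F)]^{\chi}$, together with an explicit analysis of the Euler factors $h_v$ of $\theta_{\Sigma,H_l}^{\Sigma'\cup S_l}$ at $p$-adic primes on the characters that survive in $\Z_p[\Gal(H_l/F)]^{\chi}/(\nu^{\chi})$; all such nonvanishing statements are controlled, ultimately, by the conclusion $\psi(v)\neq 1$ of Lemma~\ref{lem:ATZ1}.
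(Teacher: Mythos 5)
Your overall framework — apply Theorem~\ref{thm:current_main_p}(2) to the new data, then use Lemma~\ref{lem:ATZ1} to get rid of the $\nu_{I_p}$ contamination — is the right one, and your first two cases (some $\chi$ nontrivial on an $I_{\fp}'$, or nontrivial on some decomposition group $D_{\fp}'$ while trivial on inertia) are handled correctly. But the argument genuinely breaks down in the remaining case, exactly where you flag your ``main obstacle,'' and the missing ingredient is not bookkeeping but a further ascent. Applying Theorem~\ref{thm:current_main_p}(2) at the level $H_l/F$ and intersecting with the integral group ring gives $\Fitt \subset (\theta^{\chi}, \nu_{I_p,H_l}^{\chi})$, so you would need to show $\nu_{I_p,H_l}^{\chi} \in (\theta^{\chi}, \nu^{\chi})$. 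On the characters $\psi$ extending $\chi$ with $\psi(\nu)=0$ but $\psi$ trivial on $I_{p,H_l}$ (these occur: $\nu$ is the norm in the $\Gal(F_l/F)$-direction of $\Gal(H_l/F)\simeq\Gal(H/F)\times\Gal(F_l/F)$, while $\nu_{I_p,H_l}$ sits in the $\Gal(H/F)$-direction), we have $\psi(\nu_{I_p,H_l}) = \prod_{\fp}\#I_{\fp,H_l}$, a fixed power of $p$, but $\psi(\theta)$ contains the factor $L^{\Sigma'\cup S_l}(\psi^{-1},0)$ whose $p$-adic valuation is unbounded and entirely outside the control of the Euler-factor nonvanishing that $\psi(v)\neq 1$ provides. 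So there is simply no reason for $\psi(\theta)\mid\psi(\nu_{I_p,H_l})$, and the claimed divisibility $\theta^{\chi}\mid\nu_{I_p,H_l}^{\chi}$ modulo $\nu^{\chi}$ fails.

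The paper closes this gap by a cyclotomic-tower ascent before ever invoking Lemma~\ref{lem:ATZ1}: it passes to the $m$-th layer $H_{l,m}$ of the cyclotomic $\Z_p$-extension of $H_l$, applies Theorem~\ref{thm:current_main_p}(2) there (obtaining the norm element $\nu_{I_{p,H_{l,m}/H_l}}$, which sits in the ``new'' $\Z_p$-direction), and then pushes back down to $\Gal(H_l/F)$ via $\pi$. Under $\pi$, this norm element becomes the scalar $p^a$ with $p^a = \#I_{p,H_{l,m}/H_l}$, which can be made as large as desired by choosing $m$; in particular $m$ is chosen so that $p^a/\#\Gal(H_l/F)\in(\theta_{\Sigma,H_l}^{\Sigma'\cup S_l})$, using that $\theta$ is a non-zero-divisor. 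The descent also introduces Euler-factor ratios $h_{v,H_l}/\pi(h_{v,H_{l,m}})$, and this is precisely where Lemma~\ref{lem:ATZ1} enters: $\psi(v)\neq 1$ forces $v_p(1-\psi(v)^{-1})<1\leq v_p(\#I_{v,H_l})\leq v_p(\#I_{v,H_{l,m}})$, so each ratio is a $p$-adic unit on characters with $\psi(\nu)=0$, and hence $\#\Gal(H_l/F)\cdot\prod_v h_{v,H_l}/\pi(h_{v,H_{l,m}})$ is integral up to the module $\Q_p[\Gal(H_l/F)]\nu$. Combining with the choice of $m$ yields the claim. Without this ascent the scalar $p^a$ never appears, and the $L$-value obstruction in the third case cannot be removed.
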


\begin{proof}

For an integer $m \geq 0$, let $H_{l,m}$ be the $m$-th layer of the cyclotomic $\Z_p$-extension of $H_l$. 
We take an integer $m \geq 0$ such that
\begin{equation}\label{eq:def m}
\cfrac{\# I_{p, H_{l,m} / H_l}}{\# \Gal(H_l / F)} 
\in (\theta_{\Sigma, H_l}^{\Sigma' \cup S_l}) 
\end{equation}
holds in $\Z_p[\Gal (H_l / F)]^-$, where we write $I_{p, H_{l,m} / H_l} = \sum_{\fp \in S_p(F)} I_{\fp, H_{l,m} / H_l} \subset \Gal(H_{l, m}/H_l)$ for the sum of the inertia subgroups.
Then, by Theorem \ref{thm:current_main_p}(2), we have
\begin{equation}\label{eq:inclusion1}
 \Fitt_{\Z_p [\Gal(H_{l,m} / F)]^-} \left( {}_p\Omega_{\Sigma, H_{l,m}}^{\Sigma'\cup S_l } \right) \subset 
\left( \theta_{\Sigma , H_{l,m}}^{\Sigma' \cup S_l} , \nu_{I_{p, H_{l,m} / H_l}} \right)^- .
\end{equation}
Let
\[
\pi: \Q_p [\Gal (H_{l,m} / F)] \to \Q_p [\Gal (H_l / F)]
\]
be the natural map.
Note that $\pi(\nu_{I_{p, H_{l,m} / H_l}}) = p^a$ if we put $p^a = \# I_{p, H_{l,m} / H_l}$.
Then by \eqref{eq:theta_bar} and Proposition \ref{prop:fld_var}, we can deduce from \eqref{eq:inclusion1} that  
\begin{equation}\label{eq:inclusion3}
\Fitt_{\Z_p [\Gal (H_l / F)]^-} \left( {}_p\Omega_{\Sigma, H_l}^{\Sigma' \cup S_l} \right)
\subset \left(\theta_{\Sigma , H_l}^{\Sigma' \cup S_l} , \prod_{v \in S_p(F)} \cfrac{h_{v, H_l}}{\pi \parenth{ h_{v, H_{l,m}}}} \cdot p^a \right)^- .
\end{equation}
In the product, $v$ only has to run over the $p$-adic primes, since $h_{v, H_l } = \pi \parenth{ h_{v, H_{l,m}}}$ for $v \not \in S_p(F)$, for $H_{l, m}/H_l$ is unramified at $v$.

For a character $\psi$ of $\Gal(H_l/F)$ and $v \in S_p(F)$, by the definition of $h_v$ in \eqref{eq:hv}, we have
\[
\psi \parenth{\cfrac{h_{v, H_l}}{\pi \parenth{ h_{v, H_{l,m}}}}}
= \cfrac{1 - \psi(v)^{-1} + \# I_{v, H_l}}{1 - \psi(v)^{-1} + \# I_{v, H_{l, m}}}
\]
if $\psi$ is unramified at $v$, and otherwise the left hand side is $1$.
Recall that we assume $\neg$(i), so the orders of the inertia groups on the right hand side are divisible by $p$.
Therefore, as long as $\psi(v) \neq 1$, we have an integrality
\[
\psi \parenth{\cfrac{h_{v, H_l}}{\pi \parenth{ h_{v, H_{l,m}}}}}
\in \Z_p[\Imag (\psi)].
\]
In particular, by Lemma \ref{lem:ATZ1}, this integrality holds if $\psi(\nu) = 0$.
Therefore, we have 
\[
\# \Gal(H_l / F) \cdot \prod_{v \in S_p(F)} \cfrac{h_{v, H_l}}{\pi \parenth{ h_{v, H_{l,m}}}}
 \in \Z_p[\Gal(H_l / F)] + \Q_p[\Gal(H_l/F)] \nu.
\]
This, together with \eqref{eq:def m}, shows
\[
p^a \cdot \parenth{\prod_{v \in S_p(F)} \cfrac{h_{v, H_l}}{\pi \parenth{ h_{v, H_{l,m}}}}}^-
 \in (\theta_{\Sigma, H_l}^{\Sigma' \cup S_l}) + \Q_p[\Gal(H_l/F)]^- \nu.
\]
Then \eqref{eq:inclusion3} implies
\[
\Fitt_{\Z_p [\Gal (H_l / F)]^-} \left( {}_p\Omega_{\Sigma, H_l}^{\Sigma' \cup S_l} \right)
\subset (\theta_{\Sigma, H_l}^{\Sigma' \cup S_l}) + \Q_p[\Gal(H_l/F)]^- \nu.
\]
Note that both the left hand side and $(\theta_{\Sigma, H_l}^{\Sigma' \cup S_l})$ are contained in $\Z_p [\Gal (H_l / F)]^-$, and that we also have
\[
\Q_p[\Gal(H_l/F)] \nu \cap \Z_p[\Gal(H_l/F)] = \Z_p[\Gal(H_l/F)] \nu.
\]
Therefore, the claim follows.
\end{proof}

Using Claim \ref{claim:ATZ2}, we finish the proof of Theorem \ref{thm:current_main}, assuming (iii) and $\neg$(i).
From now on, $\pi$ denotes the natural map
\[
\pi: \Z_p [\Gal (H_l / F)] \to \Z_p [G].
\]
Note that $\pi(\nu) = p^{n - r}$.
Then, using \eqref{eq:theta_bar} and Proposition \ref{prop:fld_var}, we deduce from Claim \ref{claim:ATZ2} that
\begin{equation}\label{eq:inclusion4} 
\Fitt_{\Z_p [G]^-} \left( {}_p \Omega_{\Sigma, H}^{\Sigma' \cup S_l} \right)
\subset \left( \theta_{\Sigma , H}^{\Sigma' \cup S_l} , p^{n-r} \right)^- . 
\end{equation}
Here, we used the fact that any finite prime $v \notin S_l$ (in particular any $v \in \Sigma_f$) is unramified in $H_l / H$.
On the other hand, by \eqref{eq:theta_h_h'} and Proposition \ref{prop:Om_var}(2), 
we have 
\begin{align}
\Fitt_{\Z_p [G]^-} \left( {}_p \Omega_{\Sigma, H}^{\Sigma' \cup S_l} \right)
& = \prod_{v \in S_l} \left( 1-N(v) \varphi_v^{-1} \right)^- \cdot \Fitt_{\Z_p [G]^-} \left({}_p \Omega_{\Sigma, H}^{\Sigma'} \right)
\end{align}
and the corresponding formula for the Stickelberger elements.
By the conditions (2) and (3) in Lemma \ref{lem:trivial_zero}, 
we know that $\prod_{v \in S_l} \left( 1-N(v) \varphi_v^{-1} \right)^-$ is a unit in $\Z_p[G]^-$. 
Therefore, \eqref{eq:inclusion4} implies that 
$$ \Fitt_{\Z_p [G]^-} \left({}_p \Omega_{\Sigma, H}^{\Sigma'} \right) 
\subset \left( \theta_{\Sigma, H}^{\Sigma'} , p^{n-r} \right)^- .$$
By \eqref{eq:def n}, the right hand side equals to $(\theta_{\Sigma, H}^{\Sigma'})$.
Therefore, by applying Proposition \ref{prop:divisibility3}(1), we obtain the eTNC$_p^-$, as desired.
This completes the proof of Theorem \ref{thm:current_main} under (iii) (and $\neg$(i)). 

\subsection{Other formulations of the eTNC}\label{Ap:eTNC}

In this subsection, we prove that our formulation of the eTNC$^-$ (Conjecture \ref{conj:main}) is equivalent to 
a more standard one formulated by Burns, Kurihara, and Sano \cite[Conjecture 3.1]{BKS16}.
 We first review the statement.

Let $H/F$ be a finite abelian CM-extension and write $G = \Gal (H/F)$. 
Let $\Sigma$ and $\Sigma'$ be finite sets of places of $F$ satisfying the conditions (H1), (H2), (H3), and (H4'), where
\begin{itemize}
\item[(H4')]
$\Sigma \supset S_{\ram}(H/F)$.
\end{itemize}
Clearly (H4') is stronger than (H4).

We introduce a variant of the homomorphism \eqref{eq:defn_Omega} whose cokernel is $\Omega_{\Sigma}^{\Sigma'}$.
Take an auxiliary finite set $S'$ and $v_0 \in S'$ as in \S \ref{ss:Omega_constr}.
For each finite prime $w$ of $H$, 
we consider the map 
\begin{equation}\label{eq:g_w_defn}
g_w : W_w \rightarrow  \Z[G_w]
\end{equation}
which sends $(x,y)$ to $x$ (here we identify $W_w$ with the right hand side of the isomorphism in Proposition \ref{prop:W_str}). 
For each $v \in \Sigma_f$, 
we write $g_v = \oplus_{w \mid v} g_w : W_v \to \bigoplus_{w \mid v}\Z[G_w] \simeq \Z[G] $.  
Note that the last isomorphism depends the choice of a prime of $H$ above $v$.  
Using this, we consider a homomorphism 
\begin{equation}\label{eq:g_w}
g : \bigoplus_{v \in S'_f \setminus \{v_0\}} W_v^-  \to \bigoplus_{v \in S'_f \setminus \{v_0\}} \Z[G]^- 
\end{equation}
which is defined as $g_v^-$ at the components for $\Sigma_f$ and as $\iota_v^-$ at the components 
$v \in S'_f \setminus (\Sigma_f \cup \{ v_0 \} )$, 
where $\iota_v^- = (\oplus_{w \mid v} \iota_w)^-$ is defined in Proposition \ref{prop:str_W_2}. 
We define a homomorphism $\psi_{\Sigma}^{\Sigma'}: \Ker(\theta_V)^- \to \bigoplus_{v \in S'_f \setminus \{v_0\}} \Z[G]^-$
as the composite map 
\begin{equation}\label{eq:psi_defn}
\Ker(\theta_V)^- \to \bigoplus_{v \in S'_f \setminus \{v_0\}} W_v^- \overset{g}{\to} 
 \bigoplus_{v \in S'_f \setminus \{v_0\}} \Z[G]^-
 \end{equation}
where the first map is the middle arrow in the sequence \eqref{eq:snake_theta^-} with the identification as in Lemma \ref{lem:ker_theta_W}. 

We consider a complex of $\Z[G]^-$-modules 
$$C_{\Sigma}^{\Sigma'} = 
\left( \Ker(\theta_V)^- \overset{\psi_{\Sigma}^{\Sigma'}}{\to} \bigoplus_{v \in S'_f \setminus \{v_0\}} \Z[G]^- \right) ,$$
where the first term $\Ker(\theta_V)^-$ is placed in degree zero. 
By the hypotheses (H1),  (H2), (H3) and (H4'), 
we can show that $\Ker(\theta_V)^-$ is a projective $\Z[G]^-$-module 
in a similar was as the proof of Proposition \ref{prop:Omega_p_ct} in \S \ref{ss:ct}.  
Therefore, $C_{\Sigma}^{\Sigma'}$ is a perfect complex of $\Z[G]^-$-modules.

The cohomology groups of $C_{\Sigma}^{\Sigma'}$ are described as follows.
We put
$$ \OO_{H, \Sigma}^{\times, \Sigma'} 
= \{ x \in H^\times \mid \ord_w (x)= 0, \forall w \notin \Sigma_H 
{\;\rm and} \; x \equiv 1(\bmod w), \forall w \in \Sigma'_H\}. $$
Put $X_{H, \Sigma} = \Ker \left( \bigoplus_{w \in \Sigma_H} \Z \to \Z  \right) $. 
Let $\nabla_\Sigma^{\Sigma'} (H)$ be the transpose Selmer module (see \cite[Definition A.2]{DK20}). 
Then by \cite[\S A.1, (148)]{DK20}, we have an exact sequence
\begin{equation}\label{eq:Sel_X}
0 \to \Cl_{H, \Sigma}^{\Sigma'} \to \nabla_\Sigma^{\Sigma'} (H) \to X_{H, \Sigma} \to 0.
\end{equation}

\begin{prop}[{\cite[\S A.1]{DK20}}]\label{prop:perfect complex}
We have an exact sequence 
$$ 0 \to \left( \OO_{H, \Sigma}^{\times, \Sigma'} \right)^- \to  
\Ker(\theta_V)^- \overset{\psi_{\Sigma}^{\Sigma'}}{\to} \bigoplus_{v \in S'_f \setminus \{v_0\}} \Z[G]^-
\to \nabla_\Sigma^{\Sigma'} (H)^- \to 0.$$
In other words, we have isomorphisms 
$H^0 (C_{\Sigma}^{\Sigma'}) \simeq \left( \OO_{H, \Sigma}^{\times, \Sigma'} \right)^-$ and 
$H^1 (C_{\Sigma}^{\Sigma'}) \simeq \nabla_\Sigma^{\Sigma'} (H)^-$.
\end{prop}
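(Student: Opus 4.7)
The plan is to compute $H^0(C_{\Sigma}^{\Sigma'}) = \Ker(\psi_{\Sigma}^{\Sigma'})$ and $H^1(C_{\Sigma}^{\Sigma'}) = \Coker(\psi_{\Sigma}^{\Sigma'})$ separately, by factoring $\psi_{\Sigma}^{\Sigma'}$ through the isomorphism $\Ker(\theta_V)^- \to \Ker(\theta_W)^- \simeq \bigoplus_{v \in S_f' \setminus \{v_0\}} W_v^-$ of Lemma \ref{lem:ker_theta_W} and combining the snake-lemma sequence \eqref{eq:snake_theta^-} with a direct analysis of the map $g$ of \eqref{eq:g_w}.

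First I would analyze $g$ componentwise. At $v \in S_f' \setminus (\Sigma_f \cup \{v_0\})$ it coincides with the isomorphism $\iota_v$ of Proposition \ref{prop:str_W_2}(1), while at $v \in \Sigma_f$ the map $g_v \colon W_v \to \Z[G_v]$, $(x,y) \mapsto x$, has image $\Delta G_v$ and kernel $\Z$ generated by $(0, \nu_{G_v/I_v})$ in the description of Proposition \ref{prop:W_str}. Thus both $\Ker(g)$ and $\Coker(g)$ are identified with $\bigoplus_{v \in \Sigma_f}(\Ind_{G_v}^G \Z)^-$. Since every archimedean place of the CM field $H$ is fixed by complex conjugation, the archimedean contribution to $X_{H,\Sigma}$ vanishes on the minus component, and hence $\Coker(g)$ is naturally identified with $X_{H,\Sigma}^-$.

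For $H^1$, the sequence \eqref{eq:snake_theta^-} identifies the image of $\Ker(\theta_V)^-$ in $\Ker(\theta_W)^-$ with $\Ker(\delta_\theta^-)$, so $\Image(\psi_\Sigma^{\Sigma'}) = g(\Ker(\delta_\theta^-))$, and diagram chasing yields
\[
0 \to \Cl_H^{\Sigma',-}/\delta_\theta^-(\Ker g) \to \Coker(\psi_\Sigma^{\Sigma'}) \to \Coker(g) \to 0.
\]
By Proposition \ref{prop:snake_desc} and the identification of $\Ker(g)$ with $\bigoplus_{w \in \Sigma_H^{\fin}} \Z^-$, the restriction of $\delta_\theta^-$ sends each generator $1_w$ to the ideal class $[w]$, so the left-hand quotient equals $\Cl_{H,\Sigma}^{\Sigma',-}$; combined with $\Coker(g) \simeq X_{H,\Sigma}^-$, the resulting sequence matches the minus part of \eqref{eq:Sel_X}, giving $\Coker(\psi_\Sigma^{\Sigma'}) \simeq \nabla_\Sigma^{\Sigma'}(H)^-$. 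Parallel considerations for $H^0$ yield
\[
0 \to \mu(H)^{\Sigma',-} \to \Ker(\psi_\Sigma^{\Sigma'}) \to \Ker(g) \cap \Ker(\delta_\theta^-) \to 0,
\]
and the classical divisor sequence
\[
0 \to \OO_{H,\Sigma}^{\times,\Sigma',-}/\mu(H)^{\Sigma',-} \to \bigoplus_{w \in \Sigma_H^{\fin}} \Z^- \to \Cl_H^{\Sigma',-} \to \Cl_{H,\Sigma}^{\Sigma',-} \to 0
\]
identifies the rightmost term of the former with $\OO_{H,\Sigma}^{\times,\Sigma',-}/\mu(H)^{\Sigma',-}$, so that $\Ker(\psi_\Sigma^{\Sigma'}) \simeq \OO_{H,\Sigma}^{\times,\Sigma',-}$ provided the extension classes agree.

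The main obstacle I anticipate is precisely this agreement of extension classes: verifying that the extension of $\OO_{H,\Sigma}^{\times,\Sigma',-}/\mu(H)^{\Sigma',-}$ by $\mu(H)^{\Sigma',-}$ arising from the snake lemma is the canonical one cutting out $\OO_{H,\Sigma}^{\times,\Sigma',-}$, and likewise that the extension defining $\Coker(\psi_\Sigma^{\Sigma'})$ is the one of \eqref{eq:Sel_X}. This reduces to tracing through the local-to-global compatibility of Proposition \ref{prop:compute_local_global} to pin down the connecting maps on the nose, and essentially follows the argument of \cite[\S A.1]{DK20}, which should be adapted directly to the present formulation.
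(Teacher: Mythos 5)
Your proposal is correct and takes essentially the same approach the paper implicitly adopts: the statement is cited to \cite[\S A.1]{DK20} with no in-text argument, and your componentwise analysis of $g$ (kernel and cokernel both $\bigoplus_{v \in \Sigma_f}(\Ind_{G_v}^G \Z)^- \simeq X_{H,\Sigma}^-$) combined with the snake sequence \eqref{eq:snake_theta^-} and Proposition \ref{prop:snake_desc} is precisely the adaptation of that reference's computation to the map $\psi_{\Sigma}^{\Sigma'}$. Your acknowledged remaining step---matching extension classes with \eqref{eq:Sel_X} and, for $H^0$, with the canonical $\Sigma$-unit sequence (for which one can note there is a natural map $(\OO_{H,\Sigma}^{\times,\Sigma'})^- \to \Ker(\psi_\Sigma^{\Sigma'})$ induced by the diagonal embedding into $V_{S'}^{\Sigma'}$, reducing the claim to a ladder chase)---is exactly the content of the cited section of \cite{DK20}.
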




Now we introduce relevant $L$-values and Stickelberger elements.
For any $\C$-valued character $\psi$ of $G$, the ``$\Sigma$-depleted, $\Sigma'$-smoothed'' $L$-function is defined by 
$$L_{\Sigma}^{\Sigma'} (\psi, s) = 
L (\psi, s) 
\cdot \prod_{v \in \Sigma_f, v \nmid \ff_{\psi}} \left( 1-\cfrac{\psi(v)}{N(v)^s} \right)
\cdot \prod_{v \in \Sigma'} \left( 1-\psi(v) N(v)^{1-s} \right) .$$
We define the Stickelberger element $\Theta_\Sigma^{\Sigma'} (H/F)$ by 
\[
\Theta_\Sigma^{\Sigma'} (H/F) 
= \sum_{\psi} L_{\Sigma}^{\Sigma'} (\psi^{-1}, 0) e_\psi 
\in \Q[G]^-,
\]
where $\psi$ runs over the odd characters of $G$.
Alternatively, we have
\[
\Theta_\Sigma^{\Sigma'} (H/F)
=  \prod_{v \in \Sigma_f} \left( 1- \cfrac{\nu_{I_v}}{\# I_v}\varphi_v^{-1} \right)^-
\cdot \omega^{\Sigma'}. 
\]
Since we are assuming (H1), (H2), (H3), and (H4'), by the work of Deligne and Ribet \cite{DR80} or Cassou-Nogu\`{e}s \cite{CN79},  
we know $\Theta_\Sigma^{\Sigma'} (H/F) \in \Z[G]^-$.

We put $r_{\psi, \Sigma} = \ord_{s=0} L_{\Sigma}^{\Sigma'} (\psi, s)$. 
Consider the leading term of $L_{\Sigma}^{\Sigma'} (\psi, s)$ at $s=0$ defined by
$$ L_{\Sigma}^{\Sigma'} (\psi, 0)^\ast = \lim_{s \to 0} 
\cfrac{L_{\Sigma}^{\Sigma'} (\psi, s)}{s^{r_{\psi, \Sigma}}} .$$
Then we consider the leading term of the equivariant $L$-function at $s=0$ defined by 
$$\Theta_{\Sigma}^{\Sigma', \ast}(H/F) = 
\sum_{\psi} L_{\Sigma}^{\Sigma'} (\psi^{-1}, 0)^\ast e_\psi \in \C[G]^-,$$  
where $\psi$ runs over the odd characters of $G$.
This element is a non-zero-divisor.

Using these ingredients, we introduce the zeta element.
Let $\det_{\Z[G]^-} (C_{\Sigma}^{\Sigma'})$ denote the determinant module of the complex $C_{\Sigma}^{\Sigma'}$. 
Then we have isomorphisms 

\begin{eqnarray}
\vartheta_{\Sigma, H}^{\Sigma'} : \detZ (C_{\Sigma}^{\Sigma'}) \otimes \C &\simeq&  \detC (C_{\Sigma}^{\Sigma'} \otimes \C) \\
 &\simeq& \detC \left( \left( \OO_{H, \Sigma}^{\times, \Sigma'} \right)^- \otimes \C \right) \otimes \detC^{-1} \left( X_{H, \Sigma}^- \otimes \C \right) \\
&\simeq& \C[G]^-,
\end{eqnarray}
where the second isomorphism comes from Proposition \ref{prop:perfect complex} and \eqref{eq:Sel_X}, and the last isomorphism is induced by the minus component of Dirichlet's regulator isomorphism 
$$ 
 \left( \OO_{H, \Sigma}^{\times, \Sigma'} \right)^- \otimes \C \simeq X_{H, \Sigma}^- \otimes \C $$
which sends $x^- \in \left( \OO_{H, \Sigma}^{\times, \Sigma'} \right)^-$ to $\left( -\sum_{w \in \Sigma_H} \log |x|_w w \right)^-$.
Here, $|-|_w$ denotes the normalized $w$-adic absolute value.

Using this, we define the minus component of the zeta element $z^-_{H/F, \Sigma, \Sigma'}$ by 
$$z^-_{H/F, \Sigma, \Sigma'} 
= \vartheta_{\Sigma, H}^{\Sigma', -1} (\Theta_{\Sigma}^{\Sigma', \ast}(H/F)) \in 
\detZ (C_{\Sigma}^{\Sigma'}) \otimes \C .$$

Then the more standard statement of the minus component of the eTNC is the following.

\begin{conj}[{cf. \cite[Conjecture 3.1]{BKS16}}]\label{conj:classical_eTNC}
We have
$$ \detZ (C_{\Sigma}^{\Sigma'}) = z_{H/F, \Sigma, \Sigma'}^- \Z[G]^- .$$
In other words, the zeta element $z^-_{H/F, \Sigma, \Sigma'}$ is a basis of the determinant module $\detZ (C_{\Sigma}^{\Sigma'})$.
\end{conj}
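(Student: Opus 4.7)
The plan is to establish Conjecture \ref{conj:classical_eTNC} by explicitly identifying the generator of the principal fractional ideal $\detZ(C_{\Sigma}^{\Sigma'}) \subset \detZ(C_{\Sigma}^{\Sigma'}) \otimes \C$ and matching it with $z^-_{H/F,\Sigma,\Sigma'}$. First I would exploit the two-term structure of $C_{\Sigma}^{\Sigma'}$: the kernel $\Ker(\theta_V)^-$ is projective by the arguments in \S\ref{ss:ct}, and the target $\bigoplus_{v \in S'_f \setminus \{v_0\}} \Z[G]^-$ is free, so the determinant of the complex admits an explicit description in terms of a map between projective modules of the same rank, to which the formalism of \S\ref{ss:Fitt} applies.

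Next, I would compare the defining map $\psi_{\Sigma}^{\Sigma'}$ of $C_{\Sigma}^{\Sigma'}$ (which uses $g_v:(x,y)\mapsto x$) with the defining map of $\Omega_{\Sigma}^{\Sigma'}$ (which uses $f_v:(x,y)\mapsto x+\nu_{I_v}y$) from Definition \ref{defn:Omega}. The two maps differ only in the $\nu_{I_v}$-direction, and Lemmas \ref{lem:Fitt_comp}--\ref{lem:Fitt_diagram} let me convert this difference into an explicit ratio of fractional $\Z[G]^-$-ideals relating $\detZ(C_{\Sigma}^{\Sigma'})$ to $\Fitt_{\Z[G]^-}(\Omega_{\Sigma}^{\Sigma'})^{-1}$ times a local factor built from the $h_v$'s (one contribution per $v \in \Sigma_f$, using the local description of $W_v$ in Proposition \ref{prop:str_W_2}). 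Meanwhile, on the analytic side, applying the minus-part $\Sigma$-depleted, $\Sigma'$-smoothed Dirichlet analytic class number formula to each odd character $\psi$ expresses $\Theta_{\Sigma}^{\Sigma',*}(H/F) \in \C[G]^-$ as the product of $\theta_{\Sigma}^{\Sigma'}$ with a regulator determinant; this regulator determinant is exactly what is trivialized when one applies $\vartheta_{\Sigma,H}^{\Sigma'}$, so after this cancellation the image of $z^-_{H/F,\Sigma,\Sigma'}$ reduces to $\theta_{\Sigma}^{\Sigma'}$ multiplied by the same correction factor as on the algebraic side.

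Finally, invoking Conjecture \ref{conj:main} --- established in a broad range of cases by Theorem \ref{thm:current_main} --- gives $\Fitt_{\Z[G]^-}(\Omega_{\Sigma}^{\Sigma'}) = (\theta_{\Sigma}^{\Sigma'})$; combining this with the comparison above yields $\detZ(C_{\Sigma}^{\Sigma'}) = z^-_{H/F,\Sigma,\Sigma'} \Z[G]^-$. I expect the main obstacle to be the precise bookkeeping in the comparison step: the algebraic correction factor linking $\psi_{\Sigma}^{\Sigma'}$ to the map defining $\Omega_{\Sigma}^{\Sigma'}$ must match, \emph{on the nose} (including signs, units, and factors of $\#I_v$), the analytic discrepancy between $\Theta_{\Sigma}^{\Sigma',*}(H/F)$ and $\theta_{\Sigma}^{\Sigma'}$ arising from the different Euler-type factors in their definitions. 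This requires careful application of the local description of $W_v$ in Proposition \ref{prop:str_W_2} and the formalism of Fitting ideals of maps from \S\ref{ss:Fitt}, but involves no essentially new analytic input beyond the classical class number formula.
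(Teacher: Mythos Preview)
The statement you are attempting to prove is a \emph{conjecture}, and the paper does not prove it; indeed, Conjecture~\ref{conj:classical_eTNC} remains open in general. What the paper does establish is Theorem~\ref{thm:equivalence2}: Conjecture~\ref{conj:classical_eTNC} is \emph{equivalent} to Conjecture~\ref{conj:main}. Your final step ``invoking Conjecture~\ref{conj:main}'' is therefore not a proof step but an assumption of exactly the thing in question (in an equivalent form). Theorem~\ref{thm:current_main} only verifies Conjecture~\ref{conj:main} under hypotheses (i), (ii), or (iii), so your argument at best yields Conjecture~\ref{conj:classical_eTNC} conditionally in those cases --- which is fine, but is not a proof of the conjecture as stated.

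Reading your proposal instead as a proof of the implication ``Conjecture~\ref{conj:main} $\Rightarrow$ Conjecture~\ref{conj:classical_eTNC}'' (half of Theorem~\ref{thm:equivalence2}), your route is genuinely different from the paper's. The paper does not compare $\detZ(C_{\Sigma}^{\Sigma'})$ with $z^-_{H/F,\Sigma,\Sigma'}$ directly; it factors through Kurihara's formulation (Conjecture~\ref{conj:Kurihara_conj}), importing the equivalence of Conjectures~\ref{conj:classical_eTNC} and~\ref{conj:Kurihara_conj} as a black box (Proposition~\ref{prop:equiv3}, cited from \cite{Kur20}), and then compares Conjecture~\ref{conj:Kurihara_conj} with Conjecture~\ref{conj:main} purely via the determinant identities $\det(\psi_{\Sigma}^{\Sigma'}) = \prod_v \bigl(1-\tfrac{\nu_{I_v}}{\#I_v}\varphi_v^{-1}\bigr) h_v^{-1} \cdot \det(\phi_{\Sigma}^{\Sigma'})$ of Lemma~\ref{lem:two_diagram2}. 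This sidesteps the Dirichlet regulator entirely: the regulator is absorbed into Kurihara's result. Your direct approach must instead unwind $\vartheta_{\Sigma,H}^{\Sigma'}$ and match the regulator contribution against the analytic class number formula by hand. This is doable, but it is precisely the content hidden inside Proposition~\ref{prop:equiv3}, and your sketch does not explain how to carry it out --- in particular, the module $(\OO_{H,\Sigma}^{\times,\Sigma'})^-$ need not be trivial (it contains $\Sigma$-units, not just global units), so the regulator is not vacuous and the ``cancellation'' you anticipate requires a genuine argument.
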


In the rest of this section, 
we show the following. 

\begin{thm}\label{thm:equivalence2}
Conjecture \ref{conj:classical_eTNC} is equivalent to Conjecture \ref{conj:main}.
\end{thm}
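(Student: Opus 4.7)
The plan is to translate both conjectures into statements about the same principal fractional ideal of $\C[G]^-$, using the explicit two-term presentation of $C_{\Sigma}^{\Sigma'}$ furnished by Proposition~\ref{prop:perfect complex}.

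First I would observe that, under (H1)--(H4'), the module $\Ker(\theta_V)^-$ is projective (arguing as in the proof of Proposition~\ref{prop:Omega_p_ct}), so that $C_{\Sigma}^{\Sigma'}$ is a perfect complex of amplitude $[0,1]$ whose determinant $\det_{\Z[G]^-}(C_{\Sigma}^{\Sigma'})$ is invertible. Via the trivialization $\vartheta_{\Sigma,H}^{\Sigma'}$ of its base change to $\C[G]^-$, Conjecture~\ref{conj:classical_eTNC} becomes the assertion that a specific principal fractional ideal of $\C[G]^-$ is generated by $\Theta_{\Sigma}^{\Sigma',\ast}(H/F)$.

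Second, I would directly compare the homomorphism $\psi_{\Sigma}^{\Sigma'}$ of \eqref{eq:psi_defn}, built from $g_v$ and having $\nabla_{\Sigma}^{\Sigma'}(H)^-$ as its cokernel, with the homomorphism \eqref{eq:defn_Omega}, built from $f_v$ and having $\Omega_{\Sigma}^{\Sigma'}$ as its cokernel. These two maps agree outside $\Sigma_f$, and at each $v \in \Sigma_f$ differ by $f_v(x,y) - g_v(x,y) = \nu_{I_v}\,y$. Arguing as in the proof of Proposition~\ref{prop:variance_bar} with the help of Lemmas~\ref{lem:Fitt_comp} and~\ref{lem:Fitt_diagram}, I would express the Fitting invariants of the two cokernels in terms of one another; the resulting local correction $\prod_{v \in \Sigma_f} h_v^{-}/(1-\frac{\nu_{I_v}}{\# I_v}\varphi_v^{-1})^-$ is exactly the ratio $\theta_{\Sigma}^{\Sigma'}/\Theta_{\Sigma}^{\Sigma'}(H/F)$ of the two Stickelberger elements.

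Third, I would apply the Dirichlet regulator and the analytic class number formula character-by-character to see that the trivialization $\vartheta_{\Sigma,H}^{\Sigma'}$ is precisely the mechanism converting $\Theta_{\Sigma}^{\Sigma',\ast}(H/F)$ into $\Theta_{\Sigma}^{\Sigma'}(H/F)$: the regulator determinant encodes exactly the ``kernel part'' $H^0(C_{\Sigma}^{\Sigma'}) = (\OO_{H,\Sigma}^{\times,\Sigma'})^-$, which is absent from the $\Omega_{\Sigma}^{\Sigma'}$-side. Combining this with Step~2, together with Lemmas~\ref{lem:Fitt_ideal} and~\ref{lem:Fitt_ideal2} used to reduce equalities of ideals to orders plus inclusions, matches the two conjectures term by term. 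The hardest step will be the third, where the interplay between the Dirichlet regulator and the vanishing orders $r_{\psi,\Sigma}$ of the $L$-functions at $s=0$ must be book-kept per odd character of $G$; this is where essentially all the analytic content of the equivalence lives.
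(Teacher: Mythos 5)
Your proposal has the right ingredients but a genuine gap in Steps~2 and~3 that the paper is structured precisely to avoid.

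In Step~2 you propose to compare Fitting invariants of the two cokernels via Lemmas~\ref{lem:Fitt_comp} and~\ref{lem:Fitt_diagram}, but these lemmas require the kernel and cokernel of the homomorphisms involved to be torsion. The map $\psi_{\Sigma}^{\Sigma'}$ has non-torsion kernel $(\OO_{H,\Sigma}^{\times,\Sigma'})^-$ and cokernel $\nabla_{\Sigma}^{\Sigma'}(H)^-$ (Proposition~\ref{prop:perfect complex}), so the Fitting-ideal machinery of \S\ref{ss:Fitt} is not available. Moreover, the correction factor $\prod_{v\in\Sigma_f}\left(1-\frac{\nu_{I_v}}{\# I_v}\varphi_v^{-1}\right)^-$ is a zero-divisor in $\Q[G]^-$ (it vanishes at every odd $\psi$ that is trivial on some $G_v$), so the ``ratio'' $\theta_\Sigma^{\Sigma'}/\Theta_\Sigma^{\Sigma'}(H/F)$ is only meaningful at the level of determinants computed with fixed bases, not at the level of ideals. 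This is why the paper phrases the intermediate statement as Kurihara's Conjecture~\ref{conj:Kurihara_conj}, in terms of determinants with respect to a fixed compatible family $\{e_{i,K}\}$, $\{b_{i,K}\}$; with a fixed basis the equation $\det(\psi_{\ol{\Sigma},K}^{\Sigma'})=\Theta_{\ol{\Sigma}}^{\Sigma'}(K/F)$ is a well-posed equality of (possibly zero-divisor) elements, and Lemma~\ref{lem:two_diagram2} lets one transport such determinant identities along the corrections you identify.

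In Step~3 the claim that the trivialization $\vartheta_{\Sigma,H}^{\Sigma'}$ ``converts $\Theta^{\ast}$ into $\Theta$'' is not accurate as stated: the regulator and the leading-term ratio $\Theta^\ast/\Theta$ are logically independent pieces of data. What is true, and what you would need to reprove from scratch, is the content of Proposition~\ref{prop:equiv3} (Kurihara, \cite[Proposition~3.5]{Kur20}): that the single assertion ``$z^-$ generates $\det_{\Z[G]^-}(C_\Sigma^{\Sigma'})$'' is equivalent to the family of determinant formulas over \emph{all} intermediate CM-fields $K$ and all $\ol{\Sigma}\supset S_\infty(F)\cup S_{\ram}(K/F)$. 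The variation over $(K,\ol{\Sigma})$ is essential: when a character $\psi$ is trivial on some $G_v$ with $v\in\Sigma_f$, the $\psi$-component of $\Theta_\Sigma^{\Sigma'}(H/F)$ vanishes while $\psi(\Theta^\ast)$ does not, and one must descend to $K=H^\psi$ and a minimal $\ol{\Sigma}$ to extract the arithmetic information the regulator carries. A ``character-by-character'' bookkeeping at the fixed level $H$ and fixed $\Sigma$, as you propose, cannot recover this on its own. So your plan, while pointing at the right correction factors, would require you to re-derive Kurihara's Proposition~3.5, which the paper instead cites and complements with the elementary diagram chases of Lemma~\ref{lem:two_diagram2}.
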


For the proof, it is convenient to make use of another formulation proposed by Kurihara \cite{Kur20}, which we now review.

We write $\Ker (\theta_V)_H, \psi_{\Sigma, H}^{\Sigma'}$ and $W_{v, H}$ for 
$\Ker (\theta_V), \psi_{\Sigma}^{\Sigma'}$ and $W_{v}$ in order to clarify the field concerned.
We also put ${}_p \Ker (\theta_V)_H^- = \Z_p \otimes_{\Z} \Ker (\theta_V)_H^- $, which is a free $\Z_p[G]^-$-module of rank $\# S'_f -1$. 
Then, for each intermediate CM-field $K$ of $H/F$, 
we have 
\[
\psi_{\Sigma, K}^{\Sigma'}: 
{}_p\Ker(\theta_V)_K^- \to \bigoplus_{v \in S'_f \setminus \{v_0\}} \Z_p[\Gal(K/F)]^-.
\]
By \cite[Lemma B.1]{DK20}, 
we have a canonical homomorphism 
\[
\res_{\theta_V, H/K} :  \Ker (\theta_V)_H \to \Ker (\theta_V)_K 
\]
which induces an isomorphism $\left( \Ker (\theta_V)_H \right)_{\Gal(H/K)} \simeq \Ker (\theta_V)_K $. 

Put $r = \# S'_f -1$. 
We fix a basis $\{ b_{i, H} \}_{1 \leq i \leq r}$ of 
$\bigoplus_{v \in S'_f \setminus \{v_0\}} \Z[G]^- $.
For instance, we may take the standard basis.
For each intermediate CM-field $K$ of $H/F$, we write
\[
\pi_{H/K}: \Z[G] \to \Z[\Gal(K/F)]
\]
for the natural restriction map.
By abuse of notation, we also write $\pi_{H/K}$ for the induced homomorphism 
\[
\bigoplus_{v \in S'_f \setminus \{v_0\}} \Z[G]^- \to \bigoplus_{v \in S'_f \setminus \{v_0\}} \Z[\Gal(K/F)]^-.
\]
Then we put $b_{i, K} = \pi_{H/K} (b_{i, H})$ for any $1 \leq i \leq r$, so $\{ b_{i, K} \}_{1 \leq i \leq r}$ is a basis of
$\bigoplus_{v \in S'_f \setminus \{v_0\}} \Z[\Gal(K/F)]^- $.

\begin{conj}[{\cite[Conjecture 3.4]{Kur20}}]\label{conj:Kurihara_conj}
For any odd prime $p$, there is a basis $\{ e_{i, H} \}_{1 \leq i \leq r} $ of ${}_p \Ker (\theta_V)_H^- $ as a $\Z_p[G]^-$-module
satisfying the following property.
For an intermediate CM-field $K$ of $H/F$, 
we put $e_{i,K} = \res_{\theta_V, H/K} (e_{i, H})$ for all $1 \leq i \leq r$. 
Note that $\{ e_{i, K} \}_{1 \leq i \leq r} $ is a basis of ${}_p \Ker (\theta_V)_K^- $ as a $\Z_p[\Gal(K/F)]^-$-module.
Then, for any intermediate CM-field $K$ of $H/F$ and any subset  
$\ol{\Sigma}$ of $\Sigma$ satisfying
$\ol{\Sigma} \supset S_\infty(F) \cup S_{\ram}(K/F)$, 
we have 
$$ \det (\psi_{\ol{ \Sigma},K}^{\Sigma'}) = \Theta_{\ol{\Sigma}}^{\Sigma'} (K/F)  ,$$  
where $\det (\psi_{\ol{ \Sigma},K}^{\Sigma'})$ is the determinant of $\psi_{\ol{ \Sigma},K}^{\Sigma'}$ with respect to the bases
$\{ e_{i, K} \}_{1 \leq i \leq r} $ of ${}_p \Ker (\theta_V)_K^- $ and 
$\{ b_{i, K} \}_{1 \leq i \leq r} $ of 
$\bigoplus_{v \in S'_f \setminus \{v_0\}} \Z_p[\Gal(K/F)]^- $. 
\end{conj}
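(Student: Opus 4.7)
The plan is to establish Kurihara's Conjecture \ref{conj:Kurihara_conj} by proving its equivalence with Conjecture \ref{conj:main} (which by Theorem \ref{thm:equivalence2} is equivalent to the classical formulation \ref{conj:classical_eTNC}). Under this equivalence, Kurihara's conjecture will then hold unconditionally in every case covered by Theorem \ref{thm:current_main}. The key ingredients are: (a) the freeness of ${}_p\Ker(\theta_V)_H^-$ as a $\Z_p[G]^-$-module; (b) a precise comparison between the map $\psi_{\Sigma, H}^{\Sigma'}$ (built from $g_v$) and the map $f$ of \eqref{eq:defn_Omega} (built from $f_v$) that defines $\Omega_{\Sigma, H}^{\Sigma'}$; and (c) the functorial properties of Section \ref{s:func}, plus the descent isomorphism from \cite[Lemma B.1]{DK20} that was already used in the proof of Proposition \ref{prop:fld_var}.

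First I would verify that, under (H1)--(H4'), the module ${}_p\Ker(\theta_V)_H^-$ is cohomologically trivial over $G$ (by the same argument as Proposition \ref{prop:Omega_p_ct}) and hence $\Z_p[G]^-$-projective; since it has constant rank $r = \#S'_f - 1$ at every character, it is free over the semi-local ring $\Z_p[G]^-$. Next, observe that for each $v \in \Sigma_f$ the two maps $f_v$ and $g_v$ from $W_v$ to $\Z[G]$ differ exactly by the $\nu_{I_v}$-correction in the second coordinate, and this difference is precisely what turns the factor $1 - \frac{\nu_{I_v}}{\# I_v}\varphi_v^{-1}$ appearing in $\Theta_{\Sigma}^{\Sigma'}(H/F)$ into the factor $h_v$ appearing in $\theta_{\Sigma}^{\Sigma'}$. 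Fixing a free basis $\{e^{(0)}_{i,H}\}$ of ${}_p\Ker(\theta_V)_H^-$ and using Conjecture \ref{conj:main}, one deduces that $\det(\psi_{\Sigma, H}^{\Sigma'})$ (with respect to $\{e^{(0)}_{i,H}\}$ and $\{b_{i,H}\}$) equals $u \cdot \Theta_{\Sigma}^{\Sigma'}(H/F)$ for some unit $u \in (\Z_p[G]^-)^\times$; rescaling $e^{(0)}_{1,H}$ by $u^{-1}$ produces a basis $\{e_{i,H}\}$ that realizes the on-the-nose equality for the pair $(H, \Sigma)$.

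The remaining task is to verify that this single basis simultaneously realizes the equality for every intermediate CM-field $K$ and every admissible $\ol{\Sigma} \subset \Sigma$. For the $\ol{\Sigma}$-variance, one compares $\psi_{\ol{\Sigma}, H}^{\Sigma'}$ with $\psi_{\Sigma, H}^{\Sigma'}$ through the snake-diagram used in the proof of Proposition \ref{prop:Om_var}(1); the induced change of determinant matches $\prod_{v \in \Sigma_f \setminus \ol{\Sigma}_f}(1 - \frac{\nu_{I_v}}{\# I_v}\varphi_v^{-1})$, which is exactly the ratio $\Theta_{\ol{\Sigma}}^{\Sigma'}/\Theta_{\Sigma}^{\Sigma'}$. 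For the field-variance, one invokes the descent isomorphism $(\Ker(\theta_V)_H)_{\Gal(H/K)} \simeq \Ker(\theta_V)_K$ from \cite[Lemma B.1]{DK20}, which guarantees that $\{\res_{\theta_V, H/K}(e_{i,H})\}$ is a $\Z_p[\Gal(K/F)]^-$-basis of ${}_p\Ker(\theta_V)_K^-$, and then tracks how $\psi_{\ol{\Sigma}, K}^{\Sigma'}$ arises from $\pi_{H/K} \circ \psi_{\ol{\Sigma}, H}^{\Sigma'}$ modulo the local corrections at ramified primes. A diagram chase analogous to the proof of Proposition \ref{prop:fld_var} shows that the base-change of the determinant matches $\Theta_{\ol{\Sigma}}^{\Sigma'}(K/F)$ on the nose, because the factors $\pi_{H/K}(h_{v,H})/h_{v,K}$ of \eqref{eq:theta_bar} are exactly the corrections between $\psi$-at-$H$ and $\psi$-at-$K$.

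The main obstacle will be Step~(c), the field-variance compatibility, because the construction of $\psi_{\Sigma, K}^{\Sigma'}$ implicitly depends on the choice of the auxiliary set $S'$ and the element $v_0 \in S'$, and when one restricts from $H$ to $K$ the relevant decomposition groups $G_v \subset G$ may collapse to smaller groups inside $\GK$, affecting both the trivialization $\iota_v$ of Proposition \ref{prop:str_W_2}(1) and the local Weil description of $W_v$ in Proposition \ref{prop:W_str}. Carefully organizing these local contributions, and showing that their product reproduces precisely the ratios in \eqref{eq:theta_bar} (and not merely up to a $p$-adic unit depending on $K$), is the delicate step; it amounts to a Weil-group-level computation of how the local fundamental classes $\ol{u}_{H_w/F_v}$ restrict to $\ol{u}_{K_{w'}/F_v}$, which must be executed uniformly for all intermediate $K$ simultaneously.
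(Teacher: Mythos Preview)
Your overall strategy matches the paper's: both establish the implication Conjecture~\ref{conj:main} $\Rightarrow$ Conjecture~\ref{conj:Kurihara_conj} by choosing a basis $\{e_{i,H}\}$ realizing the determinant equality at level $(H,\Sigma)$ and then verifying compatibility under $(K,\ol{\Sigma})$. The paper, however, organizes the argument differently: instead of comparing $\psi_{\ol{\Sigma},K}^{\Sigma'}$ to $\psi_{\Sigma,H}^{\Sigma'}$ directly, it routes everything through the map $\phi_{\Sigma}^{\Sigma'}$ defining $\Omega_{\Sigma}^{\Sigma'}$, via a single lemma (Lemma~\ref{lem:two_diagram2}) that computes $\psi$, $\iota$, $f$, $g$ on the rational basis $(1 - \tfrac{\nu_{I_v}}{\#I_v}\varphi_v^{-1}, 1)$ of $W_w \otimes \Q$. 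This gives three clean commuting triangles ($\psi \leftrightarrow \phi$, $\phi_{\Sigma} \leftrightarrow \phi_{\ol{\Sigma}}$, $\phi_H \leftrightarrow \phi_K$) whose determinant corrections are read off immediately, and the chain $\det(\psi_{\ol{\Sigma},K}) \to \det(\phi_{\ol{\Sigma},K}) \to \det(\phi_{\Sigma,K}) \to \pi_{H/K}(\det(\phi_{\Sigma,H})) = \pi_{H/K}(\theta_{\Sigma,H}^{\Sigma'})$ collapses to $\Theta_{\ol{\Sigma}}^{\Sigma'}(K/F)$. This bypasses the Weil-group restriction computation you anticipate as the main obstacle.

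There is one concrete error in your outline. You assert that ``the factors $\pi_{H/K}(h_{v,H})/h_{v,K}$ of \eqref{eq:theta_bar} are exactly the corrections between $\psi$-at-$H$ and $\psi$-at-$K$.'' They are not: those are the corrections for $\phi$ (built from $f_v$), as in Proposition~\ref{prop:fld_var}. The map $\psi$ is built from $g_v\colon (x,y)\mapsto x$, and on the rational basis above $g_w$ evaluates to $1 - \tfrac{\nu_{I_v}}{\#I_v}\varphi_v^{-1}$, so the descent correction for $\psi$ at $v\in\Sigma_f$ is $\bigl(1 - \tfrac{\nu_{I_{v,K}}}{\#I_{v,K}}\varphi_v^{-1}\bigr)\big/\pi_{H/K}\bigl(1 - \tfrac{\nu_{I_{v,H}}}{\#I_{v,H}}\varphi_v^{-1}\bigr)$. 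Fortunately this is exactly the ratio $\Theta_{\Sigma}^{\Sigma'}(K/F)/\pi_{H/K}(\Theta_{\Sigma}^{\Sigma'}(H/F))$, so the conclusion survives; but your stated justification does not. Similarly, invoking the snake diagram of Proposition~\ref{prop:Om_var}(1) for the $\ol{\Sigma}$-variance is misplaced (that proposition concerns $\phi$, not $\psi$); the correct comparison is $g_v$ versus $\iota_v$ at $v\in\Sigma_f\setminus\ol{\Sigma}_f$, which again gives the factor $1 - \tfrac{\nu_{I_v}}{\#I_v}\varphi_v^{-1}$, and this must be carried out at level $K$ (where those $v$ are unramified) rather than at level $H$.
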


We have the following.

\begin{prop}[{\cite[Proposition 3.5]{Kur20}}]\label{prop:equiv3}
Conjecture \ref{conj:Kurihara_conj} is equivalent to Conjecture \ref{conj:classical_eTNC}. 
\end{prop}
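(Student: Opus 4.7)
The plan is to reduce via Proposition \ref{prop:equiv3} to showing Conjecture \ref{conj:main} is equivalent to Conjecture \ref{conj:Kurihara_conj}, and then perform a local comparison between the homomorphisms whose determinants the two conjectures describe. Both conjectures involve maps of $\Z[G]^-$-modules from $\Ker(\theta_V)^-$ to $\bigoplus_{v \in S_f' \setminus \{v_0\}} \Z[G]^-$ that factor through the common injection $\Ker(\theta_V)^- \hookrightarrow \bigoplus_v W_v^-$. Conjecture \ref{conj:main} uses the map $f_\Sigma^{\Sigma'}$ of \eqref{eq:defn_Omega}, whose cokernel is $\Omega_\Sigma^{\Sigma'}$, while Conjecture \ref{conj:Kurihara_conj} uses $\psi_\Sigma^{\Sigma'}$ of \eqref{eq:psi_defn}. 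The two maps differ only at components $v \in \Sigma_f$, where the former applies $f_v \colon (x,y) \mapsto x + \nu_{I_v} y$ and the latter applies $g_v \colon (x,y) \mapsto x$. At the remaining components $v \in S_f' \setminus (\Sigma_f \cup \{v_0\})$, both maps agree via $\iota_v$, since such $v$ are unramified (Proposition \ref{prop:str_W_2}(1)).

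The central step is a character-by-character computation. Over $\C$, at any odd character $\psi$ of $G$, both $W_v^{-, \psi}$ and $\Z[G]^{-, \psi}$ are one-dimensional, and direct inspection identifies $f_v^{-, \psi}$ and $g_v^{-, \psi}$ with multiplication by the scalars $\psi(h_v)$ and $\psi\bigl(1 - (\nu_{I_v}/\# I_v) \varphi_v^{-1}\bigr)$ respectively. The vanishing loci match the expected ones: $f_v^{-, \psi}$ fails to be an isomorphism exactly when $A_v^{-, \psi} \neq 0$, whereas $g_v^{-, \psi}$ vanishes exactly when $\psi|_{G_w} = 1$, which is when the trivial $\Z$-summand in \eqref{eq:W} survives. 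Combining across $v \in \Sigma_f$ then yields the identity
\[
\det(f_\Sigma^{\Sigma'}) = \parenth{\prod_{v \in \Sigma_f} \frac{h_v^-}{\bigl(1 - (\nu_{I_v}/\# I_v) \varphi_v^{-1}\bigr)^-}} \cdot \det(\psi_\Sigma^{\Sigma'})
\]
in the total ring of fractions of $\Z[G]^-$, with determinants computed with respect to a common basis. Since the bracketed factor equals $\theta_\Sigma^{\Sigma'}/\Theta_\Sigma^{\Sigma'}(H/F)$, and since Conjecture \ref{conj:main} is equivalent to $\det(f_\Sigma^{\Sigma'}) = (\theta_\Sigma^{\Sigma'})$ (because $\Omega_\Sigma^{\Sigma'}$ has projective dimension at most one, so Fitting ideal and determinant of the map coincide), this translates Conjecture \ref{conj:main} for $(H/F,\Sigma,\Sigma')$ into the principal-ideal equality $\det(\psi_\Sigma^{\Sigma'}) = (\Theta_\Sigma^{\Sigma'}(H/F))$.

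To upgrade this single-field statement to Conjecture \ref{conj:Kurihara_conj}, which demands one compatible basis $\{e_{i, H}\}$ of $\Ker(\theta_V)_H^-$ whose restriction along $\res_{\theta_V, H/K}$ realizes the determinantal equality at every intermediate CM field $K$ and every $\ol{\Sigma} \subset \Sigma$ with $\ol{\Sigma} \supset S_\infty(F) \cup S_{\ram}(K/F)$, I would run the same local comparison for each $K$ and exploit Proposition \ref{prop:fld_var} together with \eqref{eq:theta_bar}, which show that Conjecture \ref{conj:main} for $H/F$ forces Conjecture \ref{conj:main} for every $K/F$ and pins down the unit ambiguity compatibly across all $K$.

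The main obstacle is making the scalar-by-scalar computation rigorous at the integral level. The map $\psi_\Sigma^{\Sigma'}$ need not be an isomorphism even after inverting nonzero integers, since trivial-character contributions in both its kernel $(\OO_{H, \Sigma}^{\times, \Sigma'})^-$ and its cokernel $\nabla_\Sigma^{\Sigma'}(H)^-$ obstruct a naive determinant. One must either work with the determinant module of the perfect complex $C_\Sigma^{\Sigma'}$ directly, using the Dirichlet regulator to trivialize it over $\C$ as in the definition of the zeta element, or apply the formalism of Definition \ref{defn:Fitt_ind} (together with Lemmas \ref{lem:Fitt_comp} and \ref{lem:quasi_add}) to a suitable auxiliary map interpolating between $f$ and $\psi_\Sigma^{\Sigma'}$; the bookkeeping needed to make the local identity yield a clean principal-ideal statement is the main technical content.
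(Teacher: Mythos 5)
Your proposal is circular: it opens by invoking Proposition \ref{prop:equiv3} --- the very statement you are asked to prove --- as a reduction step. You cannot appeal to the proposition inside its own proof. What you have actually sketched is the paper's proof of Theorem \ref{thm:equivalence2} (the equivalence of Conjecture \ref{conj:classical_eTNC} with Conjecture \ref{conj:main}); the local comparison of $f_v$ and $g_v$, with determinant ratio $\prod_{v\in\Sigma_f} h_v/\bigl(1 - (\nu_{I_v}/\#I_v)\varphi_v^{-1}\bigr) = \theta_\Sigma^{\Sigma'}/\Theta_\Sigma^{\Sigma'}(H/F)$, is precisely the content of Lemma \ref{lem:two_diagram2} and is used there to show Conjecture \ref{conj:main} $\Leftrightarrow$ Conjecture \ref{conj:Kurihara_conj}. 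But that is not what Proposition \ref{prop:equiv3} asserts.

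Proposition \ref{prop:equiv3} is a result of Kurihara cited here without proof, and it concerns the equivalence between the zeta-element/determinant-module formulation (Conjecture \ref{conj:classical_eTNC}) and the existence of a compatible family of bases $\{e_{i,K}\}$ of $\Ker(\theta_V)_K^-$ realizing $\det(\psi_{\ol{\Sigma},K}^{\Sigma'}) = \Theta_{\ol{\Sigma}}^{\Sigma'}(K/F)$ for every intermediate CM-field $K$ and every $\ol{\Sigma}\supset S_\infty(F)\cup S_{\ram}(K/F)$ (Conjecture \ref{conj:Kurihara_conj}). A genuine proof of it would have to unwind $\detZ(C_\Sigma^{\Sigma'})$ and the regulator trivialization $\vartheta_{\Sigma,H}^{\Sigma'}$, identify the basis condition of Conjecture \ref{conj:Kurihara_conj} at the top level $(K,\ol{\Sigma})=(H,\Sigma)$ with the assertion that $z^-_{H/F,\Sigma,\Sigma'}$ is a basis of $\detZ(C_\Sigma^{\Sigma'})$, and then show that this single condition propagates compatibly to every pair $(K,\ol{\Sigma})$ via functoriality of the zeta elements under quotients of $G$ and under shrinking $\Sigma$. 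None of the steps in your proposal address this; the comparison of $\phi_\Sigma^{\Sigma'}$ (whose cokernel is $\Omega_\Sigma^{\Sigma'}$) with $\psi_\Sigma^{\Sigma'}$ belongs to a different equivalence and would only be relevant after Proposition \ref{prop:equiv3} is already established.
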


Thanks to this proposition, in order to prove Theorem \ref{thm:equivalence2}, it is enough to show the equivalence between Conjectures \ref{conj:main} and \ref{conj:Kurihara_conj}.
%


Recall that in Definition \ref{defn:Omega}, $\Omega_{\Sigma}^{\Sigma'}$ is defined as the cokernel of the injective homomorphism \eqref{eq:defn_Omega}.
We write
\[
\phi_{\Sigma}^{\Sigma'}: \Ker(\theta_V)^- \to \bigoplus_{v \in S'_f \setminus \{v_0\}} \Z[G]^-
\]
for the homomorphism \eqref{eq:defn_Omega}.

We show a lemma that is necessary for proving the equivalence.

\begin{lem}\label{lem:two_diagram2}
The following are true.
\begin{itemize}
\item[(1)] 
The following is commutative.
\[
\xymatrix{
      \Ker (\theta_V)^- \otimes \Q \ar[r]^-{\phi_{\Sigma}^{\Sigma'}} \ar[dr]_-{\psi_{\Sigma}^{\Sigma'}}
     &  \bigoplus_{v \in S'_f \setminus \{v_0\}} \Q[G]^- \ar[d] \\
	& \bigoplus_{v \in S'_f \setminus \{v_0 \} } \Q[G]^-. 
}
\]
Here, the vertical arrow is defined as the identity at the components for 
$v \notin \Sigma_f$ and as $\times \left(1 - \frac{\nu_{I_v}}{\# I_v}\varphi_v^{-1} \right) h_v^{-1} $ at the components 
$v \in \Sigma_f$ (recall that $h_v$ is defined in \eqref{eq:hv}).
\item[(2)]
Let $\ol{\Sigma}$ be a subset of $\Sigma$ such that 
$\ol{\Sigma} \supset S_\infty(F) \cup S_{\ram}(H/F)$. 
Then the following is commutative.
\[
\xymatrix{
      \Ker (\theta_V)^- \otimes \Q \ar[r]^-{\phi_{\Sigma}^{\Sigma'}} \ar[dr]_-{\phi_{\ol{\Sigma}}^{\Sigma'}}
     &  \bigoplus_{v \in S'_f \setminus \{v_0\}} \Q[G]^- \ar[d] \\
	& \bigoplus_{v \in S'_f \setminus \{v_0 \} } \Q[G]^-. 
}
\]
Here, the vertical arrow is defined as the identity at the components for 
$v \notin \Sigma \setminus \ol{\Sigma}$ and as $\times h_v^{-1} $ at the components 
$v \in \Sigma \setminus \ol{\Sigma}$. 
\item[(3)]
In order to clarify the field, we write $\phi_{\Sigma, H}^{\Sigma'}$ and $h_{v, H}$ for $\phi_{\Sigma}^{\Sigma'}$ and $h_v$.
Then, for an intermediate CM-field $K$ of $H/F$, 
the following is commutative.
\[ 
\xymatrix{
      \Ker (\theta_V)_H^- \otimes \Q \ar[r]^{\phi_{\Sigma, H}^{\Sigma'}} \ar[d]_{\res_{\theta_V, H/K}}
     &  \bigoplus_{v \in S'_f \setminus \{v_0\}} \Q[G]^- \ar[d] \\
	 \Ker (\theta_V)_K^- \otimes \Q \ar[r]^-{\phi_{\Sigma, K}^{\Sigma'}} 
	& \bigoplus_{v \in S'_f \setminus \{v_0 \} } \Q[\Gal (K/F)]^-. 
}
\]
Here, the right vertical arrow is defined as the natural restriction map $\pi_{H/K}$ at the components for 
$v \notin \Sigma_f $ and as the minus component of the composite map 
$$ \Q[G] \overset{\pi_{H/K}}{\to} \Q[\Gal(K/F)] 
\overset{\times h_{v, K} \cdot \pi_{H/K}(h_{v, H})^{-1} }{\longrightarrow} \Q[\Gal(K/F)] $$
at the components $v \in \Sigma_f $.
\end{itemize}
\end{lem}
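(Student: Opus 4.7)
The plan is to verify all three diagrams componentwise over $v \in S'_f \setminus \{v_0\}$, using the explicit descriptions of $W_v$, $f_v$, $g_v$, $\iota_v$, and $h_v$ established in Section \ref{s:Omega}. In every case the vertical arrow is by definition the identity outside the relevant bad set (namely $\Sigma_f$ for (1) and (3), and $\Sigma \setminus \ol{\Sigma}$ for (2)), and at those places both composites in the triangle agree by construction; so only the bad places require any computation.

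For part (1), at $v \in \Sigma_f$ the task is to check the identity
\[
\bigl(1 - \tfrac{\nu_{I_v}}{\# I_v}\varphi_v^{-1}\bigr)(x + \nu_{I_v} y) = h_v \cdot x
\]
for every $(x, y) \in W_v$. This is a direct calculation in $\Q[G_v]$ using only $\nu_{I_v}^2 = \# I_v \cdot \nu_{I_v}$, the explicit form $h_v = 1 - \tfrac{\nu_{I_v}}{\# I_v}\varphi_v^{-1} + \nu_{I_v}$, and the defining relation of $W_v$, which under the norm injection $\Z[G_v/I_v] \hookrightarrow \Z[G_v]$ reads $\nu_{I_v} x = (1 - \varphi_v^{-1}) \cdot \nu_{I_v} y$ in $\Z[G_v]$. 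For part (2), the hypothesis $\ol{\Sigma} \supset S_{\ram}(H/F)$ makes $I_v$ trivial for every $v \in \Sigma \setminus \ol{\Sigma}$; then the defining relation becomes $x = (1 - \varphi_v^{-1}) y$ and $h_v = 2 - \varphi_v^{-1}$, and one reads off $f_v(x, y) = x + y = (2 - \varphi_v^{-1}) y = h_v \cdot \iota_v(x, y)$, which is exactly the prescribed factor.

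For part (3), the key observation is that $\psi$ is functorial for restriction: because $g_v$ is a coordinate projection (as is $\iota_v$), the identity $\psi_{\Sigma, K}^{\Sigma'} \circ \res_{\theta_V, H/K} = \pi_{H/K} \circ \psi_{\Sigma, H}^{\Sigma'}$ is immediate componentwise on $W_v$. Combining this with part (1) applied over both $H$ and $K$ reduces the claim to checking that
\[
\pi_{H/K}\bigl(1 - \tfrac{\nu_{I_{v, H}}}{\# I_{v, H}}\varphi_v^{-1}\bigr) = 1 - \tfrac{\nu_{I_{v, K}}}{\# I_{v, K}}\varphi_v^{-1}
\]
at each $v \in \Sigma_f$, which follows instantly from the norm identity $\pi_{H/K}(\nu_{I_{v, H}}) = \# I_{v, H/K} \cdot \nu_{I_{v, K}}$ together with $\# I_{v, H} = \# I_{v, H/K} \cdot \# I_{v, K}$.

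The only real obstacle is notational bookkeeping: one has to consistently interpret $\nu_{I_v} y$ for $y \in \Z[G_v/I_v]$ as the image of $y$ under the well-defined lift $\sigma I_v \mapsto \nu_{I_v} \sigma$, and to keep track of the various norm identities relating $I_{v, H}$, $I_{v, K}$, and $I_{v, H/K}$ when restricting from $H$ to $K$. Once these conventions are pinned down, each of the three claims reduces to a short identity in the group rings $\Q[G_v]$ or $\Q[\GK]$.
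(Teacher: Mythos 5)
Your calculations for parts (1) and (2) are correct and essentially follow the paper's route: one verifies commutativity componentwise on $W_v$, using the explicit description in Proposition \ref{prop:W_str}, the definitions of $f_v$, $g_v$, $\iota_v$, and the defining relation $\nu_{I_v}x = (1-\varphi_v^{-1})\nu_{I_v}y$. (The paper phrases these checks in terms of the $\Q[G_v]$-basis $(1-\tfrac{\nu_{I_v}}{\#I_v}\varphi_v^{-1},\,1)$ of $W_v\otimes\Q$ and simply records $\iota_w,f_w,g_w$ applied to it, which is equivalent to your direct computation with general $(x,y)$.)

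For part (3), however, your reduction has a genuine gap. You argue: the $\psi$-version of the compatibility is immediate, and then "combining with part (1) over $H$ and $K$" yields the $\phi$-version. But part (1) gives $\psi = D\circ\phi$ with $D$ acting at $v\in\Sigma_f$ by multiplication by $\bigl(1-\tfrac{\nu_{I_v}}{\#I_v}\varphi_v^{-1}\bigr)h_v^{-1}$. The factor $1-\tfrac{\nu_{I_v}}{\#I_v}\varphi_v^{-1}$ is a zero-divisor in $\Q[G]$ (it kills the $I_v$-trivial, $\varphi_v$-trivial idempotent component), so $D$ is not invertible and one cannot pass from an identity of the form $D_K(\phi_K\circ\res) = D_K(R\,\phi_H)$ back to $\phi_K\circ\res = R\,\phi_H$. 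The norm identity you isolate, $\pi_{H/K}\bigl(1-\tfrac{\nu_{I_{v,H}}}{\#I_{v,H}}\varphi_v^{-1}\bigr) = 1-\tfrac{\nu_{I_{v,K}}}{\#I_{v,K}}\varphi_v^{-1}$, is correct and is indeed the crux; but it must be used differently. The paper first establishes the commutative square relating $\Ker(\theta_V)_H^-$, $\Ker(\theta_V)_K^-$ and $\bigoplus W_v^-$ (by the argument of \cite[Lemma B.2]{DK20}, a step you should not treat as pure bookkeeping), and then checks the remaining square involving $f_{v,H}$ and $f_{v,K}$ directly: the restriction $W_{v,H}\otimes\Q \to W_{v,K}\otimes\Q$ carries the basis element $(1-\tfrac{\nu_{I_{v,H}}}{\#I_{v,H}}\varphi_v^{-1},1)$ to $(1-\tfrac{\nu_{I_{v,K}}}{\#I_{v,K}}\varphi_v^{-1},1)$ precisely because of your norm identity, and $f_{v,H}$, $f_{v,K}$ send these to $h_{v,H}$, $h_{v,K}$, so the square commutes. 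This avoids any division by a zero-divisor. Replacing your "reduction via part (1)" with this direct check closes the gap.
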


\begin{proof}
(1)(2) 
Let $v$ be any finite prime of $F$. 
For any finite prime $w$ of $H$ above $v$, by Proposition \ref{prop:W_str}, 
$W_w \otimes \Q$ is a free $\Q[G_w]$-module of rank $1$ and 
$(1 - \frac{\nu_{I_v}}{\# I_v}\varphi_v^{-1} , 1)$ is a basis of $W_w \otimes \Q$. 
By the definitions of $\iota_w$, $f_w$, $g_w$ (see Proposition \ref{prop:str_W_2}(1), (2), and \eqref{eq:g_w_defn}, respectively), we have
\begin{align}
\iota_w \left( (1 - \frac{\nu_{I_v}}{\# I_v}\varphi_v^{-1} , 1) \right)  
& = 1,\\
f_w \left( (1 - \frac{\nu_{I_v}}{\# I_v}\varphi_v^{-1} , 1) \right)  
& = h_v,  \\
g_w \left( (1 - \frac{\nu_{I_v}}{\# I_v}\varphi_v^{-1} , 1) \right)
& = 1 - \frac{\nu_{I_v}}{\# I_v}\varphi_v^{-1}.
\end{align}
Therefore, the following are commutative.
\[
\xymatrix{
     W_w \otimes \Q \ar[r]^-{f_w} \ar[rd]_{g_w}
     &   \Q[G_w] \ar[d]^{\times \left(1 - \frac{\nu_{I_v}}{\# I_v}\varphi_v^{-1} \right) h_v^{-1}} \\
	&  \Q[G_w],
}
\qquad
\xymatrix{
     W_w \otimes \Q \ar[r]^-{f_w} \ar[rd]_{\iota_w} 
     &   \Q[G_w] \ar[d]^{\times h_v^{-1}} \\
	&  \Q[G_w]. 
}
\]
These diagrams imply the claims (1) and (2), respectively. 




(3)
By a similar proof as \cite[Lemma B.2]{DK20}, 
we obtain a commutative diagram 
\[ 
\xymatrix{
      \Ker (\theta_V)_H^- \ar[r] \ar[d]_{\res_{\theta_V, H/K}}
     &  \bigoplus_{v \in S'_f \setminus \{v_0\}} W_{v, H}^- \ar[d] \\
	 \Ker (\theta_V)_K^- \ar[r] 
	& \bigoplus_{v \in S'_f \setminus \{v_0 \} }  W_{v, K}^-. 
}
\]
Here, the two horizontal arrows are the first map in \eqref{eq:psi_defn} and 
the right vertical arrow is the natural restriction map. 
For any finite prime $v$ of $F$, 
the natural map $W_{v, H} \otimes \Q \to W_{v, K} \otimes \Q$ sends the basis $(1 - \frac{\nu_{I_{v, H}}}{\# I_{v, H}}\varphi_v^{-1} , 1)$ of $W_{v, H}^- \otimes \Q$ to the basis $(1 - \frac{\nu_{I_{v, K}}}{\# I_{v, K}}\varphi_v^{-1} , 1)$ of $W_{v, K}^- \otimes \Q$.
Therefore, the above formula on $f_w$ implies that the following is commutative.
\[
\xymatrix{
     W_{v, H} \otimes \Q \ar[r]^-{f_{v, H}} \ar[d]
     &   \Q[G] \ar[d]^{(\times h_{v, K} \cdot \pi_{H/K}(h_{v, H})^{-1}) \circ \pi_{H/K}}  \\
	 W_{v,K} \otimes \Q \ar[r]^-{f_{v, K}} 
	&  \Q[\Gal(K/F)],  
}
\]
where $f_{v, H}$ and $f_{v, K}$ are defined as in Definition \ref{defn:f_hom}. 
From these diagrams, the claim (3) follows.  
\end{proof}

Now we are ready to prove Theorem \ref{thm:equivalence2}. 

\begin{proof}[Proof of Theorem \ref{thm:equivalence2}] 
By Proposition \ref{prop:equiv3}, it is enough to show the equivalence between Conjectures \ref{conj:main} and \ref{conj:Kurihara_conj}.

Firstly, we prove that Conjecture \ref{conj:main} implies Conjecture \ref{conj:Kurihara_conj}.  
Assume that Conjecture \ref{conj:main} holds. 
We fix an odd prime number $p$.
Since $\Omega_{\Sigma}^{\Sigma'}$ is the cokernel of $\phi_{\Sigma}^{\Sigma'} = \phi_{\Sigma, H}^{\Sigma'}$, by the validity of Conjecture \ref{conj:main}, 
there exists a basis $\{ e_{i, H} \}_{1 \leq i \leq r} $ of ${}_p \Ker (\theta_V)_{H}^- $ such that 
\[
\det (\phi_{\Sigma, H}^{\Sigma'}) = \theta_{\Sigma}^{\Sigma'} = \theta_{\Sigma, H}^{\Sigma'}
\] 
with respect to the basis
$\{ e_{i, H} \}_{1 \leq i \leq r} $ and 
the fixed basis $\{ b_{i, H} \}_{1 \leq i \leq r} $ of 
$\bigoplus_{v \in S'_f \setminus \{v_0\}} \Z_p[G]^- $. 
For an intermediate CM-field $K$ of $H/F$, 
we define $e_{i,K} = \res_{\theta_V, H/K} (e_{i, H}) $ for all $1 \leq i \leq r$. 
For any subset  
$\ol{\Sigma}$ of $\Sigma$ such that 
$\ol{\Sigma} \supset S_\infty(F) \cup S_{\ram}(K/F)$, by Lemma \ref{lem:two_diagram2}(1), (2), and (3), respectively, 
we obtain the first, second, and third equalities of
\begin{align}
\det (\psi_{\ol{ \Sigma},K}^{\Sigma'}) 
&= \prod_{v \in \ol{ \Sigma}_f } \left[\left( 1-\cfrac{\nu_{I_{v, K}}}{\# I_{v,K}}\varphi_{v, K}^{-1} \right)^- 
	\cdot (h_{v,K}^-)^{-1} \right] \cdot \det (\phi_{\ol{ \Sigma},K}^{\Sigma'}) \\
&= \prod_{v \in \ol{ \Sigma}_f } \left[ \left( 1-\cfrac{\nu_{I_{v, K}}}{\# I_{v,K}}\varphi_{v, K}^{-1} \right)^- 
	\cdot (h_{v,K}^-)^{-1} \right] \cdot \prod_{v \in \Sigma \setminus \ol{ \Sigma}} (h_{v,K}^-)^{-1} \cdot \det (\phi_{\Sigma,K}^{\Sigma'}) \\
&= \prod_{v \in \ol{ \Sigma}_f } \left( 1-\cfrac{\nu_{I_{v, K}}}{\# I_{v,K}}\varphi_{v, K}^{-1} \right)^- \cdot \prod_{v \in \Sigma_f} (h_{v,K}^-)^{-1} \cdot \prod_{v \in \Sigma_f} h_{v,K}^{-} \cdot \pi_{H/K} \left( \prod_{v \in \Sigma_f} (h_{v,H}^-)^{-1} \cdot \det (\phi_{\Sigma, H}^{\Sigma'}) \right)  \\
&= \prod_{v \in \ol{ \Sigma}_f } \left( 1-\cfrac{\nu_{I_{v, K}}}{\# I_{v,K}}\varphi_{v, K}^{-1} \right)^- \cdot \pi_{H/K} (\omega^{\Sigma'}) \\
&= \Theta_{\ol{\Sigma}}^{\Sigma'} (K/F).
\end{align}
Therefore, Conjecture \ref{conj:Kurihara_conj} holds under the validity of 
Conjecture \ref{conj:main}. 

Next, we show that Conjecture \ref{conj:Kurihara_conj} implies Conjecture \ref{conj:main}. 
Assume that Conjecture \ref{conj:Kurihara_conj} holds. 
We fix an odd prime number $p$ and aim at showing the eTNC$_p^-$.
We take a basis 
$\{ e_{i, H} \}_{1 \leq i \leq r} $ of ${}_p \Ker (\theta_V)_H^- $ as a $\Z_p[G]^-$-module
satisfying the condition in Conjecture \ref{conj:Kurihara_conj}.
Then it is enough to show that we have $\det(\phi_{\Sigma, H}^{\Sigma'}) = \theta_{\Sigma}^{\Sigma'}$ as elements of $\Q_p[G]^-$, where the determinant is defined with respect to the bases $\{ e_{i, H} \}_{1 \leq i \leq r} $ and $\{ b_{i, H} \}_{1 \leq i \leq r} $.

For any odd character $\psi$ of $G$, 
we write $H^\psi = H^{\Ker \psi}$ and $\Sigma_\psi = S_{\ram} (H^\psi / F) $. 
Also for any $x \in \Q_p[G]^-$ or $x \in \Q_p[\Gal (H^\psi / F)]$, 
we write $x^\psi \in \Q_p(\Imag \psi)$ for the $\psi$-component of $x$. 
Then, for any odd character $\psi$ of $G$, 
Conjecture \ref{conj:Kurihara_conj}, Lemma \ref{lem:two_diagram2} (1), (2), and (3) respectively imply the fourth, third, second, and first equality of 
\begin{align}
\det (\phi_{\Sigma,H}^{\Sigma'})^\psi 
&= \prod_{v \in \Sigma_f} h_{v,H}^{\psi} \cdot \left( \prod_{v \in \Sigma_f} (h_{v,H^\psi}^{\psi})^{-1} \cdot \det (\phi_{\Sigma, H^\psi}^{\Sigma'})^\psi \right) \\
&= \prod_{v \in \Sigma_f} h_{v,H}^{\psi} \cdot \left( \prod_{v \in \Sigma_{\psi, f}} (h_{v,H^\psi}^{\psi})^{-1} \cdot \det (\phi_{\Sigma_\psi, H^\psi}^{\Sigma'})^\psi \right) \\
&= \prod_{v \in \Sigma_f} h_{v,H}^{\psi} \cdot \det (\psi_{\Sigma_\psi, H^\psi}^{\Sigma'})^\psi \\
&= \prod_{v \in \Sigma_f} h_{v,H}^{\psi} \cdot \Theta_{\Sigma_\psi}^{\Sigma'} (H^\psi/F)^\psi \\
&= \prod_{v \in \Sigma_f} h_{v,H}^{\psi} \cdot L_{\Sigma_\psi}^{\Sigma'}(\psi^{-1}, 0) = \theta_{\Sigma}^{\Sigma', \psi}, 
\end{align}
where $\det (\phi_{\Sigma,H}^{\Sigma'})$ (resp. $\det (\phi_{\Sigma, H^\psi}^{\Sigma'}), \det (\phi_{\Sigma_\psi, H^\psi}^{\Sigma'})$ and $\det (\psi_{\Sigma_\psi, H^\psi}^{\Sigma'})$) is defined with respect to the bases
$\{ e_{i, H} \}_{1 \leq i \leq r} $ 
and 
$\{ b_{i, H} \}_{1 \leq i \leq r} $ 
(resp. $\{ e_{i, H^\psi} \}_{1 \leq i \leq r} $
 and 
$\{ b_{i, H^\psi} \}_{1 \leq i \leq r} $).
Therefore, we have $\det(\phi_{\Sigma, H}^{\Sigma'}) = \theta_{\Sigma}^{\Sigma'}$ as desired.
This completes the proof of Theorem \ref{thm:equivalence2}. 
\end{proof}

\section{Modifications of the Eisenstein series}\label{s:Eis}

The rest of this paper is devoted to the proof of Theorem \ref{thm:current_main_p}.
In this section, we introduce group ring valued Eisenstein series that are suitable for our purpose.

In \S \ref{ss:HMF}, we briefly introduce notation on Hilbert modular forms.
In \S \ref{ss:Eisen}, we introduce the most basic Eisenstein series, and then in \S \ref{ss:Eisen_modif1} and \S \ref{ss:Eisen_modif2}, we modify them appropriately for our purpose, partly following an idea of Dasgupta and Kakde \cite{DK20}.
Finally in \S \ref{ss:Eisen_Hecke}, we compute the Hecke actions on the modified Eisenstein series.

In this section, let $F$ be a fixed totally real field.
We put $d = [F:\Q]$.
%
%
In general, for a property $P$ (e.g. the property $k = 1$ for a given integer $k$), we define $\delta_P = 1$ if $P$ is true and $\delta_P = 0$ otherwise.
This notation is introduced to avoid lengthy case-by-case argument.

\subsection{Notation on Hilbert modular forms}\label{ss:HMF}

We briefly introduce Hilbert modular forms.
In this paper we completely follow the notation of \cite[\S 7.2]{DK20}, so the readers are advised to refer to it for more details.

For each element $\lambda \in \Cl^+(F)$ of the narrow class group of $F$,
we implicitly fix a fractional ideal $\ft_{\lambda}$ that represents $\lambda$.
Let $\fn$ be an ideal of $F$ (by this we implicitly mean that $\fn$ is a nonzero integral ideal) and $k \geq 1$ an integer.

We write $M_k(\fn)$ for the space of $\C$-valued Hilbert modular forms of (parallel) weight $k$ and of level $\fn$.
For each $f \in M_k(\fn)$, we have the normalized Fourier coefficients $\coef{\fa}{f} \in \C$ for ideals $\fa$ of $F$, and 
$\coefz{\lambda}{f} \in \C$ for $\lambda \in \Cl^+(F)$.
These coefficients characterize the form $f$, so we have an injective $\C$-linear homomorphism
\[
M_k(\fn) \hookrightarrow \prod_{\lambda} \C \times \prod_{\fa} \C
\]
that sends $f$ to $((\coefz{\lambda}{f})_{\lambda}, (\coef{\fa}{f})_{\fa})$.
This map is called the Fourier expansion.

We have the set of cusps of level $\fn$, denoted by $\cusps(\fn)$.
An element of $\cusps(\fn)$ is written as the class $[\cA]$ of a pair
\[
\cA = (A, \lambda),
\]
 where $\lambda \in \Cl^+(F)$ and $A \in \GL_2^+(F)$ (the subgroup of $\GL_2(F)$ whose elements have totally positive determinants).
For such a pair $\cA$ and for $f \in M_k(\fn)$, we write $c_{\cA}(0, f)$ for the constant term of the normalized Fourier coefficients.

A cuspform (of the same weight and level) is defined as a form $f \in M_k(\fn)$ such that $c_{\cA}(0, f) = 0$ for any pair $\cA$.
The subspace of cuspforms is denoted by $S_k(\fn) \subset M_k(\fn)$.

Given a pair $\cA = (A, \lambda)$, following \cite[\S 7.2.5]{DK20}, we define a fractional ideal $\fb_{\cA}$ of $\OO_F$ by
\[
\fb_{\cA} = (a) + (c) (\ft_{\lambda} \fd_F)^{-1}.
\]
Here, $A = \begin{pmatrix} a & \ast \\ c & \ast \end{pmatrix}$ and $\fd_F$ denotes the different of $F$.
Then we define an integral ideal $\fc_{\cA}$ of $\OO_F$ by
\[
\fc_{\cA} = (c) (\ft_{\lambda} \fd_F \fb_{\cA})^{-1}.
\]
This ideal $\fc_{\cA}$ depends only on the cusp $[\cA]$.
For an ideal $\fb \mid \fn$, we define
\[
C_{\infty}(\fb, \fn) = \{ [\cA] \in \cusps(\fn) : \fb \mid \fc_{\cA} \} 
\]
and
\[
C_{0}(\fb, \fn) = \{ [\cA] \in \cusps(\fn) : \gcd(\fb, \fc_{\cA}) = 1 \}. 
\]
We also write $C_{\infty}(\fn) = C_{\infty}(\fn, \fn)$.

We have the diamond operator $S(\fa)$ on $M_k(\fn)$ and $S_k(\fn)$ for each element $\fa \in \Cl^+_{\fn}(F)$ of the narrow ray class group of $F$ of conductor $\fn$.
For each $\C$-valued character $\psi$ of $\Cl^+_{\fn}(F)$ which is totally odd (resp.~totally even) if $k$ is odd (resp.~$k$ is even), we put
\[
M_k(\fn, \psi) = \{ f \in M_k(\fn) \mid S(\fa) f = \psi(\fa) f, \forall \fa \in \Cl^+_{\fn}(F) \}
\]
and $S_k(\fn, \psi) = M_k(\fn, \psi) \cap S_k(\fn)$.
Then we have the direct sum decompositions
\[
M_k(\fn) = \bigoplus_{\psi} M_k(\fn, \psi),
\qquad S_k(\fn) = \bigoplus_{\psi} S_k(\fn, \psi).
\]

We write $M_k(\fn, \Z) \subset M_k(\fn)$ for the $\Z$-submodule of forms $f$ such that $c(\fa, f) \in \Z$ and $c_{\lambda}(0, f) \in \Z$ holds for all $\fa$ and $\lambda$.
Then, for a ring $R$, we define the $R$-module of Hilbert modular forms with coefficients in $R$ by $M_k(\fn, R) = M_k(\fn, \Z) \otimes_{\Z} R$.
When a character $\psi: \Cl_{\fn}^+(F) \to R^{\times}$ is given, we define
\[
M_k(\fn, R, \psi) = \{ f \in M_k(\fn, R) \mid S(\fa) f = \psi(\fa) f, \forall \fa \in \Cl^+_{\fn}(F) \}.
\]
We define $S_k(\fn, R)$ and $S_k(\fn, R, \psi)$ in a similar way.

In practice, we are interested in a finite abelian CM-extension $H/F$ (with $G = \Gal(H/F)$) and an odd prime number $p$.
Let us suppose that $k$ is odd and the conductor $\ff_{H/F}$ of $H/F$ divides $\fn$.
Let 
\[
\bpsi: \Cl_{\fn}^+(F) \twoheadrightarrow G \hookrightarrow \Z_p[G]^{\times} \twoheadrightarrow (\Z_p[G]^-)^{\times}
\]
 be the tautological character.
Then the above construction gives rise to spaces 
\[
S_k(\fn, \Z_p[G]^-, \bpsi) \subset M_k(\fn, \Z_p[G]^-, \bpsi).
\]
In this case, we have an interpretation of the elements of $M_k(\fn, \Z_p[G]^-, \bpsi)$ as families of forms, as follows (see \cite[Lemma 7.2]{DK20}, which comes from a result of Silliman \cite[Corollary 7.28]{Sil20}).
The Fourier expansion induces an injective $\Z_p[G]^-$-homomorphism
\[
M_k(\fn, \Z_p[G]^-, \bpsi)
\hookrightarrow \prod_{\lambda} \Z_p[G]^- \times \prod_{\fa} \Z_p[G]^-.
\]
Then an element $((c_{\lambda}(0))_{\lambda}, (c(\fa))_{\fa})$ of the target module is in the image of this map if and only if there exists a family $(f_{\psi})_{\psi} \in \prod_{\psi} M_k(\fn, \OO_{\psi}, \psi)$ ($\psi$ runs over the odd characters of $G$) such that $\psi(c_{\lambda}(0)) = c_{\lambda}(0, f_{\psi})$ and $\psi(c(\fa)) = c(\fa, f_{\psi})$ hold for any $\lambda$, $\fa$, and $\psi$.

\subsection{The Eisenstein series}\label{ss:Eisen}

In this subsection, we introduce the classical Eisenstein series.
See \cite[\S 7.3]{DK20}, \cite{DK20b}, or \cite[\S 2.2]{DDP11} for more details.

For each character $\psi$ of $F$, we write $\ff_{\psi}$ for its conductor.
When $\fn$ is an ideal divisible by $\ff_{\psi}$, let $\psi_{\fn}$ denote the character $\psi$ whose modulus is enlarged to $\fn$.
This means that $\psi_{\fn}(\fl) = 0$ as long as $\fl \mid \fn$.
We define the associated $L$-function by
\begin{align}
L(\psi_{\fn}, s) 
& = \prod_{v \nmid \fn} \parenth{1 - \frac{\psi(v)}{N(v)^s}}^{-1}\\
& = \prod_{v \mid \fn, v \nmid \ff_{\psi}} \parenth{1 - \frac{\psi(v)}{N(v)^s}} L(\psi, s).
\end{align}
Recall that we put $d = [F: \Q]$.

\begin{defn}\label{defn:Eisen}
Let $k \geq 1$ be an odd integer.
Let $\psi$ be a totally odd character of $G_F$.
Let $\fn$ be an ideal of $F$ which is divisible by $\ff_{\psi}$.
Let $\fR$ be an ideal of $F$ such that $(\fn, \fR) = 1$.
Let $1_{\fR}$ denote the trivial character whose modulus is enlarged to $\fR$.
Note that in practice $\fR$ is a product of $p$-adic primes (for a fixed $p$), and moreover we take $\fR = 1$ in most cases.

Then there exists a modular form $E_k(\psi_{\fn}, 1_{\fR}) \in M_k(\fn \fR, \Q(\psi), \psi)$, called the Eisenstein series, whose Fourier coefficients are given by 
\[
\coef{\fa}{E_k(\psi_{\fn}, 1_{\fR})}
= \sum_{\fr \mid \fa, (\fr, \fR) = 1} \psi_{\fn} \parenth{\frac{\fa}{\fr}} N(\fr)^{k - 1},
\]
where $\fr$ runs over the divisors of $\fa$ with $(\fr, \fR) = 1$, for each $\fa$, and
\[
 \coefz{\lambda}{E_k(\psi_{\fn}, 1_{\fR})}
= \delta_{\fn = 1} 2^{-d} \psi^{-1}(\lambda) L(\psi_{\fR}^{-1}, 1 - k)
+ \delta_{k = 1} \delta_{\fR = 1} 2^{-d} L(\psi_{\fn}, 0)
\]
for each $\lambda$.
(Recall that $\delta_{\fn = 1}$ is $1$ if $\fn = 1$ and is $0$ otherwise, and the other $\delta_P$'s are defined similarly.
Therefore, we have to consider the first term of the right hand side only when $\fn = 1$ and in particular $\ff_{\psi} = 1$, so $\psi^{-1}(\lambda)$ and $\psi_{\fR}$ are well-defined.
This kind of remark will be implicit in the remaining manuscript.)
\end{defn}

We next incorporate the Eisenstein series to a family.
Let $H/F$ be a finite abelian CM-extension.
For an odd integer $k \geq 1$ and an ideal $\fn$ of $F$ divisible by the conductor $\condu{H/F}$ of $H/F$, we define
\begin{equation}\label{eq:Theta}
\Theta_{\fn}(1 - k) 
= \sum_{\psi} L(\psi_{\fn}^{-1}, 1 - k) e_{\psi}
\in \Q[\Gal(H/F)]^-,
\end{equation}
where $\psi$ runs over the odd characters of $\Gal(H/F)$.

Let
\[
\bpsi = \bpsi^{H/F}: \Gal(H/F) \to (\Z[\Gal(H/F)]^-)^{\times}
\]
denote the tautological character which sends $\sigma$ to $\frac{1 - c}{2} \sigma$ (as usual $c$ denotes the complex conjugation).
For a prime $\fl$ which is unramified in $H/F$, we put $\bpsi(\fl) = \bpsi(\varphi_{\fl})$.
For an ideal $\fn$ of $F$ divisible by $\condu{H/F}$, we also define $\bpsi_{\fn}$ in a similar manner.

\begin{defn}\label{defn:Eisen_group}
Let $H/F$ be a finite abelian CM-extension.
Let $k \geq 1$ be an odd integer.
Let $\fn$ be an ideal of $F$ divisible by $\condu{H/F}$.
Let $\fR$ be an ideal of $F$ such that $(\fn, \fR) = 1$.

Then we define the group ring valued Eisenstein series
\[
E_k^{H/F}(\bpsi_{\fn}, \btr_{\fR}) 
\in M_k(\fn \fR, \Q[\Gal(H/F)]^-, \bpsi)
\]
by requiring that the specialization of $E_k^{H/F}(\bpsi_{\fn}, \btr_{\fR})$ at each odd character $\psi$ of $\Gal(H/F)$ equals to $E_k(\psi_{\fn}, 1_{\fR})$.
Explicitly, the form is constructed as
\[
\coef{\fa}{E_k^{H/F}(\bpsi_{\fn}, \btr_{\fR})}
= \sum_{\fr \mid \fa, (\fr, \fR) = 1} \bpsi_{\fn} \parenth{\frac{\fa}{\fr}} N(\fr)^{k - 1}
\]
for each $\fa$ and
\[
\coefz{\lambda}{E_k^{H/F}(\bpsi_{\fn}, \btr_{\fR})}
= \delta_{\fn = 1} 2^{-d} \bpsi^{-1}(\lambda) \Theta_{\fR}(1 - k)
+ \delta_{k = 1} \delta_{\fR = 1} 2^{-d} \Theta_{\fn}(0)^{\sharp}
\]
for each $\lambda$.
Here, $\sharp$ denotes the involution on group rings that inverts every group element.
\end{defn}

\subsection{A modification}\label{ss:Eisen_modif1}

In this subsection, we make the first modification of the above group ring valued Eisenstein series.
We essentially follow \cite{DK20}, but we change the exposition to some extent so that we will be able to make the second modification in the next subsection.

From now on let us fix an odd prime number $p$.
We shall consider a pair $(H/F, \fn)$ as follows.

\begin{setting}\label{setting:H/F}
Let $(H/F, \fn)$ be a pair satisfying:
\begin{itemize}
\item
$H/F$ is a finite abelian CM-extension such that $\Gal(H/F)'$, the maximal subgroup of $\Gal(H/F)$ whose order is prime to $p$, is cyclic.
\item
$\fn$ is an ideal of $F$ divisible by $\condu{H/F}$.
\end{itemize}
We suppose:
\begin{itemize}
\item[$(\star)$] 
When we write $\fn = \condu{H/F} \fQ \fL$ with $\fQ = \gcd( \condu{H/F}^{-1} \fn, p^{\infty})$, 
we have $(\fL, \ff_{H/F}) = 1$ and $\fL$ is square-free.
\end{itemize}
\end{setting}

Given such a pair $(H/F, \fn)$ and a faithful odd character $\chi$ of $\Gal(H/F)'$, we will work over the local ring $\Z_p[\Gal(H/F)]^{\chi}$.

\begin{defn}\label{defn:H^m}
Let $(H/F, \fn)$ be as in Setting \ref{setting:H/F}.
For an ideal $\fm \mid \fn$,
we write $H^{\fm}$ for the maximal intermediate field of $H/F$ such that every prime $v \mid \fm$ is unramified in $H^{\fm}/F$.
In other words, we have 
\[
\Gal(H/H^{\fm}) = \sum_{v \mid \fm} I_{v, H}
\]
as subgroups of $\Gal(H/F)$, where $I_{v, H}$ denotes the inertia group of $v$ in $H/F$.
We also put
\[
\nu_{\fm} = \prod_{v \mid \fm} \nu_{I_{v, H}} \in \Z[\Gal(H/F)].
\]
\end{defn}

Then $\nu_{\fm}$ is a multiple of the norm element of $\Gal(H/H^{\fm})$, 
so the multiplication by $\nu_{\fm}$ induces a well-defined homomorphism 
\[
\nu_{\fm}: \Z_p[\Gal(H^{\fm}/F)] \to \Z_p[\Gal(H/F)].
\]
We will modify the Eisenstein series by using lifts of Eisenstein series for $H^{\fm}/F$ for various $\fm \mid \fn$.
If there exists a prime $v \mid \fm$ such that $I_{v, H}$ is not a $p$-group, then we have $\nu_{\fm}^{\chi} = 0$ for each odd faithful character $\chi$ of $\Gal(H/F)'$.

In general, for an ideal $\fa$ of $F$, a Hall divisor of $\fa$ is defined as a divisor $\fb$ of $\fa$ satisfying $\gcd(\fb, \fa / \fb) = 1$.
In this case, we write $\fb \| \fa$.

\begin{defn}\label{defn:n_p}
Let $(H/F, \fn)$ be as in Setting \ref{setting:H/F}. 
We write $\fn_p$ for the Hall divisor of $\fn$ such that $v \mid \fn_p$ if and only if $I_{v, H}$ is a $p$-group.
\end{defn}

Let $\chi$ be an odd faithful character of $\Gal(H/F)'$.
Then we have $\nu_{\fm}^{\chi} = 0$ unless $\fm \mid \fn_p$.
If $\fm \mid \fn_p$ holds, the natural restriction map induces an isomorphism $\Gal(H/F)' \simeq \Gal(H^{\fm}/F)'$, so the character $\chi$ can be regarded as a character of $\Gal(H^{\fm}/F)'$, and we have
\begin{equation}\label{eq:norm_lift_m}
\nu_{\fm}: \Z_p[\Gal(H^{\fm}/F)]^{\chi} \to \Z_p[\Gal(H/F)]^{\chi}.
\end{equation}

The following lemmas are essentially observed in \cite[Lemma 8.13]{DK20}.

\begin{lem}\label{lem:np_sq-free}
Let $(H/F, \fn)$ be as in Setting \ref{setting:H/F}. 
Then the ideal $\fn_p / \gcd(\fn_p, p^{\infty})$ is square-free.
\end{lem}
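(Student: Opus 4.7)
The plan is to analyze the prime factorization of $\fn_p$ away from $p$ using the decomposition $\fn = \condu{H/F} \fQ \fL$ provided by hypothesis $(\star)$, and to show by cases that every prime $v \nmid p$ dividing $\fn_p$ actually appears in $\fn$ (and hence in $\fn_p$) with multiplicity exactly one.

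First I would fix a prime $v$ of $F$ with $v \nmid p$ and $v \mid \fn_p$; by definition of $\fn_p$ this means $v \mid \fn$ and $I_{v,H}$ is a $p$-group. Since $\fQ = \gcd(\condu{H/F}^{-1}\fn, p^{\infty})$ is supported above $p$, we have $v \nmid \fQ$, so $v \mid \condu{H/F}$ or $v \mid \fL$.

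Next I would handle the two cases. In the case $v \nmid \condu{H/F}$, the prime must lie in $\fL$, and then square-freeness of $\fL$ forces the valuation of $\fn$ at $v$ to be $1$. In the case $v \mid \condu{H/F}$, the prime $v$ is ramified, so $I_{v,H}$ is a nontrivial $p$-group. Because $v \nmid p$, the residue characteristic $\ell$ of $v$ differs from $p$; the wild inertia subgroup of $I_{v,H}$ is pro-$\ell$ inside a $p$-group, hence trivial, so $I_{v,H}$ is tame. Tame ramification of an abelian extension at $v$ means $v \| \condu{H/F}$. Combined with $v \nmid \fL$ (from $(\fL, \ff_{H/F}) = 1$) and $v \nmid \fQ$, this again gives $v \| \fn$.

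Since in either case $v$ divides $\fn$ with multiplicity one, the same holds for $\fn_p$, and therefore $\fn_p/\gcd(\fn_p, p^\infty)$ is square-free. The argument is a direct case analysis with no real obstacle; the only subtle point is the classification of $I_{v,H}$ at tame primes, which uses only the standard structure of inertia groups (wild inertia pro-$\ell$ and tame quotient cyclic of order prime to $\ell$), together with the hypothesis $(\star)$.
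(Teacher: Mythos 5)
Your proof is correct and takes essentially the same approach as the paper: the key observation in both is that if $v\nmid p$ and $I_{v,H}$ is a $p$-group, the ramification at $v$ is tame, forcing $\ord_v(\condu{H/F})\le 1$, which combined with $(\star)$ gives $\ord_v(\fn)\le 1$. The only difference is cosmetic: the paper argues by contradiction from a hypothetical $\fl$ with $\fl^2\mid\fn_p$, while you argue directly by bounding the valuation at each prime.
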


\begin{proof}
Let us assume that there exists a prime $\fl \nmid p$ such that $\fl^2 \mid \fn_p$.
If $\fl$ is unramified in $H/F$, then $(\star)$ implies $\fl^2 \nmid \fn$, so we get a contradiction.
If $\fl$ is ramified in $H/F$, then $(\star)$ implies 
\[
2 \leq \ord_{\fl}(\fn) = \ord_{\fl}(\condu{H/F}),
\]
where $\ord_{\fl}$ denotes the normalized additive $\fl$-order.
On the other hand, $\fl \mid \fn_p$ says that $I_{\fl, H}$ is a $p$-group, so we have $\ord_{\fl}(\condu{H/F}) \leq 1$.
This is a contradiction.
\end{proof}

\begin{lem}\label{lem:8.13n}
Let $(H/F, \fn)$ be as in Setting \ref{setting:H/F}. 
The following are true.
\begin{itemize}
\item[(1)]
Let $H'$ be an intermediate field of $H/F$ such that $H/H'$ is a $p$-extension.
Then $(H'/F, \fn)$ also satisfies the condition of Setting \ref{setting:H/F}.
\item[(2)]
For each $\fm \mid \fn_p$, the pair $(H^{\fm}/F, \fn/\fm)$ also satisfies the condition of Setting \ref{setting:H/F}.
\item[(3)]
Let $\psi$ be an odd character of $\Gal(H/F)$ that is faithful on $\Gal(H/F)'$.
We write 
\[
\fn = \ff_{\psi} \fQ_{\psi} \fL_{\psi}
\]
 with $\ff_{\psi}$ the conductor of $\psi$ and $\fQ_{\psi} = \gcd( \ff_{\psi}^{-1} \fn, p^{\infty})$. 
Then we have $(\ff_{\psi}, \fL_{\psi}) = 1$ and $\fL_{\psi}$ is square-free.
\end{itemize}
\end{lem}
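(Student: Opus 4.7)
The plan is to prove (1) as the main content, then derive (2) and (3) from (1) by identifying the right intermediate field. The setting condition splits into three requirements (cyclicity of $\Gal(H/F)'$, divisibility $\condu{H/F} \mid \fn$, and $(\star)$), and in all three claims divisibility is automatic, so the real work concerns cyclicity and $(\star)$.

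For (1), since $H/H'$ is a $p$-extension, the group $\Gal(H/H')$ is a $p$-group. Consequently the natural surjection $\Gal(H/F)' \twoheadrightarrow \Gal(H'/F)'$ has kernel inside a $p$-group, and since $\Gal(H/F)'$ has order prime to $p$, this surjection is actually an isomorphism; cyclicity of $\Gal(H/F)'$ transfers to $\Gal(H'/F)'$. For $(\star)$ the only nontrivial check is at primes $\fl \nmid p$ dividing the new $\fL'$; write $\fn = \condu{H'/F} \fQ' \fL'$ and fix such an $\fl$. From $(\star)$ for $(H/F, \fn)$ we have either $\fl \nmid \condu{H/F}$ with $\ord_\fl(\fn) \leq 1$ (in which case $\ord_\fl(\fL') \leq 1$ and $\fl \nmid \condu{H'/F}$ automatically), or $\fl \mid \condu{H/F}$ with $\ord_\fl(\fn) = \ord_\fl(\condu{H/F})$. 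In the latter case, if $\fl$ also ramified in $H'/F$ then $\ord_\fl(\condu{H'/F})$ picks up the same conductor exponent (the inertia image is not smaller), which I will need to verify suffices, or $\fl$ is unramified in $H'/F$; in the unramified case $I_{\fl,H} \subset \Gal(H/H')$ is a $p$-group, so in particular the wild inertia at $\fl$ (which is pro-$\ell$ with $\ell \neq p$) is trivial, hence $\fl$ is tamely ramified in $H/F$, forcing $\ord_\fl(\condu{H/F}) = 1$ and therefore $\ord_\fl(\fL') = 1$. The step that will carry the argument is this tame-ramification observation at $\fl \nmid p$, which I expect to be the principal (though short) obstacle.

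For (2), I will first note that $\Gal(H/H^{\fm}) = \sum_{v \mid \fm} I_{v,H}$ is generated by $p$-groups since $\fm \mid \fn_p$ implies every $I_{v,H}$ with $v \mid \fm$ is a $p$-group, so $H/H^{\fm}$ is a $p$-extension and (1) applies to $(H^{\fm}/F, \fn)$. It remains to replace $\fn$ by $\fn/\fm$: divisibility $\condu{H^{\fm}/F} \mid \fn/\fm$ holds because $\ord_\fl(\condu{H^{\fm}/F}) = 0$ for $\fl \mid \fm$ (by the very definition of $H^{\fm}$) and $\fm \mid \fn_p \mid \fn$. Writing $\fn/\fm = \condu{H^{\fm}/F}\fQ'\fL'$ and $\fn = \condu{H^{\fm}/F}\fQ''\fL''$ shows $\ord_\fl(\fL') \leq \ord_\fl(\fL'')$ for every $\fl \nmid p$, so $\fL' \mid \fL''$; since $\fL''$ is square-free and coprime to $\ff_{H^{\fm}/F}$ by (1), the same holds for $\fL'$.

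For (3), I will set $H^{\psi} = H^{\Ker(\psi)}$, so that $\condu{H^{\psi}/F} = \ff_\psi$ and $\Gal(H^{\psi}/F) \simeq \Imag(\psi)$. Faithfulness of $\psi$ on $\Gal(H/F)'$ means $\Ker(\psi) \cap \Gal(H/F)' = 1$, which forces $\Gal(H/H^{\psi}) = \Ker(\psi)$ to be a $p$-group, so $H/H^{\psi}$ is a $p$-extension. Applying (1) to this extension gives Setting \ref{setting:H/F} for $(H^{\psi}/F, \fn)$, which upon expanding the defining decomposition $\fn = \condu{H^{\psi}/F}\fQ_\psi\fL_\psi = \ff_\psi\fQ_\psi\fL_\psi$ is exactly the assertion $(\ff_\psi, \fL_\psi) = 1$ and $\fL_\psi$ square-free.
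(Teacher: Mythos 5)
Your reduction of (2) and (3) to (1) matches the paper's, and your check in (2) that $\fL' \mid \fL''$ (hence that $\fL'$ inherits square-freeness and coprimality to $\condu{H^{\fm}/F}$) is a correct and slightly more explicit version of the paper's one-line assertion that $(H^{\fm}/F, \fn/\fm)$ ``also satisfies'' the condition.

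The genuine gap is in (1), in the subcase where $\fl \mid \condu{H/F}$ and $\fl$ is also ramified in $H'/F$. The parenthetical ``(the inertia image is not smaller)'' is false as stated: quotienting $\Gal(H/F)$ by $\Gal(H/H')$ certainly can shrink the inertia at $\fl$ when its $p$-part is nontrivial, and in fact you exploit exactly that shrinking in the other subcase. What you need here, and what the paper asserts, is that the conductor exponent at $\fl$ is the same for $H/F$ and $H'/F$ once the ramification at $\fl$ is wild. The correct reason is not about the size of the inertia image but about its $\ell$-part, where $\ell$ is the residue characteristic of $\fl$: for $u>0$ the upper-numbering ramification groups are $\ell$-groups and $\ell \neq p$, so they meet the $p$-group $\Gal(H/H')$ trivially, and because the upper-numbering filtration passes to quotients (Herbrand), the filtration of $\Gal(H'/F)$ at $\fl$ in positive degrees agrees with that of $\Gal(H/F)$, whence the conductor exponents agree. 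Your subcase is automatically wild: $\fl \mid \condu{H'/F}$ together with $\fl \mid \fL'$ forces $\ord_\fl(\fn) \geq 2$, and $(\star)$ for $(H/F,\fn)$ then gives $\fl^2 \mid \condu{H/F}$. The paper avoids your tame/unramified subcase entirely by starting from a ``bad'' $\fl$ and reading off $\fl^2 \mid \fn$ and hence $\fl^2 \mid \condu{H/F}$ at once, landing in the wild case from the start; your route is longer, but the tame-ramification observation you flag as the crux is not wasted --- it is precisely how the subcase where $\fl$ becomes unramified in $H'/F$ resolves.
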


\begin{proof}
(1)
We have only to check $(\star)$ for $(H'/F, \fn)$.
Let us write 
\begin{equation}\label{eq:n/m}
\fn = \condu{H'/F} \fQ' \fL'
\end{equation}
 with 
$\fQ' = \gcd(\condu{H'/F}^{-1}\fn, p^{\infty})$.
We have to show that $\fL'$ is square-free and $(\fL', \ff_{H'/F}) = 1$.
Let us assume that there exists a prime $\fl \mid \fL'$ such that either $\fl^2 \mid \fL'$ or $\fl$ is ramified in $H'/F$.
In the both cases, we have $\fl^2 \mid \fn$.
Then the condition $(\star)$ for $(H/F, \fn)$ implies $\fl^2 \mid \condu{H/F}$ and $\ord_{\fl}(\condu{H/F}) = \ord_{\fl}(\fn)$.
Since $H/H'$ is a $p$-extension, $\fl^2 \mid \condu{H/F}$ implies
\[
\ord_{\fl}(\condu{H/F}) = \ord_{\fl}(\condu{H'/F}).
\]
These formulas combined with $\fl \mid \fL'$ contradict the $\fl$-order of the equation \eqref{eq:n/m}.

(2)
By the claim (1), $(H^{\fm}/F, \fn)$ satisfies the condition of Setting \ref{setting:H/F}, so $(H^{\fm}/F, \fn/\fm)$ also satisfies it.

(3)
Let us consider the intermediate field $H^{\psi} = H^{\Ker(\psi)}$ of $H/F$.
Note that $H/H^{\psi}$ is a $p$-extension since $\psi$ is faithful on $\Gal(H/F)'$.
Since $\ff_{\psi} = \condu{H^{\psi}/F}$, the claim follows from the claim (1).
\end{proof}

We briefly introduce the level raising operator.
Let $\fn$ and $\fm$ be ideals.
Then, for a form $f$ of level $\fn$, there exists a form $f|_{\fm}$ of level $\fn \fm$ whose Fourier coefficients are
\[
\begin{cases}
\coef{\fa}{f|_{\fm}} = \delta_{\fm \mid \fa} \coef{\fa/\fm}{f}\\
\coefz{\lambda}{f|_{\fm}} = \coefz{\lambda \fm}{f}.
\end{cases}
\]

Now we are ready to modify the Eisenstein series, following \cite{DK20}.
We use a formula in \cite[Proposition 8.14]{DK20} as the definition, and will observe the equivalence with \cite[Definition 8.2]{DK20} later.

\begin{defn}\label{defn:Wk}
Let $(H/F, \fn)$ be as in Setting \ref{setting:H/F}. 
Let $\chi$ be an odd faithful character of $\Gal(H/F)'$.
Let $k \geq 1$ be an odd integer.
Let $\fR$ be an ideal of $F$ such that $(\fn, \fR) = 1$.

For each $\fm \mid \fn_p$ with $(\fm, p) = 1$, we have a modular form
\[
E_k^{H^{\fm}/F}(\bpsi_{\fn/\fm}, \btr_{\fR})^{\chi}
\in M_k(\fm^{-1} \fn \fR, \Q_p[\Gal(H^{\fm}/F)]^{\chi}, \bpsi)
\]
by Lemma \ref{lem:8.13n}(2) and Definition \ref{defn:Eisen_group}.
Using these forms, the level raising operators, and \eqref{eq:norm_lift_m}, we define
$W_k^{H/F}(\bpsi_{\fn}, \btr_{\fR})^{\chi} \in M_k(\fn \fR, \Q_p[\Gal(H/F)]^{\chi}, \bpsi)$ by
\[
W_k^{H/F}(\bpsi_{\fn}, \btr_{\fR})^{\chi}
 = \sum_{\fm \mid \fn_p, (\fm, p) = 1}
	\nu_{\fm}
	\parenth{ \bpsi_{\fn/\fm}^{H^{\fm}/F}(\fm)
	\parenth{\prod_{v \mid \fm} \frac{1 - N(v)^k}{\# I_{v, H}}}
	E_k^{H^{\fm}/F}(\bpsi_{\fn/\fm}, \btr_{\fR})^{\chi}|_{\fm}}.
\]

For each odd character $\psi$ of $\Gal(H/F)$ with $\psi|_{G'} = \chi$, we write $W_k(\psi_{\fn}, 1_{\fR}) \in M_k(\fn \fR, \Q_p(\psi), \psi)$ for the $\psi$-component of $W_k^{H/F}(\bpsi_{\fn}, \btr_{\fR})^{\chi}$.
This is actually independent from the extension $H/F$.
\end{defn}

Note that the non-constant terms of $W_k^{H/F}(\bpsi_{\fn}, \btr_{\fR})^{\chi}$ lie in the integral part $\Z_p[\Gal(H/F)]^{\chi}$.
This is because the non-constant terms of the non-modified Eisenstein series are integral by the formula in Definition \ref{defn:Eisen_group} and, for $v \nmid p$, we have
\[
N(v) \equiv 1 \pmod {\# I_{v, H} \Z_p}
\]
by local class field theory.

The following proposition shows that our definition (when $\fR = 1$) coincides with \cite[Definition 8.2]{DK20}.

\begin{prop}\label{prop:W_spec}
Let $(H/F, \fn)$ be as in Setting \ref{setting:H/F}. 
Let $k \geq 1$ be an odd integer.
Let $\psi$ be an odd character of $\Gal(H/F)$ that is faithful on $\Gal(H/F)'$.
We write $\fn = \ff_{\psi} \fQ_{\psi} \fL_{\psi}$ as in Lemma \ref{lem:8.13n}(3).
Then we have
\[
W_k(\psi_{\fn}, 1)
= \sum_{\fm \mid \fL_{\psi}} \mu(\fm) N(\fm)^k \psi(\fm) E_k(\psi_{\ff_{\psi} \fQ_{\psi}}, 1)|_{\fm}.
\]
\end{prop}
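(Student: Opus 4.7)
The plan is to specialize the group-ring valued formula of Definition \ref{defn:Wk} at the character $\psi$, reduce the summation to indices $\fm \mid \fL_\psi$, and then recognize the right-hand side via Möbius inversion after expanding each $E_k(\psi_{\fn/\fm},1)$ in terms of the ``seed'' series $E_k(\psi_{\ff_\psi \fQ_\psi},1)$ using a level-raising identity for Eisenstein series.

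First I would analyze $\psi(\nu_\fm) = \prod_{v \mid \fm} \psi(\nu_{I_{v,H}})$ for $\fm \mid \fn_p$ with $(\fm,p)=1$. Since each such $I_{v,H}$ is a $p$-group, orthogonality gives $\psi(\nu_{I_{v,H}}) = \# I_{v,H}$ when $\psi|_{I_{v,H}}$ is trivial (equivalently $v \nmid \ff_\psi$) and $0$ otherwise. Using Lemmas \ref{lem:np_sq-free} and \ref{lem:8.13n}(3) to ensure the relevant square-freeness, the surviving indices are exactly $\fm \mid \fL_\psi$. For such $\fm$ the coprimality $(\fm,\fn/\fm)=1$ gives $\psi(\bpsi_{\fn/\fm}^{H^\fm/F}(\fm)) = \psi(\fm)$, the specialization of $E_k^{H^\fm/F}(\bpsi_{\fn/\fm},\btr_1)^\chi$ at $\psi$ is $E_k(\psi_{\fn/\fm},1)$, and the combined factor $\psi(\nu_\fm) \cdot \prod_{v \mid \fm}(1-N(v)^k)/\# I_{v,H}$ collapses to $\prod_{v \mid \fm}(1-N(v)^k)$. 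Thus
\[
W_k(\psi_\fn,1) = \sum_{\fm \mid \fL_\psi} \psi(\fm) \prod_{v \mid \fm}(1-N(v)^k) \cdot E_k(\psi_{\fn/\fm},1)|_\fm.
\]

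Next I would establish the level-raising identity
\[
E_k(\psi_{\fn'\fl},1) = E_k(\psi_{\fn'},1) - \psi(\fl)\, E_k(\psi_{\fn'},1)|_\fl \qquad (\fl \nmid \fn' \ff_\psi)
\]
by a direct Fourier coefficient check: writing $\fa = \fl^e \fa'$ with $(\fa',\fl)=1$ and splitting $\sum_{\fr \mid \fa} \psi_{\fn'}(\fa/\fr) N(\fr)^{k-1}$ according to $\ord_\fl(\fr) = j$, both sides reduce to $N(\fl)^{e(k-1)} c(\fa', E_k(\psi_{\fn'},1))$. Iterating this identity over the primes dividing the square-free ideal $\fL_\psi/\fm$ yields
\[
E_k(\psi_{\fn/\fm},1) = \sum_{\fd \mid \fL_\psi/\fm} \mu(\fd) \psi(\fd) \, E_k(\psi_{\ff_\psi \fQ_\psi},1)|_\fd.
\]
Substituting into the previous display and setting $\fm' = \fd\fm$ rewrites $W_k(\psi_\fn,1)$ as a double sum whose inner sum over $\fm \mid \fm'$ factors as
\[
\sum_{\fm \mid \fm'} \mu(\fm'/\fm) \prod_{v \mid \fm}(1-N(v)^k) = \prod_{v \mid \fm'}\bigl[(1-N(v)^k) + (-1)\bigr] = \mu(\fm') N(\fm')^k,
\]
producing exactly the formula claimed.

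The only delicate point is pinning down the correct shape of the level-raising identity: in the present normalization the factor $N(\fl)^{k-1}$ that one might naively expect does not appear, so the Fourier coefficient verification needs to be carried out with some care. Once that identity is in place, the remaining manipulations are formal Möbius combinatorics, made clean by the square-freeness of $\fL_\psi$ guaranteed by Lemma \ref{lem:8.13n}(3).
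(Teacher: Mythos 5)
Your proof is correct and follows essentially the same line of reasoning as the paper's (which itself rephrases \cite[Proposition 8.14]{DK20}): both begin by reducing the sum over $\fm$ to $\fm \mid \fL_{\psi}$ and then reorganize by a M\"obius-type combinatorial identity. The only real difference is expository: you make explicit the level-raising relation $E_k(\psi_{\fn'\fl},1) = E_k(\psi_{\fn'},1) - \psi(\fl)E_k(\psi_{\fn'},1)|_\fl$ and iterate it, whereas the paper packages the same content into an inductive identity $\sum_{\fm''\mid\fL_\psi/\fm'}\psi(\fm'')E_k(\psi_{\fn/\fm'\fm''},1)|_{\fm''} = E_k(\psi_{\ff_\psi\fQ_\psi},1)$ whose proof it omits; the two are equivalent via $\sum_{\fm''\mid\fe}\mu(\fe/\fm'')=\delta_{\fe=1}$.
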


\begin{proof}
This is essentially a rephrasing of \cite[Proposition 8.14]{DK20}. 
For the sake of completeness, we review the core computation.

First we observe that $\fL_{\psi} \mid \fn_p$.
For, let $\fl \mid \fn$ be a prime with $\fl \nmid \fn_p$.
Then the inertia group $I_{\fl, H/F}$ is not a $p$-group, so $\fl \mid \ff_{\psi}$ since $\psi$ is faithful on $\Gal(H/F)'$.
By Lemma \ref{lem:8.13n}(3), we then have $\fl \nmid \fL_{\psi}$.

For each $\fm \mid \fn_p$ with $(\fm, p) = 1$, we have $\psi(\nu_{\fm}) = 0$ unless $\fm \mid \fL_{\psi}$, and in that case we have $\psi(\nu_{\fm}) = \prod_{v \mid \fm} \# I_{v, H}$.
Therefore, we have
\begin{align}
W_k(\psi_{\fn}, 1)
& = \sum_{\fm \mid \fL_{\psi}}
	\parenth{ \psi(\fm)
	\parenth{\prod_{v \mid \fm} (1 - N(v)^k)}
	E_k(\psi_{\fn/\fm}, 1)|_{\fm}}.
\end{align}
We expand this to
\begin{align}
W_k(\psi_{\fn}, 1)
& = \sum_{\fm \mid \fL_{\psi}}
	\parenth{ \psi(\fm)
	\sum_{\fm' \mid \fm} \mu(\fm') N(\fm')^k
	E_k(\psi_{\fn/\fm}, 1)|_{\fm}}\\
& = \sum_{\fm' \mid \fL_{\psi}}
	\parenth{ \mu(\fm') N(\fm')^k \psi(\fm')
	\sum_{\fm'' \mid \fL_{\psi}/\fm'} \psi(\fm'')
	E_k(\psi_{\fn/\fm' \fm''}, 1)|_{\fm' \fm''}},
\end{align}
where we write $\fm = \fm' \fm''$.
Then it is enough to show
\[
\sum_{\fm'' \mid \fL_{\psi}/\fm'} \psi(\fm'')
	E_k(\psi_{\fn/\fm' \fm''}, 1)|_{\fm''}
= E_k(\psi_{\ff_{\psi} \fQ_{\psi}}, 1)
\]
for each $\fm' \mid \fL_{\psi}$.
When $\fm' = \fL_{\psi}$, this is clear.
In general, we can prove it by induction (we omit the detail).
\end{proof}

By \cite[Propositions 8.6 and 8.7]{DK20}, we have the following formula for the constant terms of $W_k(\psi_{\fn}, 1)$.

\begin{prop}\label{prop:const_terms}
Under the same notation as in Proposition \ref{prop:W_spec},
for a pair $\cA = (A, \lambda)$ with $A = \begin{pmatrix} a & \ast \\ c & \ast \end{pmatrix}$, we have
\begin{align}
& \coefz{\cA}{W_k(\psi_{\fn}, 1)}\\
& = \delta_{[\cA] \in C_0(\ff_{\psi} \fQ_{\psi}, \fn)}
	\frac{\tau(\psi)}{N(\ff_{\psi})^k} \sgn(\Nrm(-c)) \psi(\fc_{\cA}) 2^{-d} L(\psi^{-1}, 1 - k) \\
	& \qquad \times \prod_{\fp \mid \fQ_{\psi}} \parenth{1 - \frac{\psi(\fp)}{N(\fp)^k}}
	\prod_{\fq \mid \fL_{\psi}, [\cA] \in C_0(\fq, \fn)} (1 - \psi(\fq)) 
	\prod_{\fq \mid \fL_{\psi}, [\cA] \in C_{\infty}(\fq, \fn)} (1 - N(\fq)^k)\\
&\quad + \delta_{k = 1} \delta_{[\cA] \in C_{\infty}(\ff_{\psi} \fL_{\psi}, \fn)}
	\sgn(\Nrm(a)) \psi^{-1}(a \fb_{\cA}^{-1}) 2^{-d} L^{\fL_{\psi}}(\psi, 0) \\
	&  \qquad \times \prod_{\fp \mid \fQ_{\psi}, [\cA] \in C_0(\fp, \fn)} \parenth{1 - N(\fp)^{-1}}
	\prod_{\fp \mid \fQ_{\psi}, [\cA] \in C_{\infty}(\fp, \fn)} (1 - \psi(\fp)).
\end{align}
Here, in the products, $\fp$ and $\fq$ always denote finite primes of $F$.
We simply write $\Nrm$ for the norm from $F$ to $\Q$, and $\sgn$ for the sign of a rational number (see \cite[Remark 8.5]{DK20} for the convention for $\sgn(\Nrm(x))$ in the case $x = 0$).
\end{prop}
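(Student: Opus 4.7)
The plan is to derive this constant term formula from the explicit expression for $W_k(\psi_{\fn}, 1)$ furnished by Proposition \ref{prop:W_spec}, namely
\[
W_k(\psi_{\fn}, 1) = \sum_{\fm \mid \fL_{\psi}} \mu(\fm) N(\fm)^k \psi(\fm) \, E_k(\psi_{\ff_{\psi} \fQ_{\psi}}, 1)\big|_{\fm},
\]
by computing each summand's contribution to the constant term at a given cusp $[\cA]$ and then carrying out the resulting combinatorial sum over $\fm \mid \fL_{\psi}$.

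First, I would establish the analogue of the proposition for the unmodified Eisenstein series $E_k(\psi_{\ff_{\psi} \fQ_{\psi}}, 1)$ at an arbitrary cusp of level $\ff_\psi \fQ_\psi$. This is the classical calculation recorded as \cite[Proposition 8.6]{DK20}: the constant term is nonzero only when $\fc_{\cA}$ meets the prescribed divisibility conditions relative to $\ff_\psi$ and $\fQ_\psi$, and when it is nonzero, it is expressible in terms of either $L(\psi^{-1}, 1 - k)$ (multiplied by the Gauss sum $\tau(\psi)/N(\ff_\psi)^k$ and the factor $\psi(\fc_\cA) \sgn(\Nrm(-c))$) or $L(\psi, 0)$ (multiplied by $\psi^{-1}(a \fb_{\cA}^{-1}) \sgn(\Nrm(a))$, appearing only when $k = 1$). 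The local factors at primes $\fp \mid \fQ_\psi$ emerge naturally from this computation.

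Next, I would analyze the effect of the level-raising operator $|_\fm$ on constant terms at cusps of the higher level $\ff_\psi \fQ_\psi \fm$. Using $c_{\lambda}(0, f|_\fm) = c_{\lambda \fm}(0, f)$ together with the cusp correspondence between the two levels, the contribution of $E_k(\psi_{\ff_\psi \fQ_\psi}, 1)|_\fm$ at $[\cA] \in \cusps(\fn)$ reduces to a constant term of the base form at a related cusp, provided $[\cA]$ satisfies appropriate conditions with respect to the primes dividing $\fm$; otherwise the contribution vanishes. Since $\fL_\psi$ is square-free and coprime to $\ff_\psi \fQ_\psi$ (Lemma \ref{lem:8.13n}(3)), each prime $\fq \mid \fL_\psi$ satisfies exactly one of $[\cA] \in C_0(\fq, \fn)$ or $[\cA] \in C_\infty(\fq, \fn)$.

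Finally, I would perform the sum over $\fm \mid \fL_\psi$ with the weights $\mu(\fm) N(\fm)^k \psi(\fm)$. The combinatorial identity
\[
\sum_{\fm \mid \fL_\psi} \mu(\fm) N(\fm)^k \psi(\fm) \cdot (\text{contribution of } \fm \text{ at } [\cA]) = \prod_{\fq \mid \fL_\psi} (\text{local factor at } \fq),
\]
decomposes prime-by-prime, yielding a factor $(1 - \psi(\fq))$ when $[\cA] \in C_0(\fq, \fn)$ in the term arising from $L(\psi^{-1}, 1 - k)$, and a factor $(1 - N(\fq)^k)$ when $[\cA] \in C_\infty(\fq, \fn)$ in the same term; the $k = 1$ term arising from $L^{\fL_\psi}(\psi, 0)$ only requires $[\cA] \in C_\infty(\fL_\psi, \fn)$, since the smoothing factor is already built into the $L$-value. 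The main obstacle will be the careful cusp bookkeeping: tracking how the two types of constant terms of the base Eisenstein series (the ``$L(\psi^{-1}, 1-k)$'' type and the ``$L(\psi, 0)$'' type) interact with $|_\fm$ at cusps of different $C_0$/$C_\infty$ class with respect to primes of $\fL_\psi$ and $\fQ_\psi$, and ensuring the signs and Gauss sums are aligned; once this is disentangled, the formula reduces to the cited computations of Dasgupta--Kakde.
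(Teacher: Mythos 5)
Your plan is correct in substance, but it takes a substantially more laborious route than the paper's own proof, and it contains a small misattribution worth correcting. The paper proves this proposition by \emph{direct citation}: \cite[Propositions 8.6 and 8.7]{DK20} already give the constant terms of $W_k(\psi_{\fn}, 1)$ itself (not of the base Eisenstein series, as you assume), so all the paper needs is the dictionary $\fc_0, \fc, \fP, \fl, T \leftrightarrow \ff_{\psi}, \ff_{\psi}\fQ_{\psi}, \fQ_{\psi}, \fL_{\psi}, \prim(\fL_{\psi})$ followed by a short case analysis for $k=1$. In contrast, your plan essentially re-derives \cite[Propositions 8.6, 8.7]{DK20} from scratch: you start from the expansion $W_k(\psi_{\fn},1) = \sum_{\fm \mid \fL_{\psi}} \mu(\fm) N(\fm)^k \psi(\fm)\, E_k(\psi_{\ff_{\psi}\fQ_{\psi}},1)|_{\fm}$ given by Proposition~\ref{prop:W_spec}, compute the constant term of each level-raised Eisenstein series at $[\cA]$, and carry out the M\"obius sum prime-by-prime over $\fL_{\psi}$. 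This is a legitimate, self-contained route (it is what Dasgupta--Kakde do internally to prove their Propositions 8.6 and 8.7), and it buys you transparency about where each local factor $(1-\psi(\fq))$ vs.\ $(1-N(\fq)^k)$ comes from; the cost is that you are redoing a calculation the literature already packages.

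One caution on the "careful bookkeeping" you flag: for $k=1$ the paper must handle the case $\ff_{\psi} = 1$ separately, because only then can the two summands of the constant-term formula be simultaneously nonzero (for $\ff_{\psi} \neq 1$ one has $C_0(\ff_{\psi}\fQ_{\psi}, \fn) \cap C_{\infty}(\ff_{\psi}\fL_{\psi}, \fn) = \emptyset$), and this case requires the sign-and-Gauss-sum identities of \cite[Remark 8.5]{DK20} such as $\sgn(\Nrm(-c))\psi(\fc_{\cA}) = \psi^{-1}(\fd_F \ft_{\lambda}\fb_{\cA})$. If you pursue your route, you will have to rediscover these identities at the point where the two contribution types overlap; be sure to include that verification explicitly, since it is not subsumed by the generic "local factor at each prime of $\fL_{\psi}$" combinatorics.
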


\begin{proof}
Note that $\fc_0$, $\fc$, $\fP$, $\fl$, and $T$ in \cite{DK20} 
correspond to $\ff_{\psi}$, $\ff_{\psi} \fQ_{\psi}$, $\fQ_{\psi}$, $\fL_{\psi}$, and $\prim(\fL_{\psi})$ in our notation, respectively.
Then the case where $k > 1$ directly follows from \cite[Proposition 8.6]{DK20}.
Let us assume $k = 1$ and use \cite[Proposition 8.7]{DK20}.
When $\ff_{\psi} \neq 1$, we have $C_0(\ff_{\psi} \fQ_{\psi}, \fn) \cap C_{\infty}(\ff_{\psi} \fL_{\psi}, \fn) = \emptyset$, and the proposition directly follows.
When $\ff_{\psi} = 1$, the right hand side of the proposition is 
\begin{align}
& \quad \delta_{[\cA] \in C_0(\fQ_{\psi}, \fn)}
	\tau(\psi) \sgn(\Nrm(-c)) \psi(\fc_{\cA}) 2^{-d} L(\psi^{-1}, 0) \\
	& \qquad \times \prod_{\fp \mid \fQ_{\psi}} \parenth{1 - \frac{\psi(\fp)}{N(\fp)}}
	\prod_{\fq \mid \fL_{\psi}, [\cA] \in C_0(\fq, \fn)} (1 - \psi(\fq)) 
	\prod_{\fq \mid \fL_{\psi}, [\cA] \in C_{\infty}(\fq, \fn)} (1 - N(\fq))\\
& \quad + \delta_{[\cA] \in C_{\infty}(\fL_{\psi}, \fn)}
	\sgn(\Nrm(a)) \psi^{-1}(a \fb_{\cA}^{-1}) 2^{-d} L^{\fL_{\psi}}(\psi, 0) \\
	&  \qquad \times \prod_{\fp \mid \fQ_{\psi}, [\cA] \in C_0(\fp, \fn)} \parenth{1 - N(\fp)^{-1}}
	\prod_{\fp \mid \fQ_{\psi}, [\cA] \in C_{\infty}(\fp, \fn)} (1 - \psi(\fp)).
\end{align}
By \cite[Remark 8.5]{DK20}, we have
\[
\sgn(\Nrm(-c)) \psi(\fc_{\cA}) = \psi^{-1}(\fd_F \ft_{\lambda} \fb_{\cA})
\]
and
\[
\sgn(\Nrm(a)) \psi^{-1}(a \fb_{\cA}^{-1}) = \psi(\fb_{\cA})
\]
(note that the existence of a totally odd character $\psi$ whose conductor is trivial forces the degree of $F/\Q$ to be even, so we have $\Nrm(-c) = \Nrm(c)$).
Therefore, the proposition coincides with the formulas in \cite[Proposition 8.7]{DK20}.
\end{proof}

\subsection{A further modification}\label{ss:Eisen_modif2}

In this subsection, we make a further modification of $W_1^{H/F}(\bpsi_{\fn}, \btr)^{\chi}$ (we have only to deal with the case where $\fR = 1$ and the weight is $1$).

Let $(H/F, \fn)$ be as in Setting \ref{setting:H/F} and $\chi$ an odd faithful character of $\Gal(H/F)'$.

\begin{defn}\label{defn:P_p}
We put $\fP = \gcd(\fn, p^{\infty})$.
We define Hall divisors $\fP_{\ba}$ and $\fP_p$ of $\fP$ as follows.
Similarly as in Definition \ref{defn:n_p}, we define $\fP_p$ as the Hall divisor of $\fP$ such that $\fp \mid \fP_p$ if and only if $I_{\fp, H}$ is a $p$-group. 
In other words, we have $\fP_p = \gcd(\fP, \fn_p)$.
We define $\fP_{\ba}$ as the Hall divisor of $\fP$ such that $\fp \mid \fP_{\ba}$ if and only if $G_{\fp}$ is a $p$-group.
Then we have
\[
\fP_{\ba} \| \fP_p \| \fP.
\]
\end{defn}

\begin{defn}\label{defn:olW}
For each Hall divisor $\fQ \| \fP_p$, we have a  modular form
\[
W_1^{H^{\fQ}/F}(\bpsi_{\fn/\fQ}, \btr)^{\chi} \in M_1(\fn/\fQ, \Q_p[\Gal(H^{\fQ}/F)]^{\chi}, \bpsi)
\]
by Lemma \ref{lem:8.13n}(2) and Definition \ref{defn:Wk}.
Using these forms and \eqref{eq:norm_lift_m}, we define 
$\ol{W}_1^{H/F}(\bpsi_{\fn}, \btr)^{\chi}
\in M_1(\fn, \Q_p[\Gal(H/F)]^{\chi}, \bpsi)$ by
\[
\ol{W}_1^{H/F}(\bpsi_{\fn}, \btr)^{\chi}
= \sum_{\fQ \| \fP_{\ba}} \nu_{\fQ} W_1^{H^{\fQ}/F}(\bpsi_{\fn / \fQ}, \btr)^{\chi}.
\]
Here, we do not use the level raising operators.
This definition is motivated by the relation between $\theta_{\Sigma}^{\Sigma'}$ and $\Theta_{\Sigma}^{\Sigma'}$ obtained in the proof of Proposition \ref{prop:theta_int}.

For each character $\psi$ of $G$ with $\psi|_{G'} = \chi$, 
we write $\ol{W}_1^{H/F}(\psi_{\fn}, 1) \in M_1(\fn, \Q_p(\psi), \psi)$ 
for the specialization of $\ol{W}_1^{H/F}(\bpsi_{\fn}, \btr)^{\chi}$.
This does depend on the extension $H/F$, so we should not omit the superscript $H/F$.
\end{defn}

Note that the non-constant terms of $\ol{W}_1^{H/F}(\bpsi_{\fn}, \btr)^{\chi}$ again lie in the integral part $\Z_p[\Gal(H/F)]^{\chi}$.

Using Proposition \ref{prop:const_terms}, we show the following formulas for the constant terms of $\ol{W}_1^{H/F}(\psi_{\fn}, 1)$ that involves our Stickelberger element $\theta_{\Sigma}^{\Sigma'}$ introduced in \eqref{eq:theta_defn}.

\begin{prop}\label{prop:bW1_const_terms}
Let $\psi$ be character of $\Gal(H/F)$ with $\psi|_{G'} = \chi$.
We write $\fn = \ff_{\psi} \fQ_{\psi} \fL_{\psi}$ as in Lemma \ref{lem:8.13n}(3).
For a pair $\cA = (A, \lambda)$ with $A = \begin{pmatrix} a & \ast \\ c & \ast \end{pmatrix}$ such that $[\cA] \in C_{\infty}(\fP, \fn)$, we have
\begin{align}
& \coefz{\cA}{\ol{W}_1^{H/F}(\psi_{\fn}, 1)}\\
= & \delta_{(\ff_{\psi}, p) = 1} \delta_{\fP_{\ba} = \fP} \parenth{\prod_{\fp \mid \fP} \# I_{\fp, H}} \delta_{[\cA] \in C_0(\ff_{\psi}, \fn)}
	\frac{\tau(\psi)}{N(\ff_{\psi})} \sgn(\Nrm(-c)) \psi(\fc_{\cA}) 2^{-d} L(\psi^{-1}, 0) \\
	& \qquad \times \prod_{\fq \mid \fL_{\psi}, [\cA] \in C_0(\fq, \fn)} (1 - \psi(\fq)) 
	\prod_{\fq \mid \fL_{\psi}, [\cA] \in C_{\infty}(\fq, \fn)} (1 - N(\fq))\\
&+ \delta_{[\cA] \in C_{\infty}(\fn)}
	\sgn(\Nrm(a)) \psi^{-1}(a \fb_{\cA}^{-1}) 2^{-d} 
	 \psi(\theta_{\Sigma}^{\Sigma', \sharp}) 
	\prod_{\fp \mid \fP / \fP_{\ba}} \psi \parenth{\frac{1 - \frac{\nu_{I_{\fp, H}}}{\# I_{\fp, H}} \varphi_{\fp}}{h_{\fp}^{\sharp}}}.
\end{align}

Here, we put 
$\Sigma = S_{\infty}(F) \cup \prim(\fP)$ and 
$\Sigma' = \prim(\fn / \fP)$. 
\end{prop}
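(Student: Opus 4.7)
The plan is to unfold Definition \ref{defn:olW} and apply Proposition \ref{prop:const_terms} to each summand. Specializing at $\psi$ gives
\[
\ol{W}_1^{H/F}(\psi_\fn, 1) = \sum_{\fQ \| \fP_\ba} \psi(\nu_\fQ) \cdot W_1(\psi_{\fn/\fQ}, 1),
\]
where each summand is a form at level $\fn/\fQ$ viewed inside $M_1(\fn, \Q_p(\psi), \psi)$. The factor $\psi(\nu_\fQ)$ vanishes unless $\psi$ is trivial on $I_{\fp, H}$ for every $\fp \mid \fQ$, equivalently $\gcd(\fQ, \ff_\psi) = 1$; in that case $\psi(\nu_\fQ) = \prod_{\fp \mid \fQ} \# I_{\fp, H}$, the character descends to $\Gal(H^\fQ/F)$ with the same conductor $\ff_\psi$, and Lemma \ref{lem:8.13n}(3) applied to $\psi^\fQ$ gives $\fQ_{\psi^\fQ} = \fQ_\psi / \fQ$ and $\fL_{\psi^\fQ} = \fL_\psi$. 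Throughout, since $\fc_\cA$ is intrinsic to the pair $\cA$, cusp conditions at any intermediate level reduce to conditions on $\fc_\cA$.

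For the term proportional to $L(\psi^{-1}, 0)$ in Proposition \ref{prop:const_terms}, the cusp condition $[\cA] \in C_0(\ff_\psi \fQ_\psi/\fQ, \fn/\fQ)$ combined with the standing hypothesis $\fP \mid \fc_\cA$ forces $(\ff_\psi, p) = 1$ (else a $p$-adic prime of $\ff_\psi$ would divide both $\ff_\psi$ and $\fc_\cA$) and $\fQ_\psi/\fQ = 1$. The first equality gives $\fQ_\psi = \fP$, and combined with $\fQ = \fQ_\psi$ and the Hall-divisor constraint $\fQ \| \fP_\ba \| \fP$, forces $\fP_\ba = \fP$. Under these conditions only $\fQ = \fP$ contributes, with $\psi(\nu_\fP) = \prod_{\fp \mid \fP} \# I_{\fp, H}$, reproducing the first line of the claim.

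For the term proportional to $L^{\fL_\psi}(\psi, 0)$, I observe that under $\fP \mid \fc_\cA$, the condition $[\cA] \in C_\infty(\ff_\psi \fL_\psi, \fn/\fQ)$ is equivalent to $[\cA] \in C_\infty(\fn)$: the $p$-part $\ff_{\psi, p}$ divides $\fP \mid \fc_\cA$ automatically, while the prime-to-$p$ part $\ff_{\psi, p'} \fL_\psi$ equals $\fn/\fP$. Granted $[\cA] \in C_\infty(\fn)$, each summand reduces to
\[
\prod_{\fp \mid \fQ} \# I_{\fp, H} \cdot \prod_{\fp \mid \fQ_\psi/\fQ} (1 - \psi(\fp))
\]
times the universal factor $\sgn(\Nrm(a)) \psi^{-1}(a \fb_\cA^{-1}) 2^{-d} L^{\fL_\psi}(\psi, 0)$, where factors for primes $\fp \mid \ff_\psi$ contribute trivially as $\psi(\fp) = 0$.

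The resulting sum over $\fQ \| \fP_\ba$ with $\gcd(\fQ, \ff_\psi) = 1$ factors multiplicatively over the $p$-adic primes of $\fn$. Partitioning these primes into four classes according to whether $G_\fp$ is a $p$-group and whether $\fp$ divides $\ff_\psi$, and invoking the identity $\psi(h_\fp^\sharp) = 1 + \# I_{\fp, H} - \psi(\fp)$ whenever $\psi$ is trivial on $I_{\fp, H}$ (and $\psi(h_\fp^\sharp) = 1$ otherwise), I directly match the resulting product with $\psi(\theta_\Sigma^{\Sigma', \sharp}) \cdot \prod_{\fp \mid \fP/\fP_\ba} \psi \bigl( (1 - \tfrac{\nu_{I_\fp}}{\# I_\fp} \varphi_\fp)/h_\fp^\sharp \bigr)$ after using $\psi(\omega^{\Sigma', \sharp}) = L^{\fL_\psi}(\psi, 0)$. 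The chief obstacle is the careful bookkeeping of cusp conditions under the implicit level changes and the combinatorial matching of the sum over $\fQ$ with the Stickelberger element.
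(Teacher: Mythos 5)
Your proposal is correct and follows essentially the same route as the paper: specialize the definition of $\ol{W}_1^{H/F}$ at $\psi$, observe that $\psi(\nu_\fQ)$ vanishes unless $\gcd(\ff_\psi,\fQ)=1$, apply Proposition \ref{prop:const_terms} at level $\fn/\fQ$, use $\fP\mid\fc_\cA$ to isolate the surviving first term (forcing $(\ff_\psi,p)=1$, $\fQ=\fQ_\psi=\fP$, hence $\fP_\ba=\fP$) and to identify $C_\infty(\ff_\psi\fL_\psi,\fn/\fQ)$ with $C_\infty(\fn)$, and finally factor the sum over $\fQ\|\fP_\ba$ to recover $\psi(\theta_\Sigma^{\Sigma',\sharp})$ times the correction at $\fp\mid\fP/\fP_\ba$. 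The paper carries out the last combinatorial factorization explicitly (peeling off the primes in $\fP/\fP_\ba$ and then summing over $\fQ$ to get $\prod_{\fp\mid\fP_\ba,\,\fp\nmid\ff_\psi}(1-\psi(\fp)+\#I_\fp)$), whereas you indicate it by partitioning the $p$-adic primes and invoking $\psi(h_\fp^\sharp)=1-\psi(\fp)+\#I_\fp$; this is the same calculation, just compressed.
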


\begin{proof}
We write $I_{\fp} = I_{\fp, H}$.
For each $\fQ \| \fP_{\ba}$,
we first note that 
\[
\psi(\nu_{\fQ}) = 
\begin{cases}
\prod_{\fp \mid \fQ} \# I_{\fp} & (\gcd(\ff_{\psi}, \fQ) = 1)\\
0 & (\gcd(\ff_{\psi}, \fQ) \neq 1).
\end{cases}
\]
This implies
\[
\ol{W}_1^{H/F}(\psi_{\fn}, 1) 
= \sum_{\fQ \| \fP_{\ba}, \gcd(\ff_{\psi}, \fQ) = 1} 
	\parenth{\prod_{\fp \mid \fQ} \# I_{\fp}} 
	W_1(\psi_{\fn / \fQ}, 1).
\]
We shall consider $\fQ \| \fP_{\ba}$ with $\gcd(\ff_{\psi}, \fQ) = 1$.
Then we have $\fQ \mid \fQ_{\psi}$ and $\fn/\fQ = \ff_{\psi} \frac{\fQ_{\psi}}{\fQ} \fL_{\psi}$ is the corresponding decomposition.
Then by Proposition \ref{prop:const_terms}, we have a formula for $\coefz{\cA}{W_1(\psi_{\fn/\fQ}, 1)}$; it simply replaces all $\fn$ and $\fQ_{\psi}$ in the right hand side of Proposition \ref{prop:const_terms} by $\fn/\fQ$ and $\fQ_{\psi}/\fQ$, respectively (and we take $k = 1$).
By the assumption $[\cA] \in C_{\infty}(\fP, \fn)$, we have $[\cA] \not\in C_0(\ff_{\psi} \fQ_{\psi}/\fQ, \fn/\fQ)$ unless $\gcd(\fP, \ff_{\psi} \fQ_{\psi} / \fQ) = 1$, which is equivalent to 
$(\ff_{\psi}, p) = 1$ and $\fQ = \fQ_{\psi} = \fP$.
Moreover, for any $\fp \mid \fP$, we have $[\cA] \in C_{\infty}(\fp, \fn/\fQ)$.
Therefore, we obtain
\begin{align}
& \coefz{\cA}{W_1(\psi_{\fn/\fQ}, 1)}\\
& = \delta_{(\ff_{\psi}, p) = 1} \delta_{\fQ = \fP}
	\delta_{[\cA] \in C_0(\ff_{\psi}, \fn)}
	\frac{\tau(\psi)}{N(\ff_{\psi})} \sgn(\Nrm(-c)) \psi(\fc_{\cA}) 2^{-d} L(\psi^{-1}, 0) \\
	& \qquad \times \prod_{\fq \mid \fL_{\psi}, [\cA] \in C_0(\fq, \fn)} (1 - \psi(\fq)) 
	\prod_{\fq \mid \fL_{\psi}, [\cA] \in C_{\infty}(\fq, \fn)} (1 - N(\fq))\\
& \quad + \delta_{[\cA] \in C_{\infty}(\ff_{\psi} \fL_{\psi}, \fn)}
	\sgn(\Nrm(a)) \psi^{-1}(a \fb_{\cA}^{-1}) 2^{-d} L^{\fL_{\psi}}(\psi, 0)
	\times \prod_{\fp \mid \fQ_{\psi}/\fQ} (1 - \psi(\fp)).
\end{align}
Note that $\fQ = \fP$ is possible if and only if $\fP_{\ba} = \fP$.
Note also that we have 
$\delta_{[\cA] \in C_{\infty}(\ff_{\psi} \fL_{\psi}, \fn)} = \delta_{[\cA] \in C_{\infty}(\fn)}$
by the assumption $[\cA] \in C_{\infty}(\fP, \fn)$.
Taking the sum with respect to $\fQ$, we then obtain
\begin{align}
& \coefz{\cA}{ \ol{W}_1^{H/F}(\psi_{\fn}, 1)}\\
& = \delta_{(\ff_{\psi}, p) = 1} \delta_{\fP_{\ba} = \fP} \parenth{\prod_{\fp \mid \fP} \# I_{\fp}}
	\delta_{[\cA] \in C_0(\ff_{\psi}, \fn)}
	\frac{\tau(\psi)}{N(\ff_{\psi})} \sgn(\Nrm(-c)) \psi(\fc_{\cA}) 2^{-d} L(\psi^{-1}, 0) \\
	& \qquad \times \prod_{\fq \mid \fL_{\psi}, [\cA] \in C_0(\fq, \fn)} (1 - \psi(\fq)) 
	\prod_{\fq \mid \fL_{\psi}, [\cA] \in C_{\infty}(\fq, \fn)} (1 - N(\fq))\\
& \quad +	\delta_{[\cA] \in C_{\infty}(\fn)}
	\sgn(\Nrm(a)) \psi^{-1}(a \fb_{\cA}^{-1}) 2^{-d} L^{\fL_{\psi}}(\psi, 0) \\
	&  \qquad \times \sum_{\fQ \| \fP_{\ba}, (\ff_{\psi}, \fQ) = 1} \parenth{\prod_{\fp \mid \fQ} \# I_{\fp}} \prod_{\fp \mid \fQ_{\psi}/\fQ} (1 - \psi(\fp)).
\end{align}
By the definition of $\fQ_{\psi}$, we have 
$\prod_{\fp \mid \fQ_{\psi}/\fQ} (1 - \psi(\fp)) 
= \prod_{\fp \mid \fP/\fQ} (1 - \psi(\fp))$.
Therefore, the final sum can be computed as
\begin{align}
& \sum_{\fQ \| \fP_{\ba}, (\ff_{\psi}, \fQ) = 1} 
	\parenth{\prod_{\fp \mid \fQ} \# I_{\fp}} 
	\prod_{\fp \mid \fQ_{\psi}/\fQ} (1 - \psi(\fp))\\
& \qquad = \prod_{\fp \mid \fP / \fP_{\ba}} (1 - \psi(\fp)) \cdot \sum_{\fQ \| \fP_{\ba}, (\ff_{\psi}, \fQ) = 1} 
	\parenth{\prod_{\fp \mid \fQ} \# I_{\fp}} 
	\prod_{\fp \mid \fP_{\ba}/\fQ, \fp \nmid \ff_{\psi}} (1 - \psi(\fp))\\
& \qquad = \prod_{\fp \mid \fP / \fP_{\ba}} (1 - \psi(\fp)) \cdot \prod_{\fp \mid \fP_{\ba}, \fp \nmid \ff_{\psi}} (1 - \psi(\fp) + \# I_{\fp})\\
& 
\qquad = \prod_{\fp \mid \fP / \fP_{\ba}} \psi \parenth{1 - \frac{\nu_{I_{\fp}}}{\# I_{\fp}} \varphi_{\fp}} \cdot \prod_{\fp \mid \fP_{\ba}} \psi(h_{\fp}^{\sharp}).
\end{align}
By the definitions of $\theta_{\Sigma}^{\Sigma'}$ and $\Sigma$, we have
\begin{equation}\label{eq:theta_spec}
\psi(\theta_{\Sigma}^{\Sigma', \sharp}) = 
L^{\Sigma'}(\psi, 0) 
\cdot \prod_{\fp \mid \fP} 
\psi(h_{\fp}^{\sharp}).
\end{equation}
By the choice of $\Sigma'$, we have $L^{\Sigma'}(\psi, 0) =  L^{\fL_{\psi}}(\psi, 0)$. 
Thus we obtain the proposition.
\end{proof}

\subsection{Hecke actions}\label{ss:Eisen_Hecke}

In this subsection, we gather formulas of the Hecke actions on various Eisenstein series that we constructed.

We briefly review the general formulas of the Hecke operators $U_{\fl}$ and $T_{\fl}$.
Let $f \in M_k(\fn, R, \psi)$ be a modular form with coefficients in $R$, of level $\fn$, weight $k \geq 1$, and nebentypus $\psi$ as in \S \ref{ss:HMF}.
Then for a prime $\fl \nmid \fn$, the form $T_{\fl} f \in M_k(\fn, R, \psi)$ is characterized by
\[
\begin{cases}
\coef{\fa}{T_{\fl} f} 
= \coef{\fl \fa}{f} + \delta_{\fl \mid \fa} \psi(\fl) N(\fl)^{k - 1} \coef{\fl^{-1} \fa}{f}\\
\coefz{\lambda}{T_{\fl} f} = \coefz{\fl^{-1} \lambda}{f} + \psi(\fl) N(\fl)^{k - 1} \coefz{\fl \lambda}{f}.
\end{cases}
\]
For any prime $\fl$, the form $U_{\fl} f \in M_k(\gcd(\fn, \fl), R, \psi)$ is characterized by
\[
\begin{cases}
\coef{\fa}{U_{\fl} f} = \coef{\fl \fa}{f}\\
\coefz{\lambda}{U_{\fl} f} = \coefz{\fl^{-1} \lambda}{f}.
\end{cases}
\]

Note that we have the following compatibility between the level raising operators and Hecke operators:
$U_{\fl}(f|_{\fm}) = (U_{\fl} f)|_{\fm}$
for $\fl \nmid \fm$
and
$T_{\fl}(f|_{\fm}) = (T_{\fl} f)|_{\fm}$
for $\fl \nmid \fm \fn$.
These can be checked directly by the formulas on Fourier coefficients.

\begin{prop}\label{prop:Hecke_E}
We keep the notation in Definition \ref{defn:Eisen_group}.
\begin{itemize}
\item[(1)]
For a prime $\fl \nmid \fn \fR$, we have
\[
T_{\fl} E_k^{H/F}(\bpsi_{\fn}, \btr_{\fR}) = (\bpsi(\fl) + N(\fl)^{k - 1}) E_k^{H/F}(\bpsi_{\fn}, \btr_{\fR}).
\]
\item[(2)]
For any prime $\fl$, we have
\begin{align}
U_{\fl} E_k^{H/F}(\bpsi_{\fn}, \btr_{\fR})
& = \delta_{\fl \nmid \fR} N(\fl)^{k - 1} E_k^{H/F}(\bpsi_{\fn}, \btr_{\fR})
	+ \bpsi_{\fn}(\fl) E_k^{H/F}(\bpsi_{\fn}, \btr_{\fl \fR})\\
& = \bpsi_{\fn}(\fl) E_k^{H/F}(\bpsi_{\fn}, \btr_{\fR})
	+ \delta_{\fl \nmid \fR} N(\fl)^{k - 1} E_k^{H/F}(\bpsi_{\fl \fn}, \btr_{\fR}).
\end{align}
\end{itemize}
\end{prop}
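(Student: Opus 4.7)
The plan is to verify both identities by direct comparison of Fourier coefficients, invoking the injectivity of the Fourier expansion on $M_k(\fn\fR, \Q[\Gal(H/F)]^-, \bpsi)$ noted in \S \ref{ss:HMF} and the explicit formulas for $T_{\fl}$ and $U_{\fl}$ on Fourier coefficients recalled at the start of this section. Since $E_k^{H/F}(\bpsi_{\fn}, \btr_{\fR})$ is characterized by the Fourier coefficients in Definition \ref{defn:Eisen_group}, it suffices to match these coefficients inside $\Q[\Gal(H/F)]^-$, which in turn may be verified componentwise after specializing to each odd character $\psi$ of $\Gal(H/F)$.

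For part (1), I would expand
\[
\coef{\fa}{T_{\fl} E_k^{H/F}(\bpsi_{\fn}, \btr_{\fR})} = \coef{\fl\fa}{E} + \bpsi(\fl) N(\fl)^{k-1} \delta_{\fl \mid \fa} \coef{\fl^{-1}\fa}{E}
\]
and partition the divisor sum $\coef{\fl\fa}{E} = \sum_{\fr \mid \fl\fa,\ (\fr, \fR) = 1} \bpsi_{\fn}(\fl\fa/\fr) N(\fr)^{k-1}$ according to whether $\fl \mid \fr$; this is permitted because $(\fl, \fR) = 1$. The two contributions yield $\bpsi(\fl) \coef{\fa}{E}$ and $N(\fl)^{k-1} \coef{\fa}{E}$ respectively when $\fl \nmid \fa$, and when $\fl \mid \fa$ the correction term $\bpsi(\fl) N(\fl)^{k-1} \coef{\fl^{-1}\fa}{E}$ cancels a double count. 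For the constant term at $\lambda \in \Cl^+(F)$, using $\coefz{\lambda}{T_{\fl} E} = \coefz{\fl^{-1}\lambda}{E} + \bpsi(\fl) N(\fl)^{k-1} \coefz{\fl\lambda}{E}$, the first piece of Definition \ref{defn:Eisen_group} contributes $\delta_{\fn=1} 2^{-d}(\bpsi(\fl) + N(\fl)^{k-1}) \bpsi^{-1}(\lambda) \Theta_{\fR}(1-k)$ via the identity $\bpsi^{-1}(\fl^{-1}\lambda) + \bpsi(\fl) N(\fl)^{k-1} \bpsi^{-1}(\fl\lambda) = (\bpsi(\fl) + N(\fl)^{k-1}) \bpsi^{-1}(\lambda)$, and the $\delta_{k=1}\delta_{\fR=1}$ piece (which is independent of $\lambda$) contributes correctly because $1 + \bpsi(\fl) = \bpsi(\fl) + N(\fl)^{k-1}$ when $k = 1$.

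For part (2), I would verify each of the two expressions on the right separately using $\coef{\fa}{U_{\fl} E} = \coef{\fl\fa}{E}$. The same divisor sum admits two different re-organizations: grouping divisors $\fr$ by whether $\fl \mid \fr$ produces the first expression, with the factor $\delta_{\fl \nmid \fR}$ encoding that $\fl \mid \fr$ is impossible when $\fl \mid \fR$ (the remaining $\bpsi_{\fn}(\fl)$ term survives because $E_k^{H/F}(\bpsi_{\fn}, \btr_{\fl\fR}) = E_k^{H/F}(\bpsi_{\fn}, \btr_{\fR})$ in that case); grouping instead by whether $\fl \mid \fl\fa/\fr$ produces the second expression, where $\bpsi_{\fl\fn}(\fl\fa/\fr) = \bpsi_{\fn}(\fl\fa/\fr) \cdot \delta_{\fl \nmid \fl\fa/\fr}$ is exactly what enforces that only the summands with $\fl \mid \fr$ contribute. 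The constant term identities are checked by case analysis on the vanishing of the indicators $\delta_{\fn=1}$, $\delta_{\fl\fn = 1}$, $\delta_{k=1}$, $\delta_{\fR=1}$, $\delta_{\fl\fR = 1}$, using $\coefz{\lambda}{U_{\fl} E} = \coefz{\fl^{-1}\lambda}{E}$, the identity $\bpsi^{-1}(\fl^{-1}\lambda) = \bpsi(\fl) \bpsi^{-1}(\lambda)$ (valid whenever the first piece survives, in which case $\fn = 1$ forces $\fl \nmid \fn$), and the observation that $\Theta_{\fl\fn}(0)^{\sharp}$ and $\Theta_{\fn}(0)^{\sharp}$ agree modulo appropriate Euler factors at $\fl$.

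No step is deep: the entire proof reduces to careful divisor-sum bookkeeping, and the two expressions for $U_{\fl}$ in (2) emerge as the two natural ways of packaging the same sum. The only mildly delicate point is treating the two constant-term pieces of Definition \ref{defn:Eisen_group} simultaneously, and keeping track of the indicator conditions that govern when each piece is nonzero; once each case is enumerated the verification is immediate.
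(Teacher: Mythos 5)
Your approach matches the paper's: both reduce the claimed identities to reorganizing the divisor sum defining $\coef{\fa}{E_k^{H/F}(\bpsi_{\fn},\btr_{\fR})}$ and comparing coefficients termwise (the paper declines to verify the constant terms explicitly; your sketch of that part is correct, modulo noting that $\delta_{\fl\fn=1}$ and $\delta_{\fl\fR=1}$ vanish identically since $\fl$ is prime, which streamlines the case analysis). The one conceptual slip is in part (1): your split of $\coef{\fl\fa}{E}$ according to whether $\fl\mid\fr$ is a genuine partition of the index set, so nothing is ever counted twice. When $\fl\mid\fa$, your $\fl\nmid\fr$ piece falls \emph{short} of $\bpsi(\fl)\coef{\fa}{E}$ by exactly the subsum over $\fr\mid\fa$ with $\fl\mid\fr$, while your $\fl\mid\fr$ piece already equals $N(\fl)^{k-1}\coef{\fa}{E}$ in full; the correction term $\delta_{\fl\mid\fa}\bpsi(\fl)N(\fl)^{k-1}\coef{\fl^{-1}\fa}{E}$ supplies precisely that missing subsum. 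In other words it fills a gap rather than cancels an overcount. (The paper partitions instead by $\fr\mid\fa$ versus $\fr\nmid\fa$, which moves the shortfall onto the $N(\fl)^{k-1}$ piece instead; either bookkeeping closes.) Part (2) is handled exactly as in the paper, via the two natural reorganizations of the same sum, and your observations on how the $\delta_{\fl\nmid\fR}$ factor and the $\bpsi_{\fl\fn}$ indicator encode the two grouping conditions are correct.
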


\begin{proof}
These formulas are standard.
We only compute the non-constant Fourier coefficients.

(1)
We have
\begin{align}
& \coef{\fa}{T_{\fl} E_k^{H/F}(\bpsi_{\fn}, \btr_{\fR})}\\
& = \coef{\fl \fa}{E_k^{H/F}(\bpsi_{\fn}, \btr_{\fR})}
	+ \delta_{\fl \mid \fa} \bpsi_{\fn}(\fl) N(\fl)^{k - 1} \coef{\fl^{-1} \fa}{E_k^{H/F}(\bpsi_{\fn}, \btr_{\fR})}\\
& = \sum_{\fr \mid \fl \fa, (\fr, \fR) = 1} \bpsi_{\fn}\parenth{\frac{\fl \fa}{\fr}} N(\fr)^{k - 1}
	+  \delta_{\fl \mid \fa} \bpsi_{\fn}(\fl) N(\fl)^{k - 1} 
		\sum_{\fr \mid \fl^{-1} \fa, (\fr, \fR) = 1} \bpsi_{\fn}\parenth{\frac{\fl^{-1} \fa}{\fr}} N(\fr)^{k - 1}.
\end{align}
%
We decompose the first sum regarding $\fr \mid \fa$ or $\fr \nmid \fa$.
In the latter case, we put $\fr = \fl \fr'$ with $\fr' \mid \fa$.
Then 
\begin{align}
& \coef{\fa}{T_{\fl} E_k^{H/F}(\bpsi_{\fn}, \btr_{\fR})}\\
& = \bpsi(\fl) \sum_{\fr \mid \fa, (\fr, \fR) = 1} \bpsi_{\fn}\parenth{\frac{\fa}{\fr}} N(\fr)^{k - 1}
	+ N(\fl)^{k - 1} \sum_{\fr' \mid \fa, \fl \fr' \nmid \fa, (\fl \fr', \fR) = 1} \bpsi_{\fn}\parenth{\frac{\fa}{\fr'}} N(\fr')^{k - 1}\\
	& \qquad +  \delta_{\fl \mid \fa} N(\fl)^{k - 1} 
		\sum_{\fr \mid \fl^{-1} \fa, (\fr, \fR) = 1} \bpsi_{\fn}\parenth{\frac{\fa}{\fr}} N(\fr)^{k - 1}.
\end{align}
The first term is nothing but $\bpsi(\fl) \coef{\fa}{E_k^{H/F}(\bpsi_{\fn}, \btr_{\fR})}$.
Moreover, the sum of the second and the third terms is $N(\fl)^{k - 1} \coef{\fa}{E_k^{H/F}(\bpsi_{\fn}, \btr_{\fR})}$.

(2)
We have
\begin{align}\label{eq:Ul_Eis}
\coef{\fa}{U_{\fl} E_k^{H/F}(\bpsi_{\fn}, \btr_{\fR})}
& = \coef{\fl \fa}{E_k^{H/F}(\bpsi_{\fn}, \btr_{\fR})}\\
& = \sum_{\fr \mid \fl \fa, (\fr, \fR) = 1} \bpsi_{\fn}\parenth{\frac{\fl \fa}{\fr}} N(\fr)^{k - 1}.
\end{align}

We decompose \eqref{eq:Ul_Eis} regarding $\fl \mid \fr$ or $\fl \nmid \fr$.
In the former case, we write $\fr = \fl \fr'$.
Then
\begin{align}
\coef{\fa}{U_{\fl} E_k^{H/F}(\bpsi_{\fn}, \btr_{\fR})}
& = \sum_{\fr' \mid \fa, (\fl \fr', \fR) = 1} \bpsi_{\fn}\parenth{\frac{\fa}{\fr'}} N(\fl)^{k - 1} N(\fr')^{k - 1}
	+ \sum_{\fr \mid \fa, (\fr, \fR) = 1, \fl \nmid \fr} \bpsi_{\fn}\parenth{\frac{\fl \fa}{\fr}} N(\fr)^{k - 1}\\
& = \delta_{\fl \nmid \fR} N(\fl)^{k - 1} \coef{\fa}{E_k^{H/F}(\bpsi_{\fn}, \btr_{\fR})}
	+ \bpsi_{\fn}(\fl) \coef{\fa}{E_k^{H/F}(\bpsi_{\fn}, \btr_{\fl \fR})}.
\end{align}

We next decompose \eqref{eq:Ul_Eis} regarding $\fr \mid \fa$ or $\fr \nmid \fa$.
In the latter case, we write $\fr = \fl \fr'$.
Then
\begin{align}
\coef{\fa}{U_{\fl} E_k^{H/F}(\bpsi_{\fn}, \btr_{\fR})}
& = \sum_{\fr \mid \fa, (\fr, \fR) = 1} \bpsi_{\fn}(\fl) \bpsi_{\fn}\parenth{\frac{\fa}{\fr}} N(\fr)^{k - 1}
	+ \sum_{\fr' \mid \fa, (\fl \fr', \fR) = 1, \fl \fr' \nmid \fa} \bpsi_{\fn}\parenth{\frac{\fa}{\fr'}} N(\fl)^{k - 1} N(\fr')^{k - 1}\\
& = \bpsi_{\fn}(\fl) \coef{\fa}{E_k^{H/F}(\bpsi_{\fn}, \btr_{\fR})}
	+ \delta_{\fl \nmid \fR} N(\fl)^{k - 1} \sum_{\fr' \mid \fa, (\fr', \fR) = 1, \fl \nmid \fa/\fr'} \bpsi_{\fn}\parenth{\frac{\fa}{\fr'}}  N(\fr')^{k - 1}\\
& = \bpsi_{\fn}(\fl) \coef{\fa}{E_k^{H/F}(\bpsi_{\fn}, \btr_{\fR})}
	+ \delta_{\fl \nmid \fR} N(\fl)^{k - 1} \coef{\fa}{E_k^{H/F}(\bpsi_{\fl \fn}, \btr_{\fR})}.
\end{align}
Therefore, we obtain the proposition.
\end{proof}

Let $p$ be a fixed odd prime number.
When $R$ is a finite $\Z_p$-algebra, for each $p$-adic prime $\fp \mid \fn$, we define Hida's ordinary operator $e_{\fp}^{\ord}$ on $M_k(\fn, R, \psi)$ by
\[
e_{\fp}^{\ord}= \lim_{n \to \infty} U_{\fp}^{n!}.
\]

\begin{prop}\label{prop:Hecke_W}
We keep the notation in Definition \ref{defn:Wk}.
\begin{itemize}
\item[(1)]
For a prime $\fl \nmid \fn \fR$, we have
\[
T_{\fl} W_k^{H/F}(\bpsi_{\fn}, \btr_{\fR})^{\chi} = (\bpsi(\fl) + N(\fl)^{k - 1}) W_k^{H/F}(\bpsi_{\fn}, \btr_{\fR})^{\chi}.
\]
\item[(2)]
For a prime $\fp \mid p$, we have
\begin{align}
 U_{\fp} W_k^{H/F}(\bpsi_{\fn}, \btr_{\fR})^{\chi}
& = \delta_{\fp \nmid \fR} N(\fp)^{k - 1} W_k^{H/F}(\bpsi_{\fn}, \btr_{\fR})^{\chi}
	+ \bpsi_{\fn}(\fp) W_k^{H/F}(\bpsi_{\fn}, \btr_{\fp \fR})^{\chi}\\
& = \bpsi_{\fn}(\fp) W_k^{H/F}(\bpsi_{\fn}, \btr_{\fR})^{\chi}
	+ \delta_{\fp \nmid \fR} N(\fp)^{k - 1} W_k^{H/F}(\bpsi_{\fp \fn}, \btr_{\fR})^{\chi}.
\end{align}
\item[(3)]
For a prime $\fp \mid p$ with $\fp \nmid \fR$, we have
\[
e_{\fp}^{\ord} W_k^{H/F}(\bpsi_{\fn}, \btr_{\fR})^{\chi} = 
\begin{cases}
	W_1^{H/F}(\bpsi_{\fn}, \btr_{\fR})^{\chi} & (k = 1)\\
	0 & (k > 1, \fp \mid \fn)\\
	\frac{1}{1 - N(\fp)^{k - 1} \bpsi(\fp)^{-1}} W_k^{H/F}(\bpsi_{\fn}, \btr_{\fp \fR})^{\chi} & (k > 1, \fp \nmid \fn).
\end{cases}
\]
\end{itemize}
\end{prop}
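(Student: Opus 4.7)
The plan is to derive all three parts by applying the Hecke-action formulas for the unmodified Eisenstein series (Proposition \ref{prop:Hecke_E}) termwise to the sum in Definition \ref{defn:Wk}. In every case the index $\fm$ satisfies $(\fm, p) = 1$ and $\fm \mid \fn_p \mid \fn$, so $\fl \nmid \fm$ in part (1) and $\fp \nmid \fm$ in parts (2) and (3). Consequently the relevant Hecke operator commutes with the level-raising operator $(-)|_{\fm}$, reducing the task to a Hecke computation on each summand $E_k^{H^{\fm}/F}(\bpsi_{\fn/\fm}, \btr_{\fR})^{\chi}|_{\fm}$ followed by reassembly.

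For parts (1) and (2), applying Proposition \ref{prop:Hecke_E} termwise produces scalars that depend on $\fm$ only through $\bpsi^{H^{\fm}/F}(\fl)$ or $\bpsi_{\fn/\fm}^{H^{\fm}/F}(\fp)$. The crucial identities
\[
\nu_{\fm} \cdot \bpsi^{H^{\fm}/F}(\fl) = \bpsi^{H/F}(\fl) \cdot \nu_{\fm}, \qquad \nu_{\fm} \cdot \bpsi_{\fn/\fm}^{H^{\fm}/F}(\fp) = \bpsi_{\fn}^{H/F}(\fp) \cdot \nu_{\fm}
\]
in $\Z_p[\Gal(H/F)]^{\chi}$ then allow these scalars to pull out of the sum. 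Both vanish identically when the prime divides $\fn$; otherwise the prime is unramified in $H/F$, so its Frobenius in $\Gal(H/F)$ is a valid lift of its Frobenius in $\Gal(H^{\fm}/F)$, and the identities follow from the independence of the $\nu_{\fm}$-map on the choice of lift. The reassembly for the second formula of part (2) uses that the index set for $\fm$ and the coefficient $\bpsi^{H^{\fm}/F}(\fm) \prod_{v \mid \fm}(1 - N(v)^k)/\# I_{v, H}$ are unchanged when $\fn$ is replaced by $\fp\fn$, since $(\fm, \fp) = 1$ already.

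Part (3) follows from part (2) by a two-dimensional linear-algebra computation. Applying the second formula of part (2) with $\fR$ replaced by $\fp\fR$ (noting $\fp \mid \fp\fR$) shows that $W_k^{H/F}(\bpsi_{\fn}, \btr_{\fp\fR})^{\chi}$ is a $U_{\fp}$-eigenform with eigenvalue $\bpsi_{\fn}(\fp)$, which equals $0$ if $\fp \mid \fn$ and the unit $\bpsi(\fp) \in (\Z_p[G]^{\chi})^{\times}$ otherwise. Together with the first formula of part (2) for $W_k^{H/F}(\bpsi_{\fn}, \btr_{\fR})^{\chi}$, this describes $U_{\fp}$ by an upper-triangular matrix on the span $\lrang{W_k(\btr_{\fR}), W_k(\btr_{\fp\fR})}$ with diagonal entries $\delta_{\fp \nmid \fR} N(\fp)^{k-1}$ and $\bpsi_{\fn}(\fp)$. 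Hida's ordinary projector $e_{\fp}^{\ord}$ acts as the identity on the generalized eigenspace for unit eigenvalues and vanishes on that for eigenvalues in $p\Z_p[G]^{\chi}$: for $k = 1$ both eigenvalues are units, so $e_{\fp}^{\ord}$ is the identity; for $k > 1$ and $\fp \mid \fn$ the only eigenvalue $N(\fp)^{k-1}$ lies in $p\Z_p$, so $e_{\fp}^{\ord}$ is zero; for $k > 1$ and $\fp \nmid \fn$ one directly solves for the $\bpsi(\fp)$-eigenvector in the span and obtains the scalar $1/(1 - N(\fp)^{k-1}\bpsi(\fp)^{-1})$, which is a unit because $N(\fp)^{k-1} \in p\Z_p$.

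The main technical hurdle is in parts (1) and (2): verifying the lift identities for $\nu_{\fm}$ and Frobenius elements and tracking the bookkeeping of the index set and coefficients when $\fn$ is replaced by $\fp\fn$ (or $\fR$ by $\fp\fR$) so that the output sums really do reassemble into the claimed $W_k$-forms with modified arguments. Once these are in place, part (3) reduces to standard linear algebra of Hida's ordinary projector on a two-dimensional span.
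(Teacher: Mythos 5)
Your argument for parts (1) and (2) fills in exactly the steps the paper compresses into one sentence: commute the Hecke operators past the level-raising operators (using $(\fm,p)=1$, hence $\fl\nmid\fm$ and $\fp\nmid\fm$), apply Proposition \ref{prop:Hecke_E} termwise, and pull the resulting scalars through $\nu_{\fm}$ using compatibility of Frobenii under the projection $\Gal(H/F)\twoheadrightarrow\Gal(H^{\fm}/F)$. This is the same route as the paper, just made explicit.

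For part (3) your argument differs in presentation from the paper's: the paper computes $U_{\fp}^{n!}$ explicitly by iterating the first formula of (2) and then passes to the limit, whereas you take the spectral view, identifying the $U_{\fp}$-stable two-dimensional span of $W_k(\btr_{\fR})$ and $W_k(\btr_{\fp\fR})$ and projecting onto the unit-eigenvalue part. These are equivalent, and your eigenvector computation correctly recovers the scalar $1/(1-N(\fp)^{k-1}\bpsi(\fp)^{-1})$; the paper's iterative computation is perhaps more elementary but less transparent about where the formula comes from. One small inaccuracy in your exposition: you assert that ``for $k=1$ both eigenvalues are units,'' but when $\fp\mid\fn$ the second eigenvalue $\bpsi_{\fn}(\fp)$ is $0$, so $e_{\fp}^{\ord}$ is \emph{not} the identity on the whole span. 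This does not affect the conclusion, because in that sub-case $W_1(\btr_{\fR})$ is already a $U_{\fp}$-eigenform with unit eigenvalue $1$ and one never needs $W_1(\btr_{\fp\fR})$; but the claim as stated is not quite right and should be split into the sub-cases $\fp\mid\fn$ and $\fp\nmid\fn$, as the paper (and your own treatment of $k>1$) does.
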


\begin{proof}
The claims (1) and (2) follow from Proposition \ref{prop:Hecke_E}(1) and (2), together with the observation that the Hecke operators commute with the level raising operators.
For (2), note also that the level raising operators are used for $\fm$ with $(\fm, p) = 1$, so we actually have $\fp \nmid \fm$.

(3)
We use the first formula of the claim (2).
When $\fp \mid \fn$, then the claim (3) immediately follows.
In the following, we suppose $\fp \nmid \fn$.
By applying the first formula of (2) inductively, with the help of $U_{\fp} W_k^{H/F}(\bpsi_{\fn}, \btr_{\fp \fR})^{\chi}
 = \bpsi(\fp) W_k^{H/F}(\bpsi_{\fn}, \btr_{\fp \fR})^{\chi}$ (again by the first formula of (2)), we have
\[
U_{\fp}^{n!} W_k^{H/F}(\bpsi_{\fn}, \btr_{\fR})^{\chi}
= N(\fp)^{(k - 1) n!} W_k^{H/F}(\bpsi_{\fn}, \btr_{\fR})^{\chi} 
	+ \parenth{\sum_{i = 0}^{n! - 1} N(\fp)^{(k - 1)i} \bpsi(\fp)^{n! - i}} W_k^{H/F}(\bpsi_{\fn}, \btr_{\fp \fR})^{\chi}.
\]
When $k = 1$, it is easy to see that $\sum_{i = 0}^{n! - 1} \bpsi(\fp)^{n! - i}$ goes to $0$ as $n \to \infty$, so the claim follows.
When $k > 1$, 
\[
\sum_{i = 0}^{n! - 1} N(\fp)^{(k - 1)i} \bpsi(\fp)^{n! - i}
= \bpsi(\fp)^{n!} \frac{1 - (N(\fp)^{k - 1} \bpsi(\fp)^{-1})^{n!}}{1 - N(\fp)^{k - 1} \bpsi(\fp)^{-1}},
\]
so the claim follows.
\end{proof}

\begin{prop}\label{prop:Hecke_olW}
We keep the notation in Definition \ref{defn:olW}.
\begin{itemize}
\item[(1)]
For a prime $\fl \nmid \fn$, we have
\[
T_{\fl} \ol{W}_1^{H/F}(\bpsi_{\fn}, \btr)^{\chi} 
= (\bpsi(\fl) + 1) \ol{W}_1^{H/F}(\bpsi_{\fn}, \btr)^{\chi}.
\]
\item[(2)]
For a prime $\fp \mid p$ with $\fp \nmid \fP_{\ba}$, we have
\[
U_{\fp} \ol{W}_1^{H/F}(\bpsi_{\fn}, \btr)^{\chi}
= \bpsi_{\fn}(\fp) \ol{W}_1^{H/F}(\bpsi_{\fn}, \btr)^{\chi}
	+ \ol{W}_1^{H/F}(\bpsi_{\fp \fn}, \btr)^{\chi}.
\]
\item[(3)]
For any prime $\fp \mid p$, we have
\[
e_{\fp}^{\ord} \ol{W}_1^{H/F}(\bpsi_{\fn}, \btr)^{\chi} = \ol{W}_1^{H/F}(\bpsi_{\fn}, \btr)^{\chi}.
\]
\end{itemize}
\end{prop}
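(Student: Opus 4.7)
Each of the three claims will be deduced from the corresponding part of Proposition~\ref{prop:Hecke_W} (with $k=1$ and $\fR=1$), applied termwise to the summands of
\[
\ol{W}_1^{H/F}(\bpsi_{\fn}, \btr)^{\chi} = \sum_{\fQ \| \fP_{\ba}} \nu_{\fQ}\, W_1^{H^{\fQ}/F}(\bpsi_{\fn/\fQ}, \btr)^{\chi},
\]
followed by resumming. The basic tool throughout is the compatibility $\nu_{\fQ}\,\bpsi^{H^{\fQ}/F}(\fa) = \bpsi^{H/F}(\fa)\,\nu_{\fQ}$ for $\fa$ unramified in $H/F$ (and the analogue with $\bpsi$ replaced by $\bpsi_{\fn/\fQ}$ vs.\ $\bpsi_{\fn}$, where both sides vanish at primes dividing $\fn$). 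This holds because $\nu_{\fQ}$ lies in the ideal of $\Z_p[\Gal(H/F)]$ generated by the norm element of $\Gal(H/H^{\fQ})$.

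For (1), applying Proposition~\ref{prop:Hecke_W}(1) to each $W_1^{H^{\fQ}/F}(\bpsi_{\fn/\fQ},\btr)^{\chi}$ (valid since $\fl \nmid \fn$ is unramified in every $H^{\fQ}/F$) gives the identity $T_{\fl}W_1^{H^{\fQ}/F}(\bpsi_{\fn/\fQ},\btr)^{\chi} = (\bpsi^{H^{\fQ}/F}(\fl)+1)\,W_1^{H^{\fQ}/F}(\bpsi_{\fn/\fQ},\btr)^{\chi}$; multiplying by $\nu_{\fQ}$ and summing then yields the claim. For (3), Proposition~\ref{prop:Hecke_W}(3) at $k=1$ specializes to $e_{\fp}^{\ord} W_1^{H^{\fQ}/F}(\bpsi_{\fn/\fQ},\btr)^{\chi} = W_1^{H^{\fQ}/F}(\bpsi_{\fn/\fQ},\btr)^{\chi}$ for every $\fQ$, so summing immediately produces the desired identity.

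The substantive case is (2). Applying the second formula of Proposition~\ref{prop:Hecke_W}(2) termwise, multiplying by $\nu_{\fQ}$, and summing yields
\[
U_{\fp}\,\ol{W}_1^{H/F}(\bpsi_{\fn},\btr)^{\chi} = \bpsi_{\fn}^{H/F}(\fp)\,\ol{W}_1^{H/F}(\bpsi_{\fn},\btr)^{\chi} + \sum_{\fQ \| \fP_{\ba}} \nu_{\fQ}\,W_1^{H^{\fQ}/F}(\bpsi_{\fp\fn/\fQ},\btr)^{\chi},
\]
so the remaining task is to recognize the second sum as $\ol{W}_1^{H/F}(\bpsi_{\fp\fn},\btr)^{\chi}$. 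The hard part will be this combinatorial comparison: by definition, $\ol{W}_1^{H/F}(\bpsi_{\fp\fn},\btr)^{\chi}$ is indexed over Hall divisors $\fQ' \| \fP_{\ba}^{(\fp\fn)}$ attached to the enlarged level $\fp\fn$, and these need not a priori coincide with the divisors $\fQ \| \fP_{\ba}$ appearing above. The hypothesis $\fp \nmid \fP_{\ba}$ is decisive: when $\fp \mid \fn$ it forces $G_{\fp}$ not to be a $p$-group, so that $\fp \nmid \fP_{\ba}^{(\fp\fn)}$ and hence $\fP_{\ba}^{(\fp\fn)} = \fP_{\ba}$, giving a clean termwise match. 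When $\fp \nmid \fn$, the analysis is subtler; here one exploits that $I_{\fp,H}$ is trivial (so that $H^{\fp\fQ} = H^{\fQ}$ and $\nu_{\fp\fQ} = \nu_{\fQ}$) to verify directly that the apparent extra contributions from $\fQ' = \fp\fQ$ in the definition of $\ol{W}_1^{H/F}(\bpsi_{\fp\fn},\btr)^{\chi}$ collapse correctly. Once the two sums are matched term-by-term, the asserted identity falls out.
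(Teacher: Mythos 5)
Your overall approach --- applying Proposition~\ref{prop:Hecke_W}(1)--(3) termwise to the sum defining $\ol{W}_1^{H/F}(\bpsi_{\fn}, \btr)^{\chi}$ --- is exactly the paper's one-line proof, and your parts (1) and (3) are fine as written. The problem is your last paragraph on (2), specifically the case $\fp \nmid \fn$ with $G_{\fp}$ a $p$-group. You assert that the ``extra contributions from $\fQ' = \fp\fQ$ \ldots collapse correctly,'' but they do not. If one re-applies Definition~\ref{defn:olW} literally to the pair $(H/F, \fp\fn)$, then $\fP_{\ba}^{(\fp\fn)} = \fp\fP_{\ba}$, and the Hall divisors $\fQ' = \fp\fQ$ (for $\fQ \| \fP_{\ba}$) --- using $H^{\fp\fQ} = H^{\fQ}$ and $\nu_{\fp\fQ} = \nu_{\fQ}$ exactly as you observe --- contribute $\sum_{\fQ \| \fP_{\ba}} \nu_{\fQ} W_1^{H^{\fQ}/F}(\bpsi_{\fn/\fQ}, \btr)^{\chi} = \ol{W}_1^{H/F}(\bpsi_{\fn}, \btr)^{\chi}$, a nonzero form that has no partner to cancel in the asserted identity. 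Under this literal reading, (2) would be false in that case, so ``it collapses'' is not a substitute for a verification; the verification would fail.

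What makes (2) true is a convention that the paper uses tacitly but consistently: when $\ol{W}_1^{H/F}(\bpsi_{\fm}, \btr)^{\chi}$ appears for an auxiliary modulus $\fm$ (here $\fm = \fp\fn$, and elsewhere $\fm = \fn\fP'$, $\fp\fn\fP'/\fQ$, \ldots), the Hall-divisor index set is always the original $\fP_{\ba}$ attached to the pair $(H/F, \fn)$ of Setting~\ref{setting}, not recomputed from $\fm$. You can see this in the proof of Lemma~\ref{lem:tW1_coeff}, which opens by expanding $\ol{W}_1^{H/F}(\bpsi_{\fn\fP'}, \btr)^{\chi}$ as $\sum_{\fQ \| \fP_{\ba}} \nu_{\fQ} W_1^{H^{\fQ}/F}(\bpsi_{\fn\fP'/\fQ}, \btr)^{\chi}$ with the original $\fP_{\ba}$, not $\fP_{\ba}^{(\fn\fP')}$. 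Once this convention is adopted, your termwise computation already finishes (2): after applying the second identity of Proposition~\ref{prop:Hecke_W}(2) to each summand, multiplying by $\nu_{\fQ}$ (the hypothesis $\fp \nmid \fP_{\ba}$ gives $\fp \nmid \fQ$, which is what lets you replace $\nu_{\fQ}\,\bpsi_{\fn/\fQ}^{H^{\fQ}/F}(\fp)$ by $\bpsi_{\fn}^{H/F}(\fp)\,\nu_{\fQ}$), and resumming, the second sum $\sum_{\fQ \| \fP_{\ba}} \nu_{\fQ}\, W_1^{H^{\fQ}/F}(\bpsi_{\fp\fn/\fQ}, \btr)^{\chi}$ \emph{is} $\ol{W}_1^{H/F}(\bpsi_{\fp\fn}, \btr)^{\chi}$ by definition. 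There is no further combinatorial comparison to perform, and in particular no case analysis on whether $\fp \mid \fn$.
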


\begin{proof}
By Definition \ref{defn:olW}, the claims (1), (2) and (3) follow from Proposition \ref{prop:Hecke_W}(1), (2), and (3), respectively.
%
\end{proof}

Note that the action of $U_{\fp}$ on $\ol{W}_1^{H/F}(\bpsi_{\fn}, \btr)^{\chi}$ when $\fp \mid \fP_{\ba}$ is complicated.
We will encounter this problem in \S \ref{ss:Fitt_B} (especially in Lemma \ref{lem:tW1_coeff}).

\section{Construction of the cusp form}\label{s:cuspform}

In this section, using the modified Eisenstein series constructed in the previous section, we construct a cuspform that plays a crucial role in the proof of Theorem \ref{thm:current_main_p}.


By the propositions in \S \ref{ss:func_state}, in order to prove Theorem \ref{thm:current_main_p}, we may assume that $\chi$ is faithful on $G'$ and moreover choose an arbitrary pair $(\Sigma, \Sigma')$.
For clarity, we describe the situation here.

\begin{setting}\label{setting}
Let $H/F$ be a finite abelian CM-extension.
Let $p$ be an odd prime number.
Let $G'$ denote the maximal subgroup of $G = \Gal(H/F)$ of order prime to $p$.
Let $\chi$ be an odd faithful character of $G'$.

Let $T$ be a finite set of finite primes of $F$ satisfying
\begin{itemize}
\item
$T \cap (S_p(F) \cup S_{\ram}(H/F)) = \emptyset$.
\item
$\OO_H^{\times, T}$ is $p$-torsion-free.
\end{itemize}
We put 
\[
\fn = \condu{H/F} \prod_{\fq \in T} \fq.
\]
Then $(H/F, \fn)$ satisfies the condition of Setting \ref{setting:H/F}.
As in Definition \ref{defn:P_p}, we put
\[
\fP = \gcd(\fn, p^{\infty}) = \gcd(\condu{H/F}, p^{\infty})
\]
and introduce the Hall divisors $\fP_{\ba} \| \fP_p$ of $\fP$.
Then we put
\[
\Sigma = S_\infty(F) \cup \prim(\fP), 
\qquad \Sigma' = \prim(\fn / \fP),
\]
which match the choices in Proposition \ref{prop:bW1_const_terms}.
Note that the conditions (H1), (H2), (H3')$_p^{\chi}$, and (H4)$_p^{\chi}$ hold. 
We simply write 
\[
\theta = \theta_{\Sigma}^{\Sigma', \chi^{-1}} \in \Z_p[G]^{\chi^{-1}},
\]
which is integral by Proposition \ref{prop:theta_int}, so we have $\theta^{\sharp} \in \Z_p[G]^{\chi}$.
Finally we put 
\[
\fP' = \prod_{\fp' \mid p, \fp' \nmid \fP} \fp',
\qquad \fP_{\ba}' = \prod_{\fp' \mid p, \fp' \nmid \fP_{\ba}} \fp',
\]
so $\fn \fP'$ is the minimal ideal divisible by both $\fn$ and all $p$-adic primes.
\end{setting}

Since $(H/F, \fn)$ satisfies the condition of Setting \ref{setting:H/F}, we can use the notation in the previous section.

\begin{defn}\label{defn:x}
When $\fP_{\ba} \neq \fP$, we simply put $\wtil{x_k} = 1 \in \Z_p[G]^{\chi}$ for any odd integer $k > 1$.
Suppose $\fP_{\ba} = \fP$ and recall the field $H^{\fP}$ defined in Definition \ref{defn:H^m}.
For each odd integer $k > 1$, we define
\[
x_k 
= \frac{\Theta({H^{\fP}/F}, 1 - k)^{\chi}}{\Theta({H^{\fP}/F}, 0)^{\chi}} \in \Q_p[\Gal(H^{\fP}/F)]^{\chi}.
\]
Here, for each odd integer $k \geq 1$, we define
\[
\Theta({H^{\fP}/F}, 1 - k) = \sum_{\psi} L(\psi^{-1}, 1 - k) e_{\psi} \in \Q[\Gal(H^{\fP}/F)]^-,
\]
where $\psi$ runs over the odd characters of $\Gal(H^{\fP}/F)$.
By \cite[Lemma 8.16]{DK20}, by taking $k \equiv 1 \pmod{ (p - 1)p^N}$ with $N$ large enough, we have $x_k \in \Z_p[\Gal(H^{\fP}/F)]^{\chi}$ and moreover $x_k$ is a non-zero-divisor.
Let us fix a lift $\wtil{x_k} \in \Z_p[G]^{\chi}$ of $x_k$ which is a non-zero-divisor. 
\end{defn}

We introduce the space of ordinary cuspforms.
Using Hida's ordinary operators, we put
\[
e_{p}^{\ord} = \prod_{\fp \mid p} e_{\fp}^{\ord}. 
\]
Then we define
\[
S_k(\fn \fP', \Z_p[G]^{\chi}, \bpsi)^{p \ordinary} = e_p^{\ord} S_k(\fn \fP', \Z_p[G]^{\chi}, \bpsi).
\]

The following is the goal of this section.

\begin{thm}\label{thm:cuspform}
For large enough $N$, for each odd integer $k > 1$ with $k \equiv 1 \pmod{(p - 1)p^N}$,
there exists a $p$-ordinary cuspform  
$F_k(\bpsi) \in S_k(\fn \fP', \Z_p[G]^{\chi}, \bpsi)^{p \ordinary}$ such that
\begin{equation}\label{eq:Fk}
F_k(\bpsi) 
\equiv \wtil{x_k} \ol{W}_1^{H/F}(\bpsi_{\fn}, \btr)^{\chi}
	- \delta_{\fP_{\ba} = \fP} \nu_{\fP} W_k^{H^{\fP}/F}(\bpsi_{\fn/\fP}, \btr_p)^{\chi}
	 \pmod{ (\wtil{x_k} \theta^{\sharp})}.
\end{equation}
\end{thm}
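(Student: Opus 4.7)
The plan is to adapt the construction of Dasgupta--Kakde \cite[\S 7.4]{DK20} to our setting. Set
\[
G_k(\bpsi) = \wtil{x_k}\,\ol{W}_1^{H/F}(\bpsi_{\fn},\btr)^{\chi} - \delta_{\fP_{\ba}=\fP}\,\nu_{\fP}\,W_k^{H^{\fP}/F}(\bpsi_{\fn/\fP},\btr_p)^{\chi},
\]
regarded as an element of $M_k(\fn\fP', \Z_p[G]^{\chi}, \bpsi)$. The goal is to show that all constant terms of $G_k(\bpsi)$ at every cusp of level $\fn\fP'$ lie in the ideal $(\wtil{x_k}\theta^{\sharp})$, apply Hida's ordinary projector, and then invoke Silliman's theorem \cite{Sil20} to produce a genuine cuspform $F_k(\bpsi)$ congruent to $G_k(\bpsi)$ modulo $(\wtil{x_k}\theta^{\sharp})$.

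The heart of the argument is the constant-term verification. At cusps in $C_{\infty}(\fP, \fn)$, Proposition \ref{prop:bW1_const_terms} decomposes the constant term of $\wtil{x_k}\,\ol{W}_1^{H/F}(\bpsi_{\fn},\btr)^{\chi}$ into two pieces: the piece built from $\psi(\theta^{\sharp})$ lies automatically in $(\wtil{x_k}\theta^{\sharp})$, while the remaining piece (present only when $\fP_{\ba}=\fP$ and $(\ff_{\psi},p)=1$) is proportional to $\wtil{x_k}\,L(\psi^{-1},0)$. The defining interpolation $\psi(\wtil{x_k})\,L(\psi^{-1},0) = L(\psi^{-1},1-k)$, valid precisely because characters $\psi$ with $(\ff_{\psi},p)=1$ factor through $\Gal(H^{\fP}/F)$, together with the identity $\nu_{\fP}^{\chi} = \prod_{\fp\mid\fP}\# I_{\fp,H}$ (valid when $\fP_{\ba}=\fP$, since then each $I_{\fp,H}$ is a $p$-group on which $\chi$ is trivial), produces an exact cancellation against the constant term of $\nu_{\fP}\,W_k^{H^{\fP}/F}(\bpsi_{\fn/\fP},\btr_p)^{\chi}$, computed via a variant of Proposition \ref{prop:const_terms}. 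Cusps outside $C_{\infty}(\fP,\fn\fP')$ are handled in parallel: the general constant-term formula of Proposition \ref{prop:const_terms} combined with the level-raising compatibility forces both pieces of $G_k(\bpsi)$ either to vanish or to contribute only $\theta^{\sharp}$-multiples, and the required cancellations can be packaged in terms of which Hall divisors of $\fP$ satisfy $[\cA]\in C_0$ versus $C_{\infty}$.

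With the constant-term estimate in hand, Propositions \ref{prop:Hecke_W}(3) and \ref{prop:Hecke_olW}(3) show that Hida's $e_p^{\ord}$ fixes $G_k(\bpsi)$, so $G_k(\bpsi)$ already lies in the ordinary subspace. Silliman's theorem (see \cite[Corollary 7.28]{Sil20} and its application in \cite[\S 7.5]{DK20}) then allows us to subtract from $G_k(\bpsi)$ a form whose Fourier coefficients all lie in $(\wtil{x_k}\theta^{\sharp})$ so as to produce $F_k(\bpsi)\in S_k(\fn\fP', \Z_p[G]^{\chi}, \bpsi)^{p\ordinary}$. The non-zero-divisor property of $\wtil{x_k}\theta^{\sharp}$, together with the $p$-torsion-freeness of $\OO_H^{\times, T}$ built into Setting \ref{setting}, is what ensures Silliman's approximation lands in the correct ordinary cuspidal subspace.

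The principal obstacle will be the constant-term computation at cusps outside $C_{\infty}(\fP,\fn)$, where neither the clean formula of Proposition \ref{prop:bW1_const_terms} nor the simple interpolation identity for $\wtil{x_k}$ applies directly. This requires extending Proposition \ref{prop:bW1_const_terms} to arbitrary cusps of level $\fn\fP'$ (tracking simultaneously the decomposition of $\fP$ into the Hall divisors $\fP_{\ba}\|\fP_p\|\fP$ and of $\fn/\fP$ into $\fQ_{\psi}$ and $\fL_{\psi}$), together with matching the sign and unit-factor conventions of the type appearing in \cite[Remark 8.5]{DK20}. This bookkeeping, rather than any single conceptual step, is the technically heaviest part of the construction.
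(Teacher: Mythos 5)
There is a genuine gap, and it centers on what form of Silliman's cuspidal-approximation theorem you can actually invoke.

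Your $G_k(\bpsi)$ is a difference of a weight-$1$ form and a weight-$k$ form, so it is not an element of $M_k(\fn\fP', \Z_p[G]^\chi, \bpsi)$ at all; it is only a $q$-expansion. To make a genuine weight-$k$ modular form, the paper multiplies the weight-$1$ piece by Silliman's form $V_{k-1}$ from Theorem \ref{thm:Vk} (which satisfies $V_{k-1}\equiv 1 \pmod{p^m}$). You omit this step, and without it the construction cannot even get off the ground.

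More seriously: you argue the constant terms of $G_k(\bpsi)$ lie in $(\wtil{x_k}\theta^\sharp)$ and propose to invoke Silliman's theorem to produce a cuspform congruent to $G_k(\bpsi)$ modulo that ideal. But Silliman's approximation theorem (\cite[Theorem 10.10]{Sil20}, cf.~\cite[Theorem 8.15]{DK20}) works modulo a power of $p$ and only requires control of constant terms at cusps in $C_\infty(\fP,\fn)$ --- the ordinary projector $e_\fP^{\ord}$ kills the remaining cusps automatically. So your ``principal obstacle'' (cusps outside $C_\infty(\fP,\fn)$) is not an obstacle in the actual argument, while the real obstacle --- that $(\wtil{x_k}\theta^\sharp)$ is not $(p^m)$ --- is unaddressed. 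The paper resolves this by introducing the auxiliary weight-$k$ form $G_k(\bpsi)^\chi$ of Theorem \ref{thm:Gk} (unfortunate notation clash with your $G_k(\bpsi)$), whose constant terms at cusps in $C_\infty(\fP,\fn)$ are explicitly $\sgn(\Nrm(a))\bpsi^{-1}(a\fb_{\cA}^{-1})$, and multiplies it by $2^{-d}\prod_{\fp\mid\fP/\fP_\ba}y_\fp\cdot\theta^\sharp$ (the unit $y_\fp$ from Lemma \ref{lem:unit}). This subtracted term \emph{exactly cancels} the $\theta^\sharp$-contribution to the constant terms computed in Proposition \ref{prop:bW1_const_terms}, leaving a remainder that is $\equiv 0 \pmod{p^m}$ (Proposition \ref{prop:f_k}); one then chooses $m$ with $p^m\in(\#G\cdot\theta^\sharp)$ so that the correction absorbed by Silliman's theorem lands back inside $(\theta^\sharp)$. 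Your cancellation (via the $\wtil{x_k}$-interpolation and $\nu_\fP^\chi=\prod\#I_{\fp,H}$) correctly handles the term built on $L(\psi^{-1},0)$, but without the $\theta^\sharp G_k(\bpsi)^\chi$ subtraction nothing cancels the $\psi(\theta^\sharp)$-term, and Silliman's theorem has no mechanism to deal with it.

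Finally, your claim that ``Hida's $e_p^{\ord}$ fixes $G_k(\bpsi)$'' is also inexact: Proposition \ref{prop:Hecke_W}(3) shows that for $k>1$ and $\fp\nmid\fn$, $e_\fp^{\ord}$ sends $W_k(\cdot,\btr_\fR)$ to a scalar multiple of $W_k(\cdot,\btr_{\fp\fR})$, which is why the second term in \eqref{eq:Fk} carries the full stabilization $\btr_p$ rather than $\btr$. The paper handles this by first working at level $\fn$ with $e_\fP^{\ord}$ (Theorem \ref{thm:cuspform2}) and then applying $e_{\fP'}^{\ord}$ with the compensating factors $\prod_{\fp\mid\fP'}(1-N(\fp)^{k-1}\bpsi(\fp)^{-1})$, using that $N(\fp)^{k-1}\equiv 0\pmod{\theta^\sharp}$ for $N$ large.
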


Here, the congruence between modular forms means the congruences between all the Fourier coefficients $\coef{\fa}{-}$ and $\coefz{\lambda}{-}$.
The proof of this theorem occupies the rest of this section.


We make essential use of the following theorems proved by Silliman \cite{Sil20}.

\begin{thm}[{Silliman \cite[Theorem 3 = Theorem 10.7]{Sil20}, cf. \cite[Theorem 8.8]{DK20}}]\label{thm:Vk}
Let $H/F$ be a finite abelian CM-extension.
Let $m > 0$ be a positive integer.
Then, for large enough $N$, for each odd integer $k > 1$ with $k \equiv 1 \pmod{(p - 1)p^N}$, there exists a modular form $V_{k - 1} \in M_{k - 1}((1), \Z_p, 1)$ (the last $1$ denotes the trivial character) such that
\[
V_{k - 1} \equiv 1 \pmod{p^{m}},
\]
which means that 
\[
\coefz{\lambda}{V_{k - 1}} \equiv 1 \pmod{p^{m}},
\qquad \coef{\fa}{V_{k - 1}} \equiv 0 \pmod{p^{m}}
\]
hold for any $\lambda$ and $\fa$.
\end{thm}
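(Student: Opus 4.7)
The plan is to adapt Serre's classical construction over $\Q$ (the $q$-expansion congruence $E_{p-1} \equiv 1 \pmod p$ followed by $p$-power raising) to the Hilbert modular setting. The only input I would use is the existence of an initial weight--$w$ form with a mod-$p$ congruence, together with elementary ``Freshman's Dream'' style reductions in the $q$-expansion ring $\Z_p[[q^\fa]]_{\fa}$.

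First, I would produce a Hilbert modular form $A$ over $F$ of level $(1)$, trivial character, and parallel weight $w=(p-1)p^{a}$ for some fixed $a\ge 0$ (depending on $F$ and $p$), with $A\in M_w((1),\Z_p,1)$ and $q$-expansions satisfying $A\equiv 1\pmod p$ at every cusp. The natural candidate is the normalized Eisenstein series of parallel weight $p-1$ with trivial character: its constant terms are (essentially) $2^{-d}\zeta_F(2-p)$, and by the Deligne--Ribet / Cassou-Nogu\`es integrality results one has a von Staudt--Clausen--type denominator that allows renormalization so that the constant term becomes $1$; after renormalizing, the non-constant Fourier coefficients (which are integer multiples of divisor sums $\sigma_{p-2}(\fa)$) acquire a factor with positive $p$-adic valuation. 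If the Eisenstein construction in weight $p-1$ does not yield integrality in a single step for a given $F$, I would replace it by a suitable power or slightly higher weight form ($w=(p-1)p^a$), or alternatively invoke the geometric construction lifting the Hasse invariant on the Hilbert modular variety (a section of $\omega^{\otimes(p-1)}$ modulo $p$) to a characteristic-zero section, which at every cusp specializes to $1$ by Tate uniformization.

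Second, I would iterate the Freshman's Dream in the $q$-expansion ring: from $A\equiv 1\pmod p$ one obtains $A^p\equiv 1\pmod{p^2}$ by expanding $(1+pB)^p$ coefficient-wise, and inductively $A^{p^{j}}\equiv 1\pmod{p^{j+1}}$ for every $j\ge 0$. This produces, for every $j$, an element of $M_{w\cdot p^j}((1),\Z_p,1)$ congruent to $1$ modulo $p^{j+1}$.

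Third, for the given $m$, take $N:=m-1+a$. Then for any odd $k>1$ with $k\equiv 1\pmod{(p-1)p^N}$, the integer $(k-1)/w$ is divisible by $p^{m-1}$, so setting
\[
V_{k-1} \;:=\; A^{(k-1)/w}\;\in\;M_{k-1}((1),\Z_p,1)
\]
yields $V_{k-1}\equiv 1\pmod{p^m}$ by the preceding step. The main obstacle is Step 1: over $\Q$ the needed congruence for $E_{p-1}$ follows instantly from von Staudt--Clausen, but over a general totally real $F$ the analogous statement requires a genuinely delicate analysis of $p$-adic denominators of $\zeta_F(1-k)$ (which is exactly the content of Deligne--Ribet / Cassou-Nogu\`es) and possibly a geometric input to guarantee that the constant terms at \emph{every} cusp can be simultaneously normalized; the remaining Steps 2 and 3 are then formal.
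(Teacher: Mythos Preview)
The paper does not prove this statement; it is quoted verbatim from Silliman \cite[Theorem 10.7]{Sil20} (cf.\ \cite[Theorem 8.8]{DK20}) and used as a black box in the construction of the cuspform $F_k(\bpsi)$. So there is no argument in the paper to compare your proposal against.

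That said, your sketch is the standard route to such a result and is essentially correct. The construction of a level-one form $A$ of weight $(p-1)p^a$ with $A\equiv 1\pmod p$ at every cusp, followed by $p$-th-power iteration, is exactly the mechanism underlying this circle of results (Deligne--Ribet, Hida, and ultimately Silliman's refinement). Your identification of Step~1 as the only nontrivial input is accurate: the geometric lift of the Hasse invariant (which equals $1$ at each cusp by the theory of Tate objects) is the cleanest way to obtain $A$, while the Eisenstein approach requires the denominator control you mention. One small point worth making explicit: in the Hilbert modular setting a form of level $(1)$ has a separate $q$-expansion at each component indexed by $\Cl^+(F)$, and multiplication of forms is multiplication of these expansions componentwise; the Freshman's Dream therefore has to be applied at each component, which is harmless precisely because your hypothesis on $A$ is already ``$\equiv 1$ at every cusp''.
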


\begin{thm}[{Silliman \cite[Theorem 10.9]{Sil20}, cf. \cite[Theorem 8.9]{DK20}}]\label{thm:Gk}
Let $H/F$ be a finite abelian CM-extension.
Let $\fn$ be an ideal divisible by $\condu{H/F}$, and put $\fP = \gcd(\fn, p^{\infty})$.
For large enough odd integer $k > 1$, there exists a modular form $G_k(\bpsi) \in M_k(\fn, \Z_p[\Gal(H/F)]^-, \bpsi)$ such that
\[
c_{\cA}(0, G_k(\bpsi)) 
= \delta_{[\cA] \in C_{\infty}(\fn)} \sgn(\Nrm(a)) \bpsi^{-1}(a \fb_{\cA}^{-1})
\]
for all $\cA = (A, \lambda)$ with $A = \begin{pmatrix} a & \ast \\ \ast & \ast \end{pmatrix}$ such that $[\cA] \in C_{\infty}(\fP, \fn)$.
\end{thm}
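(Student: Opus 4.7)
The plan is to follow the strategy of \cite[Theorem 8.9]{DK20}, which itself is an application of Silliman's general machinery \cite[Theorem 10.9]{Sil20}. I would work character-by-character and then glue the pieces into a group-ring valued form via the family interpretation recalled at the end of \S \ref{ss:HMF}. So the first step is to reduce: it suffices to construct, for every odd character $\psi$ of $\Gal(H/F)$, a form $G_k(\psi) \in M_k(\fn, \OO_\psi, \psi)$ with
\[
c_{\cA}(0, G_k(\psi)) = \delta_{[\cA] \in C_{\infty}(\fn)} \sgn(\Nrm(a)) \psi^{-1}(a \fb_{\cA}^{-1})
\]
at all $\cA$ with $[\cA] \in C_\infty(\fP, \fn)$, and then verify that the family $(G_k(\psi))_\psi$ satisfies the compatibility and integrality conditions required by \cite[Corollary 7.28]{Sil20} so that it lies in $M_k(\fn, \Z_p[\Gal(H/F)]^-, \bpsi)$.

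For the character-wise construction, the natural input is an Eisenstein series of the shape $E_k(\psi_{\ff_\psi \fP_\psi}, 1)$, whose constant term at a cusp $[\cA] \in C_\infty(\fP, \fn)$ is, by \cite[Proposition 8.6]{DK20} (cf. Proposition \ref{prop:const_terms}), a product
\[
\sgn(\Nrm(a)) \, \psi^{-1}(a \fb_{\cA}^{-1}) \cdot 2^{-d} L(\psi^{-1}, 1 - k) \cdot (\text{Euler factors})
\]
when $[\cA] \in C_\infty(\fn)$, and supported on this locus among the cusps of $C_\infty(\fP, \fn)$. For large enough $k \equiv 1 \pmod{(p-1)p^N}$, the scalar $2^{-d} L(\psi^{-1}, 1 - k) \cdot (\text{Euler factors})$ is a $p$-adic unit in $\OO_\psi$ (by Deligne--Ribet/Cassou-Nogu\`es together with the standard Kummer-type congruences guaranteeing non-vanishing and $p$-integral invertibility when the weight is sufficiently deep in the $1$-mod-$(p-1)p^N$ direction), so dividing the Eisenstein series by it yields the desired $G_k(\psi)$. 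Level-raising along $\fL_\psi$-factors (as in Proposition \ref{prop:W_spec}) is used to make the constant terms supported at the correct sublocus $C_\infty(\fn)$ of $C_\infty(\fP, \fn)$.

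Once the characterwise forms are in hand, the second step is to glue them. I would apply \cite[Corollary 7.28]{Sil20} (the family interpretation of $M_k(\fn, \Z_p[\Gal(H/F)]^-, \bpsi)$) to the collection $(G_k(\psi))_\psi$. The $p$-adic analyticity of $\psi \mapsto L(\psi^{-1}, 1 - k)^{-1}$ on the space of characters, supplied by Deligne--Ribet's $p$-adic $L$-functions and controlled by Silliman's Theorem 10.9, ensures that the normalizing factors vary analytically in $\psi$, which is exactly what Silliman's criterion requires in order to upgrade the family to a $\Z_p[\Gal(H/F)]^-$-valued form. The constant terms of this form at each $\cA$ with $[\cA] \in C_\infty(\fP, \fn)$ are then the prescribed ones by construction.

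The main obstacle will be the integrality: the division by $L(\psi^{-1}, 1 - k)$ is only clearly legitimate over $\Q_p$, and integrality has to be recovered by imposing $k \equiv 1 \pmod{(p-1)p^N}$ with $N$ large, combined with Silliman's refined congruence results. In our abelian CM setting all of these ingredients are available in the exact form proven by Silliman, so the proof reduces to checking that our data $(H/F, \fn, \bpsi, \fP)$ satisfies the hypotheses of \cite[Theorem 10.9]{Sil20} and invoking that theorem; no new analytic or $p$-adic input beyond \cite{Sil20} and \cite{DK20} is needed.
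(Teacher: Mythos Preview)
The paper does not prove this theorem at all; it is stated with attribution to Silliman \cite[Theorem 10.9]{Sil20} and used as a black box. Your final sentence effectively does the same thing (invoke Silliman), so as a justification for \emph{using} the result it is fine.

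However, the constructive sketch you outline in the body --- take an Eisenstein series, divide by the scalar $2^{-d}L(\psi^{-1},1-k)\cdot(\text{Euler factors})$, then glue over $\psi$ --- has a genuine gap at the gluing step. For the family $(G_k(\psi))_\psi$ to assemble into an element of $M_k(\fn,\Z_p[G]^-,\bpsi)$ via the criterion at the end of \S\ref{ss:HMF}, every Fourier coefficient must be the specialization at $\psi$ of a \emph{single} element of $\Z_p[G]^-$. Your normalizing factors $L(\psi^{-1},1-k)^{-1}$ are the specializations of $\Theta(1-k)^{-1}\in\Q_p[G]^-$, and for this to land in $\Z_p[G]^-$ one would need $\Theta(1-k)$ to be a unit of $\Z_p[G]^-$. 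That fails in general: the non-unit nature of the Stickelberger element is precisely what drives the congruences exploited in \S\ref{s:cuspform}--\S\ref{s:Gal_rep}. (Even the weaker claim that each $L(\psi^{-1},1-k)$ is a unit of $\OO_\psi$ is not guaranteed by taking $k\equiv 1\pmod{(p-1)p^N}$; Kummer congruences push these values toward $L(\psi^{-1},0)$, which can be divisible by $p$.) ``$p$-adic analyticity in $\psi$'' is not the right condition here; integrality in the group ring is strictly stronger. Silliman's actual construction does not proceed by dividing an $L$-value out of a single Eisenstein series, and producing a group-ring-valued form with prescribed \emph{unit} constant terms at $C_\infty(\fn)$ is exactly the nontrivial content of \cite[Theorem 10.9]{Sil20}.
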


Now we return to Setting \ref{setting}.

Firstly, we show the following lemma
on the fraction that appears in the formula of Proposition \ref{prop:bW1_const_terms}.

\begin{lem}\label{lem:unit}
For 
$\fp \mid \fP / \fP_{\ba}$,
we put
\[ y_{\fp} =  \left(1-\cfrac{\nu_{I_\fp}}{\# I_{\fp}}\varphi_{\fp} \right)^{\chi} 
\left( \left(1-\cfrac{\nu_{I_\fp}}{\# I_{\fp}}\varphi_{\fp} + \nu_{I_\fp} \right)^\chi \right)^{-1}
\in \Frac(\Z_p[G]^{\chi}).
\]
Then $y_{\fp}$ is a unit of $\Z_p[G]^\chi$. 
\end{lem}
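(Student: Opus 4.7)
The plan is to distinguish two cases according to whether $I_\fp$ is a $p$-group, i.e., whether $\fp \mid \fP_p$. Throughout, we identify $\Z_p[G]^\chi \simeq \OO_\chi[G_p]$ via the decomposition $G = G' \times G_p$, where $\OO_\chi = \Z_p[\chi(G')]$ is unramified over $\Z_p$. In the easy case $\fp \nmid \fP_p$, the subgroup $I_\fp' := I_\fp \cap G'$ is non-trivial, and the faithfulness of $\chi$ (from Setting \ref{setting}) forces $\chi|_{I_\fp'} \neq 1$, whence $\nu_{I_\fp'}^\chi = 0$. Factoring $\nu_{I_\fp} = \nu_{I_\fp'} \cdot \nu_{I_\fp \cap G_p}$ gives $(\nu_{I_\fp}/\# I_\fp)^\chi = 0$ in $\Z_p[G]^\chi$, so both numerator and denominator defining $y_\fp$ reduce to $1$, and $y_\fp = 1$ is trivially a unit.

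The remaining case is $\fp \mid \fP_p / \fP_\ba$, in which $I_\fp$ is a non-trivial $p$-group while $G_\fp$ is not a $p$-group, so $G_\fp'$ is non-trivial. Picking any lift $\wtil{\varphi_\fp} \in G$ of $\varphi_\fp$ and writing $\wtil{\varphi_\fp} = \sigma \tau$ with $\sigma \in G'$ and $\tau \in G_p$, one sees that $\sigma$ is independent of the chosen lift (because $I_\fp \subset G_p$) and generates $G_\fp'$. Faithfulness of $\chi$ then ensures that $\zeta := \chi(\sigma)$ is a non-trivial root of unity of order prime to $p$, and consequently $1 - \zeta \in \OO_\chi^\times$.

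The central step is to establish the identity
\[
y_\fp \;=\; 1 - \nu_{I_\fp} \beta^{-1} \quad \text{in } \Z_p[G]^\chi,
\]
where $\beta := 1 - \zeta \tau + \# I_\fp \in \OO_\chi[G_p]$. The element $\beta$ is a unit in $\OO_\chi[G_p]$: modulo the maximal ideal $\fm = (p, \Delta G_p)$ it reduces to $1 - \zeta$, a unit in the residue field $\F_q$ of $\OO_\chi$. For the identity itself, a short computation using $(\nu_{I_\fp}/\# I_\fp) \cdot \nu_{I_\fp} = \nu_{I_\fp}$ yields the crucial cancellation $\nu_{I_\fp} \cdot h_\fp^{\sharp, \chi} = \nu_{I_\fp} \beta$; multiplying $1 - \nu_{I_\fp} \beta^{-1}$ by $h_\fp^{\sharp, \chi}$ then recovers the numerator $(1 - \tfrac{\nu_{I_\fp}}{\# I_\fp} \varphi_\fp)^\chi$. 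It follows that $y_\fp \in \OO_\chi[G_p]$; and since $\nu_{I_\fp} \equiv \# I_\fp \equiv 0 \pmod \fm$ (as $\# I_\fp$ is a positive power of $p$), we get $y_\fp \equiv 1 \pmod \fm$, so $y_\fp$ is a unit.

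The main obstacle is the integrality of $y_\fp$ in the second case: individually, the numerator and denominator contain the fractional idempotent $\nu_{I_\fp}/\# I_\fp$ with $p$-power denominator, so neither lies in $\OO_\chi[G_p]$. It is only the simultaneous cancellation of these denominators, enabled by the unit $1 - \zeta$ coming from the hypothesis $\fp \nmid \fP_\ba$, that allows the quotient to land back in $\OO_\chi[G_p]$.
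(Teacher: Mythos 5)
Your proof is correct and follows essentially the same approach as the paper's: both split according to whether $I_\fp$ is a $p$-group, use faithfulness of $\chi$ to kill $\nu_{I_\fp}^\chi$ in the first case, and in the second case exploit the idempotence of $\nu_{I_\fp}/\# I_\fp$ to cancel the $p$-power denominators and reduce to showing $1 - \zeta\tau + \# I_\fp$ (or equivalently $(1 - \varphi_\fp + \nu_{I_\fp})^\chi$, which agrees with your $\beta$ after multiplication by $\nu_{I_\fp}$) is a unit, which follows since $G_\fp$ is not a $p$-group so $\zeta \neq 1$. Your write-up makes the unit $\beta$ and the integrality of $y_\fp$ explicit, but the underlying computation and the role of the hypothesis $\fp \nmid \fP_\ba$ match the paper exactly.
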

\begin{proof}
Firstly, if $I_{\fp}$ is not a $p$-group, then $\nu_{I_{\fp}}^{\chi} = 0$ and so $y_{\fp} = 1$ since $\chi$ is faithful on $G'$.
Therefore, we may assume that $I_{\fp}$ is a $p$-group.
By the assumption $\fp \mid \fP/\fP_{\ba}$, this implies that $p \mid \# I_{\fp}$ and that $G_{\fp}/I_{\fp}$ is not a $p$-group.

We can easily compute
\[
y_{\fp} - 1
= - \frac{\nu_{I_{\fp}}^{\chi}}{\parenth{1 - \frac{\nu_{I_{\fp}}}{\# I_{\fp}}\varphi_{\fp} + \nu_{I_{\fp}}}^{\chi}}
\]
Moreover, since the element $\frac{\nu_{I_{\fp}}}{\# I_{\fp}}$ is an idempotent, we obtain
\[
y_{\fp} - 1
= - \frac{\nu_{I_{\fp}}^{\chi}}{\parenth{1 - \varphi_{\fp} + \nu_{I_{\fp}}}^{\chi}}.
\]
By the observations that $p \mid \# I_{\fp}$ and that $G_{\fp}/I_{\fp}$ is not a $p$-group,
we know that $(1-\varphi_{\fp} + \nu_{I_{\fp}})^\chi$ is a unit of $\Z_p[G]^\chi$
and that $\nu_{I_{\fp}}^{\chi}$ is in the maximal ideal of $\Z_p[G]^{\chi}$.
Therefore, $y_{\fp} - 1$ is in the maximal ideal of $\Z_p[G]^{\chi}$, so $y_{\fp}$ is a unit of $\Z_p[G]^{\chi}$.
\end{proof} 

The following proposition corresponds to \cite[Proposition 8.11]{DK20}.

\begin{prop}\label{prop:f_k}
Let $m > 0$ be a positive integer.
For large enough $N$, for each odd integer $k > 1$ with $k \equiv 1 \pmod{ (p - 1)p^N}$, we take $V_{k - 1}$ and $G_k(\bpsi)$ as in Theorems \ref{thm:Vk} and \ref{thm:Gk}, respectively.
(For $V_{k - 1}$, we actually have to enlarge $m$; see the proof below.)
Let us define $f_k(\bpsi) \in M_k(\fn, \Q_p[G]^{\chi}, \bpsi)$ by
\[
f_k(\bpsi) 
= \ol{W}_1^{H/F}(\bpsi_{\fn}, \btr)^{\chi} V_{k - 1} 
	- \delta_{\fP_{\ba} = \fP} \nu_{\fP} \parenth{x_k^{-1} 
	W_k^{H^{\fP}/F}(\bpsi_{\fn/\fP}, \btr)^{\chi}} 
	- 2^{-d} \prod_{\fp \mid \fP / \fP_{\ba}} y_{\fp} \cdot \theta^{\sharp} G_k(\bpsi)^{\chi}.
\]
Then, for a character $\psi$ of $G$ with $\psi|_{G'} = \chi$, the specialization $f_k(\psi)$ of $f_k(\bpsi)$ satisfies
\[
c_{\cA}(0, f_k(\psi)) \equiv 0 \pmod{ p^{m}}
\]
for any $\cA$ such that $[\cA] \in C_{\infty}(\fP, \fn)$.
\end{prop}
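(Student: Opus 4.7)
The plan is to compute $c_\cA(0, f_k(\psi))$ directly for each character $\psi$ of $G$ with $\psi|_{G'} = \chi$ and each cusp $[\cA] \in C_\infty(\fP, \fn)$, and to show that the three summands in the definition of $f_k(\bpsi)$ pair off into two cancellations modulo $p^m$. Since $V_{k-1}$ has level one, the multiplicativity of constant terms in $q$-expansions at $[\cA]$ gives $c_\cA(0, \ol{W}_1^{H/F}(\psi_\fn, 1)\, V_{k-1}) = c_\cA(0, \ol{W}_1^{H/F}(\psi_\fn, 1))\cdot c_\lambda(0, V_{k-1})$, and by Theorem~\ref{thm:Vk} the second factor is $\equiv 1 \pmod{p^{m'}}$ with $m'$ arbitrarily large after enlarging $N$, so this contribution is $\equiv c_\cA(0, \ol{W}_1^{H/F}(\psi_\fn, 1)) \pmod{p^m}$. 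Proposition~\ref{prop:bW1_const_terms} then splits it into a piece $a_{\cA,\psi}$ (supported on $\fP_\ba = \fP$, $(\ff_\psi, p) = 1$, $[\cA] \in C_0(\ff_\psi, \fn)$, carrying $L(\psi^{-1}, 0)$) and a piece $b_{\cA,\psi}$ (supported on $[\cA] \in C_\infty(\fn)$, carrying $\psi(\theta^\sharp)$).

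The pairing of $b_{\cA,\psi}$ with the third summand is exact: the constant term at $[\cA]$ of $-2^{-d}\prod_{\fp \mid \fP/\fP_\ba} y_\fp \cdot \theta^\sharp G_k(\bpsi)^\chi$ is read off from Theorem~\ref{thm:Gk}, and using the identity $y_\fp = (1 - \tfrac{\nu_{I_\fp}}{\#I_\fp}\varphi_\fp)/h_\fp^\sharp$ that motivated Lemma~\ref{lem:unit}, this contribution equals $-b_{\cA,\psi}$ with no error term. For the pairing of $a_{\cA,\psi}$ with the second summand $-\delta_{\fP_\ba = \fP}\nu_\fP(x_k^{-1} W_k^{H^\fP/F}(\bpsi_{\fn/\fP}, \btr)^\chi)$, the $\psi$-specialization is nonzero only when $(\ff_\psi, p) = 1$ (so that $\psi(\nu_\fP) = \prod_{\fp \mid \fP} \#I_\fp$), in which case $\psi$ factors through $\Gal(H^\fP/F)$ and $\psi(x_k) = L(\psi^{-1}, 1-k)/L(\psi^{-1}, 0)$. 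Applying Proposition~\ref{prop:const_terms} at level $\fn/\fP$ to $W_k(\psi_{\fn/\fP}, 1)$, the $L(\psi^{-1}, 1-k)$ in the constant term cancels against the denominator in $\psi(x_k)^{-1}$, and the resulting expression differs from $-a_{\cA,\psi}$ only by the ratio $N(\ff_\psi)^{k-1}\prod_{\fq \mid \fL_\psi,\,[\cA] \in C_\infty(\fq, \fn)} (1-N(\fq))/(1-N(\fq)^k)$.

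Since $(\ff_\psi, p) = 1$ and every $\fq \in T$ is coprime to $p$, the congruence $k \equiv 1 \pmod{(p-1)p^N}$ together with a lifting-the-exponent estimate yields $N(\ff_\psi)^{k-1} \equiv 1 \pmod{p^{N+1}}$ and $(1-N(\fq))/(1-N(\fq)^k) \equiv 1 \pmod{p^N}$; choosing $N \geq m$ (and large enough to absorb the bounded denominators from the first step) makes the second summand agree with $-a_{\cA,\psi}$ modulo $p^m$, completing the second cancellation. The main obstacle I expect is the $p$-integrality bookkeeping: because $x_k^{-1}$ lives a priori only in $\Frac(\Z_p[\Gal(H^\fP/F)]^\chi)$ and $\theta^\sharp$ is generally a zero-divisor rather than a unit, one must verify at each stage that the apparent denominators introduced (the $L(\psi^{-1}, 1-k)$ in $\psi(x_k)^{-1}$, the Gauss sum ratio $\tau(\psi)/N(\ff_\psi)^k$, the factors $1 - N(\fq)^k$) are $p$-adic units or cancel against matching numerators, so that the final error actually lies in $p^m \Z_p$ rather than in a larger localization. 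A minor auxiliary point is the identification of cusp data between levels $\fn$ and $\fn/\fP$, but since $\fc_\cA$ and $\fb_\cA$ depend only on the class $[\cA]$ and not on the level, the $C_0$ and $C_\infty$ indicators remain consistent between the two applications of Proposition~\ref{prop:const_terms}.
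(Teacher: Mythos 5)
Your argument is correct and follows the paper's proof in all essentials: after invoking Theorem \ref{thm:Vk} to replace $\ol{W}_1 V_{k-1}$ by $\ol{W}_1$ modulo a high power of $p$, you apply Propositions \ref{prop:const_terms} and \ref{prop:bW1_const_terms}, note the exact cancellation of the $\psi(\theta^\sharp)$-pieces via the identity defining $y_\fp$ in Lemma \ref{lem:unit}, and then use the congruence $k\equiv 1\pmod{(p-1)p^N}$ to kill the residual factor $1-N(\ff_\psi)^{1-k}\prod(1-N(\fq)^k)/(1-N(\fq))$. The only presentational difference is that you pair the two pieces of $\coefz{\cA}{\ol{W}_1}$ with the second and third summands separately, whereas the paper first combines all three terms and then simplifies, but this is the same computation.
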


\begin{proof}
First, the properties of $V_{k-1}$ and $G_k(\bpsi)$ show
\begin{align}
& \coefz{\cA}{f_k(\psi)}\\
& \equiv \coefz{\cA}{\ol{W}_1^{H/F}(\psi_{\fn}, 1)}
	- \delta_{\fP_{\ba} = \fP} \delta_{(\ff_{\psi}, p) = 1} \parenth{\prod_{\fp \mid \fP} \# I_{\fp}}
	\frac{L(\psi^{-1}, 0)}{L(\psi^{-1}, 1 - k)} \coefz{\cA}{W_k(\psi_{\fn/\fP}, 1)}\\
&  \qquad - 2^{-d} \prod_{\fp \mid \fP / \fP_{\ba}} \psi(y_{\fp}) \cdot \psi(\theta^{\sharp}) \delta_{[\cA] \in C_{\infty}(\fn)} \sgn(\Nrm(a)) \psi^{-1}(a \fb_{\cA}^{-1})
\end{align}
modulo $(p^{m})$.
Here, since the constant terms of $\ol{W}_1^{H/F}(\psi_{\fn}, 1)$ are not integral in general, we have to choose $V_{k - 1}$ so that a bit higher congruence holds to cancel the denominators.
Then we can apply Propositions \ref{prop:const_terms} and \ref{prop:bW1_const_terms}.
An important point is that the two terms involving $\psi(\theta^{\sharp})$ (the final term of the formula of Proposition \ref{prop:bW1_const_terms} and the final term of the above displayed formula) cancel each other.
Then the remained terms give
\begin{align}
& \coefz{\cA}{f_k(\psi)}\\
& \quad \equiv \delta_{(\ff_{\psi}, p) = 1} \delta_{\fP_{\ba} = \fP} 
	\parenth{\prod_{\fp \mid \fP} \# I_{\fp}} \delta_{[\cA] \in C_0(\ff_{\psi}, \fn)}
	\frac{\tau(\psi)}{N(\ff_{\psi})} \sgn(\Nrm(-c)) \psi(\fc_{\cA}) 2^{-d} L(\psi^{-1}, 0) \\
	& \qquad \qquad \times \prod_{\fq \mid \fL_{\psi}, [\cA] \in C_0(\fq, \fn)} (1 - \psi(\fq)) 
	\prod_{\fq \mid \fL_{\psi}, [\cA] \in C_{\infty}(\fq, \fn)} (1 - N(\fq))\\
	& \qquad \qquad \times \parenth{1 - \frac{1}{N(\ff_{\psi})^{k - 1}} 
	\prod_{\fq \mid \fL_{\psi}, [\cA] \in C_{\infty}(\fq, \fn)} \frac{1 - N(\fq)^k}{1 - N(\fq)}}.
\end{align}
By $k \equiv 1 \pmod{ (p - 1)p^N}$, the last term in the parentheses is $p$-adically close to $0$.
Then the proposition follows.
\end{proof}

Using this proposition, and applying another key theorem of Silliman \cite{Sil20}, let us construct a cuspform.
We define $e_{\fP}^{\ord} = \prod_{\fp \mid \fP} e_{\fp}^{\ord}$ and $S_k(\fn, \Z_p[G]^{\chi}, \bpsi)^{\fP \ordinary} = e_{\fP}^{\ord}S_k(\fn, \Z_p[G]^{\chi}, \bpsi)$.

\begin{thm}\label{thm:cuspform2}
For large enough $N$, for each odd integer $k > 1$ with $k \equiv 1 \pmod{(p - 1)p^N}$,
there exists a cuspform 
$\wtil{F_k}(\bpsi) \in S_k(\fn, \Z_p[G]^{\chi}, \bpsi)^{\fP \ordinary}$ such that
\[
\wtil{F_k}(\bpsi) 
	\equiv \wtil{x_k} \ol{W}_1^{H/F}(\bpsi_{\fn}, \btr)^{\chi}
	- \delta_{\fP_{\ba} = \fP} \nu_{\fP}
	W_k^{H^{\fP}/F}(\bpsi_{\fn/\fP}, \btr_{\fP})^{\chi} \pmod{ (\wtil{x_k} \theta^{\sharp})}.
\]
\end{thm}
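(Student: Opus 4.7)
The plan is to apply Hida's $\fP$-ordinary projector $e_\fP^{\ord}$ to the form $\wtil{x_k} f_k(\bpsi)$ constructed in Proposition~\ref{prop:f_k}, and then invoke a cuspidalization result (a variant of Silliman's framework already used in \cite{DK20}) to replace the resulting near-cuspform by an honest cuspform modulo the ideal $I:=(\wtil{x_k}\theta^\sharp)$.

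First, I would check that $\wtil{x_k}f_k(\bpsi)$ lies in the integral space $M_k(\fn,\Z_p[G]^\chi,\bpsi)$. The non-constant Fourier coefficients of every summand are already integral; the apparent denominator $x_k^{-1}$ in the middle term is cleared via the identity $\wtil{x_k}\cdot\nu_\fP(x_k^{-1}\alpha)=\nu_\fP(\alpha)$ valid for any $\alpha\in\Z_p[\Gal(H^\fP/F)]^\chi$ (because $\wtil{x_k}$ lifts $x_k$ and $\nu_\fP$ factors through the quotient by $\Gal(H/H^\fP)$); and the constant coefficients at cusps in $C_\infty(\fP,\fn)$ lie in $\wtil{x_k}p^m\cdot\Z_p[G]^\chi$ thanks to Proposition~\ref{prop:f_k}, with the remaining cusp coefficients handled directly from Propositions~\ref{prop:bW1_const_terms} and~\ref{prop:const_terms}. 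Lemma~\ref{lem:unit} guarantees that the unit factor $\prod_{\fp\mid\fP/\fP_\ba}y_\fp$ is invertible in $\Z_p[G]^\chi$, so the third summand of $\wtil{x_k}f_k(\bpsi)$ is already a multiple of $\wtil{x_k}\theta^\sharp$.

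Second, I would apply $e_\fP^{\ord}$ and compute each term modulo $I$. Since $V_{k-1}\equiv 1\pmod{p^m}$ and $\ol{W}_1^{H/F}(\bpsi_\fn,\btr)^\chi$ is $\fP$-ordinary by Proposition~\ref{prop:Hecke_olW}(3), one has $e_\fP^{\ord}(\wtil{x_k}\ol{W}_1^{H/F}(\bpsi_\fn,\btr)^\chi V_{k-1})\equiv \wtil{x_k}\ol{W}_1^{H/F}(\bpsi_\fn,\btr)^\chi$ modulo $p^m\cdot M_k(\fn,\Z_p[G]^\chi,\bpsi)^{\fP\ordinary}$. Using Proposition~\ref{prop:Hecke_W}(3) prime by prime, and the fact that $\nu_\fP$ commutes with Hecke operators, one computes
\[
e_\fP^{\ord}\!\left(\nu_\fP W_k^{H^\fP/F}(\bpsi_{\fn/\fP},\btr)^\chi\right)
= \nu_\fP\!\left(\prod_{\fp\mid\fP}\tfrac{1}{1-N(\fp)^{k-1}\bpsi(\fp)^{-1}}\right)W_k^{H^\fP/F}(\bpsi_{\fn/\fP},\btr_\fP)^\chi,
\]
and since $N(\fp)$ is a nontrivial power of $p$, the rational prefactor is congruent to $1$ modulo $p^{k-1}$. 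By taking $N$ sufficiently large so that both $p^m$ and $p^{k-1}$ lie in the ideal $I$ (possible because $\Z_p[G]^\chi/I$ is a finite $\Z_p$-module, $\wtil{x_k}\theta^\sharp$ being a non-zero-divisor), I conclude
\[
e_\fP^{\ord}(\wtil{x_k}f_k(\bpsi))
\equiv \wtil{x_k}\ol{W}_1^{H/F}(\bpsi_\fn,\btr)^\chi-\delta_{\fP_\ba=\fP}\,\nu_\fP W_k^{H^\fP/F}(\bpsi_{\fn/\fP},\btr_\fP)^\chi\pmod{I}.
\]

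Finally, I would invoke a cuspidalization result in Silliman's framework (in the spirit of the one used in \cite[\S 8]{DK20}): on the $\fP$-ordinary part of $M_k(\fn,\Z_p[G]^\chi,\bpsi)$, a form whose constant Fourier coefficient at \emph{every} cusp lies in $I$ can be modified by an element of $I\cdot M_k(\fn,\Z_p[G]^\chi,\bpsi)^{\fP\ordinary}$ to produce a cuspform in $S_k(\fn,\Z_p[G]^\chi,\bpsi)^{\fP\ordinary}$; applying this to the form above yields the desired $\wtil{F_k}(\bpsi)$. The main obstacle is precisely this cuspidalization step: Proposition~\ref{prop:f_k} only guarantees that constant terms at cusps in $C_\infty(\fP,\fn)$ lie in $I$, so one must also verify that the $\fP$-ordinary projection forces the constant terms at \emph{all other} cusps (those not in $C_\infty(\fP,\fn)$) into $I$. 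At the classical level this is a familiar property of Hida's ordinary projector, but making it quantitative over the group ring $\Z_p[G]^\chi$ with the precise ideal $(\wtil{x_k}\theta^\sharp)$ is the delicate point where Silliman's results enter essentially, rather than just classical Hida theory.
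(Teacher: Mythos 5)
Your proposal follows essentially the same strategy as the paper's (apply Silliman's $\fP$-ordinary cuspidalization to a small perturbation of $f_k(\bpsi)$, then compute the ordinary projection of each summand). However, there is a genuine gap in the way you handle the ideal $I = (\wtil{x_k}\theta^\sharp)$. You claim that by taking $N$ large you can ensure $p^m\in I$ and $p^{k-1}\in I$, justifying this by saying $\Z_p[G]^\chi/I$ is a finite $\Z_p$-module. This justification is incorrect: finiteness of $\Z_p[G]^\chi/I$ only guarantees that \emph{some} power of $p$ lies in $I$, and since $I$ depends on $k$ through $\wtil{x_k}$, the required power of $p$ could in principle vary with $k$ (and $m$ must be fixed \emph{before} $N$ and $k$ are chosen, creating a circularity). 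For the term arising from the rational prefactor $\prod_{\fp\mid\fP}(1-N(\fp)^{k-1}\bpsi(\fp)^{-1})^{-1}$, the error $\nu_\fP\bigl(\prod(\cdots)^{-1}-1\bigr)W_k^{H^\fP/F}(\cdots,\btr_\fP)^\chi$ carries no $\wtil{x_k}$ factor, so you genuinely need $p^{k-1}\in(\wtil{x_k}\theta^\sharp)$ and not merely $p^{k-1}\in(\theta^\sharp)$.

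The paper's proof avoids this issue entirely. It fixes $m$ with $p^m\in(\# G\cdot\theta^\sharp)$ — an ideal independent of $k$ — and works modulo $(p^m)$ until the very end, so that the $\wtil{x_k}$ factor is inserted only in places where it already multiplies everything in sight. For the rational prefactor, rather than arguing it is $p$-adically close to $1$, the paper cancels it exactly: when $\fP_\ba=\fP$ it multiplies the whole form by $\prod_{\fp\mid\fP}(1-N(\fp)^{k-1}\wtil{\bpsi(\fp)}^{-1})$ with $\wtil{\bpsi(\fp)}\in G$ a lift; this annihilates the denominator in the $W_k$ term via the identity $\nu_\fP(\alpha\beta)=\wtil{\alpha}\nu_\fP(\beta)$, and perturbs the term $\wtil{x_k}\ol{W}_1^{H/F}(\bpsi_\fn,\btr)^\chi$ only by a multiple of $\wtil{x_k}\theta^\sharp$ because $N(\fp)^{k-1}\in(\theta^\sharp)$. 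The point is that $(\theta^\sharp)$ is fixed, so "$N$ large enough" makes sense unambiguously; your version instead requires $\wtil{x_k}$ to be (close to) a unit, which is plausible since $x_k\to 1$ $p$-adically as $N\to\infty$ but is neither stated nor needed in the paper's approach. Your handling of the cuspidalization step is fine in spirit — what you describe is precisely the content of Silliman's Theorem 10.10 (the form $h$ is chosen so that $e_\fP^{\ord}(f_k-\tfrac{p^m}{\#G}h)$ is honestly cuspidal, with the ordinary projector killing the constant terms at cusps not in $C_\infty(\fP,\fn)$) — and the paper cites that result rather than reproving it.
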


\begin{proof}
Let us take an integer $m > 0$ such that $p^{m} \in (\# G \cdot \theta^{\sharp}) \Z_p[G]^{\chi}$.
Such an $m$ exists since $\theta^{\sharp}$ is a non-zero-divisor of $\Rc$.
We then construct $f_k(\bpsi)$ as in Proposition \ref{prop:f_k} associated to this $m$.
By applying Silliman's work \cite[Theorem 10.10]{Sil20} (cf.~\cite[Theorem 8.15]{DK20}) to the family $f_k(\bpsi)$, we find a modular form $h_k(\bpsi) \in M_k(\fn, \Z_p[G]^{\chi}, \bpsi)$ such that the modular form $e_{\fP}^{\ord} \parenth{f_k(\bpsi) - \frac{p^{m}}{\# G} h(\bpsi)}$ is cuspidal: 
\[
e_{\fP}^{\ord} \parenth{f_k(\bpsi) - \frac{p^{m}}{\# G} h(\bpsi)} \in S_k(\fn, \Q_p[G]^{\chi}, \bpsi)^{\fP \ordinary}.
\]
We have
\begin{align}
& f_k(\bpsi) - \frac{p^{m}}{\# G} h(\bpsi)\\
& = \ol{W}_1^{H/F}(\bpsi_{\fn}, \btr)^{\chi} V_{k - 1}
	- \delta_{\fP_{\ba} = \fP} \nu_{\fP} x_k^{-1} W_k^{H^{\fP}/F}(\bpsi_{\fn/\fP}, \btr)^{\chi}
	- 2^{-d} \prod_{\fp \mid \fP / \fP_{\ba}} y_{\fp} \cdot \theta^{\sharp} G_k(\bpsi)^{\chi}
	- \frac{p^{m}}{\# G} h(\bpsi)\\
& \equiv \ol{W}_1^{H/F}(\bpsi_{\fn}, \btr)^{\chi} 
	- \delta_{\fP_{\ba} = \fP} \nu_{\fP} x_k^{-1} W_k^{H^{\fP}/F}(\bpsi_{\fn/\fP}, \btr)^{\chi}
	- \theta^{\sharp} \parenth{ 2^{-d} \prod_{\fp \mid \fP / \fP_{\ba}} y_{\fp} \cdot G_k(\bpsi)^{\chi} + \frac{p^{m}}{\# G \cdot \theta^{\sharp}} h(\bpsi)}
\end{align}
modulo $(p^{m})$ (we again take $V_{k - 1}$ satisfying higher congruences).
By the choice of $m$ and Lemma \ref{lem:unit}, 
the last term multiplied by $\theta^{\sharp}$ is integral.
Therefore, we have
\[
 \wtil{x_k} e_{\fP}^{\ord} \parenth{f_k(\bpsi) - \frac{p^{m}}{\# G} h(\bpsi)}
\equiv \wtil{x_k} e_{\fP}^{\ord}
	\parenth{\ol{W}_1^{H/F}(\bpsi_{\fn}, \btr)^{\chi} 
	- \delta_{\fP_{\ba} = \fP} \nu_{\fP} x_k^{-1} W_k^{H^{\fP}/F}(\bpsi_{\fn/\fP}, \btr)^{\chi}}\\
\]
modulo $(\wtil{x_k} \theta^{\sharp})$.
Using Propositions \ref{prop:Hecke_W}(3) and \ref{prop:Hecke_olW}(3), we then obtain
\begin{align}
& \wtil{x_k} e_{\fP}^{\ord} \parenth{f_k(\bpsi) - \frac{p^{m}}{\# G} h(\bpsi)}\\
&\qquad  \equiv \wtil{x_k} \ol{W}_1^{H/F}(\bpsi_{\fn}, \btr)^{\chi} 
	- \delta_{\fP_{\ba} = \fP} \nu_{\fP} \parenth{\prod_{\fp \mid \fP} \frac{1}{1 - N(\fp)^{k - 1} \bpsi(\fp)^{-1}}} W_k^{H^{\fP}/F}(\bpsi_{\fn/\fP}, \btr_{\fP})^{\chi}.
\end{align}

When $\fP_{\ba} \neq \fP$ (recall $\wtil{x_k} = 1$), we simply put 
\[
\wtil{F_k}(\bpsi) = \wtil{x_k} e_{\fP}^{\ord} \parenth{f_k(\bpsi) - \frac{p^{m}}{\# G} h(\bpsi)}.
\]
When $\fP_{\ba} = \fP$,
let us take a lift $\wtil{\bpsi(\fp)} \in G$ of $\bpsi(\fp) \in \Gal(H^{\fP}/F)$ for each $\fp \mid \fP$, and put
\[
\wtil{F_k}(\bpsi) 
= \parenth{\prod_{\fp \mid \fP} (1 - N(\fp)^{k - 1} \wtil{\bpsi(\fp)}^{-1})} 
	\wtil{x_k} e_{\fP}^{\ord} \parenth{f_k(\bpsi) - \frac{p^{m}}{\# G_p} h(\bpsi)}.
\]
Since $N(\fp)^{k-1} \in (\theta^{\sharp})$ as $N$ is large enough, this form satisfies the condition. 
\end{proof}

\begin{proof}[Proof of Theorem \ref{thm:cuspform}]
We have only to put 
\[
F_k(\bpsi) 
= \parenth{\prod_{\fp \mid \fP'} (1 - N(\fp)^{k - 1} \bpsi(\fp)^{-1}) }
	e_{\fP'}^{\ord} \wtil{F_k}(\bpsi)
\]
and apply Propositions \ref{prop:Hecke_W}(3) and \ref{prop:Hecke_olW}(3).
We again use $N(\fp)^{k-1} \in (\theta^{\sharp})$ as $N$ is large enough.
This completes the proof of Theorem \ref{thm:cuspform}.
\end{proof}

\section{The Galois representation and the proof of the main theorem}\label{s:Gal_rep}

In this section, using the Galois representation associated to the cuspform that we constructed in \S \ref{s:cuspform}, we prove Theorem \ref{thm:current_main_p}.
In \S \ref{ss:hom_Hecke}, we introduce Hecke algebras, homomorphisms associated to the cuspform, and an Eisenstein ideal.
In \S \ref{ss:Gal_rep}, we introduce the associated Galois representation and observe technical properties.
In \S \ref{ss:strategy}, we explain the strategy to prove Theorem \ref{thm:current_main_p}.
The idea is realized in \S \S \ref{ss:abc} -- \ref{ss:Fitt_B}.

We keep Setting \ref{setting}.
We fix $k > 1$ and $F_k(\bpsi)$ as in Theorem \ref{thm:cuspform}.
We write $\wtil{x} = \wtil{x_k}$.

\subsection{Hecke algebras and a homomorphism}\label{ss:hom_Hecke}

We introduce suitable Hecke algebras in a similar manner as in \cite[\S 8.5]{DK20}.

\begin{defn}\label{defn:Hecke_alg}
We define Hecke algebras 
\[
\bT' \subset \wtil{\bT} \subset \End_{\Z_p[G]^{\chi}} \parenth{S_k(\fn \fP', \Z_p[G]^{\chi}, \bpsi)^{p \ordinary}},
\]
which are $\Z_p[G]^{\chi}$-algebras, as follows.
The smaller algebra $\bT'$ is generated by the Hecke operators $T_{\fl}$ for the finite primes $\fl \nmid \fn \fP'$.
The larger algebra $\wtil{\bT}$ is generated by those operators and in addition by the Hecke operator $U_{\fp}$ for all $\fp \mid p$.
(We avoid using the notation $\bT$, which in \cite{DK20} denotes the intermediate ring generated by $\bT'$ and by $U_{\fp}$ for $\fp \mid \fP$.
In this paper we do not need to make use of this intermediate algebra.)
\end{defn}


Let us construct a $\Z_p[G]^{\chi}/(\wtil{x} \theta^{\sharp})$-algebra $W$ and a homomorphism $\varphi: \wtil{\bT} \to W$ as follows.

\begin{defn}\label{defn:psi_small}
Let us consider the Fourier expansion
\[
S_k(\fn \fP', \Z_p[G]^{\chi}, \bpsi)^{p \ordinary}
\hookrightarrow \prod_{\fa} \Z_p[G]^{\chi}
\]
that sends $f$ to $(\coef{\fa}{f})_{\fa}$.
We denote by $c$ the composite of this map and the projection to $\prod_{\fa} \Z_p[G]^{\chi}/(\wtil{x} \theta^{\sharp})$.
Then we have a natural homomorphism
\[
\wtil{\bT} \to \End_{\Z_p[G]^{\chi}/(\wtil{x} \theta^{\sharp})} \parenth{c \parenth{ (F_k(\bpsi))_{\wtil{\bT}}}},
\]
where $(F_k(\bpsi))_{\wtil{\bT}}$ denotes the $\wtil{\bT}$-submodule generated by $F_k(\bpsi)$.
We define a $\Z_p[G]^{\chi}/(\wtil{x} \theta^{\sharp})$-algebra $W$ as the image of this homomorphism, and define $\varphi: \wtil{\bT} \to W$ as the induced (surjective) homomorphism.
\end{defn}

Note that we do not claim that the structure morphism $\Z_p[G]^{\chi}/(\wtil{x} \theta^{\sharp}) \to W$ is injective.
This is different from \cite[Theorem 8.23]{DK20}.
The corresponding property will be implicitly observed in the proof of Theorem \ref{thm:Fitt_B}.

\begin{lem}\label{lem:eigen}
For each $\fl \nmid \fn \fP'$, we have $\varphi(T_{\fl}) = \bpsi(\fl) + N(\fl)^{k - 1}$ as elements of $W$.
\end{lem}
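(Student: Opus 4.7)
The strategy is to apply $T_{\fl}$ directly to the congruence of Theorem \ref{thm:cuspform}, and then to invoke the eigenvalue formulas from Propositions \ref{prop:Hecke_W}(1) and \ref{prop:Hecke_olW}(1). The subtle point is that $\ol{W}_1^{H/F}(\bpsi_{\fn}, \btr)^{\chi}$ is a weight $1$ form whose $T_{\fl}$-eigenvalue is $\bpsi(\fl)+1$, while the target eigenvalue in weight $k$ is $\bpsi(\fl)+N(\fl)^{k-1}$; the discrepancy $N(\fl)^{k-1}-1$ must be absorbed into the congruence modulus $(\wtil{x}\theta^{\sharp})$.

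First, the weight-$k$ Hecke action on Fourier coefficients satisfies $\coef{\fa}{T_{\fl}f} = \coef{\fl\fa}{f} + \delta_{\fl\mid\fa}\bpsi(\fl) N(\fl)^{k-1}\coef{\fl^{-1}\fa}{f}$, so applying it to the weight-one form $\ol{W}_1^{H/F}(\bpsi_{\fn}, \btr)^{\chi}$ and comparing with Proposition \ref{prop:Hecke_olW}(1) shows that the resulting Fourier coefficients equal those of $(\bpsi(\fl)+1)\ol{W}_1^{H/F}(\bpsi_{\fn}, \btr)^{\chi}$ plus a term proportional to $(N(\fl)^{k-1}-1)$. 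Combining this with the weight-$k$ eigenvalue $\bpsi(\fl)+N(\fl)^{k-1}$ of $W_k^{H^{\fP}/F}(\bpsi_{\fn/\fP}, \btr_p)^{\chi}$ from Proposition \ref{prop:Hecke_W}(1), applying $T_{\fl}$ to the congruence of Theorem \ref{thm:cuspform} and subtracting $(\bpsi(\fl)+N(\fl)^{k-1})F_k(\bpsi)$ yields
\[
c\bigl(T_{\fl}F_k(\bpsi) - (\bpsi(\fl)+N(\fl)^{k-1})F_k(\bpsi)\bigr) \equiv (1 - N(\fl)^{k-1})\wtil{x}\,c\bigl(\ol{W}_1^{H/F}(\bpsi_{\fn}, \btr)^{\chi}\bigr) \pmod{(\wtil{x}\theta^{\sharp})}.
\]

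Next, I would show that $(N(\fl)^{k-1}-1) \in (\theta^{\sharp})$ for every $\fl\nmid\fn\fP'$. Since $\Z_p[G]^{\chi}$ is a one-dimensional Cohen--Macaulay ring and $\theta^{\sharp}$ is a non-zero-divisor, the quotient $\Z_p[G]^{\chi}/(\theta^{\sharp})$ is a finite $\Z_p$-module and hence is annihilated by some $p^m$. Strengthening the choice of $N$ in Theorem \ref{thm:cuspform} to satisfy $N+1\ge m$, the standard congruence $N(\fl)^{(p-1)p^N}\equiv 1\pmod{p^{N+1}}$ (valid for $\fl\nmid p$, which is automatic since $\fl\nmid\fn\fP'$) gives $N(\fl)^{k-1}-1\in p^{N+1}\Z_p\subset(\theta^{\sharp})$ uniformly in $\fl$. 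This reduces the displayed residual term to zero modulo $(\wtil{x}\theta^{\sharp})$.

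Finally, I would propagate the identity from $F_k(\bpsi)$ to its entire $\wtil{\bT}$-orbit. Since $\wtil{\bT}$ is commutative, $T_{\fl}(T F_k(\bpsi)) = T(T_{\fl}F_k(\bpsi))$ for every $T\in\wtil{\bT}$, and the property of having all Fourier coefficients in $(\wtil{x}\theta^{\sharp})$ is preserved by every Hecke operator, as the formulas for $T_{\fl'}$ and $U_{\fp}$ are $\Z_p[G]^{\chi}$-linear in the original coefficients. Consequently $c(T_{\fl}f) = (\bpsi(\fl)+N(\fl)^{k-1})\,c(f)$ for all $f\in (F_k(\bpsi))_{\wtil{\bT}}$, which is precisely $\varphi(T_{\fl}) = \bpsi(\fl)+N(\fl)^{k-1}$ in $W$. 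The main technical obstacle is the careful bookkeeping of the weight discrepancy in the first step, and the verification that the auxiliary parameter $N$ governing Silliman's construction can be simultaneously chosen large enough to absorb both $\wtil{x}$ and $\theta^{\sharp}$; everything else is routine.
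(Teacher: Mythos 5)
Your proof is correct and follows the same route as the paper: apply $T_{\fl}$ to the congruence \eqref{eq:Fk}, invoke the eigenvalue formulas from Propositions \ref{prop:Hecke_W}(1) and \ref{prop:Hecke_olW}(1), and propagate to the full $\wtil{\bT}$-orbit by commutativity. The one substantive difference is that you make explicit a subtlety the paper elides by simply citing the two propositions: the form $\ol{W}_1^{H/F}(\bpsi_{\fn},\btr)^{\chi}$ has weight $1$ while the Hecke operator $T_{\fl}$ is applied in weight $k$, so there is a discrepancy $N(\fl)^{k-1}-1$ multiplying the (integral) non-constant Fourier coefficients of $\ol{W}_1^{H/F}$, which must be absorbed by noting that $N(\fl)^{k-1}\equiv 1\pmod{p^{N+1}}$ for $\fl\nmid p$ and that $p^{N+1}\in(\theta^{\sharp})$ once $N$ is large enough (a condition compatible with, and indeed implied by, the constraint $N(\fp)^{k-1}\in(\theta^{\sharp})$ already used in the proof of Theorem \ref{thm:cuspform2}). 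This is a genuine and welcome clarification rather than a different argument.
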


\begin{proof}
For each prime $\fl \nmid \fn \fP'$, we have
\[
T_{\fl} F_k(\bpsi) \equiv (\bpsi(\fl) + N(\fl)^{k - 1}) F_k(\bpsi)
\]
modulo $(\wtil{x} \theta^{\sharp})$.
This follows from \eqref{eq:Fk} and Propositions \ref{prop:Hecke_W}(1) and \ref{prop:Hecke_olW}(1).
Since $\wtil{\bT}$ is commutative, $T_{\fl}$ acts as $\bpsi(\fl) + N(\fl)^{k - 1}$ on $c((F_k(\bpsi))_{\wtil{\bT}})$.
Then the lemma follows.
\end{proof}

\begin{defn}\label{defn:Eis_ideal}
We define the Eisenstein ideal $I' \subset \bT'$ as the kernel of the composite map
\[
\bT' \hookrightarrow \wtil{\bT} \overset{\varphi}{\twoheadrightarrow} W.
\]
\end{defn}

Note that Lemma \ref{lem:eigen} says $T_{\fl} - (\bpsi(\fl) + N(\fl)^{k - 1}) \in I'$ for $\fl \nmid \fn \fP'$.
Therefore, the structure morphism $\Rc \to \bT'/I'$ is surjective, which implies that $\bT'/I'$ is a local ring unless we have $I' = \bT'$.

\subsection{The Galois representation}\label{ss:Gal_rep}

We define $\bK$ as the total ring of fractions of $\wtil{\bT}$.
As explained in the proof of the following theorem, $\bK$ is a finite product of $p$-adic fields.

Let $\varepsilon_{\cyc}: G_F \to \Z_p^{\times}$ be the cyclotomic character.
Thanks to the celebrated work of Hida and Wiles, we have a Galois representation as follows.

\begin{thm}\label{thm:Gal_rep}
There exists a finite extension $E$ of $\Q_p$ and a continuous two-dimensional Galois representation $\rho$ of $G_F$ over $\bK_E := E \otimes_{\Q_p} \bK$ satisfying the following.
\begin{itemize}
\item[(a)]
For each prime $\fl \nmid \fn \fP'$, $\rho$ is unramified at $\fl$ and we have 
\[
\cha(\rho(\varphi_{\fl}))(X) = X^2 - T_{\fl} X + \bpsi(\varphi_{\fl}) N(\fl)^{k - 1},
\]
where $\cha(\rho(\sigma))(X)$ denotes the characteristic polynomial of $\rho(\sigma)$ for $\sigma \in G_F$.
\item[(b)]
For each prime $\fp \mid p$, we have an equivalence of representations
\[
\rho|_{G_{F_{\fp}}} \sim
\begin{pmatrix}
\bpsi \eta_{\fp}^{-1} \varepsilon_{\cyc}^{k - 1}
& * \\
0
& \eta_{\fp}
\end{pmatrix},
\]
where $\eta_{\fp}$ denotes the unramified character $G_{F_{\fp}} \to \wtil{\bT}^{\times}$ that sends any lift of the arithmetic Frobenius to $U_{\fp}$.
\end{itemize}
\end{thm}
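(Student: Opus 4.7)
The plan is to build $\rho$ by assembling the classical Galois representations attached to the individual Hilbert eigenforms that appear after decomposing $\bK$. First, since $\wtil{\bT}$ is a $\Z_p[G]^{\chi}$-subalgebra of the endomorphism ring of the finitely generated module $S_k(\fn \fP', \Z_p[G]^{\chi}, \bpsi)^{p\ordinary}$, it is itself finitely generated over $\Z_p[G]^{\chi}$ and in particular is a semi-local noetherian ring whose total ring of fractions $\bK$ is a finite product of fields. After tensoring with a sufficiently large finite extension $E$ of $\Q_p$, the components of $\bK_E$ become finite products of $p$-adic local fields, and each minimal prime $\fq$ of $\wtil{\bT}_E := E \otimes_{\Q_p} \wtil{\bT}$ corresponds to a homomorphism $\lambda_{\fq}: \wtil{\bT}_E \to \bK_{\fq}$ picking out a Hecke eigensystem $(T_{\fl} \mapsto a_{\fl}(\fq), U_{\fp} \mapsto a_{\fp}(\fq))$ that occurs in the space of $p$-ordinary cuspforms.

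Next I would invoke the classical construction (Ohta, Wiles, Taylor, Blasius--Rogawski, Carayol) of the two-dimensional Galois representation $\rho_{\fq}: G_F \to \GL_2(\bK_{\fq})$ attached to each such eigensystem: $\rho_{\fq}$ is unramified outside $\fn \fP'$ and satisfies $\tr \rho_{\fq}(\varphi_{\fl}) = a_{\fl}(\fq)$, $\det \rho_{\fq}(\varphi_{\fl}) = \bpsi(\varphi_{\fl}) N(\fl)^{k-1}$ for $\fl \nmid \fn \fP'$. Because the eigenform in question is $p$-ordinary, Wiles' theorem on ordinary Galois representations gives, for each $\fp \mid p$, a local upper triangular form with unramified quotient character sending the Frobenius lift to $a_{\fp}(\fq) = U_{\fp} \bmod \fq$, and with the other diagonal character equal to $\bpsi \eta_{\fp}^{-1} \varepsilon_{\cyc}^{k-1}$ by the determinant relation. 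Taking the product over $\fq$ yields a representation $\rho = \prod_{\fq} \rho_{\fq}$ over $\bK_E = \prod_{\fq} \bK_{\fq}$ that automatically satisfies (a) and (b) by construction.

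The subtle step, and the one I expect to be the main obstacle, is verifying that the local quotient character in (b) really equals $\eta_{\fp}$ as a $\wtil{\bT}$-valued (not just $\bK_{\fq}$-valued) character, i.e.~that the assembled upper triangular structure is defined over $\bK_E$ uniformly. This requires checking that the $(\bpsi \eta_{\fp}^{-1} \varepsilon_{\cyc}^{k-1}, \eta_{\fp})$-eigenlines for $G_{F_{\fp}}$ can be chosen compatibly across the components $\fq$, which follows from the fact that $U_{\fp}$ acts on the common $p$-ordinary subspace as a single element of $\wtil{\bT}$ whose image in each $\bK_{\fq}$ is precisely the Frobenius eigenvalue appearing in $\rho_{\fq}|_{G_{F_{\fp}}}$; equivalently, the two Hodge--Tate weights at $\fp$ are distinct generically, so the ordinary filtration descends from each component to $\bK_E$. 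After this check, (a) is immediate from the trace/determinant formula and (b) follows by combining the local filtrations on each factor. Finally one enlarges $E$ if necessary to ensure that the (a priori semisimple) components admit the desired model; this is harmless for the applications in the next sections.
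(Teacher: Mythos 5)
Your proposal follows essentially the same route as the paper: $\bK_E$ decomposes as a finite product of $p$-adic fields indexed by the Hecke eigensystems (the paper makes this explicit via the set $M$ of $p$-ordinary stabilizations of newforms and the resulting isomorphism $\bK_E \simeq \prod_{f \in M} E$), the classical two-dimensional ordinary Galois representation of Wiles \cite{Wil88} is attached to each factor, and the product representation satisfies (a) and (b) componentwise. The ``subtle step'' you flagged is automatic because (b) only requires an equivalence over $\bK_E$, which is a product of fields, so the conjugating matrix may be chosen component by component, and the identification of the unramified quotient character with $\eta_{\fp}$ in each component is exactly Wiles' ordinarity theorem, as you observed.
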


\begin{proof}
We explain how to deduce this theorem from the work of Hida and Wiles, following \cite[\S \S 8.5, 9.1]{DK20}.
For a character $\psi$ of $G$ such that $\psi|_{G'} = \chi$, let $M_{\psi}$ be the (finite) set of $p$-ordinary cuspidal new eigenforms of weight $k$, level dividing $\fn \fP'$, and nebentypus $\psi$.
We also put $M = \bigcup_{\psi} M_{\psi}$.
For each $f \in M_{\psi}$, let $f_p$ be its ordinary stabilization with respect to all $p$-adic primes.
Then $f_p$ is an eigenform with respect to both $T_{\fl}$ for $\fl \nmid \fn \fP'$ and $U_{\fp}$ for $\fp \mid p$.
The eigenvalue for $T_{\fl}$ is $\coef{\fl}{f_p} = \coef{\fl}{f}$, and that for $U_{\fp}$ is the unit root $\coef{\fp}{f_p}$ of $X^2 - \coef{\fp}{f} X + \psi(\fp)N(\fp)^{k - 1}$.

Let us take a finite extension $E$ of $\Q_p$ that contains all the Fourier coefficients $\coef{\fa}{f_p}$ for all $f \in M$.
Let $\OO$ be the integer ring of $E$.
Then we have a well-defined $\Z_p$-algebra homomorphism
\[
\wtil{\bT} \to \prod_{f \in M} \OO
\]
that sends $T_{\fl}$ to $(\coef{\fl}{f_p})_{f \in M}$ for $\fl \nmid \fn \fP'$ and $U_{\fp}$ to $(\coef{\fp}{f_p})_{f \in M}$ for $\fp \mid p$.
Then, as in \cite[\S 8.5]{DK20}, the induced $\OO$-algebra homomorphism $\OO \otimes_{\Z_p} \wtil{\bT} \to \prod_{f \in M} \OO$ is injective with finite cokernel.
Therefore, we have an isomorphism 
\begin{equation}\label{eq:EtimesK}
\bK_E \simeq \prod_{f \in M} E
\end{equation}
 as $E$-algebras.

For each $f \in M_{\psi}$, by \cite[Theorems 1 and 2]{Wil88}, there exists a continuous two-dimensional Galois representation $\rho_f$ of $G_F$ over $E$ satisfying the following.
\begin{itemize}
\item[(a)$_f$]
$\rho_f$ is unramified outside $\fn \fP'$ and, for each $\fl \nmid \fn \fP'$, we have 
\[
\cha(\rho(\varphi_{\fl}))(X) = X^2 - \coef{\fl}{f} X + \psi(\fl) N(\fl)^{k - 1}.
\]
\item[(b)$_f$]
For each prime $\fp \mid p$, we have an equivalence of representations
\[
\rho|_{G_{F_{\fp}}} \sim
\begin{pmatrix}
\psi \eta_{\fp, f}^{-1} \varepsilon_{\cyc}^{k - 1}
& * \\
0
& \eta_{\fp, f}
\end{pmatrix},
\]
where $\eta_{\fp, f}$ denotes the unramified character $G_{F_{\fp}} \to E^{\times}$ that sends any lift of the arithmetic Frobenius to $\coef{\fp}{f_p}$.
\end{itemize}
Then, by the isomorphism \eqref{eq:EtimesK}, the tuple $(\rho_f)_{f \in M}$ can be regarded as a continuous two-dimensional representation $\rho$ of $G_F$ over $\bK_E$ satisfying the desired properties (a) and (b).
\end{proof}

Note that it is indeed possible to show that the representation $\rho$ can be defined over $\bK$ instead of $\bK_E$.
However, we omit the proof and will work over $\bK_E$.

We investigate this Galois representation $\rho$.

\begin{lem}\label{lem:char_poly}
For any $\sigma \in G_F$, we have $\cha(\rho(\sigma))(X) \in \bT'[X]$ and
\[
\cha(\rho(\sigma))(X) \equiv (X - \bpsi(\sigma)) (X - \varepsilon_{\cyc}^{k - 1}(\sigma))
\pmod{ I'}.
\]
\end{lem}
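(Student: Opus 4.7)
The plan is to verify the statement first on Frobenius elements $\varphi_\fl$ for primes $\fl \nmid \fn \fP'$, where it follows immediately from the properties of $\rho$ listed in Theorem \ref{thm:Gal_rep} together with Lemma \ref{lem:eigen}, and then extend the result to all of $G_F$ using continuity of $\rho$ and the Chebotarev density theorem.

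For the Frobenius case, property (a) of Theorem \ref{thm:Gal_rep} reads
\[
\cha(\rho(\varphi_\fl))(X) = X^2 - T_\fl\, X + \bpsi(\fl)\, N(\fl)^{k-1},
\]
which is patently in $\bT'[X]$ since $T_\fl \in \bT'$ and $\bpsi(\fl), N(\fl)^{k-1}$ lie in the structure subring $\Z_p[G]^{\chi} \subset \bT'$. Because $\bT'/I'$ injects into $W$ by the definition of $I'$ (Definition \ref{defn:Eis_ideal}), Lemma \ref{lem:eigen} gives the congruence $T_\fl \equiv \bpsi(\fl) + N(\fl)^{k-1} \pmod{I'}$. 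Together with $N(\fl)^{k-1} = \varepsilon_{\cyc}^{k-1}(\varphi_\fl)$, this immediately yields
\[
\cha(\rho(\varphi_\fl))(X) \equiv (X - \bpsi(\varphi_\fl))(X - \varepsilon_{\cyc}^{k-1}(\varphi_\fl)) \pmod{I'}.
\]

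For the extension to general $\sigma \in G_F$, I would use that $\rho$ is unramified outside $\prim(\fn\fP') \cup S_\infty(F)$, so it factors through the Galois group of the maximal such extension, on which the Frobenius elements $\{\varphi_\fl : \fl \nmid \fn \fP'\}$ are dense by Chebotarev. Since $\tr \circ \rho$ and $\det \circ \rho$ are continuous functions $G_F \to \bK_E$, and $\bT'$ is a finitely generated $\Z_p$-module (as a subring of $\wtil{\bT}$, which is finitely generated over $\Z_p[G]^\chi$), it is $p$-adically complete and hence closed in $\bK_E$. Taking limits of the relations proved at Frobenii then forces $\tr(\rho(\sigma)), \det(\rho(\sigma)) \in \bT'$, and, after reducing modulo $I'$, yields the factorization $(X - \bpsi(\sigma))(X - \varepsilon_{\cyc}^{k-1}(\sigma))$ by continuity of $\bpsi$ and $\varepsilon_{\cyc}^{k-1}$.

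The main obstacle will be the bookkeeping around the coefficient ring: a priori $\rho$ is defined only over $\bK_E = E \otimes_{\Q_p} \bK$, whereas the Hecke subring $\bT'$ sits inside $\bK$ before tensoring with $E$. To descend from $\bK_E$ back to $\bK$, I would invoke $\Gal(E/\Q_p)$-invariance: since $\tr(\rho(\varphi_\fl)) = T_\fl \in \bK$ is fixed by $\Gal(E/\Q_p)$ for every $\fl \nmid \fn \fP'$, the density of Frobenius elements together with the continuity of the $\Gal(E/\Q_p)$-action on $\bK_E$ forces $\tr(\rho(\sigma))$ to be Galois-invariant, hence to lie in $\bK$; the same argument applies to the determinant. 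Combined with the closedness of $\bT'$ in $\bK$ observed above, this places the coefficients of $\cha(\rho(\sigma))(X)$ in $\bT'$ and completes the argument.
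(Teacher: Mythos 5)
Your argument is correct and follows the paper's own approach: verify the identity on Frobenius elements $\varphi_\fl$ (for $\fl\nmid\fn\fP'$) using Theorem \ref{thm:Gal_rep}(a) together with Lemma \ref{lem:eigen}, then extend to all of $G_F$ by Chebotarev density and continuity. The paper states only these two steps and leaves the topological bookkeeping (closedness of $\bT'$, descent from $\bK_E$ to $\bK$) implicit; your more detailed verification of these points is sound, though the $\Gal(E/\Q_p)$-invariance argument is redundant once one knows $\bT'$ is compact, hence closed, in $\bK_E$.
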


\begin{proof}
The claims holds for $\sigma = \varphi_{\fl}$ for any $\fl \nmid \fn \fP'$ by the condition (a) and Lemma \ref{lem:eigen}.
Then the general case follows from Chebotarev's density theorem.
\end{proof}

From now on, let us fix an element $\tau \in G_F$ which is a lift of the complex conjugation $c \in \Gal(H/F)$ such that, for any $f \in M$ and any $\fp \mid p$, the one-dimensional subspace of $\rho_f$ which is stable under $G_{F_{\fp}}$ is not stable by $\tau$.
We can check the existence of such $\tau$ in the same way as \cite[Proposition 9.3]{DK20}.
Then, following \cite[\S 9.1 -- 9.2]{DK20}, we take a basis of the representation $\rho$ as follows.

\begin{lem}\label{lem:choice_basis}
There exists a basis of $\rho$ over $\bK_E$ such that the presentation matrix of $\tau$ is of the form
\[
\rho(\tau) =
\begin{pmatrix}
\lambda_1 & 0 \\ 0 & \lambda_2
\end{pmatrix}
\]
for some $\lambda_1, \lambda_2 \in \bT'$ such that
\[
\lambda_1 \equiv \varepsilon_{\cyc}(\tau)^{k - 1}, 
\qquad \lambda_2 \equiv \bpsi(\tau) \pmod{ I'}.
\]
\end{lem}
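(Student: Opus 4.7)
My plan is to (i) identify the characteristic polynomial of $\rho(\tau)$, (ii) factor it over $\bT'$ using Hensel's lemma together with the diagonalizability of $\rho_f(\tau)$ in each component, and (iii) build the desired basis from the resulting eigenspaces.

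First I would apply Lemma \ref{lem:char_poly} to write $\cha(\rho(\tau))(X) = X^2 - tX + d$ with $t, d \in \bT'$; modulo $I'$ this polynomial reduces to $(X - \bpsi(\tau))(X - \varepsilon_{\cyc}(\tau)^{k-1})$. Because $\tau$ lifts the complex conjugation $c \in G$ and $\chi$ is odd, $\bpsi(\tau) = -1$ in $\Z_p[G]^{\chi}$; and since $k - 1$ is even while $\varepsilon_{\cyc}(c) = -1$, we get $\varepsilon_{\cyc}(\tau)^{k-1} = 1$. Thus $t \equiv 0$ and $d \equiv -1 \pmod{I'}$, so the discriminant $t^2 - 4d$ reduces to $4 = 2^2$, a unit square in any residue field of characteristic $p \neq 2$.

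Next I would factor $X^2 - tX + d$ in $\bT'[X]$. Being a finite algebra over the complete local Noetherian ring $\Z_p[G]^{\chi}$, $\bT'$ decomposes as a product $\prod_i R_i$ of complete local rings. Exactly one component, say $R_1$, carries the Eisenstein quotient $\bT'/I'$ (which is local as a quotient of $\Z_p[G]^{\chi}$ via Lemma \ref{lem:eigen}); in $R_1$, the image of $I'$ lies in the maximal ideal, so Hensel's lemma lifts the square root of the discriminant and yields a factorization $(X - \lambda_{1,1})(X - \lambda_{2,1})$ with $\lambda_{1,1} \equiv 1$, $\lambda_{2,1} \equiv -1$. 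For the non-Eisenstein components $R_i$ ($i \geq 2$), I would (after possibly enlarging $E$ so it contains all eigenvalues of each $\rho_f(\tau)$) use the diagonalizability of the classical $\rho_f(\tau)$ to obtain the factorization componentwise: the key input is the choice of $\tau$, which was arranged so that the $G_{F_{\fp}}$-stable line of each $\rho_f$ is not $\tau$-stable, combined with $\det \rho_f(\tau) = -1$ and the standard parity of Hilbert modular Galois representations. Patching these local factorizations produces $\lambda_1, \lambda_2 \in \bT'$ with $\lambda_1 \equiv 1$, $\lambda_2 \equiv -1 \pmod{I'}$; in particular $\lambda_1 - \lambda_2$ reduces to $2$ modulo the Jacobson radical of $\bT'$, hence is a unit.

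Finally, Cayley--Hamilton gives $(\rho(\tau) - \lambda_1)(\rho(\tau) - \lambda_2) = 0$ in $\End_{\bK_E}(\bK_E^2)$, and since $\lambda_1 - \lambda_2$ is a unit, the space splits as $\bK_E^2 = \ker(\rho(\tau) - \lambda_1) \oplus \ker(\rho(\tau) - \lambda_2)$ into two free rank-one $\bK_E$-submodules; a basis vector from each summand presents $\rho(\tau)$ as the diagonal matrix with entries $\lambda_1, \lambda_2$. The main obstacle lies in the factorization step over the non-Eisenstein components $R_i$, where Hensel's lemma with respect to $I'$ no longer applies; here the careful selection of $\tau$ at the outset is indispensable, and this is the delicate point requiring a component-by-component analysis of each classical $\rho_f(\tau)$.
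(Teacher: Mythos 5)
The proposal is correct and follows essentially the same strategy as the paper—identify $\cha(\rho(\tau))(X)\in\bT'[X]$, apply Hensel's lemma to split it over $\bT'$, and then produce the basis by elementary linear algebra—but you make explicit a step that the paper's very terse ``applying Hensel's lemma'' leaves implicit. The paper's written argument only observes that $\bpsi(\tau)=-1$ and $\varepsilon_{\cyc}^{k-1}(\tau)$ are distinct modulo the Jacobson radical of $\bT'$; by itself this, together with Lemma \ref{lem:char_poly}, only controls the factorization modulo $I'$, which is visible in the local factor of $\bT'$ containing $I'$. On the remaining local factors of the semilocal ring $\bT'$, the mod-$I'$ congruence gives no information, and one needs the residual distinctness of the eigenvalues of $\rho_f(\tau)$ for each $f\in M$; this is exactly what you point to, and it is what the choice of $\tau$ (the appeal to \cite[Proposition 9.3]{DK20}, built on the total oddness of the $\rho_f$) is designed to supply. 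So your componentwise account is a legitimate and useful unpacking of the paper's one-line proof rather than a different route.

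Two small imprecisions to flag. First, $\varepsilon_{\cyc}(\tau)^{k-1}$ is not literally $1$: $\tau$ is only required to lift $c\in\Gal(H/F)$, so $\varepsilon_{\cyc}(\tau)$ need not equal $-1$; one only knows $\varepsilon_{\cyc}(\tau)^{k-1}\equiv 1$ modulo a high power of $p$ from $k\equiv 1\pmod{(p-1)p^N}$. This does not disturb the argument, since all one needs is that $\varepsilon_{\cyc}(\tau)^{k-1}-\bpsi(\tau)$ is a unit of $\Z_p$. Second, your claim that $\lambda_1-\lambda_2$ reduces to $2$ modulo the Jacobson radical holds on the Eisenstein component but need not on the others; what the componentwise argument actually delivers is that $\lambda_1-\lambda_2$ is nonzero in every residue field (equivalently, the discriminant is a unit), which is exactly what is needed in Lemma \ref{lem:observ}. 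Note also that Lemma \ref{lem:choice_basis} itself only asserts the mod-$I'$ congruences, so the labeling of $\lambda_1,\lambda_2$ on the non-Eisenstein components is immaterial; the unit property of $\lambda_1-\lambda_2$ is only invoked later.
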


\begin{proof}
Since $\bpsi(\tau) \in \Z_p[G]^{\chi}$ is the image of the complex conjugation $c$, we actually have $\bpsi(\tau) = -1$.
This implies that $\bpsi(\tau)$ and $\varepsilon_{\cyc}^{k - 1}(\tau)$ are not congruent to each other modulo the Jacobson radical of $\bT'$.
By Lemma \ref{lem:char_poly}, applying Hensel's lemma, we then have two roots $\lambda_1, \lambda_2 \in \bT'$ of $\cha(\rho(\tau))(X)$ with the described congruences.
The existence of an appropriate basis follows from basic linear algebra.
\end{proof}

Let us fix a basis of $\rho$ over $\bK_E$ as in Lemma \ref{lem:choice_basis}.
For any $\sigma \in G_F$, we write
\[
\rho(\sigma) = 
\begin{pmatrix}
a(\sigma) & b(\sigma) \\ c(\sigma) & d(\sigma)
\end{pmatrix}
\in \GL_2(\bK_E).
\]

By the property (b) in Theorem \ref{thm:Gal_rep}, for each $\fp \mid p$, there exists a matrix
\[
M_{\fp} = 
\begin{pmatrix}
A_{\fp} & B_{\fp} \\ C_{\fp} & D_{\fp}
\end{pmatrix}
\in \GL_2(\bK_E)
\]
such that, for any $\sigma \in G_{F_{\fp}}$, we have
\begin{equation}\label{eq:ABCD}
\rho(\sigma) M_{\fp} = M_{\fp} \begin{pmatrix}
\bpsi \eta_{\fp}^{-1} \varepsilon_{\cyc}^{k - 1}
& * \\
0
& \eta_{\fp}
\end{pmatrix}.
\end{equation}
(There is a conflict between this element $A_{\fp}$ and the module defined in \eqref{eq:A_v}, but we have no afraid of confusion.)
By the choice of $\tau$, we have $A_{\fp}, C_{\fp} \in \bK_E^{\times}$.
We put 
\[
\AC{\fp} = A_{\fp}/C_{\fp} \in \bK_E^{\times}.
\]

For later use, let us observe some properties of $a(\sigma)$, $b(\sigma)$, and $d(\sigma)$.

\begin{lem}\label{lem:observ}
The following hold.
\begin{itemize}
\item[(1)]
For any $\sigma \in G_F$, we have
\[
a(\sigma) \equiv \varepsilon_{\cyc}^{k - 1}(\sigma), \qquad 
d(\sigma) \equiv \bpsi(\sigma) \pmod{ I'}.
\]
\item[(2)]
For each $\fp \mid p$ and $\sigma \in G_{F_{\fp}}$, we have
\[
b(\sigma) = (\bpsi \eta_{\fp}^{-1} \varepsilon_{\cyc}^{k - 1}(\sigma) - a(\sigma)) \AC{\fp}.
\]
\item[(3)]
For each $\fp \mid p$ and $\sigma \in G_{F_{\fp}}$, we have
\[
\bpsi(\sigma)^{-1} \varepsilon_{\cyc}^{1 - k}(\sigma) b(\sigma) 
\equiv (\eta_{\fp}(\sigma)^{-1} - \bpsi(\sigma)^{-1}) \AC{\fp}
\pmod{\AC{\fp} I'}.
\]
\end{itemize}
\end{lem}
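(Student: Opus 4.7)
The three claims decouple naturally, and the strategy is to prove them in order, each one feeding into the next.

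For part (1), the plan is to exploit the trace relation modulo $I'$ together with the explicit diagonal form of $\rho(\tau)$. From Lemma \ref{lem:char_poly} applied to $\sigma$ and to $\sigma\tau$ we obtain the two congruences
\[
a(\sigma)+d(\sigma)\equiv \bpsi(\sigma)+\varepsilon_{\cyc}^{k-1}(\sigma),\qquad a(\sigma)\lambda_1+d(\sigma)\lambda_2\equiv \bpsi(\sigma\tau)+\varepsilon_{\cyc}^{k-1}(\sigma\tau)\pmod{I'},
\]
where the second equation uses $\rho(\sigma\tau)=\rho(\sigma)\rho(\tau)$ and the fact that $\rho(\tau)$ is diagonal. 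Using Lemma \ref{lem:choice_basis} to replace $\lambda_1,\lambda_2$ by $\varepsilon_{\cyc}^{k-1}(\tau)$ and $\bpsi(\tau)$ modulo $I'$, this becomes a $2\times 2$ linear system in $a(\sigma),d(\sigma)$ whose coefficient matrix has determinant $\bpsi(\tau)-\varepsilon_{\cyc}^{k-1}(\tau)\equiv -1-1=-2\pmod{I'}$ (recall $k$ is odd, so $\varepsilon_{\cyc}^{k-1}(\tau)=1$, and $\bpsi(\tau)=-1$). Since $p$ is odd, $-2$ is a unit, so I can solve the system and read off the desired congruences for $a(\sigma)$ and $d(\sigma)$.

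For part (2), the argument is a direct matrix computation: comparing the $(1,1)$-entry of the two sides of \eqref{eq:ABCD} gives
\[
a(\sigma)A_{\fp}+b(\sigma)C_{\fp}=A_{\fp}\cdot\bpsi\eta_{\fp}^{-1}\varepsilon_{\cyc}^{k-1}(\sigma),
\]
so after dividing by the unit $C_{\fp}$ (which is invertible in $\bK_E$ by the choice of $\tau$ made right before Lemma \ref{lem:choice_basis}) I immediately obtain the stated formula $b(\sigma)=(\bpsi\eta_{\fp}^{-1}\varepsilon_{\cyc}^{k-1}(\sigma)-a(\sigma))\AC{\fp}$. This step requires no work beyond having $\AC{\fp}$ well-defined, which has already been established.

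Part (3) is then a formal consequence: I multiply the identity in (2) by $\bpsi(\sigma)^{-1}\varepsilon_{\cyc}^{1-k}(\sigma)$ to get
\[
\bpsi(\sigma)^{-1}\varepsilon_{\cyc}^{1-k}(\sigma)b(\sigma)=\bigl(\eta_{\fp}^{-1}(\sigma)-\bpsi(\sigma)^{-1}\varepsilon_{\cyc}^{1-k}(\sigma)a(\sigma)\bigr)\AC{\fp},
\]
and substitute the congruence $a(\sigma)\equiv\varepsilon_{\cyc}^{k-1}(\sigma)\pmod{I'}$ from (1), which yields $\bpsi(\sigma)^{-1}\varepsilon_{\cyc}^{1-k}(\sigma)a(\sigma)\equiv\bpsi(\sigma)^{-1}\pmod{I'}$, and consequently the whole right-hand side is congruent to $(\eta_{\fp}^{-1}(\sigma)-\bpsi(\sigma)^{-1})\AC{\fp}$ modulo $\AC{\fp}I'$.

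Honestly, there is no serious obstacle here: all three claims are formal consequences of Lemmas \ref{lem:char_poly} and \ref{lem:choice_basis}, the $B$-triangularity at $p$ recorded in \eqref{eq:ABCD}, and the invertibility of $C_{\fp}$. The only place one must be slightly careful is part (1), where one needs to verify that the $2\times 2$ coefficient matrix is invertible modulo $I'$; this reduces to the observation that $\bpsi(\tau)$ and $\varepsilon_{\cyc}^{k-1}(\tau)$ are distinct modulo the Jacobson radical of $\bT'$, which was already used in Lemma \ref{lem:choice_basis} via Hensel's lemma.
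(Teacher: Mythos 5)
Your proof is correct and follows the same path as the paper: part (1) via the trace relation for $\sigma$ and $\sigma\tau$ together with the diagonal shape of $\rho(\tau)$ and the invertibility of $\lambda_1-\lambda_2$ (your explicit computation of the determinant $\bpsi(\tau)-\varepsilon_{\cyc}^{k-1}(\tau)\equiv -2$ is exactly the content behind the paper's one-line appeal to $\lambda_1-\lambda_2$ being a unit), part (2) by reading off the $(1,1)$-entry of \eqref{eq:ABCD} and dividing by $C_{\fp}$, and part (3) by multiplying (2) by $\bpsi(\sigma)^{-1}\varepsilon_{\cyc}^{1-k}(\sigma)$ and applying the congruence for $a(\sigma)$ from (1).
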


\begin{proof}
(1)
By Lemma \ref{lem:char_poly}, we have 
\[
a(\sigma) + d(\sigma) 
\equiv \bpsi(\sigma) + \varepsilon_{\cyc}^{k - 1}(\sigma) \pmod{ I'}.
\]
By replacing $\sigma$ by $\sigma \tau$, we obtain
\[
a(\sigma) \lambda_1 + d(\sigma) \lambda_2 
\equiv \bpsi(\sigma) \lambda_2 + \varepsilon_{\cyc}^{k - 1}(\sigma) \lambda_1 \pmod{ I'}.
\]
Since $\lambda_1 - \lambda_2$ is a unit of $\bT'$, by solving these equations, we obtain the claim.

(2)
This follows immediately from \eqref{eq:ABCD}.

(3) 
This follows immediately from the claims (1) and (2).
\end{proof}

\subsection{The strategy}\label{ss:strategy}

In this subsection, we deduce Theorem \ref{thm:current_main_p} from two theorems that will be proved in the subsequent subsections.

We first define submodules $B_0$, $B$, $B'$, and $B''$ of $\bK_E$.
The definitions of the first three are directly inspired by the corresponding objects in \cite[\S \S 9.3 -- 9.4]{DK20}.
On the other hand, introducing the module $B''$ is a novel part of this paper.

Recall that $\fP_{\ba}$, $\fP_{\ba}'$, and $\fP'$ are introduced in Setting \ref{setting}.

\begin{defn}\label{defn:B}
We define $\bT'$-submodules $B' \subset B_0 \subset B$ of $\bK_E$ by
\begin{align}
B_0 &= (b(\sigma) : \sigma \in G_F)_{\bT'},\\
B &= B_0 + \parenth{ \AC{\fp} : \fp \mid \fP_{\ba} }_{\bT'},\\
B' &= \left(b(\sigma) : \sigma \in \bigcup_{\fp \mid \fP'} I_{F_\fp} \right)_{\bT'}. 
\end{align}
\end{defn}

Before introducing $B''$, we need a lemma in advance.

\begin{lem}\label{lem:B''}
For each $\fp \mid \fP_{\bad}$, we have
\[
U_{\fp}^{-1} \AC{\fp} \in B.
\]
\end{lem}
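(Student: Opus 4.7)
The approach is to extract the desired relation from the eigenvector equation encoded in the conjugating matrix $M_\fp$ of Theorem \ref{thm:Gal_rep}(b). Concretely, the first column $(A_\fp, C_\fp)^\top$ of $M_\fp$ is an eigenvector for every $\rho(\sigma)$ with $\sigma \in G_{F_\fp}$, with eigenvalue $\bpsi(\sigma)\eta_\fp(\sigma)^{-1}\varepsilon_{\cyc}(\sigma)^{k-1}$. Specialising to $\sigma = \varphi_\fp$ (for which $\eta_\fp(\varphi_\fp) = U_\fp$), reading the top entry of the eigenvector equation, and dividing through by $C_\fp$, I obtain the identity
\[
b(\varphi_\fp) + a(\varphi_\fp)\,\AC{\fp} = \bpsi(\varphi_\fp)\,\varepsilon_{\cyc}(\varphi_\fp)^{k-1}\,U_\fp^{-1}\,\AC{\fp}.
\]

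It then suffices to check two things: (i) the left-hand side lies in $B$, and (ii) the scalar $\bpsi(\varphi_\fp)\varepsilon_{\cyc}(\varphi_\fp)^{k-1}$ on the right is a unit of $\bT'$. For (i), the element $b(\varphi_\fp)$ lies in $B_0 \subset B$ by the very definition of $B_0$; Lemma \ref{lem:observ}(1) shows that $a(\varphi_\fp) \in \bT'$; and the hypothesis $\fp \mid \fP_{\ba}$ ensures $\AC{\fp} \in B$ by the construction of $B$, so $a(\varphi_\fp)\AC{\fp} \in \bT' \cdot B \subset B$. For (ii), $\bpsi(\varphi_\fp)$ is the image under $\Z_p[G]^\chi \to \bT'$ of a group element of $G$ and hence a unit, while $\varepsilon_{\cyc}(\varphi_\fp) \in \Z_p^\times$ since the cyclotomic character is valued in $\Z_p^\times$ by definition. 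Dividing the displayed identity by this unit yields $U_\fp^{-1}\AC{\fp} \in B$, as required. (Equivalently, one can specialise Lemma \ref{lem:observ}(3) at $\sigma = \varphi_\fp$ and use $\AC{\fp}\,I' \subset \bT' \cdot \AC{\fp} \subset B$ to reach the same conclusion.)

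The only conceptual point worth flagging is (ii). It would be tempting to identify $\varepsilon_{\cyc}(\varphi_\fp)$ with $N(\fp)$, which is a positive power of $p$ for our primes $\fp \mid p$ and so a non-unit in $\bT'$; dividing by such a factor would destroy the argument. However, the equality $\varepsilon_{\cyc}(\varphi_\fl) = N(\fl)$ implicit in Theorem \ref{thm:Gal_rep}(a) is forced only when $\fl \nmid \fn\fP'$, and in our setting $\fp \mid p$ always divides $\fn\fP'$. So no such identification occurs, $\varepsilon_{\cyc}(\varphi_\fp)^{k-1}$ remains an unrestricted element of $\Z_p^\times$, and once this is recognised the proof is an immediate rearrangement of the matrix identity above.
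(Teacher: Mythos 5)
Your proof is correct and amounts to the same argument as the paper's: the paper applies Lemma~\ref{lem:observ}(3) at a Frobenius lift $\sigma\in G_{F_\fp}$ and then absorbs the terms $\AC{\fp}I'$ and $\bpsi(\sigma)^{-1}\AC{\fp}$ into $B$, whereas you unwind the same computation directly from Lemma~\ref{lem:observ}(2) and divide by the unit $\bpsi(\sigma)\varepsilon_{\cyc}^{k-1}(\sigma)$ --- an equivalence you yourself flag at the end. The cautionary remark that $\varepsilon_{\cyc}(\sigma)$ is a unit of $\Z_p$ rather than $N(\fp)$ (since $\fp\mid p$ is ramified in the cyclotomic tower) is accurate and well worth noting.
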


\begin{proof}
Let $\sigma \in G_{F_{\fp}}$ be a lift of the Frobenius.
Then Lemma \ref{lem:observ}(3) implies
\[
\bpsi(\sigma)^{-1} \varepsilon_{\cyc}^{1 - k}(\sigma) b(\sigma) 
\equiv (U_{\fp}^{-1} - \bpsi(\sigma)^{-1}) \AC{\fp}
\pmod{\AC{\fp} I'}.
\]
Then the lemma follows since the left hand side is in $B_0 \subset B$, we have $\AC{\fp} I' \subset \AC{\fp} \bT' \subset B$, and $\bpsi(\sigma)^{-1} \AC{\fp} \in B$.
\end{proof}

\begin{defn}\label{defn:B''}
We define a $\bT'$-submodule $B''$ of $\bK_E$ by
\[
B'' = \parenth{ (1 - \nu_{I_{\fp}}\bpsi(\varphi_{\fp})^{-1} - U_{\fp}^{-1}) \AC{\fp} : \fp \mid \fP_{\ba} }_{\bT'}.
\]
By Lemma \ref{lem:B''}, we have $B'' \subset B$.
The necessity of this module $B''$ will become clear in Lemma \ref{lem:gamma}.
\end{defn}

\begin{defn}
We define $\bT'$-modules $\ol{B_p}$ and $\ol{B_1}$ by
\begin{align}
\ol{B_p} &= B / (I'B +\theta^{\sharp} B + B' + B''),\\
\ol{B_1} &= B / (I'B +\theta^{\sharp} B + B_0 + B'').
\end{align}
We also define a $\bT'$-module $\ol{B_0} \subset \ol{B_p}$ as the image of $B_0$.
\end{defn}

By the definitions, we have a natural exact sequence of $\bT'$-modules
\begin{equation}\label{eq:Bs1}
0 \to \ol{B_0} \to \ol{B_p} \to \ol{B_1} \to 0.
\end{equation}
Note that, since $\ol{B_p}$ is annihilated by $I'$, this is a sequence of $\bT'/I'$-modules.
Moreover, $\bT'/I'$ is a $\Z_p[G]^{\chi}$-algebra whose structure morphism $\Z_p[G]^{\chi} \to \bT'/I'$ is surjective
(see the text after Definition \ref{defn:Eis_ideal}).
For these reasons, we regard \eqref{eq:Bs1} as a sequence over $\Z_p[G]^{\chi}$.

Recall that $\Sigma$ and $\Sigma'$ are chosen as in Setting \ref{setting}.
Now our goal, Theorem \ref{thm:current_main_p}, follows from the following two theorems whose proof is given in the subsequent subsections. 

\begin{thm}\label{thm:abc}
There exist surjective homomorphisms $\alpha$, $\beta$, and $\gamma$ which fit in a commutative diagram
\[
\xymatrix{
0 \ar[r]
& \Cl_H^{\Sigma', \chi^{-1}} \ar[r] \ar@{->>}[d]_{\alpha}
& \Omega_{\Sigma}^{\Sigma', \chi^{-1}} \ar[r] \ar@{->>}[d]_{\beta}
& \bigoplus_{v \in \Sigma_{f}} A_v^{\chi^{-1}} \ar[r] \ar@{->>}[d]_{\gamma}
& 0\\
0 \ar[r]
& \ol{B_0}^{\sharp} \ar[r]
& \ol{B_p}^{\sharp} \ar[r]
& \ol{B_1}^{\sharp} \ar[r]
& 0
}
\]
over $\Z_p[G]^{\chi^{-1}}$, where the upper sequence is the $\chi^{-1}$-component of \eqref{eq:fund} and the lower is the twist of \eqref{eq:Bs1}.
\end{thm}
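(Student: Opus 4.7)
The plan is to construct the three surjective vertical maps $\alpha, \beta, \gamma$ explicitly using the Galois representation $\rho$ from Theorem \ref{thm:Gal_rep}, in particular via the upper-right entry $b: G_F \to \bK_E$ of $\rho$ in the basis fixed in Lemma \ref{lem:choice_basis}. Following the strategy of Dasgupta--Kakde \cite{DK20} adapted to our finite Ritter--Weiss module, the key point is that modulo the defining submodules of $\ol{B_0}$, $\ol{B_p}$, and $\ol{B_1}$, the function $b$ becomes a genuine $1$-cocycle (after normalization by $d$), and the extension class it defines matches the one computed in \S \ref{ss:ext} via Proposition \ref{prop:ext_desc}, after applying the involution $\sharp$.

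First I would construct $\alpha$. The cocycle identity for $\rho$ combined with Lemma \ref{lem:observ}(1) shows that $\sigma \mapsto d(\sigma)^{-1} b(\sigma)$ descends to a $1$-cocycle with values in $B/(I' B + \theta^{\sharp} B + B')$, and that its restriction to $G_H$ (where $d \equiv \bpsi = 1$ modulo $I'$) is a $G$-equivariant homomorphism. Using the unramified condition of $\rho$ at primes outside $\fn \fP'$, the factor $B'$ in the denominator to absorb inertia at primes above $p$ outside $\fP$, and the appearance of $\theta^{\sharp}$ (which forces smoothing at $\Sigma'$), this homomorphism factors through the Artin map $G_H \twoheadrightarrow \Cl_H^{\Sigma', -}$. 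Taking $\chi$-components and $\sharp$-twisting gives $\alpha: \Cl_H^{\Sigma', \chi^{-1}} \twoheadrightarrow \ol{B_0}^{\sharp}$, and surjectivity is immediate since $B_0$ is by definition generated by the values of $b$.

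Next I would construct $\gamma$. Since $A_v^{\chi^{-1}}$ for $v = \fp \in \Sigma_f \subset S_p(F)$ is cyclic over $\Z_p[G]^{\chi^{-1}}$ with single relation $(1 - \varphi_\fp^{-1} + \# I_\fp)^{\chi^{-1}} = 0$, it suffices to prescribe the image of a generator. The natural candidate is the class of $\AC{\fp}$ for $\fp \mid \fP_{\ba}$ (where $\nu_{I_\fp}^{\chi}$ is meaningful) and is forced to be zero otherwise: for $\fp \mid \fP_{\ba}$, the defining relation of $B''$, namely that $(1 - \nu_{I_\fp} \bpsi(\varphi_\fp)^{-1} - U_\fp^{-1}) \AC{\fp}$ lies in $B''$, combines with $\eta_\fp(\varphi_\fp) = U_\fp$ (Theorem \ref{thm:Gal_rep}(b)) to translate after $\sharp$-twist into exactly the annihilation relation defining $A_\fp^{\chi^{-1}}$. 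For $\fp \mid \fP/\fP_{\ba}$ the vanishing of $\nu_{I_\fp}^{\chi}$ plus Lemma \ref{lem:B''} (so $U_\fp^{-1}\AC{\fp} \in B$) handles the relation. Surjectivity of $\gamma$ follows from the fact that $\ol{B_1}$ is generated modulo $B_0$ by the classes of the $\AC{\fp}$ for $\fp \mid \fP_{\ba}$.

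Finally, to produce $\beta$ and the commutativity of the diagram, I would compare extension classes. For each $v \in \Sigma_f$, Proposition \ref{prop:ext_desc} gives a lift $\wtil{\eta_v}: J_v^{\chi^{-1}} \to \Cl_H^{\Sigma', \chi^{-1}}$ of the upper class characterized by $\wtil{\eta_v}(\bpsi(\sigma) - 1) = \phi(\sigma)$ for $\sigma \in I_{F_v}$ and $\wtil{\eta_v}(\bpsi(\wtil{\varphi_v}) - 1 + \nu_{I_v}\bpsi(\wtil{\varphi_v})) = \phi(\wtil{\varphi_v})$, where $\phi(g) = \phi_H(g\wtil{c}g^{-1}\wtil{c}^{-1})^{1/2}$. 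Lemma \ref{lem:observ}(3) applied to the restriction of $\rho$ to $G_{F_\fp}$, together with the definition of $B''$, is precisely calibrated so that under $\alpha$ the images of $\phi(\sigma)$ and $\phi(\wtil{\varphi_v})$ are the elementary expressions in $b$, $\AC{\fp}$ one needs to match the image under $\gamma$ of the generator of $A_v^{\chi^{-1}}$ coming from the relation. The main obstacle will be exactly this verification at primes $\fp \mid \fP_{\ba}$: the submodule $B''$ was introduced to make this compatibility work, and the check requires carefully tracking how the factor $\nu_{I_\fp}\bpsi(\wtil{\varphi_\fp})$ in $J_v$ translates via the Galois representation to the expression $(1 - \nu_{I_\fp}\bpsi(\varphi_\fp)^{-1} - U_\fp^{-1})\AC{\fp}$ modulo $I'B + \theta^{\sharp}B + B_0$. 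Once this local comparison is established, a standard diagram chase together with Proposition \ref{prop:snake_desc} produces $\beta$, and the five lemma forces $\beta$ to be surjective.
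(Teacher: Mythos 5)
Your overall plan matches the paper's proof: define a cocycle $\kappa(\sigma)=\bpsi(\sigma)^{-1}\ol{b(\sigma)}$ (your $d(\sigma)^{-1}b(\sigma)$ agrees with this modulo $I'$), restrict to $G_H$ to get $\alpha$, send the cyclic generator of $A_{\fp}^{\chi^{-1}}$ to $\pm\AC{\fp}$ to get $\gamma$, and produce $\beta$ by matching $\alpha_*\eta_\Omega$ against $\gamma^*\eta_B$ using the explicit lift description of Proposition \ref{prop:ext_desc} and the ordinarity relation of Lemma \ref{lem:observ}(3). However, there are two points where the written proposal has a genuine gap.

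\emph{Unramifiedness of $\kappa|_{G_H}$ at primes dividing $\fP$.} You account for unramifiedness at primes $\fl\nmid\fn\fP'$ (from $\rho$ unramified there) and at primes dividing $\fP'$ (absorbed by $B'$). But the cocycle must also be unramified at the primes dividing $\fP$, i.e. at the $p$-adic primes ramified in $H/F$, and none of the ingredients you list handles these. This is in fact the substantive part of the verification: for $\fp\mid\fP_{\ba}$ one needs Lemma \ref{lem:observ}(3) together with $\eta_{\fp}$ being unramified and $\bpsi(\sigma)=1$ on $I_{H_w}$ to conclude $\kappa(\sigma)\in\AC{\fp}I'\subset I'B$; for $\fp\mid\fP/\fP_{\ba}$ (where $G_\fp$ is not a $p$-group) one argues instead that $\Hom_G(I_{H_w},\ol{B_0}^\sharp)$ vanishes because the $\chi$-part of $\prod_{w\mid\fp}U^1_{H_w}$ is zero, using faithfulness of $\chi$ and local class field theory. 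Without covering these two cases the homomorphism need not factor through $\Cl_H^{\Sigma'}$ at all.

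\emph{Misattribution of $\theta^\sharp$.} You say the appearance of $\theta^\sharp$ in the denominator ``forces smoothing at $\Sigma'$''; that is not its role. The reason $\kappa|_{G_H}$ factors through the ray class group of modulus $\Sigma'_H$ is solely the unramifiedness statement just described (ramification at $\Sigma'$ is \emph{permitted} by that modulus, it is not a constraint you need to enforce). The $\theta^\sharp$-quotient is what makes $\kappa$ a cocycle in the first place: the cocycle identity for $\sigma\mapsto\bpsi(\sigma)^{-1}\ol{b(\sigma)}$ requires $a(\sigma)$ to act as $1$ on $\ol{B_0}$, and since $a(\sigma)\equiv\varepsilon_{\cyc}^{k-1}(\sigma)\pmod{I'}$ by Lemma \ref{lem:observ}(1), one needs $\varepsilon_{\cyc}^{k-1}(\sigma)\equiv 1\pmod{\theta^\sharp}$, which holds only because $k\equiv 1\pmod{(p-1)p^N}$ with $N$ large and $\theta^\sharp$ is a non-zero-divisor. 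Finally, for the $\beta$ step you should note that one takes $\wtil{c}=\tau$ (the element with $\rho(\tau)$ diagonal), so that $\phi(\tau)=\kappa(\tau)=0$ forces the ambient coboundary constant to vanish and one gets the pointwise identity $\alpha\circ\phi=\kappa$ used to match the lifts $\wtil{\eta_{\Omega,\fp}}$ and $(\wtil{\gamma^*\eta_B})_\fp$.
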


\begin{thm}\label{thm:Fitt_B}
Recall that $I_p = \sum_{\fp \in S_p(F)} I_{\fp} \subset G$.
Then we have
\[
\Fitt_{\Rc}(\ol{B_p}) \subset (\theta^{\sharp}, \delta_{\fP_{\ba}' = 1} \nu_{I_p}^{\chi})
\]
as ideals of $\Z_p[G]^{\chi}$.
\end{thm}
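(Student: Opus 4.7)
The plan is to compute the Fitting ideal of $\ol{B_p}$ by realizing its generators as Fourier coefficients of the cuspform $F_k(\bpsi)$ after suitable Hecke actions, and then invoking the explicit congruence of Theorem \ref{thm:cuspform}. The whole argument runs in parallel with \cite[\S 9]{DK20}, but with two genuinely new features forced by our formulation: the nontrivial weight $k$ correction term containing $\nu_{\fP}$, and the auxiliary module $B''$.

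First, using the Galois representation $\rho$ of Theorem \ref{thm:Gal_rep}, I would express generators of $B$ in the Hecke algebra. The generators $b(\sigma)$ of $B_0$ are matrix entries of $\rho$, so $a(\sigma) b(\sigma') = $ a trace/product combination whose image in $\bT'/I'$ is explicitly computable; after multiplying by enough classes of Frobenii to turn entries into Hecke eigenvalues, one realises $b(\sigma)$ (modulo $I'B$) as a $\Z_p[G]^{\chi}$-linear combination of Fourier coefficients $\coef{\fa}{F_k(\bpsi)}$ through the homomorphism $\varphi : \wtil{\bT} \to W$ of Definition \ref{defn:psi_small}. Lemma \ref{lem:observ}(3) then translates the generators $\AC{\fp}$ of $B/B_0$ (for $\fp \mid \fP_{\ba}$) into $b(\sigma)/\text{(unit)}$ plus error terms that Lemma \ref{lem:B''} and the very definition of $B''$ show are already contained in $B'' + I'B$.

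Second, I would plug in the congruence
\[
F_k(\bpsi) \equiv \wtil{x}\, \ol{W}_1^{H/F}(\bpsi_{\fn}, \btr)^{\chi} - \delta_{\fP_{\ba} = \fP}\, \nu_{\fP}\, W_k^{H^{\fP}/F}(\bpsi_{\fn/\fP}, \btr_p)^{\chi} \pmod{(\wtil{x}\,\theta^{\sharp})}
\]
from Theorem \ref{thm:cuspform}. The first term is a weight $1$ Eisenstein piece whose non-constant Fourier coefficients are controlled by Propositions \ref{prop:Hecke_olW}(1)(2): away from $\fP_{\ba}$ the Hecke relations on $\ol{W}_1^{H/F}$ are diagonal in $\bpsi$, so the corresponding coefficients feed back into $I'B + \theta^{\sharp}B + B'$. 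For $\fp \mid \fP_{\ba}$, the $U_{\fp}$-action on $\ol{W}_1^{H/F}$ is \emph{not} simply eigen, and here the module $B''$ is tailor-made so that the discrepancy $\bigl(1 - \nu_{I_{\fp}}\bpsi(\varphi_{\fp})^{-1} - U_{\fp}^{-1}\bigr)\AC{\fp}$ vanishes in $\ol{B_p}$. Canceling $\wtil{x}$ (a non-zero-divisor) and projecting to $\bT'/I'$ shows that the contribution of the weight $1$ piece to $\Fitt_{\Rc}(\ol{B_p})$ is absorbed into $(\theta^{\sharp})$.

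Third, the remaining term $\delta_{\fP_{\ba} = \fP}\,\nu_{\fP}\,W_k^{H^{\fP}/F}(\bpsi_{\fn/\fP}, \btr_p)^{\chi}$ is where $\nu_{I_p}^{\chi}$ enters. When $\fP_{\ba} = \fP$, i.e.\ when $\fP_{\ba}' = 1$, the factor $\nu_{\fP} = \prod_{\fp \mid \fP}\nu_{I_{\fp}}$ projects to a multiple of $\nu_{I_p}^{\chi}$ in $\Z_p[G]^{\chi}$ (since each $I_{\fp} \subset I_p$ and the $\chi$-idempotent of each $\nu_{I_{\fp}}$ is absorbed into $\nu_{I_p}^{\chi}$), so every Fourier coefficient of this correction term lies in $(\nu_{I_p}^{\chi})$. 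When $\fP_{\ba}' \ne 1$, this term is absent. Combining both contributions gives the desired inclusion.

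The main obstacle is the third step: the weight $k$ piece does not satisfy the clean Hecke eigen-relations that carried the weight $1$ piece, and its Fourier coefficients at primes $\fa$ coprime to $\fP$ can be quite intricate. The saving grace is the prefactor $\nu_{\fP}$, which makes \emph{every} coefficient of that term land in $(\nu_{I_p}^{\chi})$ regardless of its internal complexity; the technical heart of the proof is verifying, prime by prime and Hecke operator by Hecke operator, that no other residual term escapes $I'B + \theta^{\sharp}B + B' + B''$, so that the conclusion $\Fitt_{\Rc}(\ol{B_p}) \subset (\theta^{\sharp}, \delta_{\fP_{\ba}' = 1}\, \nu_{I_p}^{\chi})$ genuinely emerges.
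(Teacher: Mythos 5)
Your proposal captures the overall architecture (generators of $\ol{B_p}$ as matrix entries, transfer to Fourier coefficients of $F_k(\bpsi)$ via $\varphi$, invoking the congruence from Theorem \ref{thm:cuspform}, and the role of $B''$ in absorbing the non-eigen $U_{\fp}$-action at $\fp\mid\fP_{\ba}$), but it contains a genuine gap at the crucial third step.

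You write ``When $\fP_{\ba}=\fP$, i.e.\ when $\fP_{\ba}'=1$,'' but these two conditions are \emph{not} equivalent: $\fP_{\ba}'=1$ means that \emph{every} $p$-adic prime of $F$ is ramified with $p$-group decomposition, whereas $\fP_{\ba}=\fP$ only constrains the ramified ones. Since the coefficient on the weight-$k$ term in Theorem \ref{thm:cuspform} is $\delta_{\fP_{\ba}=\fP}$, your argument as written would only yield $\Fitt_{\Rc}(\ol{B_p})\subset(\theta^{\sharp},\delta_{\fP_{\ba}=\fP}\,\nu_{I_p}^{\chi})$, which is strictly weaker than the stated theorem and insufficient to deduce Theorem \ref{thm:current_main_p}(1) in the cases where $\fP_{\ba}=\fP$ but $\fP'\neq 1$.

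The missing mechanism is the following, and it is where the choice of generators of $\ol{B_p}$ matters. The paper's proof picks generators $c_1,\dots,c_r$ (with $r=\#\prim(\fP')$) arising from $b(\sigma_j^{-1})$ for $\sigma_j\in G_{F_{\fp'_j}}$ a Frobenius lift at the unramified $p$-adic prime $\fp'_j$. By Lemma \ref{lem:observ}(3) and equation \eqref{eq:c_desc}, $c_j=(U_{\fp'_j}-\bpsi(\sigma_j)+x_j)\AC{\fp'_j}$. Feeding a relation matrix $M$ through the homomorphism $\varphi$ therefore produces an element $t\in\wtil{\bT}$ (Lemma \ref{lem:t}) that contains the factor $\prod_{j=1}^r(U_{\fp'_j}-\bpsi(\fp'_j))$. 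The decisive point — which you do not mention — is that by Proposition \ref{prop:Hecke_W}(2), each $U_{\fp'_j}-\bpsi(\fp'_j)$ annihilates the weight-$k$ piece $W_k^{H^{\fP}/F}(\bpsi_{\fn/\fP},\btr_p)^{\chi}$, because $\fp'_j\mid p$ divides the smoothing modulus $\fR=p$. Hence the weight-$k$ contribution to $t\,F_k(\bpsi)$ vanishes whenever $r\geq 1$, i.e.\ whenever $\fP'\neq 1$; combined with the prefactor $\delta_{\fP_{\ba}=\fP}$ this is exactly the condition $\fP_{\ba}'=1$. Without this observation, the inclusion you obtain is too weak. (Separately, your second step is also rather vague: the way one extracts $\det(M)$ from Fourier coefficients in the paper requires the determinantal identity $\det(M')=0$ in Lemma \ref{lem:t} followed by the inductive substitution of $v^{\fa}_{il}$ for $u_{il}$ in Proposition \ref{prop:tW1} and the induction \eqref{eq:det_1}, which is not reducible to a general ``prime by prime'' heuristic.)
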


Let us deduce Theorem \ref{thm:current_main_p} (for $\chi^{-1}$ instead of $\chi$) from these two theorems.
By Theorems \ref{thm:abc} and \ref{thm:Fitt_B},
we have the first and the last inclusions of
\[
\Fitt_{\Z_p[G]^{\chi^{-1}}}(\Omega_{\Sigma}^{\Sigma', \chi^{-1}})
\subset \Fitt_{\Z_p[G]^{\chi^{-1}}}(\ol{B_p}^{\sharp})
= \Fitt_{\Z_p[G]^{\chi}}(\ol{B_p})^{\sharp}
\subset (\theta_{\Sigma}^{\Sigma', \chi^{-1}}, \delta_{\fP_{\ba}' = 1} \nu_{I_p}^{\chi^{-1}})
\]
in $\Z_p[G]^{\chi^{-1}}$.
Recall that, by the definition, $\fP_{\ba}' = 1$ if and only if all the $p$-adic primes are ramified in $H/F$ and moreover the decomposition groups are $p$-groups.
Thus the $\chi^{-1}$-component of Theorem \ref{thm:current_main_p} follows.

\subsection{The existence of the homomorphisms}\label{ss:abc}

In this subsection, we prove Theorem \ref{thm:abc}.

\subsubsection{Construction of $\alpha$}\label{sss:a}

We define a map $\kappa: G_F \to \ol{B_0}^{\sharp}$ by
\[
\kappa(\sigma) = \bpsi(\sigma)^{-1} \ol{b(\sigma)}.
\]

\begin{lem}\label{lem:a_cocycle}
The map $\kappa$ is a cocycle, so we have $[\kappa] \in H^1(G_F, \ol{B_0}^{\sharp})$.
\end{lem}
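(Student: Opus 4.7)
The plan is to verify the cocycle identity $\kappa(\sigma\tau) = \kappa(\sigma) + \sigma\cdot\kappa(\tau)$ directly from the multiplicativity $\rho(\sigma\tau) = \rho(\sigma)\rho(\tau)$, using the congruences for $a,b,d$ modulo $I'$ from Lemma \ref{lem:observ}(1). Note first that $\kappa$ genuinely takes values in $\ol{B_0}^{\sharp}$ (not merely $\ol{B_p}^{\sharp}$): by the very definition of $B_0$ in Definition \ref{defn:B}, we have $b(\sigma)\in B_0$ for every $\sigma\in G_F$, so $\bpsi(\sigma)^{-1}\ol{b(\sigma)}$ lies in $\ol{B_0}$, which is $\ol{B_0}^{\sharp}$ as an abelian group. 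The Galois action on $\ol{B_0}^{\sharp}$ factors through $G$ and, by the $\sharp$-twist convention, sends $\sigma$ to multiplication by $\bpsi(\sigma)^{-1}$; it is this action that I must reproduce on the right-hand side.

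Concretely, I would first compute in the fixed basis the identity
\[
b(\sigma\tau) \;=\; a(\sigma)\,b(\tau) + b(\sigma)\,d(\tau)
\]
from $\rho(\sigma\tau) = \rho(\sigma)\rho(\tau)$. Then I would reduce modulo $I'B$: since $I'$ annihilates $\ol{B_p}$, Lemma \ref{lem:observ}(1) permits replacing $a(\sigma)$ with $\varepsilon_{\cyc}^{k-1}(\sigma)$ and $d(\tau)$ with $\bpsi(\tau)$, yielding
\[
\ol{b(\sigma\tau)} \;=\; \varepsilon_{\cyc}^{k-1}(\sigma)\,\ol{b(\tau)} + \bpsi(\tau)\,\ol{b(\sigma)} \quad\text{in } \ol{B_0}.
\]
Multiplying through by $\bpsi(\sigma\tau)^{-1} = \bpsi(\sigma)^{-1}\bpsi(\tau)^{-1}$ rearranges this to
\[
\kappa(\sigma\tau) \;=\; \bpsi(\sigma)^{-1}\varepsilon_{\cyc}^{k-1}(\sigma)\,\kappa(\tau) + \kappa(\sigma).
\]

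The main obstacle (such as it is) is disposing of the spurious scalar $\varepsilon_{\cyc}^{k-1}(\sigma)\in\Z_p^{\times}$. Since $k\equiv 1\pmod{(p-1)p^N}$, Fermat gives $\varepsilon_{\cyc}(\sigma)^{k-1}\equiv 1\pmod{p^{N+1}}$ for every $\sigma\in G_F$. On the other hand, $\theta^{\sharp}$ is a non-zero-divisor in the semi-local ring $\Rc$ and $\Rc/(\theta^{\sharp})$ is finite, so for $N$ sufficiently large (enlarging the $N$ of Theorem \ref{thm:cuspform} if necessary) we have $p^{N+1}\in(\theta^{\sharp})$; because $\theta^{\sharp}$ kills $\ol{B_p}$, the factor $\varepsilon_{\cyc}^{k-1}(\sigma)$ acts trivially on $\ol{B_0}^{\sharp}$. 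The remaining factor $\bpsi(\sigma)^{-1}$ is precisely the action of $\sigma$ on $\ol{B_0}^{\sharp}$ coming from the $\sharp$-twist, so the displayed identity simplifies to $\kappa(\sigma\tau) = \sigma\cdot\kappa(\tau) + \kappa(\sigma)$, as required. No ingredient is needed beyond multiplicativity of $\rho$, Lemma \ref{lem:observ}(1), and the standard integrality argument for absorbing high $p$-powers into $(\theta^{\sharp})$.
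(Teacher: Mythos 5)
Your proposal is correct and follows essentially the same argument as the paper: expand the upper-right entry of $\rho(\sigma\sigma') = \rho(\sigma)\rho(\sigma')$, reduce modulo $I'$ using Lemma \ref{lem:observ}(1), and absorb the factor $\varepsilon_{\cyc}^{k-1}(\sigma)$ using the fact that it is $\equiv 1 \pmod{\theta^{\sharp}}$ (which the paper asserts without spelling out, and which you justify correctly via $k \equiv 1 \pmod{(p-1)p^N}$ and $p$-adic finiteness of $\Rc/(\theta^{\sharp})$). The only cosmetic quibble is that you reuse $\tau$ as a generic group element when $\tau$ has already been reserved for the fixed lift of complex conjugation.
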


\begin{proof}
Let $\sigma, \sigma' \in G_F$.
By $\rho(\sigma \sigma') = \rho(\sigma) \rho(\sigma')$, we have
\[
b(\sigma \sigma') = a(\sigma) b(\sigma') + b(\sigma) d(\sigma').
\]
By Lemma \ref{lem:observ}(1) and $\varepsilon_{\cyc}^{k - 1}(\sigma) \equiv 1 \pmod{\theta^{\sharp}}$, we obtain
\[
\ol{b(\sigma \sigma')} = \ol{b(\sigma')} + \bpsi(\sigma') \ol{b(\sigma)}
\]
as elements of $\ol{B_0}$.
This shows $\kappa(\sigma \sigma') = \kappa(\sigma) + \bpsi^{-1}(\sigma) \kappa(\sigma')$.
\end{proof}

Then we obtain the restriction 
\[
[\kappa]|_{G_H} \in H^1(G_H, \ol{B_0}^{\sharp})^G = \Hom_G(G_H, \ol{B_0}^{\sharp}).
\]
This cocycle induces the desired homomorphism $\alpha$ as follows.

\begin{prop}\label{prop:kappa_unram}
The restriction $[\kappa]|_{G_H} \in H^1(G_H, \ol{B_0}^{\sharp})$ is unramified at any places of $H$ not lying above a place in 
$\Sigma' = \prim(\fn / \fP)$.
Therefore, by class field theory, the cocycle $[\kappa]|_{G_H}$ induces a homomorphism
$\alpha: \Cl_H^{\Sigma', \chi^{-1}} \to \ol{B_0}^{\sharp}$.
\end{prop}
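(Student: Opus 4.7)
The plan is to show that the cocycle $\kappa|_{G_H}$ vanishes on the inertia subgroup $I_{H_w}$ for every place $w$ of $H$ not above a prime in $\Sigma' = \prim(\fn/\fP)$; once this is verified, the assertion that $\kappa|_{G_H}$ factors through $\Cl_H^{\Sigma'}$ follows from Artin reciprocity (together with $G$-equivariance, which is automatic from the construction), and passing to $\chi^{-1}$-components yields $\alpha$. Since any $\sigma \in I_{H_w} \subset G_H$ satisfies $\bpsi(\sigma) = 1 \in G$, we have $\kappa(\sigma) = \ol{b(\sigma)}$, and the task reduces to showing that $b(\sigma) \in I'B + \theta^{\sharp}B + B' + B''$ for every such $\sigma$. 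The argument proceeds by case analysis on the prime $v$ of $F$ lying beneath $w$.

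Three routine cases cover most of the work. First, if $v$ does not divide $\fn\fP'$, then Theorem \ref{thm:Gal_rep}(a) implies that $\rho$ is unramified at $v$, so $b(\sigma) = 0$ in $\bK_E$ and there is nothing to check. Second, if $v \in \prim(\fP')$, then $v$ is unramified in $H/F$, so $I_{H_w} = I_{F_v}$, and $b(\sigma) \in B'$ by the very definition of $B'$. Third, for $v = \fp \in \prim(\fP)$, Lemma \ref{lem:observ}(2) combined with the fact that $\eta_{\fp}$ is unramified and $\bpsi(\sigma) = 1$ yields $b(\sigma) = (\varepsilon_{\cyc}^{k-1}(\sigma) - a(\sigma))\AC{\fp}$; Lemma \ref{lem:observ}(1) then gives $b(\sigma) \in I' \cdot \AC{\fp}$. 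When $\fp \in \fP_{\ba}$ this lies in $I'B$, because $\AC{\fp} \in B$ by construction, and the subcase is settled.

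The main obstacle is the remaining subcase $\fp \in \fP \setminus \fP_{\ba}$, where $G_{\fp}$ is not a $p$-group and $\AC{\fp}$ is not a priori in $B$. I plan to split this further according to whether $I_\fp$ is a $p$-group. If $I_\fp$ is not a $p$-group, then $I_\fp' = I_\fp \cap G' \neq 1$, and the faithfulness of $\chi$ on $G'$ forces $\nu_{I_\fp}^\chi = 0$ together with the vanishing of the relevant character-component of $A_\fp$, which should trivialize the contribution after a short computation. If instead $I_\fp$ is a $p$-group (so $\fp \in \fP_p \setminus \fP_{\ba}$), then $\varphi_\fp$ has non-trivial prime-to-$p$ image in $(G/I_\fp)'$; here I expect to invoke Lemma \ref{lem:unit} (to exhibit the unit $y_\fp$) together with Lemma \ref{lem:observ}(3) applied to a lift $\wtil{\varphi_\fp}$ of the Frobenius, producing enough algebraic relations inside $B$ to absorb $I' \cdot \AC{\fp}$ modulo $\theta^{\sharp}B$. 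Infinite places impose no constraint, since $\ol{B_0}^{\sharp}$ is a minus-component module and complex places of $H$ have trivial inertia. Once the cocycle is shown to kill $I_{H_w}$ at every $w \nmid \Sigma'_H$, the induced $G$-equivariant homomorphism $G_H^{\Sigma',\ab} \to \ol{B_0}^{\sharp}$ corresponds, via Artin reciprocity and passage to $\chi^{-1}$-components, to the desired map $\alpha$.
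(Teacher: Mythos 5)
Your three routine cases agree with the paper. For $\fl \nmid \fn\fP'$ you invoke Theorem~\ref{thm:Gal_rep}(a); for $\fp \mid \fP'$ you use the definition of $B'$; and for $\fp \mid \fP_{\ba}$ the combination of Lemma~\ref{lem:observ}(1) and (2) gives $b(\sigma) \in \AC{\fp} I' \subset I'B$ (the paper uses part (3) of the same lemma, but the conclusion is identical). You also correctly flag that this last computation only closes the case when $\AC{\fp} \in B$, i.e.\ $\fp \mid \fP_{\ba}$.

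The genuine gap is the case $\fp \mid \fP / \fP_{\ba}$. You try to continue computing with the Galois representation, splitting on whether $I_\fp$ is a $p$-group and appealing to $\nu_{I_\fp}^\chi = 0$, the vanishing of $A_\fp^\chi$, and Lemma~\ref{lem:unit}. This does not go through, and more importantly it is not the right kind of argument. Those facts concern the group ring $\Z_p[G]^{\chi}$ and have no direct bearing on whether the element $b(\sigma) \in \bK_E$ lies in $I'B + \theta^{\sharp}B + B' + B''$; in particular $\AC{\fp}$ is \emph{not} in $B$ when $\fp \nmid \fP_{\ba}$, so $\AC{\fp}I' \not\subset I'B$, and neither Lemma~\ref{lem:unit} (which produces a unit of $\Z_p[G]^\chi$ used in the construction of the cuspform, not a relation inside $\bK_E$) nor the vanishing of $\nu_{I_\fp}^\chi$ closes that gap. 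The paper's proof sidesteps the representation-theoretic computation entirely for this case: since $\ol{B_0}^{\sharp}$ is a $\Z_p[G]^{\chi^{-1}}$-module and, by local class field theory, the pro-$p$ part of the abelianized inertia above $\fp$ is controlled (as a $G$-module) by $\prod_{\fp_H \mid \fp} U^1_{H_{\fp_H}} \simeq U^1_{H_{\fp_H}} \otimes \Z_p[G/G_\fp]$, the fact that $G_\fp$ is not a $p$-group together with faithfulness of $\chi$ forces the $\chi^{-1}$-component of this module to vanish. Hence $\Hom_G(I_{H_{\fp_H}}, \ol{B_0}^{\sharp}) = 0$, and \emph{any} $G$-equivariant cocycle, in particular $[\kappa]|_{G_H}$, is automatically unramified at $\fp$ regardless of what $b(\sigma)$ is. This structural observation is the missing idea in your proposal.
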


\begin{proof}
First we show that $[\kappa]|_{G_H}$ is unramified at places not lying above a place in $\prim(\fn/\fP_{\ba}) = \Sigma' \cup \prim(\fP/\fP_{\ba})$.
For any prime $\fl \nmid \fn \fP'$, since $\rho$ is unramified at $\fl$, we have $b(\sigma) = 0$ for $\sigma \in I_{F_{\fl}}$, so $\kappa(\sigma) = 0$.
For any $\fp \mid \fP'$ and $\sigma \in I_{F_{\fp}}$, since $b(\sigma) \in B'$ by the very definition of $B'$, we have $\kappa(\sigma) = 0$.
For any $\fp \mid \fP_{\bad}$, a place $\fp_H$ of $H$ lying above $\fp$, and $\sigma \in I_{H_{\fp_H}} \subset I_{F_{\fp}}$, we have by Lemma \ref{lem:observ}(3)
\[
\bpsi(\sigma)^{-1}b(\sigma) 
\equiv (\eta_{\fp}(\sigma)^{-1} - \bpsi(\sigma)^{-1}) \AC{\fp}
\pmod {\AC{\fp} I'}.
\]
Since $\eta_{\fp}(\sigma) = 1$ (by the unramifiedness of $\eta_{\fp}$) and $\bpsi(\sigma) = 1$, the right hand side vanishes.
This shows $\kappa(\sigma) = 0$.
For any $\fp \mid \fP / \fP_{\ba}$ and
a place $\fp_H$ of $H$ lying above $\fp$, 
we write $U^1_{H_{\fp_H}}$ for the principal local unit group of $H_{\fp_H}$. 
Then we have 
\[
\left( \prod_{\fp_H \mid \fp} U^1_{H_{\fp_H}} \right)^\chi 
\simeq \left( U^1_{H_{\fp_H}} \otimes \Z_p[G / G_{\fp}] \right)^\chi =0
\]
 since 
the decomposition group $G_{\fp} \subset G$ is not a $p$-group and $\chi$ is a faithful character. 
This implies that $H^1(I_{H_{\fp_H}}, \ol{B_0}^{\sharp})^G = \Hom_G(I_{H_{\fp_H}}, \ol{B_0}^{\sharp}) = 0$ by local class field theory.   
Therefore, $[\kappa]|_{G_H}$ is unramified at each $\fp \mid \fP / \fP_{\ba}$. 
Thus we obtain the proposition. 
\end{proof}

\begin{lem}
The homomorphism $\alpha$ is surjective.
\end{lem}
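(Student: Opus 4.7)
The plan is to show that the image of $\alpha$ exhausts $\ol{B_0}^{\sharp}$. First I would observe that $\ol{B_0}$ is generated as a $\Z_p[G]^{\chi}$-module by $\{\kappa(\sigma) : \sigma \in G_F\}$: by definition $B_0 = (b(\sigma) : \sigma \in G_F)_{\bT'}$, and the action of $\bT'$ on $\ol{B_0}$ factors through $\bT'/I'$, which is in turn a quotient of $\Z_p[G]^{\chi}$ by the surjectivity of the structure map noted after Definition \ref{defn:Eis_ideal}. Passing through the $\sharp$-equivalence, this reduces the claim to showing that the $\Z_p[G]^{\chi}$-submodule $N \subset \ol{B_0}$ generated by $\{\kappa(h) : h \in G_H\}$ equals all of $\ol{B_0}$, i.e.\ that $\kappa(\sigma) \in N$ for every $\sigma \in G_F$.

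Next, I would carry out the following commutator computation for arbitrary $\sigma \in G_F$. Iterating the cocycle relation of Lemma \ref{lem:a_cocycle} together with the elementary consequence $\kappa(\sigma^{-1}) = -\bpsi(\sigma)\kappa(\sigma)$, and using that $\bpsi(\tau) = -1$ (since $\tau$ lifts the complex conjugation $c \in G$ and $\chi$ is odd), a routine calculation yields
\[
\kappa(\sigma \tau \sigma^{-1} \tau^{-1}) = 2\kappa(\sigma) + (\bpsi(\sigma)^{-1} - 1)\kappa(\tau),
\]
in perfect parallel with the formula of Lemma \ref{lem:desc_phi}. Since $G = \Gal(H/F)$ is abelian, the commutator $\sigma\tau\sigma^{-1}\tau^{-1}$ restricts trivially to $H$ and hence lies in $G_H$, so the left-hand side is automatically an element of $N$.

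The decisive point is that $\kappa(\tau) = 0$ in $\ol{B_0}$. This follows at once from Lemma \ref{lem:choice_basis}, which was the precise reason for fixing the basis so that $\rho(\tau) = \diag(\lambda_1, \lambda_2)$: indeed $b(\tau) = 0$, and thus $\kappa(\tau) = \bpsi(\tau)^{-1}\ol{b(\tau)} = -\ol{b(\tau)} = 0$. The displayed identity therefore collapses to $2\kappa(\sigma) = \kappa(\sigma\tau\sigma^{-1}\tau^{-1}) \in N$, and since $2$ is a unit of $\Z_p[G]^{\chi}$ (because $p$ is odd), we conclude $\kappa(\sigma) \in N$ for every $\sigma \in G_F$, so $N = \ol{B_0}$. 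There is essentially no obstacle to overcome beyond the book-keeping in the commutator calculation and the $\sharp$-twist conversion, both of which are mechanical.
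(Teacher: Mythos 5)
Your proof is correct and matches the intended argument: the paper defers to \cite[Corollary 9.6]{DK20}, and that corollary is proved by precisely this commutator device (already on display in Lemma \ref{lem:desc_phi}). The cocycle identity $\kappa(\sigma\tau\sigma^{-1}\tau^{-1}) = 2\kappa(\sigma) + (\bpsi(\sigma)^{-1}-1)\kappa(\tau)$, the vanishing $\kappa(\tau)=0$ coming from the diagonal choice of basis in Lemma \ref{lem:choice_basis}, and the invertibility of $2$ are exactly the ingredients needed, and your reduction to the generating set $\{\kappa(\sigma)\}$ via the surjection $\Rc \twoheadrightarrow \bT'/I'$ is also the right bookkeeping.
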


\begin{proof}
This can be shown in a similar way as \cite[Corollary 9.6]{DK20}.
\end{proof}

\subsubsection{Construction of $\gamma$}\label{sss:c}

We first note that $A_{\fp}^{\chi^{-1}} = 0$ for $\fp \in \Sigma_f \setminus \prim(\fP_{\ba}) = \prim(\fP / \fP_{\ba})$ (see the proof of Lemma \ref{lem:U_ct}).
Therefore, we have
\[
\bigoplus_{\fp \in \Sigma_f} A_{\fp}^{\chi^{-1}} = 
\bigoplus_{\fp \mid \fP_{\bad}} A_{\fp}^{\chi^{-1}}.
\]
We show a lemma which makes essential use of the definition of $B''$ in Definition \ref{defn:B''}.

\begin{lem}\label{lem:gamma}
Let $\fp \mid \fP_{\bad}$.
\begin{itemize}
\item[(a)]
For $\sigma \in I_{F_{\fp}}$, we have
\[
(1 - \bpsi(\sigma)^{-1}) \AC{\fp} = \kappa(\sigma)
\]
in $\ol{B_p}$.
\item[(b)]
For a lift $\wtil{\varphi_{\fp}}$ of the Frobenius, we have
\[
(1 - \nu_{I_{\fp}}\bpsi(\varphi_{\fp})^{-1} - \bpsi(\wtil{\varphi_{\fp}})^{-1}) \AC{\fp} 
= \kappa(\wtil{\varphi_{\fp}})
\]
in $\ol{B_p}$.
\end{itemize}
\end{lem}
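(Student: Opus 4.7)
My plan is to derive both equalities as direct consequences of Lemma \ref{lem:observ}(3), using that $\eta_{\fp}$ is unramified (so it trivializes on $I_{F_{\fp}}$ and sends a Frobenius lift to $U_{\fp}$), together with the defining generators of the submodule $B''$ introduced in Definition \ref{defn:B''}. Throughout, I would work in $\ol{B_p} = B/(I'B + \theta^{\sharp} B + B' + B'')$, so all four subgroups may be discarded.

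The first step is to multiply Lemma \ref{lem:observ}(3) by $\varepsilon_{\cyc}^{k-1}(\sigma)$ to obtain, for any $\sigma \in G_{F_{\fp}}$,
\[
\bpsi(\sigma)^{-1} b(\sigma) \equiv \varepsilon_{\cyc}^{k-1}(\sigma)\bigl(\eta_{\fp}(\sigma)^{-1} - \bpsi(\sigma)^{-1}\bigr) \AC{\fp} \pmod{\AC{\fp} I'}.
\]
Since $\AC{\fp} I' \subset I' B$ is killed in $\ol{B_p}$, and since the choice of $k$ with $k \equiv 1 \pmod{(p-1)p^N}$ for $N$ sufficiently large forces $\varepsilon_{\cyc}^{k-1}(\sigma) - 1$ into $(\theta^{\sharp})\Z_p[G]^{\chi}$ for every $\sigma$ in the decomposition groups we consider, the factor $\varepsilon_{\cyc}^{k-1}(\sigma)$ can be replaced by $1$ modulo $\theta^{\sharp} B$. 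Thus in $\ol{B_p}$ we have the cleaner identity $\kappa(\sigma) = \bpsi(\sigma)^{-1}\ol{b(\sigma)} = (\eta_{\fp}(\sigma)^{-1} - \bpsi(\sigma)^{-1})\AC{\fp}$.

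For part (a), specialize this identity to $\sigma \in I_{F_{\fp}}$, where $\eta_{\fp}(\sigma) = 1$ by the unramifiedness of $\eta_{\fp}$. The identity collapses exactly to $\kappa(\sigma) = (1 - \bpsi(\sigma)^{-1})\AC{\fp}$ in $\ol{B_p}$, as desired. For part (b), specialize to $\sigma = \wtil{\varphi_{\fp}}$, so that $\eta_{\fp}(\wtil{\varphi_{\fp}}) = U_{\fp}$, obtaining $\kappa(\wtil{\varphi_{\fp}}) = (U_{\fp}^{-1} - \bpsi(\wtil{\varphi_{\fp}})^{-1})\AC{\fp}$. The final move is to replace $U_{\fp}^{-1}\AC{\fp}$ by $(1 - \nu_{I_{\fp}}\bpsi(\varphi_{\fp})^{-1})\AC{\fp}$: this is precisely the congruence $(1 - \nu_{I_{\fp}}\bpsi(\varphi_{\fp})^{-1} - U_{\fp}^{-1})\AC{\fp} \in B''$ that defines $B''$ in Definition \ref{defn:B''}, and it is killed in $\ol{B_p}$. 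Combining these steps yields the asserted equality.

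The principal obstacle I foresee is justifying rigorously that the $\varepsilon_{\cyc}^{k-1}(\sigma)$ factor may be dropped in $\ol{B_p}$, i.e.\ that $(\varepsilon_{\cyc}^{k-1}(\sigma) - 1)\AC{\fp} \in \theta^{\sharp} B$ for every relevant $\sigma$. This requires unpacking how large $N$ must be chosen in Theorem \ref{thm:cuspform} (already used to make $N(\fp)^{k-1} \in (\theta^{\sharp})$) and confirming that the analogous divisibility persists on the full decomposition group $G_{F_{\fp}}$; it is the same type of bookkeeping used throughout \S \ref{s:cuspform}, but is the one genuinely delicate point. Once this congruence is in place, both (a) and (b) follow essentially by inspection.
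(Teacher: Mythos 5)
Your proof is correct and follows the same route as the paper: invoke Lemma \ref{lem:observ}(3), discard $\AC{\fp}I' \subset I'B$, kill the $\varepsilon_{\cyc}^{k-1}(\sigma)$ factor modulo $\theta^{\sharp}B$, then use unramifiedness of $\eta_{\fp}$ for (a) and the generating relation of $B''$ for (b). The congruence $\varepsilon_{\cyc}^{k-1}(\sigma) \equiv 1 \pmod{\theta^{\sharp}}$ that you flag as the delicate point is indeed implicitly assumed throughout; the paper already states it (also without detailed justification) in the proof of Lemma \ref{lem:a_cocycle}, and it holds for $N$ large since $\Z_p[G]^{\chi}/(\theta^{\sharp})$ is finite and hence annihilated by some $p$-power, while $\varepsilon_{\cyc}^{k-1}(\sigma)-1$ can be forced into an arbitrarily deep $p$-power by enlarging $N$.
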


\begin{proof}
For any $\sigma \in G_{F_{\fp}}$, we have by Lemma \ref{lem:observ}(3)
\[
(\eta_{\fp}(\sigma)^{-1} - \bpsi(\sigma)^{-1}) \AC{\fp} 
= \bpsi(\sigma)^{-1} \ol{b(\sigma)}
= \kappa(\sigma)
\]
in $\ol{B_p}$.
Then the claim (a) immediately follows since $\eta_{\fp}(\sigma) = 1$.
The claim (b) also follows, since $\eta_{\fp}(\wtil{\varphi_{\fp}}) = U_{\fp}$ and $(1 - \nu_{I_{\fp}}\bpsi(\varphi_{\fp})^{-1} - U_{\fp}^{-1}) \AC{\fp} = 0$ in $\ol{B_p}$ by the definition of $B''$.
\end{proof}

%

For each $\fp \mid \fP$, recall that the ideal $J_{\fp}$ of $\Z[G]$ defined in \S \ref{ss:ext} is generated by 
$\bpsi(\sigma) - 1$ for $\sigma \in I_{F_{\fp}}$ and $\bpsi(\wtil{\varphi_{\fp}}) - 1 + \nu_{I_{\fp}} \bpsi(\wtil{\varphi_{\fp}})$ for any lift $\wtil{\varphi_{\fp}} \in G_{F_{\fp}}$ of the Frobenius.
For $\fp \mid \fP_{\bad}$, by Lemma \ref{lem:gamma}, the element $- \AC{\fp}$ is annihilated by $J_{\fp}$ in $\ol{B_1}^{\sharp}$.
Therefore, we can define a homomorphism
\[
\gamma: 
\bigoplus_{\fp \mid \fP_{\bad}} A_{\fp}^{\chi^{-1}} \to \ol{B_1}^{\sharp}
\]
by sending $1$ at $\fp \mid \fP_{\ba}$ to $-\AC{\fp}$.
Then $\gamma$ is clearly surjective.

\subsubsection{Construction of $\beta$}\label{sss:b}

Recall that in \S \ref{ss:Gal_rep}, 
we fixed a lift $\tau \in G_F$ of the complex conjugation $c \in \Gal(H/F)$ that satisfies a certain condition. 
We take $\wtil{c} = \tau$ in Lemma \ref{lem:desc_phi} and define a cocycle $\phi$ accordingly.  
We first observe a lemma.

\begin{lem}\label{lem:phi_kappa}
We have
\[
\alpha \circ \phi = \kappa
\]
as maps from $G_F$ to $\ol{B_0}^{\sharp}$.
\end{lem}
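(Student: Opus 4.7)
The plan is to verify the identity pointwise on $G_F$. The construction of $\alpha$ in Proposition~\ref{prop:kappa_unram} gives precisely $\alpha(\phi_H(g)) = \kappa(g)$ for every $g \in G_H$, where $\phi_H(g)$ is viewed in the $\chi^{-1}$-component via the obvious projection. Since $\alpha$ is $\Z_p[G]^{\chi^{-1}}$-linear and the target $\ol{B_0}^{\sharp}$ is $2$-divisible (we are on a minus component and $p$ is odd), combining this with the defining formula $\phi(\sigma) = \phi_H(\sigma\tau\sigma^{-1}\tau^{-1})^{1/2}$ from Lemma~\ref{lem:desc_phi} reduces the lemma to the identity
\[
\tfrac{1}{2}\,\kappa\!\left(\sigma\tau\sigma^{-1}\tau^{-1}\right) \;=\; \kappa(\sigma)
\qquad \text{for all } \sigma \in G_F.
\]

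Next I would unwind the left-hand side by repeated application of the cocycle relation
\[
\kappa(\sigma_1\sigma_2) \;=\; \kappa(\sigma_1) + \bpsi(\sigma_1)^{-1}\kappa(\sigma_2)
\]
extracted from the proof of Lemma~\ref{lem:a_cocycle}, together with its immediate consequence $\kappa(\sigma^{-1}) = -\bpsi(\sigma)\,\kappa(\sigma)$. Because $\tau$ lifts the complex conjugation $c$, its image under $\bpsi$ equals $-1$ in $\Z_p[G]^{\chi^{-1}}$, so in particular $\kappa(\tau^{-1}) = \kappa(\tau)$. A short bookkeeping calculation then yields
\[
\kappa\!\left(\sigma\tau\sigma^{-1}\tau^{-1}\right) \;=\; 2\,\kappa(\sigma) + \bigl(\bpsi(\sigma)^{-1} - 1\bigr)\,\kappa(\tau).
\]

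The decisive input is the vanishing $\kappa(\tau) = 0$. This is where the specific choice of the lift $\tau$ made before Lemma~\ref{lem:choice_basis} pays off: in the basis fixed there the matrix $\rho(\tau)$ is diagonal, hence $b(\tau) = 0$, and therefore $\kappa(\tau) = \bpsi(\tau)^{-1}\,\ol{b(\tau)} = 0$ in $\ol{B_0}^{\sharp}$. Substituting this into the displayed identity gives $\kappa(\sigma\tau\sigma^{-1}\tau^{-1}) = 2\kappa(\sigma)$, which after dividing by $2$ is exactly the reduced statement. I do not anticipate any real obstacle here: the whole argument is a clean cocycle manipulation whose success is built into the earlier setup, namely the deliberate choice of $\tau$ and of the diagonalizing basis of Lemma~\ref{lem:choice_basis} that forces $b(\tau) = 0$.
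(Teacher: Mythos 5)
Your proof is correct, and it takes a genuinely different route from the paper's. The paper argues cohomologically: it observes that $\alpha_*[\phi]$ and $[\kappa]$ have the same restriction to $G_H$ and hence differ by a coboundary $\sigma \mapsto \bpsi(\sigma)^{-1}\ol{b_0} - \ol{b_0}$, then plugs in $\sigma = \tau$ (using $\phi(\tau) = 0$, $\kappa(\tau) = 0$, $\bpsi(\tau) = -1$) to force $\ol{b_0} = 0$. You instead bypass any cohomology-class comparison entirely: you use the defining property $\alpha \circ \phi_H^{\chi^{-1}} = \kappa|_{G_H}$ to reduce to the pointwise identity $\tfrac{1}{2}\kappa(\sigma\tau\sigma^{-1}\tau^{-1}) = \kappa(\sigma)$, and then verify this by explicitly expanding $\kappa(\sigma\tau\sigma^{-1}\tau^{-1})$ via the cocycle relation $\kappa(\sigma_1\sigma_2) = \kappa(\sigma_1) + \bpsi(\sigma_1)^{-1}\kappa(\sigma_2)$, getting $2\kappa(\sigma) + (\bpsi(\sigma)^{-1}-1)\kappa(\tau)$ and finishing with $\kappa(\tau) = 0$ (which follows from the diagonal form of $\rho(\tau)$). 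Both proofs hinge on exactly the same two facts — $\kappa(\tau) = 0$ and $\bpsi(\tau) = -1$ — but your version makes the computation completely elementary and self-contained, whereas the paper's version is shorter because it hides the bookkeeping inside the coboundary formalism. Either style is fine here; the coboundary argument is probably the more robust pattern to reuse when one cannot so easily evaluate the commutator, but your direct expansion is arguably more transparent in this specific instance.
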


\begin{proof}
Recall the class $[\phi] \in H^1(G_F, \Cl_H^{\Sigma', -})$ in Definition \ref{defn:phi_cocycle}.
Then we have
\[
\alpha_* [\phi] = [\kappa] \in H^1(G_F, \ol{B_0}^{\sharp}),
\]
since both elements are sent by the restriction to the same class in $H^1(G_H, \ol{B_0}^{\sharp})^G$.
This means that there exists $\ol{b_0} \in \ol{B_0}$ such that
\[
\alpha(\phi(\sigma)) - \kappa(\sigma) = \bpsi(\sigma)^{-1} \ol{b_0} - \ol{b_0}
\]
for any $\sigma \in G_F$.
Let us put $\sigma = \tau$ in this formula.
Since $\phi(\tau) = 0$, $\kappa(\tau) = 0$, and $\bpsi(\tau) = -1$, we then obtain $0 = -2 \ol{b_0}$, so $\ol{b_0} = 0$.
This completes the proof of the lemma.
\end{proof}

As in Definition \ref{defn:eta}, let
\[
\eta^{\chi^{-1}} \in \Ext^1_{\Z_p[G]^{\chi^{-1}}} \parenth{\bigoplus_{\fp \mid \fP_{\bad}} A_{\fp}^{\chi^{-1}}, \Cl_H^{\Sigma', \chi^{-1}}}
\]
denote the extension class represented by the upper sequence in the diagram in Theorem \ref{thm:abc}.
Let us write $\eta_{\Omega} = \eta^{\chi^{-1}}$.
Similarly, let 
\[
\eta_B \in \Ext^1_{\Z_p[G]^{\chi^{-1}}} \parenth{\ol{B_1}^{\sharp}, \ol{B_0}^{\sharp}}
\]
denote the extension class represented by the lower sequence.

Then, for the construction of $\beta$, it is enough to show the following (cf.~\cite[Theorem 9.8]{DK20}).

\begin{prop}
We have
\[
\alpha_*\eta_{\Omega} = \gamma^* \eta_B
\in \Ext^1_{\Z_p[G]^{\chi^{-1}}} \parenth{\bigoplus_{\fp \mid \fP_{\bad}} A_{\fp}^{\chi^{-1}}, \ol{B_0}^{\sharp}}.
\]
\end{prop}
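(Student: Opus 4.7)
The plan is to prove the equality componentwise in $\Ext^1_{\Z_p[G]^{\chi^{-1}}}(A_\fp^{\chi^{-1}}, \ol{B_0}^\sharp)$ for each $\fp \mid \fP_{\ba}$ (noting that $A_\fp^{\chi^{-1}} = 0$ for $\fp \in \prim(\fP/\fP_{\ba})$). The $\chi^{-1}$-component of \eqref{eq:J_v} gives a short exact sequence $0 \to J_\fp^{\chi^{-1}} \to \Z_p[G]^{\chi^{-1}} \to A_\fp^{\chi^{-1}} \to 0$, and since $\Z_p[G]^{\chi^{-1}}$ is free, the connecting map
\[
\Hom_{\Z_p[G]^{\chi^{-1}}}(J_\fp^{\chi^{-1}}, \ol{B_0}^\sharp) \twoheadrightarrow \Ext^1_{\Z_p[G]^{\chi^{-1}}}(A_\fp^{\chi^{-1}}, \ol{B_0}^\sharp)
\]
is surjective. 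I will construct explicit preimages of $\alpha_*\eta_\Omega|_\fp$ and of $\gamma^*\eta_B|_\fp$, and verify that they agree on the two types of generators of $J_\fp^{\chi^{-1}}$, namely $\bpsi(\sigma)-1$ for $\sigma \in I_{F_\fp}$ and $\bpsi(\wtil{\varphi_\fp})-1+\nu_{I_\fp}\bpsi(\wtil{\varphi_\fp})$.

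For the left-hand side, Proposition \ref{prop:ext_desc}, applied with the lift $\wtil{c}=\tau$ fixed in \S \ref{ss:Gal_rep}, produces a map $\wtil{\eta_\fp} \in \Hom_{\Z[G]^-}(J_\fp^-, \Cl_H^{\Sigma',-})$ lifting $\eta_\fp$, whose value on the first generator is $\phi(\sigma)$ and on the second generator is $\phi(\wtil{\varphi_\fp})$. Taking the $\chi^{-1}$-component and composing with $\alpha$ gives a preimage of $\alpha_*\eta_\fp$ which, thanks to Lemma \ref{lem:phi_kappa}, sends the two generators to $\kappa(\sigma)$ and $\kappa(\wtil{\varphi_\fp})$, respectively.

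For the right-hand side, I take the tautological lift $-\AC{\fp} \in \ol{B_p}^\sharp$ of $\gamma(1_\fp) = -\AC{\fp} \in \ol{B_1}^\sharp$. The resulting preimage of $\gamma^*\eta_B|_\fp$ is the homomorphism $x \mapsto x \cdot (-\AC{\fp})$, with image automatically in $\ol{B_0}^\sharp$ because $x \in J_\fp^{\chi^{-1}}$ annihilates $A_\fp^{\chi^{-1}}$. Computing the $\sharp$-twisted action on the first generator gives $(1-\bpsi(\sigma)^{-1})\AC{\fp}$, which equals $\kappa(\sigma)$ in $\ol{B_p}$ by Lemma \ref{lem:gamma}(a). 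For the second generator, using $\nu_{I_\fp}^\sharp = \nu_{I_\fp}$ and that $\nu_{I_\fp}\bpsi(\wtil{\varphi_\fp})$ depends only on the class of $\wtil{\varphi_\fp}$ modulo $I_\fp$, I obtain $(1-\bpsi(\wtil{\varphi_\fp})^{-1} - \nu_{I_\fp}\bpsi(\varphi_\fp)^{-1})\AC{\fp}$, which is exactly $\kappa(\wtil{\varphi_\fp})$ by Lemma \ref{lem:gamma}(b). Hence both preimages agree on the generators, their images in $\Ext^1$ coincide, and summing over $\fp \mid \fP_{\ba}$ yields $\alpha_*\eta_\Omega = \gamma^*\eta_B$.

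The only delicate point will be bookkeeping for the involution $\sharp$: one must be careful that the action of $g \in G$ on $\ol{B_p}^\sharp$ corresponds to multiplication by $g^{-1}$ on $\ol{B_p}$, so that the computations line up precisely with the formulas of Lemma \ref{lem:gamma}. This is why the definition of $B''$ (via the element $1 - \nu_{I_\fp}\bpsi(\varphi_\fp)^{-1} - U_\fp^{-1}$) was tailored exactly to make the Frobenius case work; it is no accident that this element, after substituting $U_\fp^{-1}$ by $\eta_\fp(\wtil{\varphi_\fp})^{-1}$ and applying Lemma \ref{lem:observ}(3), produces the relation needed in Lemma \ref{lem:gamma}(b).
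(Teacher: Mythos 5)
Your proof matches the paper's almost verbatim: you decompose componentwise over $\fp \mid \fP_{\ba}$, lift both $\Ext^1$-classes through the connecting map coming from \eqref{eq:J_v}, use Proposition \ref{prop:ext_desc} with $\wtil c = \tau$ together with Lemma \ref{lem:phi_kappa} to describe the preimage of $\alpha_*\eta_{\Omega,\fp}$, and use multiplication by $-\AC{\fp}$ together with Lemma \ref{lem:gamma} for the preimage of $(\gamma^*\eta_B)_\fp$, then compare on the two types of generators of $J_\fp^{\chi^{-1}}$. The careful $\sharp$-twist bookkeeping you flag is exactly the point the paper handles via the statements (b$'$)(c$'$), so the argument is correct and identical in structure.
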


\begin{proof}
We fix $\fp \mid \fP_{\bad}$ and study the $\fp$-components $\eta_{\Omega, \fp}$ and $(\gamma^* \eta_B)_{\fp}$ of $\eta_{\Omega}$ and $\gamma^* \eta_B$.
We will always work over $\Z_p[G]^{\chi^{-1}}$.
It is convenient to write a commutative diagram over 
\[
\xymatrix{
	\Hom(J_{\fp}^{\chi^{-1}}, \Cl_H^{\Sigma', \chi^{-1}}) \ar@{->>}[r]^{\delta_{\Cl}} \ar[d]_{\alpha_*}
	& \Ext^1(A_{\fp}^{\chi^{-1}}, \Cl_H^{\Sigma', \chi^{-1}}) \ar[d]^{\alpha_*}\\
	\Hom(J_{\fp}^{\chi^{-1}}, \ol{B_0}^{\sharp}) \ar@{->>}[r]_{\delta_{B}}
	& \Ext^1(A_{\fp}^{\chi^{-1}}, \ol{B_0}^{\sharp}),
}
\]
where the horizontal arrows are induced by \eqref{eq:J_v}, so $\delta_{\Cl}$ is nothing but the $\chi^{-1}$-component of $\delta_{\fp}$ in \eqref{eq:Jv_ext}.
By Proposition \ref{prop:ext_desc}, we have an element
\[
\wtil{\eta_{\Omega, \fp}} \in \Hom(J_{\fp}^{\chi^{-1}}, \Cl_H^{\Sigma', \chi^{-1}})
\]
such that $\delta_{\Cl}(\wtil{\eta_{\Omega, \fp}}) = \eta_{\Omega, \fp}$ and that the properties labeled (b) and (c) hold.
Let $\alpha_* \wtil{\eta_{\Omega, \fp}} \in \Hom(J_{\fp}^{\chi^{-1}}, \ol{B_0}^{\sharp})$ be the push of $\wtil{\eta_{\Omega, \fp}}$.
Then, by the above commutative diagram, the following hold.
\begin{itemize}
\item[(a)]
We have $\delta_B(\alpha_* \wtil{\eta_{\Omega, \fp}}) = \alpha_* \eta_{\Omega, \fp}$.
\item[(b)]
For any $\sigma \in I_{F_{\fp}}$, 
we have $(\alpha_* \wtil{\eta_{\Omega, \fp}})(\bpsi(\sigma) - 1) = \alpha(\phi(\sigma))$.
\item[(c)]
For any lift $\wtil{\varphi_{\fp}} \in G_{F_{\fp}}$ of $\varphi_{\fp}$, 
we have $(\alpha_* \wtil{\eta_{\Omega, \fp}})(\bpsi(\wtil{\varphi_{\fp}}) - 1 + \nu_{I_{\fp}} \bpsi(\wtil{\varphi_{\fp}})) = \alpha(\phi(\wtil{\varphi_{\fp}}))$.
\end{itemize}

On the other hand, the element $(\gamma^*\eta_B)_{\fp} \in \Ext^1\parenth{A_{\fp}^{\chi^{-1}}, \ol{B_0}^{\sharp}}$ is described as follows.
Let us consider the homomorphism $\Z_p[G]^{\chi^{-1}} \to \ol{B_p}^{\sharp}$ which sends $1$ to $- \AC{\fp}$, which induces a homomorphism $(\wtil{\gamma^*\eta_B})_{\fp}: J_{\fp}^{\chi^{-1}} \to \ol{B_0}^{\sharp}$.
Then the following hold.
\begin{itemize}
\item[(a')]
We have $\delta_{B}((\wtil{\gamma^*\eta_B})_{\fp}) = (\gamma^*\eta_B)_{\fp}$ by tracing the maps involved in the snake lemma.
\item[(b')]
For $\sigma \in I_{F_{\fp}}$,
we have $(\wtil{\gamma^*\eta_B})_{\fp}(\bpsi(\sigma) - 1) = \kappa(\sigma)$ by Lemma \ref{lem:gamma}(a).
\item[(c')]
For any lift $\wtil{\varphi_{\fp}} \in G_{F_{\fp}}$ of $\varphi_{\fp}$, 
we have $(\wtil{\gamma^*\eta_B})_{\fp}(\bpsi(\wtil{\varphi_{\fp}}) - 1 + \nu_{I_{\fp}} \bpsi(\wtil{\varphi_{\fp}})) = \kappa(\wtil{\varphi_{\fp}})$ by Lemma \ref{lem:gamma}(b).
\end{itemize}

By Lemma \ref{lem:phi_kappa} and comparing the properties (b)(c) with (b')(c'), we have
\[
\alpha_* \wtil{\eta_{\Omega, \fp}} = (\wtil{\gamma^*\eta_B})_{\fp}.
\]
Therefore, by the properties (a) and (a'), we obtain
\[
\alpha_* \eta_{\Omega, \fp} = (\gamma^*\eta_B)_{\fp},
\]
as desired.
\end{proof}

\subsection{Computation of the Fitting ideal}\label{ss:Fitt_B} 

In this subsection, we prove Theorem \ref{thm:Fitt_B}.

We put $r = \# \prim(\fP') \geq 0$ and label the elements as $\prim(\fP') = \{\fp'_1, \dots, \fp'_r\}$.
For each $1 \leq j \leq r$, 
let us fix $\sigma_j \in G_{F_{\fp'_j}}$ which is a lift of the arithmetic Frobenius.
Put
\[
c_j = \bpsi(\sigma_j) \varepsilon_{\cyc}^{1 - k}(\sigma_j^{-1}) b(\sigma_j^{-1}) \in B_0 \subset B.
\] 
By Lemma \ref{lem:observ}(3), there exists an element $x_j \in I'$ such that 
\begin{equation}\label{eq:c_desc}
c_j = (U_{\fp'_j} - \bpsi(\sigma_j) + x_j) \AC{\fp'_j}.
\end{equation}
We also put $s = \# \prim(\fP_{\bad})$ and label $\prim(\fP_{\bad}) = \{ \fp_1, \dots, \fp_s\}$.

Let $b_1, \dots, b_n \in B$ be a set of generator of $B$ as a $\Rc$-module such that
$b_1, \dots, b_n$ are non-zero-divisor in $\bK_E$ (we may assume this as in \cite[Lemma 9.9]{DK20}).
We use the classes of 
\[
c_1, \dots, c_r, b_1, \dots, b_n, \AC{\fp_1}, \dots, \AC{\fp_s}
\]
as a generator of $\ol{B_p}$ over $\Rc$.
Compared to the work \cite[\S 9.5]{DK20}, the additional elements $\AC{\fp_l}$ for $1 \leq l \leq s$ are a novel ingredient.
We introduce them in order to manage the contribution of the additional module $B''$ in the definition of $\ol{B_p}$.
They make the computation even harder.

Let $M \in M_{r + n + s}(\Rc)$ be any square matrix which is a relation of the generators in $\ol{B_p}$, that is, if we write
\[
M = (W \mid Z \mid U) = 
\parenth{(w_{ij})_{i, 1 \leq j \leq r} \mid (z_{ik})_{i, 1 \leq k \leq n} \mid (u_{il})_{i, 1 \leq l \leq s} },
\]
we have
\begin{equation}\label{eq:Fitt_cond}
\sum_{j=1}^r w_{ij} c_j + \sum_{k=1}^n z_{ik} b_k + \sum_{l=1}^s u_{il} \AC{\fp_l}
\in I'B + \theta^{\sharp} B + B' + B''
\end{equation}
for each $1 \leq i \leq r + n + s$.
Since the Fitting ideal $\Fitt_{\Rc}(\ol{B_p})$ is generated by $\det(M) \in \Rc$ for $M$ with this property, 
in order to prove Theorem \ref{thm:Fitt_B}, it is enough to show
\begin{equation}\label{eq:Fitt_M}
\det(M) \in (\theta^{\sharp}, \delta_{\fP_{\ba}' = 1} \nu_{I_p}^{\chi}).
\end{equation}

\begin{lem}\label{lem:t}
There exists a family of elements $y_{ik}, \alpha_{il} \in \Rc$ such that, if we define an element $t \in \wtil{\bT}$ by
\[
t = \prod_{j = 1}^r (U_{\fp'_j}- \bpsi(\fp_j')) 
\cdot \det \parenth{(w_{ij})_{i, j} 
\mid (z_{ik} - \theta^{\sharp} y_{ik})_{i, k} 
\mid \parenth{u_{il} - \alpha_{il} (1 - \nu_{I_{\fp_l}} \bpsi({\fp_l})^{-1} - U_{\fp_l}^{-1})}_{i, l} },
\]
then we have
\[
t F_k(\bpsi) \equiv 0 \pmod{ (\wtil{x} \theta^{\sharp})}.
\]
\end{lem}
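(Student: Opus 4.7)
The plan is to convert the membership relation \eqref{eq:Fitt_cond} into an annihilation statement for $F_k(\bpsi)$ in the Hecke algebra $\wtil{\bT}$, closely following the strategy of \cite[\S 9.5]{DK20} but incorporating an additional absorption step to handle the module $B''$ that does not appear in their setup.

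First, I would unwind \eqref{eq:Fitt_cond} by choosing explicit representatives: for each row $i$, fix $\xi_i \in I'B$, $\eta_i \in B$, $\beta_i \in B'$, and $\gamma_i \in B''$ such that
\[
\sum_{j=1}^{r} w_{ij} c_j + \sum_{k=1}^{n} z_{ik} b_k + \sum_{l=1}^{s} u_{il} \AC{\fp_l} = \xi_i + \theta^\sharp \eta_i + \beta_i + \gamma_i.
\]
Because $b_1, \dots, b_n$ generate $B$ over $\Rc$, I can write $\eta_i = \sum_{k} y_{ik} b_k$ with $y_{ik} \in \Rc$. By Definition \ref{defn:B''}, I may write $\gamma_i = \sum_l \wtil{\alpha_{il}}(1 - \nu_{I_{\fp_l}}\bpsi(\fp_l)^{-1} - U_{\fp_l}^{-1})\AC{\fp_l}$ for some $\wtil{\alpha_{il}} \in \bT'$. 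Since the structure morphism $\Rc \to \bT'/I'$ is surjective (this follows from Lemma \ref{lem:eigen} because $\bT'$ is generated over $\Rc$ by the $T_{\fl}$, each of which is congruent to a scalar modulo $I'$), I may pick $\alpha_{il} \in \Rc$ with $\wtil{\alpha_{il}} \equiv \alpha_{il} \pmod{I'}$ and absorb the difference into $\xi_i$. The upshot is that each row of the modified matrix $M' = (w_{ij} \mid z_{ik}-\theta^\sharp y_{ik} \mid u_{il} - \alpha_{il}(1 - \nu_{I_{\fp_l}}\bpsi(\fp_l)^{-1} - U_{\fp_l}^{-1}))$ yields an element of $I'B + B'$.

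Next, applying Cramer's rule to $M'$ (with entries now in $\wtil{\bT}$), multiplication on the left by the adjugate of $M'$ shows that $\det(M') \cdot v \in I'B + B'$ for each generator $v \in \{c_j, b_k, \AC{\fp_l}\}$. To finish, I would show that the prefactor $\prod_{j=1}^{r}(U_{\fp'_j} - \bpsi(\fp'_j))$ collapses the $B'$-contribution into $I'B$: the module $B'$ is generated by $b(\sigma)$ for $\sigma$ ranging over the inertia at primes $\fp'_j \mid \fP'$, and by Lemma \ref{lem:observ}(3) each such $b(\sigma)$ lies, modulo $\AC{\fp'_j} I'$, in the $\bT'$-span of $(\eta_{\fp'_j}(\sigma)^{-1} - \bpsi(\sigma)^{-1})\AC{\fp'_j}$. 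Combined with the fact that $\eta_{\fp'_j}$ is unramified and sends $\varphi_{\fp'_j}$ to $U_{\fp'_j}$, multiplication by $U_{\fp'_j} - \bpsi(\fp'_j)$ eliminates the $B'$-part at $\fp'_j$ up to $I'B$; iterating in $j$ accounts for the full product. Combined with the Cramer-rule step, this gives $t \cdot v \in I'B$ for each generator $v$.

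The last step is to translate this annihilation inside $B/I'B$ into an annihilation of $F_k(\bpsi)$ modulo $(\wtil{x}\theta^\sharp)$. For the non-constant Fourier coefficients this is immediate from Definitions \ref{defn:psi_small} and \ref{defn:Eis_ideal}: $I'$ is exactly the kernel of $\bT' \to W$, so an $I'$-multiple of $F_k(\bpsi)$ has coefficients divisible by $\wtil{x}\theta^\sharp$. For the constant terms one combines this with the explicit form of $F_k(\bpsi)$ from Theorem \ref{thm:cuspform}, whose constant-term congruence is already modulo $(\wtil{x}\theta^\sharp)$.

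The main obstacle I foresee is the $B'$-elimination step. The entries $b(\sigma)$ for inertia elements $\sigma$ at $\fp'_j$ are not themselves Hecke operators, so converting Lemma \ref{lem:observ}(3) into a clean statement that $(U_{\fp'_j} - \bpsi(\fp'_j))$ kills $B'$ modulo $I'B$ requires careful bookkeeping of the $\AC{\fp'_j} I'$ error terms and of the off-diagonal $\ast$-entry in the upper-triangular local form of Theorem \ref{thm:Gal_rep}(b). This is where the parallel with \cite[Lemma 9.10]{DK20} must be deployed most faithfully, while also verifying that the novel $\alpha_{il}$-terms introduced to handle $B''$ do not disturb the argument.
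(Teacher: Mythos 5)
Your proposal shares the broad framework of the paper's proof---unwind \eqref{eq:Fitt_cond}, pass to a modified matrix, apply Cramer, and translate through $\varphi$---and several of your observations (that $\eta_i$ can be expanded in the $b_k$, that $\Rc \to \bT'/I'$ is surjective so $\alpha_{il} \in \Rc$ can be chosen) are correct and match the paper. However, two steps contain genuine gaps.

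The Cramer's rule step does not deliver what you claim. You feed into the adjugate a system of relations whose right-hand sides lie in $I'B + B'$, but your modified matrix, and hence its adjugate, has entries in $\wtil{\bT}$ (because of the $U_{\fp_l}^{-1}$ appearing in the $u_{il}$-columns), while $I'B + B'$ is only a $\bT'$-submodule of $\bK_E$: $B$ carries no $\wtil{\bT}$-action and $U_{\fp_l}^{-1}(I'B+B')$ need not lie in $I'B + B'$. So $\det(M')\cdot v \in I'B + B'$ does not follow. The paper sidesteps this entirely by first producing an \emph{exact} vanishing relation: it expands $c_j = (U_{\fp'_j} - \bpsi(\fp'_j) + x_j)\AC{\fp'_j}$ via \eqref{eq:c_desc} and observes $B' \subset \sum_j \AC{\fp'_j} I'$, so that each row of the relation, after rewriting, is literally $0$, i.e. one has a matrix over $\wtil{\bT}$ whose product with the column vector $(\AC{\fp'_1},\dots,\AC{\fp'_r}, b_1,\dots,b_n, \AC{\fp_1},\dots,\AC{\fp_s})$ is the zero vector. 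Multiplying by the adjugate and using that $b_1$ is a non-zero-divisor of $\bK_E$ gives the clean conclusion that the determinant is $0$ in $\wtil{\bT}$, with no module-theoretic bookkeeping.

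You have also misread where the prefactor $\prod_j(U_{\fp'_j} - \bpsi(\fp'_j))$ comes from. It is not needed to eliminate the $B'$-contribution: for $\sigma$ in the inertia at $\fp'_j$, both $\eta_{\fp'_j}(\sigma)$ and $\bpsi(\sigma)$ equal $1$ (the primes dividing $\fP'$ are unramified in $H/F$), so the right side of Lemma \ref{lem:observ}(3) vanishes outright and $b(\sigma) \in \AC{\fp'_j} I'$ with no extra factor. The prefactor instead arises at the very end, after applying $\varphi$ to the vanishing determinant: the $j$-th column of $\varphi$ of the rewritten matrix is $\bigl(w_{ij}\bigl(\varphi(U_{\fp'_j}) - \bpsi(\fp'_j)\bigr)\bigr)_i$ because $\varphi$ kills $I'$, and factoring $\varphi(U_{\fp'_j}) - \bpsi(\fp'_j)$ out of each such column produces exactly the prefactor times the determinant appearing in the statement of $t$. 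Without reorganizing the argument around an exact zero relation, this column factorization has no natural place. Finally, your last translation step conflates $t\cdot v \in I'B$ (an element of $\bK_E$) with $tF_k(\bpsi)\equiv 0$ (a congruence of modular forms); the correct bridge, as in the paper, is that the determinant is $0$ in $\wtil{\bT}$, hence $\varphi(t)=0$ in $W$, which by Definition \ref{defn:psi_small} is precisely $\coef{\fa}{tF_k(\bpsi)}\equiv 0 \pmod{\wtil{x}\theta^{\sharp}}$ for all $\fa$; no separate treatment of constant terms is required since $F_k(\bpsi)$ is cuspidal.
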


\begin{proof}
We have $B' \subset \sum_{j = 1}^r \AC{\fp'_j} I'$ since, for any $\fp \mid \fP'$ and $\sigma \in I_{F_{\fp}}$, Lemma \ref{lem:observ}(3) implies $b(\sigma) \in \AC{\fp} I'$. 
Therefore, by \eqref{eq:Fitt_cond}, for each $i$, we have
\begin{align}
& \sum_{j=1}^r w_{ij} c_j + \sum_{k=1}^n z_{ik} b_k + \sum_{l=1}^s u_{il} \AC{\fp_l}\\
& = \sum_{j = 1}^r \AC{\fp'_j} x_{ij}'' 
+ \sum_{k = 1}^n (x_{ik}' + \theta^{\sharp} y_{ik}) b_k 
+ \sum_{l = 1}^s t_{il} (1 - \nu_{I_{\fp_l}} \bpsi(\fp_l)^{-1} - U_{\fp_l}^{-1}) \AC{\fp_l}
\end{align}
for some $x_{ij}'' \in I'$, $x_{ik}' \in I'$, $y_{ik} \in \Rc$, and $t_{il} \in \bT'$.
Using \eqref{eq:c_desc}, we rewrite this formula as
\begin{align}\label{eq:Fitt_a1}
& \sum_{j=1}^r \parenth{w_{ij} (U_{\fp'_j}- \bpsi(\fp_j') + x_j) - x_{ij}''}\AC{\fp'_j} \\
& + \sum_{k=1}^n \parenth{z_{ik} - x_{ik}' - \theta^{\sharp} y_{ik}} b_k 
 + \sum_{l=1}^s \parenth{u_{il} - t_{il} (1 - \nu_{I_{\fp_l}} \bpsi({\fp_l})^{-1} - U_{\fp_l}^{-1})} \AC{\fp_l}
 = 0.
\end{align}

We define a matrix $M' \in M_{r + n + s}(\wtil{\bT})$ by
\[
M' = 
(W' \mid Z' \mid U')
\]
with
\begin{align}
W' &= \parenth{w_{ij} (U_{\fp'_j}- \bpsi(\fp_j') + x_j) - x_{ij}''}_{i, 1 \leq j \leq r},\\
Z' &= \parenth{z_{ik} - x_{ik}' - \theta^{\sharp} y_{ik}}_{i, 1 \leq k \leq n},\\
U' &=  \parenth{u_{il} - t_{il} (1 - \nu_{I_{\fp_l}} \bpsi({\fp_l})^{-1} - U_{\fp_l}^{-1})}_{i, 1 \leq l \leq s}.
\end{align}
Then by the formula \eqref{eq:Fitt_a1}, since $b_1, \dots, b_n$ are non-zero-divisors, we have $\det(M') = 0$.
In particular, we have $\det(\varphi(M')) = \varphi(\det(M')) = 0$ in $W$.
Since $x_j, x_{ik}', x_{ij}''$ are sent to zero by $\varphi$, we have
\begin{align}
\varphi(M') = \parenth{\parenth{w_{ij} (\varphi(U_{\fp'_j})- \bpsi(\fp_j'))}_{i, j}
	\mid \parenth{z_{ik} - \theta^{\sharp} y_{ik}}_{i, k}
	\mid \parenth{u_{il} - \varphi(t_{il}) (1 - \nu_{I_{\fp_l}} \bpsi({\fp_l})^{-1} - \varphi(U_{\fp_l})^{-1})
}_{i, l}}.
\end{align}

Since the structure morphism $\Rc \to \bT'/I'$ is surjective, we can choose an element $\alpha_{il} \in \Rc$ such that $t_{il} - \alpha_{il} \in I'$.
We then define an element $t \in \wtil{\bT}$ by the formula in the statement.
Then we have $\varphi(t) = \det(\varphi(M')) = 0$.
By the definition of $\varphi$, we have $\varphi(t) = 0$ in $W$ if and only if $t F_k(\bpsi) \equiv 0 \pmod{ (\wtil{x} \theta^{\sharp})}$, so the lemma holds.
\end{proof}

We keep the notation in Lemma \ref{lem:t}. 
For simplicity, we write
\begin{equation}\label{eq:defn_Z}
\cZ = \parenth{(w_{ij})_{i, j} \mid (z_{ik} - \theta^{\sharp} y_{ik})_{i, k} },
\end{equation}
so
\[
t = \prod_{j = 1}^r (U_{\fp'_j}- \bpsi(\fp_j')) 
\cdot \det \parenth{\cZ \mid (u_{il} - \alpha_{il} (1 - \nu_{I_{\fp_l}} \bpsi({\fp_l})^{-1} - U_{\fp_l}^{-1}))_{i, l} }.
\]
%

In order to utilize Lemma \ref{lem:t}, we have to investigate $t F_k(\bpsi)$.
By \eqref{eq:Fk}, we are led to study $t \ol{W}_1^{H/F}(\bpsi_{\fn}, \btr)^{\chi}$ and $t \nu_{\fP} W_k^{H^{\fP}/F}(\bpsi_{\fn/\fP}, \btr_p)^{\chi}$ in case $\fP_{\ba} = \fP$.
The former will be studied in Proposition \ref{prop:tW1}, using Lemma \ref{lem:tW1_coeff}.
Actually, we need only the Fourier coefficients at square-free ideals $\fa$ with $\fa \mid \fP_{\ba}$.
On the other hand, as written in \S \ref{ss:outline}, we cannot manage the contribution of the latter completely, so we will deal with only when the latter vanishes in some sense.
The precise statement is the following.

\begin{lem}\label{lem:tW1}
Let us take $t \in \wtil{\bT}$ as in Lemma \ref{lem:t}.
Then we have
\begin{equation}\label{eq:tW1_cong}
\coef{\fa}{t \ol{W}_1^{H/F}(\bpsi_{\fn}, 1)^{\chi}} \equiv 0 \pmod{(\theta^{\sharp}, \delta_{\fP_{\ba}' = 1} \nu_{I_p}^{\chi})}
\end{equation}
for any ideal $\fa$.
\end{lem}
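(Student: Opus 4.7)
The plan is to extract the desired congruence from the congruence $t F_k(\bpsi) \equiv 0 \pmod{(\wtil{x} \theta^{\sharp})}$ of Lemma \ref{lem:t} and the explicit description of $F_k(\bpsi)$ in Theorem \ref{thm:cuspform}, and then to analyze the resulting $W_k$-contribution case by case.

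Taking the $\fa$-th Fourier coefficient of $t F_k(\bpsi)$ and substituting the expression from Theorem \ref{thm:cuspform}, one obtains
\[
\wtil{x}\,\coef{\fa}{t\ol{W}_1^{H/F}(\bpsi_{\fn}, \btr)^{\chi}}
\;-\; \delta_{\fP_{\ba} = \fP}\,\coef{\fa}{t\,\nu_{\fP}\,W_k^{H^{\fP}/F}(\bpsi_{\fn/\fP}, \btr_p)^{\chi}}
\;\in\; (\wtil{x}\theta^{\sharp}).
\]
The strategy is to show that the $W_k$-term lies in $(\wtil{x}\theta^{\sharp},\, \wtil{x}\,\delta_{\fP_{\ba}'=1}\,\nu_{I_p}^{\chi})$; then, dividing by $\wtil{x}$ (using that $\wtil{x}$ is a non-zero-divisor), the claim follows.

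When $\fP_{\ba} \neq \fP$, the indicator $\delta_{\fP_{\ba} = \fP}$ vanishes and there is nothing to do. When $\fP_{\ba} = \fP$ but $\fP_{\ba}' \neq 1$, the equality $\fP_{\ba}' = \fP'$ forces $r \geq 1$, so $t$ contains at least one factor $(U_{\fp'_j} - \bpsi(\fp'_j))$. Applying Proposition~\ref{prop:Hecke_W}(2) at $\fR = (p)$ (where $\fp'_j \mid \fR$) gives $U_{\fp'_j} W_k^{H^{\fP}/F}(\bpsi_{\fn/\fP}, \btr_p)^{\chi} = \bpsi^{H^{\fP}/F}(\fp'_j) W_k^{H^{\fP}/F}(\bpsi_{\fn/\fP}, \btr_p)^{\chi}$; after multiplying by $\nu_{\fP}$ and using the $\Z_p[G]^{\chi}$-module structure through the natural lift, $(U_{\fp'_j} - \bpsi(\fp'_j))$ annihilates $\nu_{\fP}\,W_k^{H^{\fP}/F}(\bpsi_{\fn/\fP}, \btr_p)^{\chi}$, so the whole $W_k$-term disappears. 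Finally, when $\fP_{\ba}' = 1$ (which in particular forces $\fP_{\ba} = \fP$ and $I_p = \Gal(H/H^{\fP})$), the element $\nu_{\fP} = \prod_{\fp \mid \fP} \nu_{I_{\fp}}$ is a $\Z$-multiple of $\nu_{I_p}$, hence $\coef{\fa}{t\,\nu_{\fP} W_k^{H^{\fP}/F}(\bpsi_{\fn/\fP}, \btr_p)^{\chi}}$ lies in $(\nu_{I_p}^{\chi})$.

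The main obstacle is the final descent step in the third case: from $\wtil{x}\,\coef{\fa}{t\ol{W}_1} \in (\wtil{x}\theta^{\sharp}) + (\nu_{I_p}^{\chi})$ one wants to deduce $\coef{\fa}{t\ol{W}_1} \in (\theta^{\sharp}, \nu_{I_p}^{\chi})$. This requires that $\wtil{x}$ remain a non-zero-divisor on $\Rc/(\theta^{\sharp}, \nu_{I_p}^{\chi})$, which is not automatic from the mere fact that $\wtil{x}$ is a non-zero-divisor of $\Rc$. I expect this to be handled either by decomposing into $\psi$-components and using the explicit formula $\psi(\wtil{x}) = L(\psi^{-1},1-k)/L(\psi^{-1},0)$ for characters $\psi$ of $\Gal(H^{\fP}/F)$ together with $\fk$-adic congruences (as in \cite[Lemma 8.16]{DK20}), or by tightening the construction so that the $\fa$-th Fourier coefficient of $\nu_{\fP} W_k^{H^{\fP}/F}(\bpsi_{\fn/\fP}, \btr_p)^{\chi}$ is itself divisible by $\wtil{x}$ inside $(\nu_{I_p}^{\chi})$. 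This latter divisibility is natural since, specialized at a character $\psi$ of $\Gal(H^{\fP}/F)$, the coefficient factors through $\psi(x_k) L(\psi^{-1},0)$, and $\psi(x_k) = \psi(\wtil{x})$ by construction; the remaining factor then lies in $(\theta^{\sharp})$ by \eqref{eq:theta_spec}. Combining these ingredients gives the desired membership in $(\theta^{\sharp}, \delta_{\fP_{\ba}'=1}\,\nu_{I_p}^{\chi})$.
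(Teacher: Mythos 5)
Your proposal follows the same route as the paper: use Lemma~\ref{lem:t} and \eqref{eq:Fk} to obtain
\[
\wtil{x}\, t\,\ol{W}_1^{H/F}(\bpsi_{\fn},\btr)^{\chi} - \delta_{\fP_{\ba}=\fP}\,\nu_{\fP}\,t\,W_k^{H^{\fP}/F}(\bpsi_{\fn/\fP},\btr_p)^{\chi} \equiv 0 \pmod{(\wtil{x}\theta^{\sharp})},
\]
then dispose of the $W_k$-term case by case: it is absent when $\fP_{\ba}\neq\fP$, it is annihilated by the factors $U_{\fp'_j}-\bpsi(\fp'_j)$ of $t$ via Proposition~\ref{prop:Hecke_W}(2) when $\fP_{\ba}=\fP$ and $r\geq 1$, and in the remaining case $\fP_{\ba}'=1$ its Fourier coefficients lie in $(\nu_{\fP}^{\chi})\subset(\nu_{I_p}^{\chi})$ since $\nu_{\fP}$ is a positive integer multiple of $\nu_{I_p}$. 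Up to here you agree with the paper.

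The step you flag as the ``main obstacle'' is exactly what the paper settles in one line, and your diagnosis of it is slightly off. Write $c=\coef{\fa}{t\ol{W}_1^{H/F}(\bpsi_{\fn},\btr)^{\chi}}$; the congruence reads $\wtil{x}c=\wtil{x}\theta^{\sharp}a+b$ with $b\in(\nu_{I_p}^{\chi})$, so $\wtil{x}(c-\theta^{\sharp}a)\in(\nu_{I_p}^{\chi})$. What is therefore required is that multiplication by $\wtil{x}$ be injective on $\Rc/(\nu_{I_p}^{\chi})$, not on $\Rc/(\theta^{\sharp},\nu_{I_p}^{\chi})$ as you wrote. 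This does follow from $\wtil{x}$ being a non-zero-divisor of $\Rc$: identifying $\Rc\cong\OO_{\chi}[G/G']$, the ideal $(\nu_{I_p}^{\chi})$ consists precisely of the elements whose coefficients are constant on cosets of the image of $I_p$, so $\Rc/(\nu_{I_p}^{\chi})$ is $\Z_p$-torsion-free and hence injects into $\Q_p[G]^{\chi}/(\nu_{I_p}^{\chi})\Q_p[G]^{\chi}$; there $\wtil{x}$ is invertible because a non-zero-divisor of $\Rc$ has $\psi(\wtil{x})\neq 0$ for every $\psi$ with $\psi|_{G'}=\chi$. Thus $c-\theta^{\sharp}a\in(\nu_{I_p}^{\chi})$ and $c\in(\theta^{\sharp},\nu_{I_p}^{\chi})$. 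Neither of your two suggested fixes is quite the right one: the $L$-value formula $\psi(\wtil{x})=L(\psi^{-1},1-k)/L(\psi^{-1},0)$ is available only for $\psi$ trivial on $I_p$, but those characters are irrelevant to injectivity modulo $(\nu_{I_p}^{\chi})$, which is governed by the $\psi$ nontrivial on $I_p$; and the divisibility by $\wtil{x}$ of the $\fa$-th coefficient of $\nu_{\fP}W_k^{H^{\fP}/F}(\bpsi_{\fn/\fP},\btr_p)^{\chi}$ is neither supplied by the construction nor needed once the torsion-freeness observation is made.
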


\begin{proof}
By $t F_k(\bpsi) \equiv 0 \pmod{ (\wtil{x} \theta^{\sharp})}$ and \eqref{eq:Fk}, we have
\begin{equation}\label{eq:tFk2}
\wtil{x} t \ol{W}_1^{H/F}(\bpsi_{\fn}, \btr)^{\chi}
- \delta_{\fP_{\ba} = \fP} \nu_{\fP} t W_k^{H^{\fP}/F}(\bpsi_{\fn/\fP}, \btr_p)^{\chi} \equiv 0
 \pmod{(\wtil{x} \theta^{\sharp})}.
\end{equation}
By Proposition \ref{prop:Hecke_W}(2), 
 $W_k^{H^{\fP}/F}(\bpsi_{\fn/\fP}, \btr_p)^{\chi}$ is annihilated by $U_{\fp_j'} - \bpsi(\fp_j')$ for $1 \leq j \leq r$ (recall that these operators appear in the definition of $t$).
Therefore, the contribution of $W_k^{H^{\fP}/F}(\bpsi_{\fn/\fP}, \btr_p)^{\chi}$ vanishes unless $\fP_{\ba} = \fP$ and $r = 0$. 
The last two conditions simultaneously hold if and only if $\fP_{\ba}' = 1$.
Even if $\fP_{\ba}' = 1$, $\nu_{\fP}$ is divisible by $\nu_{I_p}$, and also recall that the non-constant terms of $t W_k^{H^{\fP}/F}(\bpsi_{\fn/\fP}, \btr_p)^{\chi}$ are all integral.
Therefore, in any case, we have
\begin{equation}
\coef{\fa}{\wtil{x} t \ol{W}_1^{H/F}(\bpsi_{\fn}, \btr)^{\chi}} \equiv 0
 \pmod{(\wtil{x} \theta^{\sharp}, \delta_{\fP_{\ba}' = 1} \nu_{I_p}^{\chi})}.
\end{equation}
The lemma follows from this.
\end{proof}

\begin{lem}\label{lem:tW1_coeff}
For a square-free ideal $\fa \mid \fP_{\bad}$, the following are true.
\begin{itemize}
\item[(a)]
We have
\[
\coef{\fa}{\ol{W}_1^{H/F}(\bpsi_{\fn \fP'}, \btr)^{\chi}}
= \prod_{\fp \mid \fP_{\ba}} (1 + \nu_{I_{\fp}}(1 + \delta_{\fp \mid \fa} \bpsi(\fp))).
\]
\item[(b)]
For a divisor $\fP_0 \| \fP_{\ba}$, we have
\begin{align}
& \coef{\fa}{\prod_{\fp \mid \fP_0}(1 - \nu_{I_{\fp}} \bpsi(\fp)^{-1} - U_{\fp}^{-1}) \ol{W}_1^{H/F}(\bpsi_{\fn \fP'}, \btr)^{\chi}}\\
& = \coef{\fa}{\ol{W}_1^{H/F}(\bpsi_{\fn \fP'}, \btr)^{\chi}}
\times \prod_{\fp \mid \fP_0} C_{\fp}^{\fa},
\end{align}
where, for $\fp \mid \fP_{\bad}$, we put
\[
C_{\fp}^{\fa} = \nu_{I_{\fp}} (1 - \# I_{\fp} \bpsi(\fp)^{-1} - \bpsi(\fp)^{-1})
	 \frac{1 + \delta_{\fp \mid \fa} \bpsi(\fp)}{1 + \# {I_{\fp}}(1 + \delta_{\fp \mid \fa} \bpsi(\fp))}.
\]
\end{itemize}
\end{lem}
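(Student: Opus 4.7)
\bigskip

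\noindent\textbf{Proof plan for Lemma \ref{lem:tW1_coeff}.}
The plan is to compute the Fourier coefficients directly by unfolding the definitions of $\ol{W}_1$ and $W_1$. The crucial structural observation is that for $\fa \mid \fP_{\ba}$ square-free, $\fa$ consists only of $p$-adic prime factors, while the auxiliary ideals $\fm$ appearing in Definition \ref{defn:Wk} are required to be coprime to $p$. Since the level-raising operator $|_{\fm}$ forces $\fm \mid \fa$, only the term $\fm = 1$ contributes; thus the computation reduces to that of unmodified Eisenstein series. In what follows I use $\bpsi$ also for the character at intermediate fields, via lifts under the natural projection, since the presence of $\nu_{\fQ}$ makes this well-defined.

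For part (a), after the reduction above,
\[
\coef{\fa}{\ol{W}_1^{H/F}(\bpsi_{\fn\fP'}, \btr)^{\chi}}
= \sum_{\fQ} \nu_{\fQ} \sum_{\fr \mid \fa} \bpsi^{\chi}_{\fn\fP'/\fQ}(\fa/\fr),
\]
where $\fQ$ ranges over Hall divisors of the ``bad'' part lying in $\fP_{\ba}$. The character $\bpsi_{\fn\fP'/\fQ}$ vanishes on $\fa/\fr$ unless $\gcd(\fa/\fr, \fn\fP'/\fQ) = 1$; since each prime $\fp \mid \fa/\fr$ lies in $\fP_{\ba} \mid \fn$, this condition is equivalent to $\fp \mid \fQ$. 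Hence the inner sum equals $\prod_{\fp \mid \gcd(\fa,\fQ)}(1+\bpsi(\fp))$. Because both the outer sum over $\fQ$ and this product factor through the set of primes of $\fP_{\ba}$, the whole expression factorizes, and collecting the four cases (each prime $\fp$ either divides $\fa$ or not, and either divides $\fQ$ or not) gives precisely
\[
\prod_{\fp \mid \fP_{\ba}}\bigl(1 + \nu_{I_{\fp}}(1 + \delta_{\fp \mid \fa}\bpsi(\fp))\bigr).
\]

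For part (b), the idea is to compute the action of each factor $1 - \nu_{I_\fp}\bpsi(\fp)^{-1} - U_\fp^{-1}$ on $\coef{\fa}{-}$. Since these operators for distinct primes $\fp \mid \fP_0$ commute, one argues one prime at a time. By Proposition \ref{prop:Hecke_W}(2), $U_\fp$ acts on the summand $W_1^{H^{\fQ}/F}(\bpsi_{\fn\fP'/\fQ},\btr)^{\chi}$ as the identity together with an extra term $\bpsi_{\fn\fP'/\fQ}(\fp)W_1^{H^{\fQ}/F}(\bpsi_{\fn\fP'/\fQ},\btr_\fp)^{\chi}$, which survives only when $\fp \mid \fQ$. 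Tracking this through the Fourier expansion and comparing to the factorized formula of part (a), the action of the whole operator $1 - \nu_{I_\fp}\bpsi(\fp)^{-1} - U_\fp^{-1}$ on $\coef{\fa}{-}$ multiplies the $\fp$-factor of part (a) by the explicit scalar $C_{\fp}^{\fa}$; the denominator $1+\#I_{\fp}(1+\delta_{\fp\mid\fa}\bpsi(\fp))$ is exactly what is obtained by replacing $\nu_{I_{\fp}}$ with $\#I_{\fp}$ in the $\fp$-factor of (a), reflecting the identity $\nu_{I_\fp}^2 = \#I_{\fp}\cdot\nu_{I_\fp}$. Iterating over $\fp \mid \fP_0$ yields the claim.

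The main obstacle is interpreting and controlling $U_\fp^{-1}$ when $\fp \mid \fP_{\ba}$. Unlike the cases covered by Proposition \ref{prop:Hecke_olW}, the form $\ol{W}_1$ is not a $U_\fp$-eigenform because $U_\fp$ acts differently on different summands of the $\fQ$-decomposition: for each $\fQ$ the action depends on whether $\fp \mid \fQ$ or not. One must therefore invert $U_\fp$ consistently at the level of Fourier coefficients (or on the ordinary part of the Hecke module where it acts invertibly) and show that the resulting scalar at $\coef{\fa}{-}$ matches the combinatorial factor $C_{\fp}^{\fa}$. The bookkeeping between $\nu_{I_\fp}$, $\bpsi(\fp)^{\pm1}$ and $U_\fp^{\pm 1}$ across the various summands parameterized by $\fQ$ is the technical heart of the argument.
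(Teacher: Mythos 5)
Part (a) of your proposal follows the paper's argument closely and is correct: since $\fa \mid \fP_{\ba}$ has only $p$-adic prime factors while the auxiliary ideals $\fm$ in Definition \ref{defn:Wk} are prime to $p$, only $\fm = 1$ contributes, so the computation unfolds to the Eisenstein coefficients $\sum_{\fr \mid (\fa,\fQ)}\bpsi(\fr)$ and then factorizes over primes of $\fP_{\ba}$. The paper does exactly this via a decomposition $\fP_{\ba} = \fP_1\fP_2$ according to whether the prime divides $\fa$ or not.

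For part (b), however, your proposal stops at identifying the difficulty rather than resolving it, and the resolution is the real content. The paper obtains an explicit formula for $U_{\fp}^{-1}$ on each $\fQ$-summand from the \emph{second} expression in Proposition \ref{prop:Hecke_W}(2): combining
$U_{\fp}W_1^{H^{\fQ}/F}(\bpsi_{\fn\fP'/\fQ},\btr)^{\chi} = \bpsi_{\fn\fP'/\fQ}(\fp)W_1^{H^{\fQ}/F}(\bpsi_{\fn\fP'/\fQ},\btr)^{\chi} + W_1^{H^{\fQ}/F}(\bpsi_{\fp\fn\fP'/\fQ},\btr)^{\chi}$
with the fact that $W_1^{H^{\fQ}/F}(\bpsi_{\fp\fn\fP'/\fQ},\btr)^{\chi}$ is $U_{\fp}$-fixed yields, for $\fp\mid\fQ$,
$U_{\fp}^{-1}W_1^{H^{\fQ}/F}(\bpsi_{\fn\fP'/\fQ},\btr)^{\chi} = \bpsi(\fp)^{-1}\bigl(W_1^{H^{\fQ}/F}(\bpsi_{\fn\fP'/\fQ},\btr)^{\chi} - W_1^{H^{\fQ}/F}(\bpsi_{\fp\fn\fP'/\fQ},\btr)^{\chi}\bigr),$
whereas for $\fp\nmid\fQ$ it acts as the identity. (Your proposal uses the \emph{first} expression of Proposition \ref{prop:Hecke_W}(2), which gives the stabilized form $W_1(\cdots,\btr_{\fp})$; that is a $U_{\fp}$-eigenform but with eigenvalue $0$ when $\fp\nmid\fQ$, so it is not the useful presentation for inversion.) Substituting the inversion formula and applying $(1 - \nu_{I_{\fp}}\bpsi(\fp)^{-1}-U_{\fp}^{-1})$ to the full $\fQ$-sum produces a telescoping cancellation between the terms $\sum_{\fp\mid\fQ}\nu_{\fQ}\bpsi(\fp)^{-1}W_1^{H^{\fQ}/F}(\bpsi_{\fp\fn\fP'/\fQ},\btr)^{\chi}$ and $\sum_{\fp\nmid\fQ}\nu_{I_{\fp}}\nu_{\fQ}\bpsi(\fp)^{-1}W_1^{H^{\fQ}/F}(\bpsi_{\fn\fP'/\fQ},\btr)^{\chi}$; your proposal does not verify or even state this identity, which is nontrivial because the two sides involve forms over different intermediate fields $H^{\fQ}$ and relies on the compatibility of the $\nu_{\fQ}$-lifts. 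After the cancellation one reparameterizes via $\fQ\mapsto\fp^{d_\fp}\fQ$ (using $\nu_{I_{\fp}}^2=\#I_{\fp}\nu_{I_{\fp}}$, which you do mention), obtaining the prefactor $\nu_{I_{\fp}}(1-\#I_{\fp}\bpsi(\fp)^{-1}-\bpsi(\fp)^{-1})$ in front of a sum resembling $\ol{W}_1$ over the smaller Hall divisors, and then argues by induction over the primes dividing $\fP_0$. The final Fourier-coefficient computation mirrors part (a). As written your proposal names the ``technical heart'' but does not supply the explicit inversion, the cancellation, or the inductive reduction, all of which are needed for the denominator and the full numerator of $C_{\fp}^{\fa}$ to emerge.
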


\begin{proof}
Recall that 
\[
\ol{W}_1^{H/F}(\bpsi_{\fn \fP'}, \btr)^{\chi}
= \sum_{\fQ \| \fP_{\ba}} \nu_{\fQ} W_1^{H^{\fQ}/F}(\bpsi_{\fn \fP' / \fQ}, \btr)^{\chi}.
\]
(a)
We decompose $\fP_{\ba} = \fP_1 \fP_2$ so that 
\[
\prim(\fP_1) \subset \prim(\fa),
\qquad \prim(\fP_2) \cap \prim(\fa) = \emptyset.
\]
For each $\fQ \| \fP_{\ba}$, we decompose $\fQ = \fQ_1 \fQ_2$ correspondingly.
We first compute $\coef{\fa}{W_1^{H^{\fQ}/F}(\bpsi_{\fn \fP'/\fQ}, \btr)^{\chi}}$ by the formula in Definition \ref{defn:Wk}.
Since $\fa \mid p$, by the formula of the level raising operators, only $\fm = 1$ contributes to the sum, so
\begin{align}
\coef{\fa}{W_1^{H^{\fQ}/F}(\bpsi_{\fn \fP'/\fQ}, \btr)^{\chi}}
& = \coef{\fa}{E_1^{H^{\fQ}/F}(\bpsi_{\fn \fP'/ \fQ}, \btr)^{\chi}}
 = \sum_{\fr \mid \fa} \bpsi_{\fn \fP'/\fQ} \parenth{\frac{\fa}{\fr}}\\
& = \sum_{\fr \mid (\fa, \fQ)} \bpsi(\fr)
 = \prod_{\fp \mid \fQ_1} (1 + \bpsi(\fp)).
\end{align}
By taking the sum with respect to $\fQ \| \fP_{\ba}$, we obtain
\begin{align}
\coef{\fa}{\ol{W}_1^{H/F}(\bpsi_{\fn \fP'}, \btr)^{\chi}}
& = \sum_{\fQ \| \fP_{\ba}} 
	\nu_{\fQ}  \coef{\fa}{W_1^{H^{\fQ}/F}(\bpsi_{\fn \fP' / \fQ}, \btr)^{\chi}}\\
& = \sum_{\fQ_1 \| \fP_1} \sum_{\fQ_2 \| \fP_2} 
	\nu_{\fQ_1} \nu_{\fQ_2}
	\prod_{\fp \mid \fQ_1} (1 + \bpsi(\fp))\\
& = \prod_{\fp \mid \fP_1} (1 + \nu_{I_{\fp}}(1 + \bpsi(\fp)))
	\prod_{\fp \mid \fP_2} (1 + \nu_{I_{\fp}}).
\end{align}

(b)
By Proposition \ref{prop:Hecke_W}(2), for each $\fp \mid \fP_0$ and $\fQ \| \fP_{\ba}$, we have
\[
U_{\fp} W_1^{H^{\fQ}/F}(\bpsi_{\fn \fP' / \fQ}, \btr)^{\chi}
= \bpsi_{\fn \fP' / \fQ}(\fp) W_1^{H^{\fQ}/F}(\bpsi_{\fn \fP' / \fQ}, \btr)^{\chi}
 + W_1^{H^{\fQ}/F}(\bpsi_{\fp \fn \fP' / \fQ}, \btr)^{\chi}
\]
and
\[
U_{\fp} W_1^{H^{\fQ}/F}(\bpsi_{\fp \fn \fP' / \fQ}, \btr)^{\chi}
=W_1^{H^{\fQ}/F}(\bpsi_{\fp \fn \fP' / \fQ}, \btr)^{\chi}.
\]
Note that $\bpsi_{\fn \fP' / \fQ}(\fp) = \delta_{\fp \mid \fQ} \bpsi(\fp)$.
Then these formulas imply
\[
U_{\fp}^{-1} W_1^{H^{\fQ}/F}(\bpsi_{\fn \fP' / \fQ}, \btr)^{\chi}
= \begin{cases}
	W_1^{H^{\fQ}/F}(\bpsi_{\fn \fP' / \fQ}, \btr)^{\chi}
	& (\fp \nmid \fQ)\\
	\bpsi(\fp)^{-1} 
	\parenth{W_1^{H^{\fQ}/F}(\bpsi_{\fn \fP' / \fQ}, \btr)^{\chi}
		- W_1^{H^{\fQ}/F}(\bpsi_{\fp \fn \fP' / \fQ}, \btr)^{\chi}
	}
	& (\fp \mid \fQ).
\end{cases}
\]
Then, for each $\fp \mid \fP_0$, we have
\begin{align}
& (1 - \nu_{I_{\fp}} \bpsi(\fp)^{-1} - U_{\fp}^{-1})
	\sum_{\fQ \| \fP_{\ba}} \nu_{\fQ} W_1^{H^{\fQ}/F}(\bpsi_{\fn \fP' / \fQ}, \btr)^{\chi}\\
& = \sum_{\fQ \| \fP_{\ba}, \fp \mid \fQ} \nu_{\fQ} 
	\parenth{(1 - \nu_{I_{\fp}} \bpsi(\fp)^{-1} - \bpsi(\fp)^{-1}) W_1^{H^{\fQ}/F}(\bpsi_{\fn \fP' / \fQ}, \btr)^{\chi}
  		+ \bpsi(\fp)^{-1} W_1^{H^{\fQ}/F}(\bpsi_{\fp \fn \fP' / \fQ}, \btr)^{\chi}}\\
& \qquad + \sum_{\fQ \| \fP_{\ba}, \fp \nmid \fQ} \nu_{\fQ} 
	\parenth{ - \nu_{I_{\fp}} \bpsi(\fp)^{-1}  W_1^{H^{\fQ}/F}(\bpsi_{\fn \fP' / \fQ}, \btr)^{\chi}
	}.
\end{align}
Several terms in this expression cancel each other, and we obtain
\begin{align}
& = \sum_{\fQ \| \fP_{\ba}, \fp \mid \fQ} \nu_{\fQ} 
	(1 - \nu_{I_{\fp}} \bpsi(\fp)^{-1} - \bpsi(\fp)^{-1}) W_1^{H^{\fQ}/F}(\bpsi_{\fn \fP' / \fQ}, \btr)^{\chi}\\
& = \nu_{I_{\fp}} (1 - \# I_{\fp} \bpsi(\fp)^{-1} - \bpsi(\fp)^{-1})
	\sum_{\fQ \| \fP_{\ba}, \fp \nmid \fQ} \nu_{\fQ} 
	W_1^{H^{\fp^{d_{\fp}} \fQ}/F}(\bpsi_{\fn \fP' / \fp^{d_{\fp}} \fQ}, \btr)^{\chi}.
\end{align}
Here, we write $d_{\fp} = \ord_{\fp}(\fP)$, so  $\fp^{d_{\fp}} \fQ \| \fP$ for $\fQ \| \fP$ with $\fp \nmid \fQ$.

By similar computation, we can inductively show
\begin{align}
& \parenth{\prod_{\fp \mid \fP_0} (1 - \nu_{I_{\fp}} \bpsi(\fp)^{-1} - U_{\fp}^{-1})}
	\ol{W}_1^{H/F}(\bpsi_{\fn \fP'}, \btr)^{\chi}\\
& = \parenth{\prod_{\fp \mid \fP_0} \nu_{I_{\fp}} (1 - \# I_{\fp} \bpsi(\fp)^{-1} - \bpsi(\fp)^{-1})}
	\sum_{\fQ  \| \fP_{\ba}, (\fQ, \fP_0) = 1} \nu_{\fQ} 
	W_1^{H^{\fP_0 \fQ}/F}(\bpsi_{\fn \fP' / \fP_0 \fQ}, \btr)^{\chi}.
\end{align}

Let us decompose $\fP_{\ba} = \fP_0 \fP_1 \fP_2$ so that
\[
\prim(\fP_1) \subset \prim(\fa), \qquad \prim(\fP_2) \cap \prim(\fa) = \emptyset.
\]
For each $\fQ \| \fP_{\ba}$ with $(\fQ, \fP_0) = 1$, we decompose $\fQ = \fQ_1 \fQ_2$ correspondingly.
Then as in (a) we have
\begin{align}
\coef{\fa}{W_1^{H^{\fP_0\fQ}/F}(\bpsi_{\fn \fP' / \fP_0 \fQ}, \btr)^{\chi}}
& = \prod_{\fp \mid \fa, \fp \mid \fP_0 \fQ} (1 + \bpsi(\fp))\\
& = \prod_{\fp \mid \fP_0, \fp \mid \fa} (1 + \bpsi(\fp))
	\prod_{\fp \mid \fQ_1} (1 + \bpsi(\fp)).
\end{align}
Therefore,
\begin{align}
& \coef{\fa}{\parenth{\prod_{\fp \mid \fP_0} \nu_{I_{\fp}}} \sum_{\fQ  \| \fP_{\ba}, (\fQ, \fP_0) = 1} \nu_{\fQ} 
	W_1^{H^{\fP_0 \fQ}/F}(\bpsi_{\fn \fP' / \fP_0 \fQ}, \btr)^{\chi}}\\
& = 	\prod_{\fp \mid \fP_0} \nu_{I_{\fp}} \sum_{\fQ \| \fP_{\ba}, (\fQ, \fP_0) = 1}
	\nu_{\fQ}
	 \prod_{\fp \mid \fP_0, \fp \mid \fa} (1 + \bpsi(\fp))
	\prod_{\fp \mid \fQ_1} (1 + \bpsi(\fp))\\
& =	\prod_{\fp \mid \fP_0, \fp \nmid \fa} \nu_{I_{\fp}} 
	\prod_{\fp \mid \fP_0, \fp \mid \fa} \nu_{I_{\fp}} (1 + \bpsi(\fp))
	\sum_{\fQ_1 \| \fP_1} \sum_{\fQ_2 \| \fP_2}
	\nu_{\fQ_1} \nu_{\fQ_2}	
	\prod_{\fp \mid \fQ_1} (1 + \bpsi(\fp))\\
& = 	\prod_{\fp \mid \fP_0, \fp \nmid \fa} \nu_{I_{\fp}} 
	\prod_{\fp \mid \fP_0, \fp \mid \fa} \nu_{I_{\fp}} (1 + \bpsi(\fp))
	\prod_{\fp \mid \fP_1} (1 + \nu_{I_{\fp}}(1 + \bpsi(\fp)))
	\prod_{\fp \mid \fP_2} (1 + \nu_{I_{\fp}})\\
& =  \coef{\fa}{\ol{W}_1^{H/F}(\bpsi_{\fn \fP'}, \btr)^{\chi}}
	\times \prod_{\fp \mid \fP_0, \fp \mid \fa} \nu_{I_{\fp}} \frac{1 + \bpsi(\fp)}{1 + \#{I_{\fp}}(1 + \bpsi(\fp))}
	\times \prod_{\fp \mid \fP_0, \fp \nmid \fa} \nu_{I_{\fp}} \frac{1}{1 + \#{I_{\fp}}}.
\end{align}
The last equality follows from (a).
Thus we have the claim (b).
\end{proof}

\begin{prop}\label{prop:tW1}
For a square-free ideal $\fa \mid \fP_{\bad}$, we have
\[
\coef{\fa}{t \ol{W}_1^{H/F}(\bpsi_{\fn}, \btr)^{\chi}}
= \det \parenth{ \cZ \mid (v^{\fa}_{il})_{i, l}},
\]
where we put
\[
v^{\fa}_{il} = 
	(1 + \nu_{I_{\fp_l}}(1 + \delta_{\fp_l \mid \fa} \bpsi(\fp_l)))u_{il} 
	- \alpha_{il} \cdot \nu_{I_{\fp_l}} (1 - \# I_{\fp_l} \bpsi(\fp_l)^{-1} - \bpsi(\fp_l)^{-1})
	 \cdot (1 + \delta_{\fp_l \mid \fa} \bpsi(\fp_l)).
\]
\end{prop}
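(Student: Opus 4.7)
The plan is to compute $\coef{\fa}{t \ol{W}_1^{H/F}(\bpsi_{\fn}, \btr)^{\chi}}$ by applying the two parts of $t$ in turn, exploiting Proposition \ref{prop:Hecke_olW}(2) and Lemma \ref{lem:tW1_coeff}. Writing
\[
t = \prod_{j=1}^{r}(U_{\fp'_j} - \bpsi(\fp'_j)) \cdot \det\parenth{\cZ \mid \parenth{u_{il} - \alpha_{il} O_l}_{i,l}}, \qquad O_l = 1 - \nu_{I_{\fp_l}}\bpsi(\fp_l)^{-1} - U_{\fp_l}^{-1},
\]
first I would apply Proposition \ref{prop:Hecke_olW}(2) iteratively to the primes $\fp'_j$. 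Each $\fp'_j$ satisfies $\fp'_j \mid p$, $\fp'_j \nmid \fP_{\ba}$ and $\fp'_j \nmid \fn$, so $\bpsi_{\fn \fp'_1 \cdots \fp'_{j-1}}(\fp'_j) = \bpsi(\fp'_j)$, and each application of $U_{\fp'_j} - \bpsi(\fp'_j)$ raises the level by $\fp'_j$. After iterating one obtains
\[
\prod_{j=1}^{r}(U_{\fp'_j} - \bpsi(\fp'_j)) \, \ol{W}_1^{H/F}(\bpsi_{\fn}, \btr)^{\chi} = \ol{W}_1^{H/F}(\bpsi_{\fn \fP'}, \btr)^{\chi},
\]
reducing matters to computing the $\fa$-th Fourier coefficient of the determinant operator applied to this new form.

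Next I would exploit the multilinearity of the determinant in its last $s$ columns. Since $O_l$ depends only on $l$ (not on the row $i$), it may be factored out of the $l$-th of those columns after splitting each entry $u_{il} - \alpha_{il} O_l$ into its two summands. This yields
\[
\det\parenth{\cZ \mid \parenth{u_{il} - \alpha_{il} O_l}_{i,l}} = \sum_{S \subseteq \{1,\dots,s\}} \parenth{\prod_{l \in S}(-O_l)} \det(\cZ \mid P^S),
\]
where $P^S$ has $l$-th of the last $s$ columns equal to $(u_{il})_i$ if $l \notin S$ and $(\alpha_{il})_i$ if $l \in S$. Each $\det(\cZ \mid P^S)$ lies in $\Z_p[G]^{\chi}$ and therefore acts on Fourier coefficients by scalar multiplication. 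Applying Lemma \ref{lem:tW1_coeff}(b) with $\fP_0 = \prod_{l \in S} \fp_l$ (which is a Hall divisor of $\fP_{\ba}$) then gives the $\fa$-th coefficient for each subset $S$, and refolding the sum by multilinearity produces
\[
\coef{\fa}{t \ol{W}_1^{H/F}(\bpsi_{\fn}, \btr)^{\chi}} = \coef{\fa}{\ol{W}_1^{H/F}(\bpsi_{\fn \fP'}, \btr)^{\chi}} \cdot \det\parenth{\cZ \mid \parenth{u_{il} - \alpha_{il} C_{\fp_l}^{\fa}}_{i,l}}.
\]

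Finally, substituting the formula from Lemma \ref{lem:tW1_coeff}(a) for $\coef{\fa}{\ol{W}_1^{H/F}(\bpsi_{\fn \fP'}, \btr)^{\chi}} = \prod_{l=1}^{s}(1 + \nu_{I_{\fp_l}}(1 + \delta_{\fp_l \mid \fa} \bpsi(\fp_l)))$ and absorbing the $l$-th scalar factor into the $l$-th of the last $s$ columns, the statement reduces to verifying the identity
\[
(1 + \nu_{I_{\fp_l}}(1 + \delta_{\fp_l \mid \fa}\bpsi(\fp_l)))(u_{il} - \alpha_{il} C_{\fp_l}^{\fa}) = v^{\fa}_{il}.
\]
This follows by a direct computation of $(1 + \nu_{I_{\fp_l}}(1 + \delta_{\fp_l \mid \fa}\bpsi(\fp_l)))C_{\fp_l}^{\fa}$, where the idempotent relation $\nu_{I_{\fp_l}}^2 = \# I_{\fp_l} \cdot \nu_{I_{\fp_l}}$ causes the denominator $1 + \# I_{\fp_l}(1 + \delta_{\fp_l \mid \fa}\bpsi(\fp_l))$ in $C_{\fp_l}^{\fa}$ to cancel exactly against the numerator produced by expanding $\nu_{I_{\fp_l}} \cdot (1 + \nu_{I_{\fp_l}} X)$. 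The main obstacle, given that Lemma \ref{lem:tW1_coeff} already encapsulates the modular-forms content, is purely a bookkeeping matter: keeping the column-by-column combinatorics of the determinant expansion consistent with the multiplicative structure in Lemma \ref{lem:tW1_coeff}(b), and carrying out the final column-absorption identity without losing track of the various $\nu_{I_{\fp_l}}$-factors.
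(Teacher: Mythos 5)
Your proof is correct and follows essentially the same route as the paper: first use Proposition \ref{prop:Hecke_olW}(2) to replace the level-$\fn$ form by the level-$\fn\fP'$ form, then apply Lemma \ref{lem:tW1_coeff}(b) column by column through the multilinear expansion of the determinant, and finish by absorbing the product from Lemma \ref{lem:tW1_coeff}(a) into the last $s$ columns using the idempotent identity $\nu_{I_{\fp_l}}^2 = \# I_{\fp_l}\,\nu_{I_{\fp_l}}$. The paper compresses your subset-expansion step into the phrase ``considering the expansion of the determinant,'' so spelling out the sum over subsets $S$ is a useful elaboration rather than a departure. One small notational point: since the primes $\fp_l \mid \fP_{\ba}$ are wildly ramified, $\fP_{\ba}$ is not square-free, so when invoking Lemma \ref{lem:tW1_coeff}(b) you should take $\fP_0$ to be the Hall divisor of $\fP_{\ba}$ with prime support $\{\fp_l : l\in S\}$ rather than literally $\prod_{l\in S}\fp_l$; as the lemma's formula depends only on $\prim(\fP_0)$ this is cosmetic, but the hypothesis $\fP_0 \| \fP_{\ba}$ does need the correct exponents.
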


\begin{proof}
By Proposition \ref{prop:Hecke_olW}(2), we have
\[
\parenth{\prod_{\fp' \mid \fP'} (U_{\fp'} - \bpsi(\fp'))}
\ol{W_1}^{H/F}(\bpsi_{\fn}, \btr)^{\chi}
= \ol{W_1}^{H/F}(\bpsi_{\fn \fP'}, \btr)^{\chi},
\]
so
\begin{equation}\label{eq:tW1_det}
t \ol{W_1}^{H/F}(\bpsi_{\fn}, \btr)^{\chi} = 
\det \parenth{\cZ \mid (u_{il} - \alpha_{il} (1 - \nu_{I_{\fp_l}} \bpsi({\fp_l})^{-1} - U_{\fp_l}^{-1}))_{i, l} } 
	\cdot \ol{W_1}^{H/F}(\bpsi_{\fn \fP'}, \btr)^{\chi}.
\end{equation}

By Lemma \ref{lem:tW1_coeff}(b) and considering the expansion of the determinant, we have
\[
\coef{\fa}{t \ol{W}_1^{H/F}(\bpsi_{\fn}, \btr)^{\chi}}
= \coef{\fa}{\ol{W}_1^{H/F}(\bpsi_{\fn \fP'}, 1)}
\times \det(\cZ \mid (u_{il} - \alpha_{il} C_{\fp_l}^{\fa})_{i, l}).
\]
By Lemma \ref{lem:tW1_coeff}(a), this is equal to
\[
\det(\cZ \mid (1 + \nu_{I_{\fp_l}}(1 + \delta_{\fp_l \mid \fa} \bpsi(\fp_l)))(u_{il} - \alpha_{il} C_{\fp_l}^{\fa}))_{i, l}).
\]
Now it is easy to see 
\[
(1 + \nu_{I_{\fp_l}}(1 + \delta_{\fp_l \mid \fa} \bpsi(\fp_l)))(u_{il} - \alpha_{il} C_{\fp_l}^{\fa})
 = v^{\fa}_{il}.
\]
Thus we get the proposition.
\end{proof}

Now we are ready to finish the proof of Theorem \ref{thm:Fitt_B}.

\begin{proof}[Proof of Theorem \ref{thm:Fitt_B}]
Recall that our goal is to show \eqref{eq:Fitt_M}, which is by \eqref{eq:defn_Z} equivalent to
\begin{equation}\label{eq:Fitt_goal}
\det \parenth{\cZ \mid (u_{il})_{i, l}} \equiv 0 
\pmod{(\theta^{\sharp}, \delta_{\fP_{\ba}' = 1} \nu_{I_p}^{\chi})}.
\end{equation}

For each square-free ideal $\fa \mid \fP_{\bad}$, let us consider $v_{il}^{\fa} \in \Z_p[G]^{\chi}$ defined in Proposition \ref{prop:tW1}.
For any $0 \leq l' \leq l$ and any $\fa \mid \fp_{l'+1} \dots \fp_l$, we shall show
\begin{equation}\label{eq:det_1}
\det \parenth{ \cZ \mid \parenth{u_{i, 1}, \cdots, u_{i, l'}, v^{\fa}_{i, l'+1}, \cdots, v^{\fa}_{i, l}}_{i}} \equiv 0 \pmod{ (\theta^{\sharp}, \delta_{\fP_{\ba}' = 1} \nu_{I_p}^{\chi})}
\end{equation}
by induction on $l'$.
When $l' = 0$, the claim \eqref{eq:det_1} follows immediately from Lemma \ref{lem:tW1} and Proposition \ref{prop:tW1}.
When $l' = l$, the claim \eqref{eq:det_1} is nothing but the goal \eqref{eq:Fitt_goal}.

Let $1 \leq l' \leq l$ and let $\fa \mid \fp_{l' + 1} \dots \fp_l$.
We choose a lift $\wtil{\bpsi(\fp_{l'})} \in \Gal(H/F)$ of $\bpsi(\fp_{l'}) \in \Gal(H^{\fp_{l'}}/F)$.
Then by the induction hypothesis, we have
\begin{align}
& (1 + \wtil{\bpsi(\fp_{l'})}) 
\det \parenth{ \cZ \mid \parenth{u_{i, 1}, \cdots, u_{i, l' - 1}, v^{\fa}_{i, l'}, v^{\fa}_{i, l'+1}, \cdots, v^{\fa}_{i, l}}_{i}} \\
& - \det \parenth{ \cZ \mid \parenth{u_{i, 1}, \cdots, u_{i, l' - 1}, v^{\fp_{l'} \fa}_{i, l'}, v^{\fp_{l'} \fa}_{i, l'+1}, \cdots, v^{\fp_{l'} \fa}_{i, l}}_{i}} 
\equiv 0 \pmod{ (\theta^{\sharp}, \delta_{\fP_{\ba}' = 1} \nu_{I_p}^{\chi})}.
\end{align}
By direct computation, we have
\[
(1 + \wtil{\bpsi(\fp_{l'})}) v^{\fa}_{i, l'} - v^{\fp_{l'} \fa}_{i, l'}
= \wtil{\bpsi(\fp_{l'})} u_{i, l'}.
\]
Therefore, the left hand side is 
\[
\wtil{\bpsi(\fp_{l'})} \cdot
\det \parenth{ \cZ \mid \parenth{u_{i, 1}, \cdots, u_{i, l' - 1}, u_{i, l'}, v^{\fa}_{i, l'+1}, \cdots, v^{\fa}_{i, l}}_{i}}.
\]
This proves \eqref{eq:det_1} by induction.

Thus we have proved \eqref{eq:det_1}, so in particular \eqref{eq:Fitt_goal}.
This completes the proof of Theorem \ref{thm:Fitt_B}.
\end{proof}

This completes the proof of Theorem \ref{thm:current_main_p}.
By the discussion in \S \ref{ss:pf_thm}, this also completes the proof of Theorems \ref{thm:current_main} and \ref{thm:main_p_2}.

\bibliographystyle{abbrv}
\bibliography{biblio}

\end{document}